\newif\ifpersonal
\numberwithin{equation}{section}
\theoremstyle{plain}
\newtheorem{thm}[equation]{Theorem}
\newtheorem{lem}[equation]{Lemma}
\newtheorem{prop}[equation]{Proposition}
\newtheorem{cor}[equation]{Corollary}
\theoremstyle{definition}
\newtheorem{defin}[equation]{Definition}
\newtheorem{notation}[equation]{Notation}
\newtheorem{rem}[equation]{Remark}
\newcommand{\personal}[1]{\textcolor[rgb]{0,0,1}{(Personal: #1)}}
\newcommand{\discussion}[1]{\textcolor{violet}{(Discussion: #1)}}
\newcommand{\todo}[1]{\textcolor{red}{(Todo: #1)}}
\newcommand{\personal}[1]{\ignorespaces}
\newcommand{\discussion}[1]{\ignorespaces}
\newcommand{\todo}[1]{\ignorespaces}
\newcommand{\C}{\mathbb C}
\newcommand{\Z}{\mathbb Z}
\newcommand{\rR}{\mathrm R}
\newcommand{\rT}{\mathrm T}
\newcommand{\cA}{\mathcal A}
\newcommand{\cB}{\mathcal B}
\newcommand{\cC}{\mathcal C}
\newcommand{\cD}{\mathcal D}
\newcommand{\cE}{\mathcal E}
\newcommand{\cF}{\mathcal F}
\newcommand{\cH}{\mathcal H}
\newcommand{\cJ}{\mathcal J}
\newcommand{\cO}{\mathcal O}
\newcommand{\cS}{\mathcal S}
\newcommand{\cT}{\mathcal T}
\newcommand{\cU}{\mathcal U}
\newcommand{\cV}{\mathcal V}
\newcommand{\cW}{\mathcal W}
\newcommand{\cX}{\mathcal X}
\newcommand{\cY}{\mathcal Y}
\newcommand{\cZ}{\mathcal Z}
\DeclareFontFamily{U}{BOONDOX-calo}{\skewchar\font=45 }
\DeclareFontShape{U}{BOONDOX-calo}{m}{n}{<-> s*[1.05] BOONDOX-r-calo}{}
\DeclareFontShape{U}{BOONDOX-calo}{b}{n}{<-> s*[1.05] BOONDOX-b-calo}{}
\DeclareMathAlphabet{\mathcalboondox}{U}{BOONDOX-calo}{m}{n}
\newcommand{\bbA}{\mathbb A}
\newcommand{\bbE}{\mathbb E}
\newcommand{\bA}{\mathbf A}
\newcommand{\bD}{\mathbf D}
\newcommand{\bP}{\mathbf P}
\let\save@mathaccent\mathaccent
\newcommand*\if@single[3]{%
	\setbox0\hbox{${\mathaccent"0362{#1}}^H$}%
	\setbox2\hbox{${\mathaccent"0362{\kern0pt#1}}^H$}%
	\ifdim\ht0=\ht2 #3\else #2\fi
}
\newcommand*\rel@kern[1]{\kern#1\dimexpr\macc@kerna}
\newcommand*\widebar[1]{\@ifnextchar^{{\wide@bar{#1}{0}}}{\wide@bar{#1}{1}}}
\newcommand*\wide@bar[2]{\if@single{#1}{\wide@bar@{#1}{#2}{1}}{\wide@bar@{#1}{#2}{2}}}
\newcommand*\wide@bar@[3]{%
	\begingroup
	\def\mathaccent##1##2{%
		\let\mathaccent\save@mathaccent
		\if#32 \let\macc@nucleus\first@char \fi
		\setbox\z@\hbox{$\macc@style{\macc@nucleus}_{}$}%
		\setbox\tw@\hbox{$\macc@style{\macc@nucleus}{}_{}$}%
		\dimen@\wd\tw@
		\advance\dimen@-\wd\z@
		\divide\dimen@ 3
		\@tempdima\wd\tw@
		\advance\@tempdima-\scriptspace
		\divide\@tempdima 10
		\advance\dimen@-\@tempdima
		\ifdim\dimen@>\z@ \dimen@0pt\fi
		\rel@kern{0.6}\kern-\dimen@
		\if#31
		\overline{\rel@kern{-0.6}\kern\dimen@\macc@nucleus\rel@kern{0.4}\kern\dimen@}%
		\advance\dimen@0.4\dimexpr\macc@kerna
		\let\final@kern#2%
		\ifdim\dimen@<\z@ \let\final@kern1\fi
		\if\final@kern1 \kern-\dimen@\fi
		\else
		\overline{\rel@kern{-0.6}\kern\dimen@#1}%
		\fi
	}%
	\macc@depth\@ne
	\let\math@bgroup\@empty \let\math@egroup\macc@set@skewchar
	\mathsurround\z@ \frozen@everymath{\mathgroup\macc@group\relax}%
	\macc@set@skewchar\relax
	\let\mathaccentV\macc@nested@a
	\if#31
	\macc@nested@a\relax111{#1}%
	\else
	\def\gobble@till@marker##1\endmarker{}%
	\futurelet\first@char\gobble@till@marker#1\endmarker
	\ifcat\noexpand\first@char A\else
	\def\first@char{}%
	\fi
	\macc@nested@a\relax111{\first@char}%
	\fi
	\endgroup
}
\newcommand{\tu}{\widetilde u}
\newcommand{\tU}{\widetilde U}
\newcommand{\PSh}{\mathrm{PSh}}
\newcommand{\Sh}{\mathrm{Sh}}
\newcommand{\infcat}{$\infty$-category\xspace}
\newcommand{\infcats}{$\infty$-categories\xspace}
\newcommand{\infsite}{$\infty$-site\xspace}
\newcommand{\inftopos}{$\infty$-topos\xspace}
\newcommand{\inftopoi}{$\infty$-topoi\xspace}
\newcommand{\sSet}{\mathrm{sSet}}
\newcommand{\Ab}{\mathrm{Ab}}
\newcommand{\DAb}{\cD(\Ab)}
\newcommand{\tauet}{\tau_\mathrm{\acute{e}t}}
\newcommand{\bPsm}{\bP_\mathrm{sm}}
\newcommand{\Modh}{\textrm{-}\mathrm{Mod}^\heartsuit}
\newcommand{\Mod}{\textrm{-}\mathrm{Mod}}
\newcommand{\sMod}{\textrm{-}\mathrm{sMod}}
\newcommand{\Coh}{\mathrm{Coh}}
\newcommand{\Cohh}{\mathrm{Coh}^\heartsuit}
\newcommand{\St}{\mathrm{St}}
\newcommand{\Sch}{\mathrm{Sch}}
\newcommand{\Afd}{\mathrm{Afd}}
\newcommand{\Top}{\mathcal T\mathrm{op}}
\newcommand{\dAn}{\mathrm{dAn}}
\newcommand{\dAnk}{\mathrm{dAn}_k}
\newcommand{\Ank}{\mathrm{An}_k}
\newcommand{\cTan}{\cT_{\mathrm{an}}}
\newcommand{\cTank}{\cT_{\mathrm{an}}(k)}
\newcommand{\cTdisck}{\cT_{\mathrm{disc}}(k)}
\newcommand{\cTetk}{\cT_{\mathrm{\acute{e}t}}(k)}
\newcommand{\Str}{\mathrm{Str}}
\newcommand{\Strloc}{\mathrm{Str}^\mathrm{loc}}
\newcommand{\RTop}{\tensor*[^\rR]{\Top}{}}
\newcommand{\RHTop}{\tensor*[^\rR]{\mathcal{H}\Top}{}}
\newcommand{\dAfd}{\mathrm{dAfd}}
\newcommand{\dAfdk}{\mathrm{dAfd}_k}
\newcommand{\trunc}{\mathrm{t}_0}
\newcommand{\Hyp}{\mathrm{Hyp}}
\newcommand{\Ring}{\mathrm{Ring}}
\newcommand{\CRing}{\mathrm{CRing}}
\newcommand{\sCRing}{\mathrm{sCRing}}
\newcommand{\Cat}{\mathrm{Cat}}
\newcommand{\Catinf}{\mathrm{Cat}_\infty}
\newcommand{\fib}{\mathrm{fib}}
\newcommand{\DerAn}{\mathrm{Der}\an}
\newcommand{\anL}{\mathbb L\an}
\newcommand{\AnRing}{\mathrm{AnRing}}
\newcommand{\cTanc}{\cTan(\mathbb C)}
\newcommand{\dAff}{\mathrm{dAff}}
\newcommand{\afp}{^{\mathrm{afp}}}
\newcommand{\cHom}{\cH \mathrm{om}}
\newcommand{\cTAb}{\cT_{\Ab}}
\newcommand{\Catlex}{\Cat_\infty^{\mathrm{lex}}}
\newcommand{\underover}[1]{#1//#1}
\newcommand{\llb}{[\![}
\newcommand{\rrb}{]\!]}
\newcommand{\an}{^\mathrm{an}}
\newcommand{\alg}{^\mathrm{alg}}
\newcommand{\ev}{\mathrm{ev}}
\newcommand{\inv}{^{-1}}
\newcommand{\id}{\mathrm{id}}
\newcommand{\kanal}{$k$-analytic\xspace}
\newcommand{\op}{^\mathrm{op}}
\newcommand{\DM}{Deligne-Mumford\xspace}
\tikzset{
  closed/.style = {decoration = {markings, mark = at position 0.5 with { \node[transform shape, xscale = .8, yscale=.4] {/}; } }, postaction = {decorate} },
  open/.style = {decoration = {markings, mark = at position 0.5 with { \node[transform shape, scale = .7] {$\circ$}; } }, postaction = {decorate} }
}
\DeclareMathOperator{\Fun}{Fun}
\DeclareMathOperator{\Hom}{Hom}
\DeclareMathOperator{\Map}{Map}
\DeclareMathOperator{\Sp}{Sp}
\DeclareMathOperator{\Spec}{Spec}
\DeclareMathOperator{\Sym}{Sym}
\DeclareMathOperator*{\colim}{colim}
\DeclareMathOperator*{\cotimes}{\widehat{\otimes}}
\renewenvironment{abstract}{%
	\quotation
	\small
	\textbf{\textit{\abstractname.}} 
}{\endquotation}
\begin{document}
\title[Representability theorem in derived analytic geometry]{Representability theorem in derived analytic geometry}

\author{Mauro PORTA}
\address{Mauro PORTA, Institut de Recherche Mathématique Avancée, 7 Rue René Descartes, 67000 Strasbourg, France}
\email{porta@math.unistra.fr}

\author{Tony Yue YU}
\address{Tony Yue YU, Department of Mathematics M/C 253-37, California Institute of Technology, 1200 E California Blvd, Pasadena, CA 91125, USA}
\email{yuyuetony@gmail.com}
\date{April 5, 2017 (Revised on March 16, 2022)}
\subjclass[2020]{Primary 14D23; Secondary 14G22, 32G13, 14A30}
\keywords{representability, deformation theory, analytic cotangent complex, derived geometry, rigid analytic geometry, complex geometry, derived stacks}

\maketitle

\begin{abstract}
We prove the representability theorem in derived analytic geometry.
The theorem asserts that an analytic moduli functor is a derived analytic stack if and only if it is compatible with Postnikov towers, has a global analytic cotangent complex, and its truncation is an analytic stack.
Our result applies to both derived complex analytic geometry and derived non-archimedean analytic geometry (rigid analytic geometry).
The representability theorem is of both philosophical and practical importance in derived geometry.
The conditions of representability are natural expectations for a moduli functor.
So the theorem confirms that the notion of derived analytic space is natural and sufficiently general.
On the other hand, the conditions are easy to verify in practice.
So the theorem enables us to enhance various classical moduli spaces with derived structures, thus provides plenty of down-to-earth examples of derived analytic spaces.
For the purpose of proof, we study analytification, square-zero extensions, analytic modules and cotangent complexes in the context of derived analytic geometry.
We will explore applications of the representability theorem in our subsequent works.
In particular, we will establish the existence of derived mapping stacks via the representability theorem.
\end{abstract}

\tableofcontents

\section{Introduction} \label{sec:introduction}

Derived algebraic geometry is a far reaching enhancement of algebraic geometry.
We refer to Toën \cite{Toen_Derived_2014} for an overview, and to Lurie \cite{Lurie_Thesis,DAG-V} and Toën-Vezzosi \cite{HAG-I,HAG-II} for foundational works.
A fundamental result in derived algebraic geometry is Lurie's representability theorem.
It gives sufficient and necessary conditions for a moduli functor to possess the structure of a derived algebraic stack.
The representability theorem enables us to enrich various classical moduli spaces with derived structures, thus bring derived geometry into the study of important moduli problems.
Examples include derived Picard schemes, derived Hilbert schemes, Weil restrictions, derived Betti moduli spaces, derived de Rham moduli spaces, and derived Dolbeault moduli spaces (cf.\ \cite{DAG-XIV,Lurie_Thesis,Pantev_Shifted_2013,Simpson_Algebraic_aspects_2002,Simpson_Geometricity_2009,Gaitsgory_A_study_2016}).

Algebraic geometry is intimately related to analytic geometry.
In \cite{DAG-IX}, Lurie proposed a framework for derived complex analytic geometry.
In \cite{Porta_Yu_Derived_non-archimedean_analytic_spaces}, we started to develop the foundation for derived non-archimedean analytic geometry.
Our motivation comes from enumerative problems in the study of mirror symmetry of Calabi-Yau manifolds.
We refer to the introduction of \cite{Porta_Yu_Derived_non-archimedean_analytic_spaces} for a more detailed discussion on the motivations.
Our results in \cite{Porta_Yu_Derived_non-archimedean_analytic_spaces} include the existence of fiber products, and a comparison theorem between discrete derived analytic spaces and non-archimedean analytic \DM stacks.

As in the algebraic case, the theory of derived analytic geometry cannot be useful without a representability theorem.
So we establish the representability theorem in derived analytic geometry in this paper.
We cover both the complex analytic case and the non-archimedean analytic case using a unified approach.
Let us now state our main result:

\begin{thm}[Representability, cf.\ {\cref{thm:representability}}]\label{thmintro:representability}
	Let $F$ be a stack over the étale site of derived analytic spaces.
	The followings are equivalent:
	\begin{enumerate}
		\item $F$ is a derived analytic stack;
		\item $F$ is compatible with Postnikov towers, has a global analytic cotangent complex, and its truncation $\trunc(F)$ is an (underived) analytic stack.
	\end{enumerate}	
\end{thm}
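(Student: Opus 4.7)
The direction (1) $\Rightarrow$ (2) is essentially formal: a derived analytic stack is built by étale gluing from representable pieces, and each of the three properties (compatibility with Postnikov towers, existence of a global analytic cotangent complex, algebraicity of the truncation) can be checked locally and is known for derived analytic spaces themselves. The truncation $\trunc(F)$ being an analytic stack follows from the comparison between discrete derived analytic spaces and underived analytic spaces established in \cite{Porta_Yu_DNAnG_I}. So I would focus the bulk of the proof on the non-trivial direction (2) $\Rightarrow$ (1).

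The strategy for (2) $\Rightarrow$ (1) is to mimic Lurie's proof of the algebraic representability theorem: produce a smooth analytic atlas $U \to F$ by inductively lifting an atlas of $\trunc(F)$ through the Postnikov tower $\{\tau_{\le n} F\}$. Concretely, by assumption there is a smooth surjection $u_0 \colon U_0 \to \trunc(F) = \tau_{\le 0} F$ where $U_0$ is an underived analytic space. The plan is to construct compatible derived analytic spaces $U_n$ with truncation $U_0$ and smooth maps $u_n \colon U_n \to \tau_{\le n} F$ lifting $u_0$, then pass to the limit; compatibility with Postnikov towers ensures that the limit $U := \lim U_n$ maps to $\lim \tau_{\le n} F \simeq F$ and is itself a derived analytic space, with the limit map smooth and surjective.

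The heart of the induction is a single step: upgrading $u_n \colon U_n \to \tau_{\le n} F$ to $u_{n+1}$. Here I would use that $\tau_{\le n+1} F \to \tau_{\le n} F$ is a square-zero extension classified by a derivation $F \to F \oplus \pi_{n+1}(\cL_F)[n+2]$ constructed from the global analytic cotangent complex. Pulling this derivation back along $u_n$ yields an obstruction to lifting $U_n$ to a square-zero extension $U_{n+1}$ mapping to $\tau_{\le n+1} F$; because $U_n$ is itself a derived analytic space with its own analytic cotangent complex $\cL_{U_n}$ and a canonical map $u_n^* \cL_F \to \cL_{U_n}$, one produces the required square-zero extension $U_{n+1}$ of $U_n$ by a module (derived from pulling back $\pi_{n+1}(\cL_F)$), and smoothness of $u_n$ propagates to smoothness of $u_{n+1}$ by the infinitesimal lifting criterion. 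The study of analytic square-zero extensions and of the analytic cotangent complex carried out earlier in the paper provides exactly the technical tools needed for this step.

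The main obstacle I anticipate is \emph{not} the obstruction theory per se, but rather showing that the constructions above stay inside the category of derived analytic spaces rather than merely yielding formal or pro-analytic objects. This requires that square-zero extensions of derived analytic spaces by coherent analytic modules produce genuine derived analytic spaces (not just derived ringed topoi), and that the limit $\lim U_n$ exists in $\dAn$ and has the expected cotangent complex; both rely crucially on the theory of analytic modules and on the analytification machinery developed in the earlier sections. A secondary subtlety is the étale-local nature of the atlas: one must work locally on $U_0$ where the smooth surjection admits sufficiently nice charts, and then verify that the various lifts glue, which I would handle by the usual uniqueness (up to contractible choice) of solutions to the deformation problem controlled by $\cL_F$.
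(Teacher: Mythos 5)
Your high-level plan — lift an atlas of $\trunc(F)$ through a tower of $m$-truncated approximations, then take the colimit and use compatibility with Postnikov towers to land in $F$ — is indeed the skeleton of the paper's argument. But there are several concrete gaps in the way you propose to execute it, and the most serious ones are exactly where the paper has to introduce genuinely new ideas.

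First, you propose to build smooth maps $u_n \colon U_n \to \tau_{\le n}F$. This is problematic at the outset: $F$ is an arbitrary stack on $(\dAfdk,\tauet)$, and there is no truncation $\tau_{\le n}F$ with a square-zero extension structure on the transition maps. The paper sidesteps this entirely: the intermediate maps go directly to $F$ itself, $u_m \colon U_m \to F$, and the Postnikov data is not a tower of stacks but is encoded in the infinitesimal-cartesian and convergence hypotheses on $F$. Second, and more fundamentally, requiring each $u_n$ to be \emph{smooth} is not achievable at a finite stage. Smoothness of $u_m$ would force $\anL_{U_m/F}$ to be perfect in tor-amplitude $0$, but all that can be arranged inductively is the weaker condition that $\anL_{U_m/F}$ is \emph{flat to order $m+1$} (\cref{def:n_flat}). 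This is the key device the proposal omits: combined with $U_m$ being $m$-truncated, it makes $\tau_{\le m}\anL_{U_m/F}$ flat and hence perfect, which yields a (non-canonical) splitting $\anL_{U_m/F}\simeq \tau_{\le m}\anL_{U_m/F}\oplus\tau_{\ge m+1}\anL_{U_m/F}$; the choice of splitting is precisely what determines the square-zero extension $U_{m+1}$, and only in the colimit $\tU=\colim_m U_m$ does $\anL_{\tU/F}$ become perfect in tor-amplitude $0$, so that smoothness of $\tu\colon\tU\to F$ is proved once, at the end, via the infinitesimal lifting criterion of \cref{prop:infinitesimal_characterization_smooth}. Third, your closing remark that the local lifts glue ``by the usual uniqueness (up to contractible choice)'' contradicts the situation: the lift is genuinely \emph{not} unique, precisely because the splitting above is a non-canonical choice. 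The paper therefore does not glue local lifts at all; it works with a single global atlas $U_0$.

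Two smaller points: the proof of $(2)\Rightarrow(1)$ also runs an outer induction on the geometric level $n$ of $\trunc(F)$ — the base case $n=-1$ is handled separately by \cref{lem:lifting_etale_morphisms}, and the inductive hypothesis is what makes the diagonal of $F$ $(n-1)$-representable. And the direction $(1)\Rightarrow(2)$ is less formal than you suggest: showing that $n$-representable morphisms are infinitesimally cartesian, convergent, and admit global analytic cotangent complexes (\cref{prop:geometric_implies_inf_cartesian}, \cref{lem:representable_implies_convergent}, \cref{prop:inf_cartesian_implies_obstruction_theory}) is itself an induction on geometric level with real content, in particular the connectivity estimate on $\anL_{F/G}$ and the vanishing statement \cref{prop:geometric_implies_inf_cartesian}(2).
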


As in derived algebraic geometry, the representability theorem is of both philosophical and practical importance.
Since the conditions in \cref{thmintro:representability}(2) are natural expectations for a moduli functor $F$, the theorem confirms that our notion of derived analytic space is natural and sufficiently general.
On the other hand, these conditions are easy to verify in practice.
So \cref{thmintro:representability} provides us at the same time plenty of down-to-earth examples of derived analytic spaces.

The main ingredient in the proof of the representability theorem is \emph{derived analytic deformation theory}, which we develop in the body of this paper.
Central to this theory is the notion of \emph{analytic cotangent complex}.
Although this concept is similar to its algebraic counterpart, new ideas are needed in the analytic setting, especially in the non-archimedean case when the ground field has positive characteristic.

Let us give an informal account of the ideas involved.
Intuitively, a derived analytic space is a topological space equipped with a sheaf of derived analytic rings.
A derived analytic ring is a derived ring (e.g.\ a simplicial commutative ring) equipped with an extra analytic structure.
The extra analytic structure consists of informations about norms, convergence of power series, as well as composition rules among convergent power series.
In \cite{DAG-IX,Porta_Yu_Derived_non-archimedean_analytic_spaces}, this heuristic idea is made precise using the theory of pregeometry and structured topos introduced by Lurie \cite{DAG-V} (we recall it in \cref{sec:basic_notions}).
All analytic information is encoded in a pregeometry $\cTank$, where $k$ is either $\C$ or a non-archimedean field.
Then a derived analytic space $X$ is a pair $(\cX,\cO_X)$ consisting of an \inftopos $\cX$ and a $\cTank$-structure $\cO_X$ on $\cX$ satisfying some local finiteness condition (cf.\ \cref{def:derived_analytic_space}).
One should think of $\cX$ as the underlying topological space, and $\cO_X$ as the structure sheaf.
A derived analytic ring is formally defined as a $\cTank$-structure on a point.

Intuitively, the analytic cotangent complex of a derived analytic space represents the derived cotangent spaces.
We will construct it via the space of derivations.
Recall that given a $k$-algebra $A$ and an $A$-module $M$, a \emph{derivation} of $A$ into $M$ is a $k$-linear map $d\colon A\to M$ satisfying the Leibniz rule:
\[d(ab)=b\, d(a)+a\, d(b).\]

In the context of derived analytic geometry, we take $A$ to be a derived analytic ring.
Let $A\alg$ denote the underlying derived ring of $A$, obtained by forgetting the analytic structure.
We define $A$-modules to be simply $A\alg$-modules, (we will see later that this is a reasonable definition.)
Let $M$ be an $A$-module and we want to define what are analytic derivations of $A$ into $M$.
However, the Leibniz rule above is problematic in derived analytic geometry for two reasons.

The first problem concerns analytic geometry.
It follows from the Leibniz rule that for any element $a\in A$ and any polynomial in one variable $f$, we have
\[d(f(a))=f'(a)d(a).\]
In analytic geometry, it is natural to demand the same formula not only for polynomials but also for every convergent power series $f$.
This means that we have to add infinitely many new rules.

The second problem concerns derived geometry.
For derived rings, we are obliged to demand the Leibniz rule up to homotopy.
This results in an infinite chain of higher homotopies and becomes impossible to manipulate.

In order to solve the two problems, note that in the classical case, a derivation of $A$ into $M$ is equivalent to a section of the projection from the split square-zero extension $A\oplus M$ to $A$.
So we can reduce the problem of formulating the Leibniz rule involving convergent power series as well as higher homotopies to the problem of constructing split square-zero extensions of derived analytic rings.
In other words, given a derived analytic ring $A$ and an $A$-module $M$, we would like to construct a derived analytic ring structure on the direct sum $A\oplus M$.

For this purpose, we need to interpret the notion of $A$-module in a different way, which is the content of the following theorem.

\begin{thm}[Reinterpretation of modules, cf.\ {\cref{thm:equivalence_of_modules}}] \label{thmintro:modules}
Let $X = (\cX, \cO_X)$ be a derived analytic space.
We have an equivalence of stable $\infty$-categories
\[\cO_X\Mod \simeq \Sp(\Ab(\AnRing_k(\cX)_{/\cO_X})),\]
where $\AnRing_k(\cX)_{/\cO_X}$ denotes the \infcat of sheaves of derived $k$-analytic rings on $\cX$ over $\cO_X$, $\Ab(-)$ denotes the \infcat of abelian group objects, and $\Sp(-)$ denotes the \infcat of spectrum objects.
\end{thm}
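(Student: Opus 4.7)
The approach is to reduce the statement to the classical algebraic analog by comparing abelian group objects on the two sides. Since $\cO_X\Mod$ is defined as $\cO_X\alg\Mod$, and the algebraic version of the theorem (due to Quillen--Schwede, formulated $\infty$-categorically by Lurie) gives a natural equivalence
\[\cO_X\alg\Mod \simeq \Sp(\Ab(\sCRing_k(\cX)_{/\cO_X\alg})),\]
it suffices to show that the forgetful functor $(-)\alg \colon \AnRing_k(\cX) \to \sCRing_k(\cX)$ induces an equivalence
\[\Ab(\AnRing_k(\cX)_{/\cO_X}) \xrightarrow{\simeq} \Ab(\sCRing_k(\cX)_{/\cO_X\alg}),\]
after which the full statement follows by passing to spectrum objects. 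The functor $(-)\alg$ is a right adjoint, its left adjoint being the free analytic ring functor (analytification) coming from the morphism of pregeometries $\cTdisck \to \cTank$, so it preserves limits and thus sends abelian group objects to abelian group objects, providing a well-defined comparison functor.

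The key geometric input is that an abelian group object $A \to \cO_X$ in $\AnRing_k(\cX)_{/\cO_X}$ is automatically a split square-zero extension: the zero section provides a splitting $A \simeq \cO_X \oplus M$ with $M := \fib(A \to \cO_X)$, and the abelian group structure forces the product of any two sections of $M$ inside $A$ to vanish. The analytic structure on such an extension is then uniquely and functorially determined by the analytic structure on $\cO_X$ together with the $\cO_X\alg$-module structure on $M$, via first-order Taylor expansion: for any local analytic function $f$ in $n$ variables and sections $a_i + m_i$ of $A$,
\[f(a_1+m_1,\ldots,a_n+m_n) \;=\; f(a_1,\ldots,a_n) + \sum_{i=1}^n (\partial_i f)(a_1,\ldots,a_n)\cdot m_i,\]
since all higher Taylor terms vanish once $M^2 = 0$. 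This formula supplies the inverse to the comparison functor: essential surjectivity, because any split square-zero extension of $\cO_X\alg$ by a connective module $M$ lifts canonically to an analytic split square-zero extension; fully faithfulness, because a morphism of such extensions over $\cO_X$ is determined by its restriction to the module components, to which the Taylor formula extends uniquely.

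The main obstacle will be to promote this Taylor construction to an honest $\infty$-functor valued in the structured topos formalism. One has to verify that the formula above defines a bona fide $\cTank$-structure on $\cO_X \oplus M$ that is compatible with all admissible pullbacks and étale covers in the pregeometry $\cTank$, and that the assignment $M \mapsto \cO_X \oplus M$ is coherently functorial at the $\infty$-categorical level (not merely on underlying homotopy categories). Combining Lurie's general theory of square-zero extensions of $\cT$-structures with the explicit presentation of $\cTank$ developed in our previous work should furnish the required coherence, but this verification is the technical heart of the proof; once it is in place, stabilizing yields the equivalence.
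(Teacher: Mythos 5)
Your reduction to showing that $\Ab(\Phi)$ is an equivalence is a valid in-principle step, and indeed the paper notes that this would suffice. But the paper deliberately does \emph{not} take that route: it only proves that $\Phi_*^{\ge 1}$, the restriction of $\Phi_*$ to $1$-connected objects of $\AnRing_k(\cX)_{\underover{\cO_X}}$, is an equivalence, and then recovers the spectrum-level statement by a suspension argument, since $\Sigma(M)$ always lands in the $1$-connected part. The reason for this detour is the very problem you wave at in your last paragraph: formulating the Taylor/Leibniz rule for derived analytic rings requires an infinite hierarchy of coherences, and the introduction to the paper explicitly warns that this ``results in an infinite chain of higher homotopies and becomes impossible to manipulate.'' The whole point of passing through $\Sp(\Ab(-))$ and the functor $\Omega^\infty_\Ab$ is to \emph{avoid} writing down the first-order Taylor formula and its coherences; your proposal instead tries to write it down directly, which is exactly the difficulty the machinery was built to sidestep.

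Concretely, two things are missing. First, you have not proved full faithfulness of $\Ab(\Phi)$: essential surjectivity does follow from the compatibility $\Omega^\infty_\Ab \circ (-)\alg \simeq (-)\alg \circ \Omega^\infty_\Ab$ (\cref{lem:underlying_algebra_split_square_zero_extension}, \cref{cor:split_square-zero_extension}), but the assertion that a morphism of analytic split square-zero extensions is determined by its algebraic shadow is precisely the hard content, and the Taylor heuristic does not establish it at the $\infty$-categorical level. Second, the paper's actual engine is absent from your sketch entirely: the proof of \cref{thm:connected_closed_immersion_are_algebraic} rests on the flatness \cref{prop:flatness} --- that the unit $B \to \Phi_*(\Psi_*(B))$ of the relative-analytification adjunction is a flat map of simplicial commutative rings --- which is established by reducing to stalks (\cref{sec:reduction_to_spaces}), identifying the relative analytification of $A\alg[X_1,\ldots,X_m]$ as a germ ring (\cref{prop:geometric_interpretation_relative_analytification}), and then checking flatness by passing to formal completions (\cref{cor:flatness_smooth_algebras}). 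Combined with the observation that for $1$-connected $B$ the map is automatically an isomorphism on $\pi_0$, flatness upgrades to an equivalence; your proposal contains no substitute for this argument. You would need to either prove the Taylor coherence claim rigorously (which is expected to be at least as hard as the paper's flatness argument, and in positive-characteristic non-archimedean geometry is the explicitly flagged source of new difficulty) or rediscover the flatness strategy.
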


We have natural functors
\[\Sp(\Ab(\AnRing_k(\cX)_{/\cO_X})))\xrightarrow{\Omega^\infty}\Ab(\AnRing_k(\cX)_{/\cO_X})\xrightarrow{U}\AnRing_k(\cX)_{/\cO_X}.\]
We will show that given $\cF\in \cO_X\Mod$, the underlying sheaf of derived rings of $U(\Omega^\infty(\cF))$ is equivalent to the algebraic split square-zero extension of $\cO_X\alg$ by $\cF$ (cf.\ \cref{cor:split_square-zero_extension}).
So we define $U(\Omega^\infty(\cF))$ to be the \emph{analytic split square-zero extension} of $\cO_X$ by $\cF$, which we denote by $\cO_X \oplus \cF$.

\cref{thmintro:modules} also confirms that our definition of module over a derived analytic ring $A$ as $A\alg$-module is reasonable because it can be reinterpreted in a purely analytic way without forgetting the analytic structure.

Let us now explain the necessity of taking abelian group objects in the statement of \cref{thmintro:modules}.
Given an $\bbE_\infty$-ring $A$, the \infcat of $A$-modules is equivalent to the \infcat $\Sp(\mathbb E_\infty \textrm{-} \Ring_{/A})$, where $\mathbb E_\infty \textrm{-} \Ring_{/A}$ denotes the \infcat of $\bbE_\infty$-rings over $A$ (cf.\ \cite[7.3.4.14]{Lurie_Higher_algebra}).
However, our approach to derived analytic geometry via structured topoi is simplicial in nature.
For a simplicial commutative ring $A$, the \infcat of $A$-modules is in general not equivalent to the \infcat $\Sp(\CRing_{/ A})$, where $\CRing_{/A}$ denotes the \infcat of simplicial commutative rings over $A$.
This problem can be solved by taking abelian group objects before taking spectrum objects.
More precisely, in \cref{sec:modules}, for any simplicial commutative ring $A$, we prove the following equivalence of stable \infcats
\begin{equation} \label{eq:algebraic_modules}
A\Mod\simeq\Sp(\Ab(\CRing_{/A})).
\end{equation}

The proof of \cref{thmintro:modules} is rather involved.
Let us give a quick outline for the convenience of the reader:
By \cref{eq:algebraic_modules}, we are reduced to prove an equivalence
\begin{equation} \label{eq:SpAbAnAlg}
\Sp(\Ab(\AnRing_k(\cX)_{/\cO_X})) \xrightarrow{\sim} \Sp(\Ab(\CRing_k(\cX)_{/\cO_X\alg})).
\end{equation}
The functor above is induced by the underlying algebra functor forgetting the analytic structure
\[ (-)\alg \colon \AnRing_k(\cX) \to \CRing_k(\cX). \]
Via a series of reduction steps in \cref{sec:reduction_to_connected_objects}, we can deduce \cref{eq:SpAbAnAlg} from the following equivalence
\[	(-)\alg \colon \AnRing_k(\cX)_{\underover{\cO_X}}^{\ge 1} \simeq \CRing_k(\cX)_{\underover{\cO_X\alg}}^{\ge 1} .\]
In \cref{sec:reduction_to_spaces}, we make a further reduction to the case of a point, i.e.\ when $\cX$ is the \infcat of spaces $\cS$.
The proof is finally achieved in \cref{sec:flatness} via flatness arguments.

With the preparations above, we are ready to introduce the notions of analytic derivation and analytic cotangent complex.

Let $X = (\cX, \cO_X)$ be a derived analytic space, let $\cA \in \AnRing_k(\cX)_{/\cO_X}$ and $\cF \in \cO_X\Mod^{\ge 0}$.
The space of \emph{$\cA$-linear analytic derivations of $\cO_X$ into $\cF$} is by definition
\[ \DerAn_{\cA}(\cO_X, \cF) \coloneqq \Map_{\AnRing_k(\cX)_{\cA // \cO_X}}( \cO_X, \cO_X \oplus \cF ). \]
In \cref{subsec:analytic_cotangent_complex}, we show that the functor $\DerAn_\cA(\cO_X, - )$ is representable by an $\cO_X$-module which we denote by $\anL_{\cO_X/\cA}$, and call the \emph{analytic cotangent complex} of $\cO_X/\cA$.
For a map of derived analytic spaces $f\colon X\to Y$, we define its analytic cotangent complex $\anL_{X/Y}$ to be $\anL_{\cO_X/f\inv\cO_Y}$.

Important properties of the analytic cotangent complex are established in \cref{sec:cotangent_complex}, and we summarize them in the following theorem:

\begin{thm}[Properties of the analytic cotangent complex] \label{thmintro:properties_cotangent_complex}
	The analytic cotangent complex satisfy the following properties:
	\begin{enumerate}
		\item For any map of derived analytic spaces $f\colon X\to Y$, the analytic cotangent complex $\anL_{X/Y}$ is connective and coherent.
				\item For any sequence of maps $X \xrightarrow{f} Y \xrightarrow{g} Z$, we have a fiber sequence
		\[ f^* \anL_{Y/Z} \to \anL_{X / Z} \to \anL_{X / Y} . \]
		\item For any pullback square of derived analytic spaces
		\[ \begin{tikzcd}
			X' \arrow{r} \arrow{d}{g} & Y' \arrow{d}{f} \\
			X \arrow{r} & Y ,
		\end{tikzcd} \]
		we have a canonical equivalence
		\[ \anL_{X' / Y'} \simeq g^* \anL_{X / Y} . \]
		\item For any derived algebraic \DM stack $X$ locally almost of finite presentation over $k$, its analytification $X\an$ is a derived analytic space (cf.\ \cref{sec:analytification}).
		We have a canonical equivalence
		\[ (\mathbb L_X)\an \simeq \anL_{X\an} . \]
		\item For any closed immersion of derived analytic spaces $X \hookrightarrow Y$, we have a canonical equivalence
		\[ \anL_{X / Y} \simeq \mathbb L_{X\alg / Y\alg} . \]
		\item (Analytic Postnikov tower) For any derived analytic space $X$, every $n \ge 0$, the canonical map $\mathrm t_{\le n}(X) \hookrightarrow \mathrm t_{\le n+1}(X)$ is an analytic square-zero extension. In other words, there exists an analytic derivation
		\[ d \colon \anL_{\tau_{\le n} \cO_X} \to \pi_{n+1}(\cO_X)[n+2] \]
		such that the square
		\[ \begin{tikzcd}
			\tau_{\le n +1} \cO_X \arrow{r} \arrow{d} & \tau_{\le n} \cO_X \arrow{d}{\eta_d} \\
			\tau_{\le n} \cO_X \arrow{r}{\eta_0} & \tau_{\le n} \cO_X \oplus \pi_{n+1}(\cO_X)[n+2]
		\end{tikzcd} \]
		is a pullback, where $\eta_d$ is the map associated to the derivation $d$ and $\eta_0$ is the map associated to the zero derivation.
		\item A morphism $f \colon X \to Y$ of derived analytic spaces is smooth if and only if its truncation $\trunc(f)$ is smooth and the analytic cotangent complex $\anL_{X/Y}$ is perfect and in tor-amplitude $0$.
	\end{enumerate}
\end{thm}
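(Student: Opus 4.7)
The strategy is to exploit \cref{thmintro:modules} to transport structure between the analytic and algebraic settings, handling each item differently according to whether the analytic nature plays an essential role. Items (2) and (3) are formal consequences of representability: the fiber sequence in (2) comes from the cofiber sequence of mapping spaces
\[ \Map_{/\cO_X}((gf)\inv\cO_Z,\cO_X\oplus\cF) \to \Map_{/\cO_X}(f\inv\cO_Y,\cO_X\oplus\cF) \to \Map_{/\cO_X}(\cO_X,\cO_X\oplus\cF) \]
attached to $X\xrightarrow{f}Y\xrightarrow{g}Z$, and base change in (3) follows because analytic derivations are preserved under pullback of derived analytic spaces.

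For (1), connectivity is immediate since $\DerAn_{f\inv\cO_Y}(\cO_X,\cF)$ is contractible for strictly coconnective $\cF$: a section of the split extension $\cO_X\oplus\cF\to\cO_X$ in analytic rings over $\cO_X$ must land in the zero section. Coherence is more delicate, and I would argue it locally in the étale topology by reducing to the case of an affinoid (respectively Stein) model $X\to Y$, presenting $\cO_X$ as a compact object over $f\inv\cO_Y$, and realizing $\anL_{X/Y}$ as a perfect complex built from finite free $\cTank$-generators. The coherence of $\cO_X$ established in \cite{Porta_Yu_DNAnG_I} then upgrades this to coherence of $\anL_{X/Y}$.

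For (4) and (5), the comparison statements hinge on the universal property of analytification. In (4), since $(-)\an$ is left adjoint to the forgetful functor from $\cTank$-structures to $\cTetk$-structures, it identifies analytic derivations of $\cO_{X\an}$ into an analytic module with the corresponding algebraic derivations of $\cO_X$; the equivalence $\anL_{X\an}\simeq(\mathbb L_X)\an$ then follows by Yoneda. For (5), a closed immersion is locally presented by a surjection with nilpotent kernel, on which any convergent power series reduces to its polynomial truncation, so analytic and algebraic derivations coincide.

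The main obstacle is (6), the analytic Postnikov tower, which requires genuinely producing an analytic enhancement of the algebraic Postnikov square-zero extension rather than merely transporting existing structure. I would take the algebraic derivation $\mathbb L_{\tau_{\le n}\cO_X\alg}\to\pi_{n+1}(\cO_X)[n+2]$ classifying the map $\tau_{\le n+1}\cO_X\alg\to\tau_{\le n}\cO_X\alg$, translate it through \cref{thmintro:modules} to an analytic derivation, and check that the split extension $\tau_{\le n}\cO_X\oplus\pi_{n+1}(\cO_X)[n+2]$ fits into the required analytic pullback square. The $\cTank$-structure is unique because the added ideal sits in degree $\ge n+2$, forcing every convergent series to act through its polynomial truncation, reducing the problem to (5). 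Once (6) is in place, (7) follows by a standard Postnikov argument: the smoothness of $f$ is detected stage-by-stage, and the infinitesimal lifting criterion converts via representability of $\DerAn$ into the condition that $\anL_{X/Y}$ be perfect of tor-amplitude $0$ together with smoothness of $\trunc(f)$.
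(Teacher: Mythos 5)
Your proposal captures the right high-level decomposition, but there are genuine gaps in (3), (5), and (6), of which (3) is the most serious.

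For (3) you assert that base change is formal because analytic derivations are preserved under pullback. The paper explicitly warns (\cref{rem:warning_cotangent_complex_pullback}) that the general base-change result \cref{prop:base_change_cotangent_complex} applies to pullbacks in $\RTop(\cTank)$, but the inclusion $\dAnk \hookrightarrow \RTop(\cTank)$ does \emph{not} commute with pullbacks in the non-archimedean case (unlike the complex-analytic case). The actual proof of \cref{prop:base_change_cotangent_complex_analytic} factors the morphism locally through a closed immersion into $Y \times \bD^n_k$, handles the closed-immersion case via \cite[Proposition 6.2]{Porta_Yu_DNAnG_I} plus the formal base change, and handles the polydisk projection by reducing to $\bA^n_k$ and invoking the analytification comparison (4). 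So (4) is an input to (3), not independent of it, and your "formal" claim would fail over a non-archimedean base.

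For (5) you say the kernel of the closed immersion is nilpotent, so convergent power series ``reduce to their polynomial truncations.'' Closed immersions of derived analytic spaces correspond to effective epimorphisms on structure sheaves, i.e.\ surjections on $\pi_0$, not nilpotent ideals. The mechanism in the paper (\cref{thm:algebraic_vs_analytic_cotangent_complex}, via \cref{lem:effective_epimorphisms_infinite_suspension}) is left adjointability of the square relating $\Omega^\infty_\Ab$ to $(-)\alg$, which rests on unramifiedness of $\cTank$: for an effective epimorphism $\cA \to \cB$, one has $(\cO \cotimes_\cA \cB)\alg \simeq \cO\alg \otimes_{\cA\alg} \cB\alg$, hence $\Sigma^\infty_\Ab$ commutes with algebraization. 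Your nilpotence heuristic does not apply and would not give the general statement. Similarly for (6), you propose to ``translate'' the algebraic Postnikov derivation through \cref{thmintro:modules} into an analytic one; but \cref{thmintro:modules} identifies module categories, not cotangent complexes, and $\mathbb L_{\tau_{\le n}\cO_X\alg}$ is not equivalent to $\anL_{\tau_{\le n}\cO_X}$ in general. What the paper actually does (\cref{thm:lifting_derivations}) is build the analytic derivation directly as $d \colon \anL_\cA \to \anL_{\cA/\cB} \to \tau_{\le 2n}\anL_{\cA/\cB}$, and then uses (5) together with conservativity of $(-)\alg$ and \cite[7.4.1.26]{Lurie_Higher_algebra} to identify the resulting analytic square-zero extension with the given one. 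Your claim that ``the $\cTank$-structure is unique'' because the ideal sits in high degree addresses neither the construction nor the comparison and is not the load-bearing step.

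On (1), (2), (4), (7) you are close to the paper's approach, though you understate the work. In (2), mapping-space exact sequences alone do not give a fiber sequence of cotangent complexes; the paper establishes this via the abelianized tangent bundle formalism of \cref{subsec:cotangent_formalism}. In (4), the actual comparison (\cref{thm:analytification_cotangent_complex}) requires identifying $(p\inv\cO_X)\an \simeq \cO_{X\an}$ via a delicate fiber-sequence argument on mapping spaces of structured topoi, not a one-line Yoneda. And (7) requires both an infinitesimal lifting lemma (\cref{lem:vanishing_obstruction_theory}) and a non-trivial flatness argument à la \cite[7.2.2.13]{Lurie_Higher_algebra} to show that perfect tor-amplitude-0 forces strongness.
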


The properties (1) - (7) correspond respectively to \cref{cor:finiteness_cotangent_complex}, \cref{prop:transitivity_cotangent_complex}, \cref{prop:base_change_cotangent_complex_analytic}, \cref{thm:analytification_cotangent_complex}, \cref{cor:cotangent_complex_closed_immersion}, \cref{cor:Postnikov_tower_analytic_square_zero} and \cref{prop:characterization_smooth_morphisms}.

Using Properties (2), (4) and (5), we can compute the analytic cotangent complex of any derived analytic space via local embeddings into affine spaces.

In \cref{sec:gluing}, we use the analytic Postnikov tower decomposition to construct pushout of derived analytic spaces along closed immersions:

\begin{thm}[Gluing along closed immersions, cf.\ {\cref{thm:pushout_closed_immersions}}] \label{thmintro:gluing_closed_immersions}
	Let
	\[ \begin{tikzcd}
		X \arrow{r}{i} \arrow{d}{j} & X' \arrow{d} \\
		Y \arrow{r} & Y'
	\end{tikzcd} \]
	be a pushout square of $\cTank$-structured topoi.
	Suppose that $i$ and $j$ are closed immersions and $X$, $X'$, $Y$ are derived analytic spaces.
	Then $Y'$ is also a derived analytic space.
\end{thm}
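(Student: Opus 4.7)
The plan is to realize $Y'$ as the limit of its Postnikov tower $\{\mathrm t_{\le n}(Y')\}$ and prove inductively that each stage is a derived analytic space. Since $\cO_{Y'} \simeq \lim_n \mathrm t_{\le n}\cO_{Y'}$ and the property of being a derived analytic space is preserved under such Postnikov limits (this is the content encoded by \cref{thmintro:properties_cotangent_complex}(6)), it suffices to show that $\mathrm t_{\le n}(Y')$ is a derived analytic space for every $n \ge 0$. I would first establish that the truncation functor commutes with the pushout in question, so that $\mathrm t_{\le n}(Y')$ is itself the pushout of $\mathrm t_{\le n}$ applied to the original square in the $\infty$-category of $\cTank$-structured topoi. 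This is a separate lemma, and the closed-immersion hypothesis is precisely what makes such a commutation available.

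For the base case $n = 0$, the problem reduces to pushouts in classical $k$-analytic geometry: by \cref{thmintro:properties_cotangent_complex}(5) the analytic structure along a closed immersion is controlled by the algebraic one, so $\mathrm t_{\le 0}(Y')$ is the pushout of the classical analytic spaces $\mathrm t_{\le 0}(X'),\mathrm t_{\le 0}(Y)$ along the classical closed immersion from $\mathrm t_{\le 0}(X)$. Existence of such pushouts in the category of classical $k$-analytic spaces is a standard result in both the complex and the non-archimedean settings.

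For the inductive step, assume $\mathrm t_{\le n}(Y')$ is a derived analytic space. Applying \cref{thmintro:properties_cotangent_complex}(6) to each of $X, X', Y$ yields analytic derivations $d_X, d_{X'}, d_Y$ that encode the transitions $\mathrm t_{\le n+1}(-) \to \mathrm t_{\le n}(-)$. Combining the transitivity sequence \cref{thmintro:properties_cotangent_complex}(2), the closed-immersion identification \cref{thmintro:properties_cotangent_complex}(5) of the analytic and algebraic cotangent complexes, and \cref{thmintro:modules} to realize analytic derivations as maps of abelian group objects in $\AnRing_k(\cY')_{/\mathrm t_{\le n}\cO_{Y'}}$, I would glue $d_X, d_{X'}, d_Y$ into a single derivation
\[ d_{Y'} \colon \anL_{\mathrm t_{\le n}\cO_{Y'}} \longrightarrow \pi_{n+1}(\cO_{Y'})[n+2], \]
where $\pi_{n+1}(\cO_{Y'})$ is constructed as a pushout of $\mathrm t_{\le 0}\cO_{Y'}$-modules. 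The associated analytic square-zero extension is a derived analytic space by construction, and the universal property of the original pushout square identifies it with $\mathrm t_{\le n+1}(Y')$.

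The hardest step is the gluing in the inductive stage. A priori a pushout of sheaves of derived analytic rings need not remain a sheaf of analytic rings, because the infinite coherence data encoding composition of convergent power series is fragile under colimits. The reinterpretation of modules \cref{thmintro:modules} is what rescues the argument: it translates the analytic derivation problem into a linear problem about modules, which can then be solved by an ordinary pushout of $\mathrm t_{\le 0}\cO_{Y'}$-modules, and the resulting analytic split square-zero extension $\mathrm t_{\le n}\cO_{Y'} \oplus \pi_{n+1}(\cO_{Y'})[n+2]$ automatically carries an analytic ring structure. A secondary point requiring care is the commutation of $\mathrm t_{\le n}$ with pushouts along closed immersions, which must be extracted from the fact that closed immersions preserve enough of the homotopy fiber structure for each $\pi_n$ to behave coherently as a module over the truncation $\mathrm t_{\le 0}\cO$.
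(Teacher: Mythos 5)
Your argument has a genuine gap at its foundation: the ``separate lemma'' that truncation commutes with the pushout in $\RTop(\cTank)$ is false, even for pushouts along closed immersions, and the entire induction collapses without it. Recall that the structure sheaf of $Y'$ is the pullback $\cO_{Y'} \simeq p_*\cO_{X'} \times_{h_*\cO_X} q_*\cO_Y$ of the pushforward structure sheaves. The Mayer--Vietoris sequence yields
\[
\pi_1(p_*\cO_{X'}\alg)\oplus\pi_1(q_*\cO_Y\alg) \to \pi_1(h_*\cO_X\alg) \to \pi_0(\cO_{Y'}\alg) \to \pi_0(p_*\cO_{X'}\alg)\oplus\pi_0(q_*\cO_Y\alg) \to \pi_0(h_*\cO_X\alg) \to 0 ,
\]
so $\pi_0(\cO_{Y'})$ sits in a short exact sequence $0 \to \cJ \to \pi_0(\cO_{Y'}\alg) \to \cO_{Y''}\alg \to 0$, where $Y''$ is the pushout of the truncated square and $\cJ$ is the cokernel of the leftmost map. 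The ideal $\cJ$ need not vanish: it is governed by $\pi_1(\cO_X)$, which is unconstrained by $\pi_1$ of $X'$ and $Y$. (Concretely, take $X' = Y = \Sp(k)$ and $X$ a derived self-intersection of a point with $\pi_1(\cO_X)\neq 0$; then $\pi_0(\cO_{Y'})$ is a nontrivial square-zero extension of $k$.) Thus $\mathrm t_{\le 0}(Y')$ is \emph{not} the pushout of the $0$-truncations, your base case already fails, and the same mechanism obstructs commutation at every level $n$.

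The paper addresses this failure head-on rather than trying to commute truncation past the colimit. After localizing to the affinoid (resp.\ Stein) case, it: (i) constructs the underived pushout $Y''$ in $\Ank$ via \cref{prop:pushout_closed_immersions_underived} (which in the non-archimedean setting is not a standard citation but requires an Artin--Tate argument); (ii) exhibits $\pi_0(\cO_{Y'})$ as an algebraic square-zero extension of $\cO_{Y''}$ by the nilpotent coherent ideal $\cJ$ above, reducing by induction to $\cJ^2 = 0$; (iii) invokes \cref{thm:lifting_derivations} to show this algebraic square-zero extension is automatically an \emph{analytic} one; and (iv) applies \cref{prop:analytic_square_zero_extensions_are_analytic_spaces} to conclude $(\cY',\pi_0(\cO_{Y'}))$ is an analytic space, finishing by checking coherence of the higher $\pi_i(\cO_{Y'})$ directly from the same Mayer--Vietoris sequence. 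No Postnikov induction on $Y'$ is performed. You have most of the right tools in view---square-zero extensions, \cref{thmintro:properties_cotangent_complex}(5) for closed immersions, the reduction to the classical case---but the essential missing ingredient is $\cJ$ itself, which measures exactly the failure of the commutation you assumed, together with the upgrade of algebraic to analytic square-zero extensions that \cref{thm:lifting_derivations} provides.
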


In other words, the theorem asserts that derived analytic spaces can be glued together along closed immersions.
In particular, it has the following important consequence:

\begin{cor}[Representability of analytic square-zero extensions] \label{corintro:square-zero}
	Let $X$ be a derived analytic space and let $\cF \in \Coh^{\ge 1}(X)$.
	Let $X[\cF]$ be the analytic split square-zero extension of $X$ by $\cF$.
	Let $i_d \colon X[\cF] \to X$ be the map associated to an analytic derivation $d$ of $\cO_X$ into $\cF$.
	Let $i_0 \colon X[\cF] \to X$ be the map associated to the zero derivation.
	Then the pushout
	\[ \begin{tikzcd}
		X[\cF] \arrow{r}{i_d} \arrow{d}{i_0} & X \arrow{d} \\
		X \arrow{r} & X_d[\cF]
	\end{tikzcd} \]
	is a derived analytic space.
\end{cor}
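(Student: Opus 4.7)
The plan is to deduce the statement directly from \cref{thmintro:gluing_closed_immersions} by recognising the stated pushout as a pushout of $\cTank$-structured topoi along two closed immersions. Thus it suffices to verify that (i) $X[\cF]$ is a derived analytic space and (ii) both $i_0$ and $i_d$ are closed immersions; the remaining hypothesis of \cref{thmintro:gluing_closed_immersions}, namely that the two copies of $X$ are derived analytic spaces, is automatic.

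For (i), recall that $X[\cF]$ has the same underlying $\infty$-topos as $X$, equipped with the structure sheaf $\cO_X \oplus \cF$ produced via $U\Omega^\infty$ as in the discussion following \cref{thmintro:modules}. I expect to check the local finiteness axiom of a derived analytic space for $(\cX, \cO_X \oplus \cF)$ by an induction along the Postnikov tower of $\cF$. Coherence of $\cF$ combined with the vanishing $\pi_0(\cF) = 0$ reduces the question to the case when $\cF$ is concentrated in a single positive degree and locally finite free, in which case $\cO_X \oplus \cF$ is locally a standard split square-zero extension by a free module of finite rank, and is therefore manifestly analytic.

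For (ii), both $i_0$ and $i_d$ have identity underlying geometric morphism, so the topological part of the closed-immersion condition is trivial. At the structure-sheaf level they correspond to the maps $\eta_0, \eta_d \colon \cO_X \to \cO_X \oplus \cF$ in $\AnRing_k(\cX)$, each a section of the canonical projection. Since $\cF \in \Coh^{\ge 1}(X)$, we have $\pi_0(\cO_X \oplus \cF) = \pi_0(\cO_X)$, and both $\pi_0(\eta_0)$ and $\pi_0(\eta_d)$ are the identity on $\pi_0(\cO_X)$, hence in particular surjective. This is the defining property of a closed immersion of derived analytic spaces in this framework.

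The main obstacle will be (i): carefully arguing that $(\cX, \cO_X \oplus \cF)$ satisfies the local finiteness axiom of a derived analytic space. Beyond the Postnikov induction sketched above, this requires knowing that the split square-zero extension functor, applied to a coherent module concentrated in positive degrees, preserves local analytic presentations. This compatibility should follow from the interpretation of modules and analytic derivations developed in the body of the paper, in parallel with the algebraic fact that a split square-zero extension of a finitely presented algebra by a finitely generated module remains finitely presented.
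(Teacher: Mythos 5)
Your overall strategy is correct and is exactly the route the paper intends: the corollary is presented in the introduction as an immediate consequence of \cref{thmintro:gluing_closed_immersions}, with no separate proof in the body, so the task is precisely to verify the hypotheses of the gluing theorem. Your verification of (ii) is right: $i_0$ and $i_d$ are the identity on the underlying $\infty$-topos, and the structure-sheaf maps $\eta_0, \eta_d \colon \cO_X \to \cO_X \oplus \cF$ are sections of the projection $\cO_X \oplus \cF \to \cO_X$, which is an isomorphism on $\pi_0$ because $\pi_0(\cF) = 0$; hence $\eta_0$ and $\eta_d$ are isomorphisms on $\pi_0$, and in particular effective epimorphisms.

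Your argument for (i) is more elaborate than it needs to be, and one of its steps is unjustified: a coherent sheaf of $\pi_0(\cO_X)$-modules is locally finitely presented over a noetherian ring but is not in general locally free, so the proposed reduction to the ``concentrated in one degree and locally finite free'' case does not go through. Fortunately neither a Postnikov induction on $\cF$ nor any such reduction is needed. By \cref{cor:split_square-zero_extension} the underlying algebra of $\cO_X \oplus \cF$ is the algebraic split square-zero extension $\cO_X\alg \oplus \cF$, whose underlying $\DAb$-valued sheaf is the direct sum. Since $\cF \in \Coh^{\ge 1}(X)$ one has $\pi_0(\cF) = 0$, so $\pi_0(\cO_X \oplus \cF) \simeq \pi_0(\cO_X)$ and the truncation of $X[\cF]$ coincides with that of $X$; and for $j \ge 1$ the sheaf $\pi_j(\cO_X \oplus \cF) \simeq \pi_j(\cO_X) \oplus \pi_j(\cF)$ is coherent as a direct sum of coherent $\pi_0(\cO_X)$-modules. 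Both conditions of \cref{def:derived_analytic_space} therefore hold at once.
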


The corollary gives one more evidence that our notions of analytic derivation and analytic cotangent complex are correct.
If we replace $d$ by an algebraic derivation, the pushout will no longer be a derived analytic space in general.

Now let us give a sketch of the proof of \cref{thmintro:representability}, the main theorem of this paper.

The implication (1)$\Rightarrow$(2) is worked out in \cref{sec:properties_of_derived_analytic_stacks}.
We first prove that (2) holds for derived analytic spaces.
We deduce it from the various properties of the analytic cotangent complex explained above as well as the gluing along closed immersions.
After that, we prove (2) for derived analytic stacks by induction on the geometric level.

The proof of the implication (2)$\Rightarrow$(1) is more involved.
By induction on the geometric level of the truncation $\trunc(F)$, it suffices to lift a smooth atlas $U_0\to\trunc(F)$ of $\trunc(F)$ to a smooth atlas $\tU\to F$ of $F$.
To obtain such a lifting, we proceed by constructing successive approximations:
\[ \begin{tikzcd}
U_0 \arrow[hook]{r}{j_0} \arrow[bend right = 5]{drrrrr}[swap, near start]{u_0} & U_1 \arrow[hook]{r}{j_1} \arrow[bend right = 2]{drrrr}{u_1} & \cdots \arrow[hook]{r}{j_{m-1}} & U_m \arrow{drr}[bend right = 1.5]{u_m} \arrow[hook]{r}{j_m} & \cdots \\
{} & & & & & F
\end{tikzcd} \]
where
\begin{enumerate}
	\item $U_m$ is $m$-truncated;
	\item $U_m \to U_{m+1}$ induces an equivalence on $m$-truncations;
	\item $\anL_{U_m / F}$ is flat to order $m+1$ (cf.\ \cref{def:n_flat}).
\end{enumerate}
The construction goes by induction on $m$.
The notion of flatness to order $n$ is the key idea behind the induction step.
Indeed, combining the fact that $\anL_{U_m / F}$ is flat to order $m+1$ with the fact that $U_m$ is $m$-truncated guarantees that the truncation $\tau_{\le m} \anL_{U_m / F}$ is flat as a sheaf on $U_m$.
It is not hard to deduce from here that $\tau_{\le m} \anL_{U_m / F}$ must be perfect.
From this, we can choose a splitting
\[ \anL_{U_m / F} \simeq \tau_{\le m} \anL_{U_m / F} \oplus \tau_{\ge m+1} \anL_{U_m / F} . \]
The choice of the splitting determines the passage to the next level of the approximation.
We remark that the splitting above is in general not unique, and thus the choice of the lifting $\tU \to F$ of $U_0 \to \trunc(F)$ is not unique.
When $F$ is \DM, the lifting is unique, in other words, an atlas of $\trunc(F)$ determines canonically an atlas of $F$.

To complete the proof, we set $\tU\coloneqq\colim_m U_m$.
The construction above guarantees that $U_m \simeq \mathrm t_{\le m}(\tU)$.
Since $F$ is compatible with Postnikov towers, we obtain a canonical map $\tU \to F$.
The induction hypothesis on the geometric level of $F$ guarantees that this map is representable by geometric stacks.
In order to check that the map $\tU \to F$ is also smooth, we use an infinitesimal lifting property that we establish in \cref{prop:infinitesimal_characterization_smooth}.

Finally, we would like to stress that our approach to the representability theorem in derived analytic geometry is by no means a simple repetition of the proof of the representability theorem in derived algebraic geometry.
As we have explained above, the presence of the extra analytic structure has obliged us to make take different paths at various stages.
This also leads to a more conceptual understanding of the proof of the representability theorem in derived algebraic geometry.

We will explore applications of the representability theorem in our subsequent works (cf.\ \cite{Porta_Yu_Derived_Hom_spaces,Porta_Yu_Non-archimedean_quantum_K-invariants,Porta_Yu_Non-archimedean_Gromov-Witten_invariants}).

\bigskip
\paragraph{\textbf{Notations and terminology}}

We refer to \cite{Porta_Yu_Derived_non-archimedean_analytic_spaces} for the framework of derived non-archimedean analytic geometry, and to \cite{DAG-IX} for the framework of derived complex analytic geometry.
We give a unified review of the basic notions in \cref{sec:basic_notions}.

The letter $k$ denotes either the field $\C$ of complex numbers or a non-archimedean field with nontrivial valuation.
By \kanal spaces (or simply analytic spaces), we mean complex analytic spaces when $k=\C$, and rigid \kanal spaces when $k$ is non-archimedean.

We denote by $\Ank$ the category of \kanal spaces, and by $\dAnk$ the \infcat of derived \kanal spaces.
We denote by $\Afd_k$ the category of $k$-affinoid spaces when $k$ is non-archimedean, and the category of Stein spaces when $k=\C$.
We denote by $\dAfd_k$ the \infcat of derived affinoid spaces when $k$ is non-archimedean, and the \infcat of derived Stein spaces when $k=\C$.

For $n\in\Z_{\ge 0}$, we denote by $\bbA^n_k$ the algebraic $n$-dimensional affine space over $k$, by $\bA^n_k$ the analytic $n$-dimensional affine space over $k$, and by $\bD^n_k$ the $n$-dimensional closed unit polydisk over $k$.

For an \inftopos $\cX$, we denote by $\AnRing_k(\cX)$ the \infcat of sheaves of derived \kanal rings over $\cX$, and by $\CRing_k(\cX)$ the \infcat of sheaves of simplicial commutative $k$-algebras over $\cX$.

We denote by $\cS$ the \infcat of spaces.
An \infsite $(\cC,\tau)$ consists of a small \infcat $\cC$ equipped with a Grothendieck topology $\tau$.
A stack over an \infsite $(\cC,\tau)$ is by definition a hypercomplete sheaf with values in $\cS$ over the \infsite (cf.\ \cite[\S 2]{Porta_Yu_Higher_analytic_stacks_2014}).
We denote by $\St(\cC,\tau)$ the \infcat of stacks over $(\cC,\tau)$.

Throughout this paper, we use homological indexing conventions, i.e., the differential in chain complexes lowers the degree by 1.

A commutative diagram of $\infty$-categories
\[ \begin{tikzcd}
	\cC \arrow{r}{p} \arrow{d}{g} & \cC' \arrow{d}{g'} \\
	\cD \arrow{r}{q} & \cD'
\end{tikzcd} \]
is called \emph{left adjointable} if the functors $g$ and $g'$ have left adjoints $f \colon \cD \to \cC$, $f' \colon \cD' \to \cC'$ and if the push-pull transformation
\[ \gamma \colon f' \circ q \to p \circ f \]
is an equivalence (cf.\ \cite[7.3.1.1]{HTT}).

\bigskip
\paragraph{\textbf{Acknowledgements}}
We are very grateful to Antoine Chambert-Loir, Maxim Kontsevich, Jacob Lurie, Tony Pantev, Marco Robalo, Nick Rozenblyum, Carlos Simpson, Bertrand To\"en and Gabriele Vezzosi for valuable discussions.
Special thanks to Micheal Temkin for helping us with non-archimedean pinchings.
The authors would also like to thank each other for the joint effort.
Various stages of this research received supports from the Clay Mathematics Institute, Simons Foundation grant number 347070, Fondation Sciences Mathématiques de Paris, and from the Ky Fan and Yu-Fen Fan Membership Fund and the S.-S.\ Chern Endowment Fund of the Institute for Advanced Study.

\section{Basic notions of derived analytic geometry} \label{sec:basic_notions}

In this section we review the basic notions of derived complex analytic geometry and derived non-archimedean geometry in a unified framework.

First we recall the notions of pregeometry and structured topos introduced by Lurie in \cite{DAG-V}.

\begin{defin}[{\cite[3.1.1]{DAG-V}}]
	A \emph{pregeometry} is an \infcat $\cT$ equipped with a class of \emph{admissible} morphisms and a Grothendieck topology generated by admissible morphisms, satisfying the following conditions:
	\begin{enumerate}[(i)]
	\item The \infcat $\cT$ admits finite products.
	\item The pullback of an admissible morphism along any morphism exists.
	\item The class of admissible morphisms is closed under composition, pullback and retract.
	Moreover, for morphisms $f$ and $g$, if $g$ and $g\circ f$ are admissible, then $f$ is admissible.
	\end{enumerate}
\end{defin}

\begin{defin}[{\cite[3.1.4]{DAG-V}}] \label{def:structure}
	Let $\cT$ be a pregeometry, and let $\cX$ be an \inftopos.
	A \emph{$\cT$-structure} on $\cX$ is a functor $\cO\colon\cT\to\cX$ with the following properties:
	\begin{enumerate}[(i)]
		\item The functor $\cO$ preserves finite products.
		\item Suppose given a pullback diagram
		\[
		\begin{tikzcd}
		U' \arrow{r} \arrow{d} & U \arrow{d}{f} \\
		X' \arrow{r} & X
		\end{tikzcd}
		\]
		in $\cT$, where $f$ is admissible.
		Then the induced diagram
		\[
		\begin{tikzcd}
		\cO(U') \arrow{r} \arrow{d} & \cO(U) \arrow{d} \\
		\cO(X') \arrow{r} & \cO(X)
		\end{tikzcd}
		\]
		is a pullback square in $\cX$.
		\item Let $\{U_\alpha\to X\}$ be a covering in $\cT$ consisting of admissible morphisms.
		Then the induced map
		\[\coprod_\alpha\cO(U_\alpha)\to\cO(X)\]
		is an effective epimorphism in $\cX$.
			\end{enumerate}
	A morphism of $\cT$-structures $\cO\to\cO'$ on $\cX$ is \emph{local} if for every admissible morphism $U\to X$ in $\cT$, the resulting diagram
	\[ \begin{tikzcd}
	\cO(U) \arrow{r} \arrow{d} & \cO'(U) \arrow{d} \\
	\cO(X) \arrow{r} & \cO'(X)
	\end{tikzcd} \]
	is a pullback square in $\cX$.
	We denote by $\Strloc_\cT(\cX)$ the \infcat of $\cT$-structures on $\cX$ with local morphisms.
	
	A \emph{$\cT$-structured \inftopos} $X$ is a pair $(\cX,\cO_X)$ consisting of an \inftopos $\cX$ and a $\cT$-structure $\cO_X$ on $\cX$.
	We denote by $\RTop(\cT)$ the \infcat of $\cT$-structured \inftopoi (cf.\ \cite[Definition 1.4.8]{DAG-V}).
	Note that a 1-morphism $f\colon (\cX, \cO_X) \to (\cY, \cO_Y)$ in $\RTop(\cT)$ consists of a geometric morphism of \inftopoi $f_*\colon\cX\rightleftarrows\cY\colon f\inv$ and a local morphism of $\cT$-structures $f^\sharp \colon f\inv \cO_Y \to \cO_X$.
\end{defin}

Let $k$ denote either the field $\C$ of complex numbers or a complete non-archimedean field with nontrivial valuation.
We introduce three pregeometries $\cTank$, $\cTdisck$ and $\cTetk$ that are relevant to derived analytic geometry.

The pregeometry $\cTank$ is defined as follows:
	\begin{enumerate}[(i)]
		\item The underlying category of $\cTank$ is the category of smooth $k$-analytic spaces;
		\item A morphism in $\cTank$ is admissible if and only if it is étale;
		\item The topology on $\cTank$ is the étale topology.
		(Note that in the complex analytic case, the étale topology is equivalent to the usual analytic topology.)
	\end{enumerate}

The pregeometry $\cTdisck$ is defined as follows:
	\begin{enumerate}[(i)]
		\item The underlying category of $\cTdisck$ is the full subcategory of the category of $k$-schemes spanned by affine spaces $\mathbb A^n_k$;
		\item A morphism in $\cTdisck$ is admissible if and only if it is an isomorphism;
		\item The topology on $\cTdisck$ is the trivial topology, i.e.\ a collection of admissible morphisms is a covering if and only if it is nonempty.
	\end{enumerate}

The pregeometry $\cTetk$ is defined as follows:
\begin{enumerate}[(i)]
	\item The underlying category of $\cTetk$ is the category of smooth $k$-schemes;
	\item A morphism in $\cTetk$ is admissible if and only if it is étale;
	\item The topology on $\cTetk$ is the étale topology.
\end{enumerate}

We have a natural functor $\cTdisck \to \cTank$ induced by analytification.
Composing with this functor, we obtain an ``algebraization'' functor
\[
(-)^\mathrm{alg} \colon \Strloc_{\cTank}(\cX) \to \Strloc_{\cTdisck}(\cX).
\]
In virtue of \cite[Example 3.1.6, Remark 4.1.2]{DAG-V}, we have an equivalence induced by evaluation on the affine line
\[\Strloc_{\cTdisck}(\cX) \xrightarrow{\ \sim\ } \Sh_{\CRing_k}(\cX),\]
where $\Sh_{\CRing_k}(\cX)$ denotes the \infcat of sheaves on $\cX$ with values in the \infcat of simplicial commutative $k$-algebras.

\begin{defin} \label{def:derived_analytic_space}
	A \emph{derived \kanal space} $X$ is a $\cTank$-structured \inftopos $(\cX,\cO_X)$ such that $\cX$ is hypercomplete and there exists an effective epimorphism from $\coprod_i U_i$ to a final object of $\cX$ satisfying the following conditions, for every index $i$:
	\begin{enumerate}[(i)]
		\item The pair $(\cX_{/U_i}, \pi_0(\cO\alg_X | U_i))$ is equivalent to the ringed \inftopos associated to the étale site of a \kanal space $X_i$.
		\item For each $j\ge 0$, $\pi_j(\cO\alg_X | U_i)$ is a coherent sheaf of $\pi_0(\cO\alg_X | U_i)$-modules on $X_i$.
	\end{enumerate}
	We denote by $\dAnk$ the full subcategory of $\RTop(\cTank)$ spanned by derived \kanal spaces.
\end{defin}

\begin{defin}
	When $k$ is non-archimedean, a \emph{derived $k$-affinoid space} is by definition a derived $k$-analytic space $(\cX, \cO_X)$ whose truncation $(\cX, \pi_0(\cO_X))$ is a $k$-affinoid space.
	A \emph{derived Stein space} is by definition a derived $\C$-analytic space whose truncation is a Stein space.
	We denote the $\infty$-category of derived $k$-affinoid (resp.\ Stein) spaces by $\dAfdk$ (resp.\ $\dAfd_{\mathbb C}$).
\end{defin}

\section{Derived analytification} \label{sec:analytification}

In this section, we study the analytification of derived algebraic \DM stacks.

Let $\RHTop(\cTank)$ denote the full subcategory of $\RTop(\cTank)$ spanned by $\cTank$-structured \inftopoi whose underlying \inftopos is hypercomplete.
By \cite[Lemma 2.8]{Porta_Yu_Derived_non-archimedean_analytic_spaces}, the inclusion $\RHTop(\cTank)\allowbreak\hookrightarrow\RTop(\cTank)$ admits a right adjoint $\Hyp\colon\RTop(\cTank)\to\RHTop(\cTank)$.

By analytification, we have a transformation of pregeometries
\[ (-)\an \colon \cTetk \longrightarrow \cTank . \]
Precomposition with $(-)\an$ induces a forgetful functor
\[ (-)\alg \colon \RTop(\cTank) \longrightarrow \RTop(\cTetk) , \]
which admits a right adjoint in virtue of \cite[Theorem 2.1.1]{DAG-V}.
Composing with the right adjoint $\Hyp \colon \RTop(\cTank) \to \RHTop(\cTank)$, we obtain a functor
\[ \RTop(\cTetk) \longrightarrow \RHTop(\cTank) . \]
We call this functor the \emph{derived analytification functor}, and we denote it by $(-)\an$ again.
This notation is justified by the lemma below.


\begin{lem} \label{lem:analytification_formal_properties}
	\begin{enumerate}
		\item \label{item:analytification_affines} The diagram
		\[ \begin{tikzcd}
			\cTetk \arrow{r}{(-)\an} \arrow{d} & \cTank \arrow{d} \\
			\RTop(\cTetk) \arrow{r}{(-)\an} & \RHTop(\cTank)
		\end{tikzcd} \]
		commutes.
		\item \label{item:truncation_functors} Let us denote by $\RTop^{\le n}(\cTetk)$ (resp.\ $\RTop^{\le n}(\cTank)$) the full subcategory of $\RTop(\cTetk)$ (resp.\ $\RTop(\cTank)$) spanned by those $(\cX, \cO_X)$ such that $\cO_X$ is $n$-truncated.
		The diagram
		\[ \begin{tikzcd}
			\RTop(\cTetk) & \RHTop(\cTank) \arrow{l}[swap]{(-)\alg} \\
			\RTop^{\le n}(\cTetk) \arrow{u} & \RHTop^{\le n}(\cTank) \arrow{l}[swap]{(-)\alg} \arrow{u}
		\end{tikzcd} \]
		commutes, and the vertical arrows are left adjoint to the truncation functor $\mathrm t_{\le n}$.
		\item \label{item:analytification_truncation_adjointable} The functor $(-)\alg \colon \RHTop^{\le n}(\cTank) \to \RTop^{\le n}(\cTetk)$ admits a right adjoint which we denote by $\Psi_n$, and moreover the diagram
		\[ \begin{tikzcd}
			\Sch(\cTetk) \arrow{r}{(-)\an} \arrow{d}{\mathrm t_{\le n}} & \RHTop(\cTank) \arrow{d}{\mathrm t_{\le n}} \\
			\Sch^{\le n}(\cTetk) \arrow{r}{\Psi_n} & \RHTop^{\le n}(\cTank)
		\end{tikzcd} \]
		is left adjointable, where $\Sch^{\le n}(\cTetk)\coloneqq\Sch(\cTetk)\cap\RTop^{\le n}(\cTetk)$, and $\Sch(\cTetk)$ denotes the \infcat of $\cTetk$-schemes (cf.\ \cite[3.4.8]{DAG-V}).
	\end{enumerate}
\end{lem}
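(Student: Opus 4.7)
The plan is to verify the three parts in order, using the universal properties of the structured topos adjunctions developed in \cite{DAG-V}.

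For part (1), the task is to show that for a smooth $k$-scheme $U \in \cTetk$, the derived analytification of $U$ (viewed as a $\cTetk$-structured topos) coincides with $U\an \in \cTank$ (viewed as a $\cTank$-structured topos via the embedding of smooth analytic spaces into $\RHTop(\cTank)$). Both are characterized by universal properties: the former by the adjunction
\[ \Map_{\RHTop(\cTank)}((\cY,\cO), U\an) \simeq \Map_{\RTop(\cTetk)}((\cY, \cO\alg), U), \]
and the latter by the representability of the classical analytification of a smooth scheme inside the pregeometry $\cTank$. The identification reduces, by Yoneda, to checking that a local $\cTetk$-structure on $(\cY, \cO\alg)$ factoring through $U$ is the same data as a local $\cTank$-structure on $(\cY, \cO)$ factoring through $U\an$. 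This is essentially the content of \cite[Proposition 11.9]{DAG-IX} in the complex case, and of the parallel development in \cite{Porta_Yu_DNAnG_I} in the non-archimedean case.

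For part (2), I first verify the adjunction $\iota \dashv \mathrm t_{\le n}$. Given $Y = (\cY, \cO_Y) \in \RTop^{\le n}(\cT)$ and $X = (\cX, \cO_X) \in \RTop(\cT)$, a morphism $Y \to X$ consists of a geometric morphism $f$ together with a local map $f\inv \cO_X \to \cO_Y$. Since $\cO_Y$ is $n$-truncated and $f\inv$ preserves $n$-truncation, this map factors uniquely through $\tau_{\le n}(f\inv \cO_X) \simeq f\inv(\tau_{\le n} \cO_X)$, producing a morphism $Y \to \mathrm t_{\le n}(X)$. The commutativity of the square is then immediate: $(-)\alg$ is defined by precomposition with the pregeometry map $\cTetk \to \cTank$, and $n$-truncatedness of a $\cT$-structure is tested on its underlying sheaf of spaces; hence $(-)\alg$ preserves $n$-truncatedness.

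For part (3), I would exhibit $\Psi_n$ explicitly as the composition $\mathrm t_{\le n} \circ (-)\an \circ \iota \colon \RTop^{\le n}(\cTetk) \to \RHTop^{\le n}(\cTank)$. Its right adjointness follows by chaining the adjunction $(-)\alg \dashv (-)\an$ on the full categories with the two instances of $\iota \dashv \mathrm t_{\le n}$ from part (2). Left adjointability of the displayed diagram then reduces to the assertion that for an $n$-truncated $\cTetk$-scheme $X$, the analytification $X\an$ is already $n$-truncated in $\RHTop(\cTank)$, so that $\mathrm t_{\le n}(X\an) \simeq X\an$. This preservation statement follows from the general principle that right adjoints preserve truncated objects, combined with the explicit description of the structure sheaf of $X\an$ via the relative spec functor. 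The main obstacle is part (1), which hinges on identifying the universal property of the classical analytification of a smooth affine scheme with the one obtained from the relative spec functor $\Spec^{\cTank}_{\cTetk}$; once this representability statement is in hand, parts (2) and (3) follow by purely formal manipulations with adjunctions.
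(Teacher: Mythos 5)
Your overall plan mirrors the intended reduction — part (1) via universal properties of the relative $\Spec$ functor, part (2) via the truncation adjunction, and part (3) by explicitly realizing $\Psi_n$ as $\mathrm t_{\le n} \circ (-)\an$ and reducing left-adjointability to showing that $X\an$ is already $n$-truncated when $X$ is. However, the execution has two genuine gaps.

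In part (2), you assert that a local map $f\inv \cO_X \to \cO_Y$ with $n$-truncated target factors uniquely through $\tau_{\le n}(f\inv \cO_X) \simeq f\inv(\tau_{\le n}\cO_X)$ and thereby produces a morphism $Y \to \mathrm t_{\le n}(X)$. This requires knowing that $\tau_{\le n} \circ \cO_X$ is still a $\cT$-structure and that the factored map is still local. Truncation commutes with finite products, so axiom (i) of \cref{def:structure} is fine, but a $\cT$-structure must also send admissible pullbacks to pullbacks, and truncation does not commute with general finite limits. This \emph{compatibility with $n$-truncations} is a nontrivial property of the pregeometries $\cTetk$ and $\cTank$, and it is precisely what the paper invokes (DAG-V 4.3.28 for $\cTetk$; Porta--Yu Theorem 3.23 and DAG-IX Proposition 11.4 for $\cTank$) to make the truncation functor exist as claimed. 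Your argument silently assumes it.

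In part (3), the reduction to the claim that $X\an$ is $n$-truncated for $n$-truncated $\cTetk$-schemes $X$ is correct and is the heart of the matter, but the justification you give — ``the general principle that right adjoints preserve truncated objects'' — is not valid here. The truncatedness in question is a condition on the structure sheaf $\cO_{X\an}$ as a functor $\cTank \to \cX\an$, not a categorical $n$-truncatedness condition on $X\an$ as an object of $\RHTop(\cTank)$; and the relative $\Spec$ functor, while a right adjoint to $(-)\alg$, has no a priori compatibility with the $t$-structure on structure sheaves. This is exactly the substantial content that the paper imports from \cite[Proposition 6.2]{Porta_DCAGI}, where it is proved by a careful analysis of the spectrum construction (and it is also why the lemma restricts to $\Sch(\cTetk)$ rather than all of $\RTop(\cTetk)$). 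Attempting to derive it from a general adjunction principle would fail. As a smaller point, in part (1) the citation of \cite[Proposition 11.9]{DAG-IX} is off: that proposition gives conservativity of $(-)\alg$, whereas what is needed is the compatibility of the relative $\Spec$ with the transformation of pregeometries on objects of $\cTetk$, for which the paper cites \cite[Proposition 2.3.8]{DAG-V} together with the hypercompleteness of truncated objects.
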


\begin{proof}
	Recall from \cite[6.5.2.9]{HTT} that truncated objects in an $\infty$-topos are hypercomplete.
	Then statement (\ref{item:analytification_affines}) follows from \cite[Proposition 2.3.8]{DAG-V}.
	Statement (\ref{item:truncation_functors}) is a consequence of the compatibility of $\cTetk$ and $\cTank$ with $n$-truncations for $n \ge 0$ (for $\cTetk$, we refer to \cite[4.3.28]{DAG-V}; for $\cTank$, we refer to \cite[Theorem 3.23]{Porta_Yu_Derived_non-archimedean_analytic_spaces} in the non-archimedean case and \cite[Proposition 11.4]{DAG-IX} in the complex case).
	Finally, statement (\ref{item:analytification_truncation_adjointable}) follows from \cite[Proposition 6.2]{Porta_DCAGI}.
\end{proof}

\begin{cor} \label{cor:analytification_closed_immersions}
	Let $j \colon Y \hookrightarrow X$ be a closed immersion in $\cTetk$.
	The induced map $j\an \colon Y\an \to X\an$ is a closed immersion in $\RHTop(\cTank)$.
\end{cor}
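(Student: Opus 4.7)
The plan is to verify, separately, the two defining features of a closed immersion in $\RHTop(\cTank)$: that the underlying geometric morphism is a closed immersion of $\infty$-topoi, and that the induced map on structure sheaves is an effective epimorphism after applying $\pi_0$. Since both features are detected on the $0$-truncation, the strategy is to reduce the whole statement to an ordinary analytic assertion and then invoke the classical fact that analytification carries closed immersions of $k$-schemes locally of finite type to closed immersions of $k$-analytic spaces, in both the complex and non-archimedean settings.

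For the topos-theoretic condition, I would apply part~(\ref{item:analytification_truncation_adjointable}) of \cref{lem:analytification_formal_properties} with $n = 0$. The left-adjointability of the displayed square gives a canonical equivalence $\mathrm{t}_{\le 0}(j\an) \simeq \Psi_0(\mathrm{t}_{\le 0}(j))$ in $\RHTop^{\le 0}(\cTank)$. Since $j$ is already a closed immersion in $\cTetk$, the truncation $\mathrm{t}_{\le 0}(j)$ is an ordinary closed immersion of $k$-schemes. Together with part~(\ref{item:analytification_affines}), this identifies $\Psi_0(\mathrm{t}_{\le 0}(j))$ with the classical analytification of $\mathrm{t}_{\le 0}(j)$, and the latter is known to be a closed immersion of $k$-analytic spaces. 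Hence the geometric morphism underlying $j\an$ is a closed immersion of $\infty$-topoi.

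For the structure-sheaf condition, I would identify $\pi_0((j\an)^\sharp)$ with the classical analytification of the surjection $\pi_0(j^\sharp)$, using the compatibility of analytification with $n$-truncations recorded in part~(\ref{item:truncation_functors}). Since classical analytification of sheaves of coherent modules is exact and in particular preserves surjections, $\pi_0((j\an)^\sharp)$ remains an effective epimorphism of sheaves of rings, which is what is required.

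The main obstacle is confirming that $\Psi_0$, restricted to $0$-truncated $\cTetk$-schemes, really recovers the classical analytification functor on $k$-schemes locally of finite type, as opposed to some a priori unrelated right adjoint. I would justify this via the universal property: $\Psi_0$ is right adjoint to the forgetful functor $(-)\alg$ on $\le 0$-truncated objects, and this universal property is the one satisfied by classical analytification in both the complex analytic and rigid analytic settings. The hypercompletion step built into the definition of $(-)\an$ causes no difficulty here, since any $0$-truncated object of an $\infty$-topos is automatically hypercomplete.
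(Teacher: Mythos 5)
Your proof is correct but takes a longer route than necessary. Since $j$ is a morphism between objects of $\cTetk$ — smooth $k$-schemes, hence already discrete as $\cTetk$-structured topoi — \cref{lem:analytification_formal_properties}(\ref{item:analytification_affines}) alone identifies $j\an$ with the classical analytification of $j$ inside $\cTank$, i.e.\ a morphism of smooth $k$-analytic spaces, and the paper then simply invokes the classical facts (\cite[Theorem 5.4]{Porta_Yu_DNAnG_I} in the non-archimedean case, \cite[7.3.2.11]{HTT} in the complex case) together with preservation of closed immersions by hypercompletion. Your detour through the truncation functors and $\Psi_0$ via \cref{lem:analytification_formal_properties}(\ref{item:analytification_truncation_adjointable}) is legitimate, and it is in fact the route the paper takes later in \cref{cor:analytification_closed_immersions_II} for the harder case of closed immersions of derived algebraic \DM stacks, where item~(\ref{item:analytification_affines}) does not apply directly; but here it is superfluous because $\trunc(j) = j$ already. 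Your final paragraph also slightly overstates what the bare universal property of $\Psi_0$ buys: that universal property is quantified over $0$-truncated $\cTank$-structured topoi, not over $\Ank$, so to pin $\Psi_0(X)$ down as the classical analytification one must already know that it lands in $\Ank$. For $X \in \cTetk$ this is exactly what item~(\ref{item:analytification_affines}) supplies, and you do invoke it in the body of the argument, so there is no genuine gap — but that reliance is also what lets the entire $\Psi_0$ scaffolding be stripped away.
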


\begin{proof}
	Recall from \cite[Lemma 5.2]{Porta_Yu_Derived_non-archimedean_analytic_spaces} that the hypercompletion functor $\Hyp$ preserves closed immersions of $\infty$-topoi.
	At this point, in the non-archimedean case, the corollary is a consequence of \cref{lem:analytification_formal_properties}(\ref{item:analytification_affines}) and of \cite[Theorem 5.4]{Porta_Yu_Derived_non-archimedean_analytic_spaces}.
	In the complex case, the corollary is a consequence of \cref{lem:analytification_formal_properties}(\ref{item:analytification_affines}) and of \cite[7.3.2.11]{HTT}.
\end{proof}

Let us recall that a derived algebraic \DM stack over $k$ is by definition a $\cTetk$-scheme, which is in particular a $\cTetk$-structured topos (cf.\ \cite[4.3.20]{DAG-V}).
We refer to \cite{Porta_Comparison_2017} for a comparison with the definition of \DM stack via functor of points.

\begin{defin}
	A derived algebraic \DM stack $X = (\cX, \cO_X)$ is said to be \emph{locally almost of finite presentation} if its truncation $\trunc(X)=(\cX,\pi_0(\cO_X))$ is an underived algebraic \DM stack of finite presentation, and $\pi_i(\cO_X)$ is a coherent $\pi_0(\cO_X)$-module for every $i$.
\end{defin}

\begin{lem} \label{lem:analytification_truncated_spaces}
	Let $X = (\cX, \cO_X)$ be a derived algebraic \DM stack locally almost of finite presentation over $k$.
	Let $X\an = (\cX\an, \cO_{X\an})$ be its analytification.
	Then $\trunc(X\an) = (\cX\an, \pi_0(\cO_{X\an}))$ is an underived analytic \DM stack.
\end{lem}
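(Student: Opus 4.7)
The plan is to reduce the statement to the classical analytification of ordinary \DM stacks, exploiting the formal properties of derived analytification established in \cref{lem:analytification_formal_properties}. Since $X$ is a $\cTetk$-scheme, part (\ref{item:analytification_truncation_adjointable}) of \cref{lem:analytification_formal_properties} applied with $n = 0$ produces a canonical equivalence
\[ \trunc(X\an) \simeq \Psi_0(\trunc(X)) \]
in $\RHTop^{\le 0}(\cTank)$. Because $X$ is locally almost of finite presentation, $\trunc(X)$ is an underived algebraic \DM stack of finite presentation over $k$. Thus it suffices to show that $\Psi_0$ sends underived algebraic \DM stacks of finite presentation to underived analytic \DM stacks.

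To that end, I would first identify $\Psi_0$ with classical analytification on representables. For any smooth $k$-scheme $V \in \cTetk$, the object $V$ is already $0$-truncated, and combining parts (\ref{item:analytification_affines}) and (\ref{item:analytification_truncation_adjointable}) of \cref{lem:analytification_formal_properties} yields
\[ \Psi_0(V) \simeq \trunc(V\an) \simeq V\an, \]
where $V\an$ denotes the classical \kanal space associated to $V$.

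Now choose an étale atlas $\coprod_i V_i \to \trunc(X)$ by smooth $k$-schemes of finite type, so each $V_i$ lies in $\cTetk$. Applying $\Psi_0$ and using the identification above gives a map $\coprod_i V_i\an \to \Psi_0(\trunc(X))$. The individual maps $V_i\an \to \Psi_0(\trunc(X))$ are étale because the pregeometry transformation $(-)\an \colon \cTetk \to \cTank$ sends admissible (étale) morphisms to admissible morphisms. The classical analytification theorem for \DM stacks (Grauert--Remmert in the complex case and its rigid-analytic counterpart in the non-archimedean case) then assembles these pieces into a bona fide underived analytic \DM stack.

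The main obstacle is to verify that $\coprod_i V_i\an \to \Psi_0(\trunc(X))$ is an effective epimorphism in the analytic \inftopos, since $\Psi_0$ is only a priori guaranteed to preserve limits. I expect to handle this by unpacking the concrete construction of $\Psi_0$: at the level of the underlying topos it is modelled by pullback along the morphism of small étale sites induced by analytification, which takes étale covers to étale covers and therefore preserves effective epimorphisms. An alternative route would be to exploit the left adjointability in part (\ref{item:analytification_truncation_adjointable}) of \cref{lem:analytification_formal_properties}, reducing the surjectivity of $\Psi_0$ on the atlas back to the surjectivity of $(-)\an$ on the underlying étale atlas, which is classical.
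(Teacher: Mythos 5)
There is a genuine gap: your argument hinges on choosing ``an étale atlas $\coprod_i V_i \to \trunc(F)$ by smooth $k$-schemes of finite type, so each $V_i$ lies in $\cTetk$.'' No such atlas exists unless $\trunc(X)$ is itself smooth. An étale morphism $V \to \trunc(X)$ from a smooth scheme $V$ factors through the smooth locus of $\trunc(X)$, so if $\trunc(X)$ has singular points the proposed atlas cannot be surjective. The pregeometry $\cTetk$ consists of smooth $k$-schemes, but a $\cTetk$-scheme is a locally ringed $\infty$-topos that \emph{locally} admits a closed immersion into (or more generally a finite-limit presentation in terms of) objects of $\cTetk$; it need not be smooth. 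So the reduction of $\Psi_0$ on a general \DM stack of finite presentation to its behavior on smooth schemes does not go through.

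This is exactly the obstruction the paper's proof is designed to circumvent. The paper first reduces to the affine case, then (rather than covering) \emph{embeds}: an affine $\trunc(X)$ is presented as the underived fiber product of $\Spec(k) \xrightarrow{0} \mathbb A^m_k \leftarrow \mathbb A^n_k$, whose derived pullback $Y$ satisfies $\trunc(Y) \simeq \trunc(X)$. The key workhorse is then not preservation of étale covers, but the fact that $\dAnk$ is closed in $\RHTop(\cTank)$ under pullback along closed immersions (\cite[Proposition 6.2]{Porta_Yu_DNAnG_I}, \cite[Proposition 12.10]{DAG-IX}), applied via \cref{cor:analytification_closed_immersions} to the analytified diagram $\Sp(k) \to \mathbf A^m_k \leftarrow \mathbf A^n_k$. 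If you want to rescue the spirit of your approach, you would need to replace the smooth atlas by a local closed embedding of each affine piece into some $\mathbb A^n_k$ and argue as the paper does; as written the appeal to an atlas of smooth schemes, and hence to a classical analytification theorem on that atlas, does not apply. Note also that \cref{cor:analytification_truncated_spaces_II}, which identifies $\Psi_0$ with classical analytification on underived stacks, is in this paper a \emph{consequence} of the present lemma, so invoking that identification here would be circular.
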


\begin{proof}
	By \cite[Lemma 2.1.3]{DAG-V}, the question is local on $X$.
	So we can assume $X$ to be affine.
	Furthermore, using \cref{lem:analytification_formal_properties}(\ref{item:truncation_functors}), we see that there is a canonical equivalence
	\[ \trunc(X\an) \simeq \Psi_0(\trunc(X)) . \]
	Since $X$ is an affine scheme, we can find an \emph{underived} pullback diagram of the following form:
	\[ \begin{tikzcd}
		\trunc(X) \arrow{r} \arrow{d} & \mathbb A^n_k \arrow{d} \\
		\Spec(k) \arrow{r}{0} & \mathbb A^m_k .
	\end{tikzcd} \]
	Let $Y$ denote the derived pullback of the above diagram.
	Then $\trunc(Y) \simeq \trunc(X)$.
	Unramifiedness of $\cTetk$ implies that
	\[ \begin{tikzcd}
		Y \arrow{r} \arrow{d} & \mathbb A^n_k \arrow{d} \\ \Spec(k) \arrow{r}{0} & \mathbb A^m_k
	\end{tikzcd} \]
	remains a pullback diagram when viewed in $\RTop(\cTetk)$.
	Since $(-)\an$ is a right adjoint, it follows that
	\[ \begin{tikzcd}
		Y\an \arrow{r} \arrow{d} & (\mathbb A^n_k)\an \arrow{d} \\ (\Spec(k))\an \arrow{r} & (\mathbb A^m_k)\an
	\end{tikzcd} \]
	is a pullback diagram in $\RHTop(\cTank)$.
	Using \cref{lem:analytification_formal_properties}(\ref{item:analytification_affines}), we see that $(\Spec(k))\an\allowbreak\simeq \Sp(k)$, $(\mathbb A^n_k)\an \simeq \mathbf A^n_k$ and $(\mathbb A^m_k)\an \simeq \mathbf A^m_k$.
	Moreover, \cref{cor:analytification_closed_immersions} implies that the the morphism $\Sp(k) \to \mathbf A^m_k$ is again a closed immersion.
	Since $\dAnk$ is closed in $\RHTop(\cTank)$ under pullback by closed immersions by \cite[Proposition 6.2]{Porta_Yu_Derived_non-archimedean_analytic_spaces} and \cite[Proposition 12.10]{DAG-IX}, we conclude that $Y\an$ is a derived analytic space.
	So it follows from \cite[Corollary 3.24]{Porta_Yu_Derived_non-archimedean_analytic_spaces} that $\trunc(Y\an)$ is an analytic space.
	Finally, using the chain of equivalences provided by \cref{lem:analytification_formal_properties}(\ref{item:analytification_truncation_adjointable})
	\[ \trunc(Y\an) \simeq \Psi_0(\trunc(Y)) \simeq \Psi_0(\trunc(X)) \simeq \trunc(X\an) , \]
	we conclude that $\trunc(X\an)$ is an analytic space.
\end{proof}

\begin{cor} \label{cor:analytification_truncated_spaces_II}
	Let $X$ be an underived algebraic \DM stack locally of finite presentation over $k$.
	Then $X\an$ is a derived analytic space and it is equivalent to the classical analytification of $X$.
\end{cor}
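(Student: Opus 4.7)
The plan is to derive both assertions of the corollary from \cref{lem:analytification_truncated_spaces}, combined with the left adjointability in \cref{lem:analytification_formal_properties}(\ref{item:analytification_truncation_adjointable}).

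First, since $X$ is underived we have $\pi_i(\cO_X) = 0$ for every $i \ge 1$, so $X$ is trivially a derived algebraic \DM stack locally almost of finite presentation. \cref{lem:analytification_truncated_spaces} therefore yields that $\trunc(X\an)$ is an underived analytic \DM stack.

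Next, I would establish that $X\an$ itself is $0$-truncated, so that $X\an \simeq \trunc(X\an)$. Recall from \cref{lem:analytification_formal_properties}(\ref{item:truncation_functors}) that the truncation $\mathrm t_{\le n}$ is right adjoint to the inclusion $\RHTop^{\le n}(\cTank) \hookrightarrow \RHTop(\cTank)$, and similarly on the algebraic side. Unfolding the Beck--Chevalley transformation for the left adjointable square of \cref{lem:analytification_formal_properties}(\ref{item:analytification_truncation_adjointable}), one obtains a natural equivalence $\Psi_n(X) \simeq X\an$ in $\RHTop(\cTank)$ for every $n$-truncated $\cTetk$-scheme $X$. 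Applied to our $0$-truncated $X$, this yields $X\an \simeq \Psi_0(X) \in \RHTop^{\le 0}(\cTank)$. Combined with the previous paragraph, we conclude that $X\an$ is a derived analytic space (and in fact already underived).

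For the identification with the classical analytification, the equivalence $X\an \simeq \Psi_0(X)$ already does most of the work. The functor $\Psi_0$ is by construction right adjoint to the algebraization $(-)\alg$ on $0$-truncated objects; on this level both pregeometries recover the classical locally ringed categories of analytic spaces and algebraic \DM stacks, and $(-)\alg$ reduces to the usual forgetful functor. By uniqueness of adjoints, $\Psi_0$ coincides with the classical analytification functor, which yields the desired identification. The main delicate point is precisely this last matching of universal properties; I would handle it by passing to an étale atlas of $X$, reducing to the case where $X$ is an affine scheme of finite type over $k$, and then comparing with the explicit construction of the classical analytification in this setting (Berkovich or rigid spectrum in the non-archimedean case, Serre-style GAGA in the complex case).
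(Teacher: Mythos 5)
Your proof follows essentially the same route as the paper: use the left adjointability of \cref{lem:analytification_formal_properties}(\ref{item:analytification_truncation_adjointable}) to get $X\an \simeq \Psi_0(X)$ (hence $0$-truncated), combine with \cref{lem:analytification_truncated_spaces} to conclude $X\an \simeq \trunc(X\an)$ is an analytic space, and then use the $\Psi_0$-adjunction to identify it with the classical analytification. The one place where you hedge — ``uniqueness of adjoints,'' which is slightly imprecise since $\Psi_0$ and the classical analytification are adjoints to functors on a priori different categories — the paper handles more directly by noting that, once $X\an$ is known to lie in $\Ank$, the $\Psi_0$-adjunction restricted to analytic $Y$ reads $\Map_{\Ank}(Y, X\an) \simeq \Map_{\RTop(\cTetk)}(Y\alg, X)$, which is exactly the universal property of the classical analytification; your backup explicit comparison is therefore not needed.
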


\begin{proof}
	The question is local on $X$ and we can therefore assume that $X$ is affine.
	Using \cref{lem:analytification_formal_properties}(\ref{item:analytification_truncation_adjointable}), we see that the structure sheaf of $X\an$ is discrete.
	Thus, $X\an \simeq \trunc(X\an)$ is an analytic space in virtue of \cref{lem:analytification_truncated_spaces}.
	Moreover, \cref{lem:analytification_formal_properties}(\ref{item:truncation_functors}) shows that $\trunc(X\an) \simeq \Psi_0(\trunc(X)) \simeq \Psi_0(X)$.
	Using the universal property of $\Psi_0$ and the fact that $X\an$ is an analytic space, we see that for every analytic space $Y$, there is an equivalence
	\[ \Map_{\Ank}(Y, X\an) \simeq \Map_{\RTop(\cTetk)}(Y\alg, X) . \]
	This shows that $X\an$ can be identified with the classical analytification of $X$.
\end{proof}

\begin{cor} \label{cor:analytification_closed_immersions_II}
	Let $j \colon X \to Y$ be a closed immersion of derived algebraic \DM stacks locally almost of finite presentation over $k$.
	Then $j\an \colon X\an \to Y\an$ is a closed immersion in $\RHTop(\cTank)$.
\end{cor}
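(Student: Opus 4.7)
I would reduce the derived statement to its classical counterpart via the truncation-compatibility results already established. Recall that a morphism in $\RHTop(\cTank)$ is a closed immersion precisely when the underlying geometric morphism of $\infty$-topoi is a closed immersion and the induced map on $\pi_0$ of structure sheaves is an effective epimorphism; both properties are local on the target, so one may immediately reduce to the case in which $X$ and $Y$ are affine derived schemes.

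For the topos-level condition, observe that the underlying $\infty$-topos of $X\an$ coincides with that of $\trunc(X\an)$. By \cref{lem:analytification_truncated_spaces} together with the adjointability diagram of \cref{lem:analytification_formal_properties}(\ref{item:analytification_truncation_adjointable}) and \cref{cor:analytification_truncated_spaces_II}, there are canonical equivalences
\[ \trunc(X\an) \simeq \Psi_0(\trunc(X)) \simeq \trunc(X)\an, \]
where the rightmost object is the \emph{classical} analytification of the underived DM stack $\trunc(X)$; the same identifications apply to $Y$. Since $\trunc(j) \colon \trunc(X) \to \trunc(Y)$ is a classical closed immersion of algebraic DM stacks locally of finite presentation, its classical analytification is a closed immersion of ordinary analytic stacks by standard results in complex and rigid analytic geometry, which furnishes the required closed immersion on $\infty$-topoi. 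For the structure-sheaf condition, \cref{lem:analytification_formal_properties}(\ref{item:truncation_functors}) identifies $\pi_0(\cO_{X\an})$ and $\pi_0(\cO_{Y\an})$ with the classical analytic structure sheaves of $\trunc(X)\an$ and $\trunc(Y)\an$ respectively, and identifies the induced map $\pi_0((j\an)\inv \cO_{Y\an}) \to \pi_0(\cO_{X\an})$ with the structure-sheaf map of the classical analytification of $\trunc(j)$; the required surjectivity then reduces to the classical case.

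The chief subtlety will be matching the various adjunctions so that the induced map on $\pi_0$ of analytic structure sheaves is really determined by the classical analytification of $\trunc(j)$. This amounts to chasing units and counits in the adjoint chains $(-)\alg \dashv \Psi_n$ and $(-)\alg \dashv (-)\an$ through the adjointability square of \cref{lem:analytification_formal_properties}(\ref{item:analytification_truncation_adjointable}); it is essentially bookkeeping and should not introduce any new difficulty beyond what has already been absorbed into the cited statements.
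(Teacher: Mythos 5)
Your proof is correct and follows essentially the same route as the paper's (which is also a two-step reduction: first to the truncation, then via $\trunc(j\an) \simeq \Psi_0(\trunc(j))$ and \cref{cor:analytification_truncated_spaces_II} to the classical analytification). You have simply unpacked the reduction — spelling out why a closed immersion of $\cTank$-structured topoi may be tested on $0$-truncations, namely because the underlying $\infty$-topos is unchanged and surjectivity of $f^\sharp$ is a $\pi_0$ condition — which the paper leaves implicit in its opening sentence.
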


\begin{proof}
	It is enough to prove that $\trunc(j\an) \colon \trunc(X\an) \to \trunc(Y\an)$ is a closed immersion.
	Since $\trunc(j\an) \simeq \Psi_0(\trunc(j))$, the statement is now a direct consequence of \cref{cor:analytification_truncated_spaces_II}.
\end{proof}

We are now ready to state and prove the main result of this section:

\begin{prop} \label{prop:analytification}
	Let $X = (\cX, \cO_X) \in \RTop(\cTetk)$ be a derived algebraic \DM stack locally almost of finite presentation over $k$.
	Then $X\an$ is a derived analytic space.
\end{prop}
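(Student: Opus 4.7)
The statement is étale local on $X$, by the local nature of \cref{def:derived_analytic_space} and \cref{lem:analytification_formal_properties}(\ref{item:analytification_affines}). I would first reduce to the case $X = \Spec(A)$ with $A$ a simplicial commutative $k$-algebra almost of finite presentation; in this setting $\cX\an$ is automatically hypercomplete by construction of analytification via $\Hyp$, and condition (i) of the definition is immediate from \cref{lem:analytification_truncated_spaces} (choosing a closed immersion $\trunc(X) \hookrightarrow \mathbb A^n_k$ coming from finite presentation and invoking \cref{cor:analytification_closed_immersions} to realize $\trunc(X\an)$ as a closed analytic subspace of $\mathbf A^n_k$).

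The heart of the argument is condition (ii): coherence of every $\pi_j(\cO\alg_{X\an})$ as a sheaf of $\pi_0(\cO\alg_{X\an})$-modules. My plan is to reduce this to classical analytic GAGA by establishing the canonical identification
\[
\pi_j(\cO\alg_{X\an}) \;\simeq\; \pi_j(\cO\alg_X)\an ,
\]
where the right-hand side denotes the classical analytification, along $\trunc(X) \to \trunc(X\an)$, of the coherent $\pi_0(\cO_X)$-module $\pi_j(\cO\alg_X)$. Granting this, coherence of the left-hand side follows from the classical fact that analytifications of coherent algebraic sheaves on finite-type schemes are coherent.

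The identification itself would be proved inductively along the Postnikov tower of $\cO_X$. At each stage $n$ one has a derived pullback
\[
\begin{tikzcd}
\tau_{\le n+1}(\cO_X) \arrow{r} \arrow{d} & \tau_{\le n}(\cO_X) \arrow{d}{\eta_0} \\
\tau_{\le n}(\cO_X) \arrow{r}{\eta_d} & \tau_{\le n}(\cO_X) \oplus \pi_{n+1}(\cO_X)[n+2]
\end{tikzcd}
\]
in $\RTop(\cTetk)$, presenting $\tau_{\le n+1}(\cO_X)$ as a split square-zero extension by the coherent module $\pi_{n+1}(\cO_X)[n+2]$. Since analytification is a right adjoint (being the right adjoint to $(-)\alg$ followed by $\Hyp$), it preserves this pullback, and \cref{cor:analytification_closed_immersions_II} guarantees that $\eta_0\an$ remains a closed immersion after analytification. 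The main obstacle will be controlling the formation of $\pi_j$'s on the analytic side of these pullbacks: the essential input is the classical flatness of analytification on coherent modules over finite-type affine $k$-schemes, which is what ultimately closes the inductive comparison of homotopy sheaves on the algebraic and analytic sides.
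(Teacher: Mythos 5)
Your reduction to the affine case and the treatment of condition (i) via \cref{lem:analytification_truncated_spaces} are in order, and the identification
\[
\pi_j(\cO\alg_{X\an}) \;\simeq\; p^*\bigl(\pi_j(\cO\alg_X)\bigr)
\]
that you want to establish (where $p \colon (\cX\an, \cO_{X\an}\alg) \to (\cX, \cO_X)$ is the counit) is indeed true. However, the inductive step as written has a genuine gap, stemming from a level mismatch between structure sheaves and structured topoi. The square you draw is a pullback of sheaves of rings on the fixed topos $\cX$. Passing to structured topoi \emph{reverses the arrows}: a morphism $(\cX,\cO)\to(\cX,\cO')$ in $\RTop(\cTetk)$ corresponds to a local morphism of $\cT$-structures $\cO'\to\cO$. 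So the pullback of structure sheaves expressing $\tau_{\le n+1}(\cO_X)$ as a square-zero extension corresponds to a \emph{pushout} of $\cTetk$-structured topoi (this is exactly \cref{corintro:square-zero} in the analytic case). The right-adjointness of $(-)\an$ on $\RTop$ preserves limits of structured topoi, not colimits, so your assertion that ``analytification preserves this pullback'' is precisely what fails to follow. The claim you are implicitly using is that the analytification map $p\inv\cO_X \to \cO_{X\an}\alg$ is flat, and this derived flatness statement cannot be bootstrapped from the classical flatness of underived analytification alone by the argument you sketch: it is a substantive theorem (cf.\ the role of \cref{prop:flatness} in this paper, or the flatness arguments in \cite[Appendix A]{Porta_DCAGII}) whose proof requires more than invoking right-adjointness.

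The paper's own proof deliberately sidesteps this point. Instead of the Postnikov filtration of $\cO_X$, it introduces the full subcategory $\cC\subset\dAff_k\afp$ of derived affines with analytic analytification and shows it is closed under retracts, pullbacks along closed immersions, and finite limits. The finite-limit closure is where right-adjointness legitimately enters: a finite colimit of free simplicial commutative algebras becomes, under $\Spec$, a finite \emph{limit} of affine spaces in $\RTop(\cTetk)$, and $(-)\an$ preserves those. Compactness of $\tau_{\le n}(A)$ in $\CRing_k^{\le n}$ then lets one realize $\mathrm t_{\le n}(\Spec A)\an$ as a retract of the analytification of such a finite cell complex, giving coherence of each $\pi_j$ without ever computing it. What the paper's approach buys is that it needs only pullbacks of structured topoi along closed immersions (where right-adjointness genuinely applies) rather than pushouts; what your approach would buy, if completed, is the sharper flat-base-change statement identifying the homotopy sheaves exactly---but that sharper statement requires the flatness theorem you gestured at, and cannot be obtained by formal adjunction.
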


\begin{proof}
	Using \cite[Lemma 2.1.3]{DAG-V}, we can reason \'etale locally on $X$ and therefore assume that $X$ is affine.
	Let $\dAff_k\afp$ denote the \infcat of derived affine $k$-schemes almost of finite presentation.
	Let $\cC$ be the full subcategory of $\dAff_k\afp$ spanned by those derived affines $X$ such that $X\an \in \dAnk$.
	Let us remark that $\cC$ has the following properties:
	\begin{enumerate}
		\item $\cC$ contains $\cTetk$ in virtue of \cref{lem:analytification_formal_properties}(\ref{item:analytification_affines}).
		\item $\cC$ is closed under pullbacks along closed immersions. Indeed, if
		\[ \begin{tikzcd}
			W \arrow[hook]{r} \arrow{d} & Z \arrow{d} \\
			Y \arrow[hook]{r}{j} & X
		\end{tikzcd} \]
		is a pullback diagram in $\dAff_k\afp$ and $j$ is a closed immersion, then the unramifiedness of $\cTetk$ implies that the image of this diagram in $\RTop(\cTetk)$ is again a pullback square.
		Since $(-)\an$ is a right adjoint, we see that
		\[ \begin{tikzcd}
			W\an \arrow{r} \arrow{d} & Z\an \arrow{d} \\ Y\an \arrow{r}{j\an} & X\an
		\end{tikzcd} \]
		is a pullback square in $\RHTop(\cTank)$.
		Using \cref{cor:analytification_closed_immersions_II}, we see that $j\an$ is a closed immersion.
		Since $\dAnk$ is closed under pullback along closed immersions in $\RHTop(\cTank)$ (see \cite[Proposition 6.2]{Porta_Yu_Derived_non-archimedean_analytic_spaces} for the non-archimedean case and \cite[Proposition 12.10]{DAG-IX} for the complex case), we conclude that if $X, Y, Z \in \cC$, then $W \in \cC$ as well.
		\item $\cC$ is closed under finite limits. Indeed, it follows from \cite[\S 6]{Porta_Yu_Derived_non-archimedean_analytic_spaces} that general pullbacks can be constructed in terms of products of affine spaces and pullbacks along closed immersions. Since $(-)\an$ takes $\mathbb A^n_k$ to $\mathbf A^n_k$ by \cref{lem:analytification_formal_properties}(\ref{item:analytification_affines}), we see that $(-)\an \colon \cC \to \dAnk$ commutes with products of affine spaces.
		Since $\cC$ is furthermore closed under pullbacks along closed immersions by the previous point, the conclusion follows.

		\item \label{item:analytification_retraction} $\cC$ is closed under retractions. Indeed, let $X \in \cC$ and let $Y \xrightarrow{j} X \xrightarrow{p} Y$ be a retraction diagram in $\dAff_k\afp$.
		By assumption, $X\an \in \dAnk$ and \cref{lem:analytification_truncated_spaces} shows that $\trunc(Y\an) \in \dAnk$.
		It is therefore sufficient to show that $\pi_i(\cO_{Y\an})$ is a coherent sheaf over $\pi_0(\cO_{Y\an})$.
		Nevertheless, $\pi_i(\cO_{Y\an})$ is a retract of $j\inv \pi_i(\cO_{X\an})$, which is locally of finite presentation over $j\inv \pi_0(\cO_{X\an})$.
		It follows that $\pi_i(\cO_{Y\an})$ is locally of finite presentation over $j\inv \pi_0(\cO_{X\an})$.
		Since $\pi_0(\cO_{Y\an})$ is a retract of $\pi_0(\cO_{X\an})$ and $\pi_i(\cO_{Y\an})$ has a canonical $\pi_0(\cO_{Y\an})$-structure, we conclude that $\pi_i(\cO_{Y\an})$ is of finite presentation over $\pi_0(\cO_{Y\an})$ as well.
		The conclusion now follows from the fact that $\pi_0(\cO_{Y\an})$ is coherent.
			\end{enumerate}
	
	Let now $X \in \dAff_k\afp$ and write $X \simeq \Spec(A)$ for a simplicial commutative $k$-algebra $A$ almost of finite presentation.
	We want to prove that $X \in \cC$.
	Since \cref{lem:analytification_truncated_spaces} guarantees that $\trunc(X\an)$ is an analytic space, we only have to show that $\pi_i(\cO_{X\an})$ is a coherent sheaf of $\pi_0(\cO_{X\an})$-modules.
	
	In particular, for every $n \ge 0$ the algebra $\tau_{\le n}(A)$ is a compact object in the $\infty$-category $\CRing_k^{\le n}$ of $n$-truncated simplicial commutative $k$-algebras.
	It follows that there exists a \emph{finite} diagram of free simplicial commutative $k$-algebras
	\[ F \colon I \to \CRing_k \]
	such that $\tau_{\le n} A$ is a retraction of $\tau_{\le n}(B)$, where
	\[ B \coloneqq \colim_I F \in \CRing_k . \]
	
	Since $\cC$ is closed under finite limits, we see that $\Spec(B) \in \cC$.
	Now, using \cref{lem:analytification_formal_properties}(\ref{item:truncation_functors}) we conclude that
	\[ (\mathrm t_{\le n}(\Spec(A)))\an \simeq \Psi_n( \mathrm t_{\le n}(\Spec(A)) ) \]
	is a retract of
	\[ \Psi_n(\mathrm t_{\le n} (\Spec(B))) \simeq \mathrm t_{\le n}(\Spec(B)\an) . \]
	Property (\ref{item:analytification_truncation_adjointable}) implies that this is a derived analytic space.
	Therefore, it follows from (\ref{item:analytification_retraction}) that $( \mathrm t_{\le n}( \Spec(A)) )\an$ is a derived analytic space as well.
	
	Since we further have that
	\[ \Psi_n(\mathrm t_{\le n}(\Spec(A))) \simeq \mathrm t_{\le n}( \Spec(A)\an ) , \]
	we conclude that $\pi_i(\cO_{\Spec(A)\an})$ is a coherent sheaf of $\pi_0(\cO_{\Spec(A)\an})$-modules for all $0 \le i \le n$.
	Repeating the same reasoning for every $n$, we now conclude that $\Spec(A)\an$ is a derived analytic space.
	The proof is thus complete.
\end{proof}

\section{Analytic modules} \label{sec:analytic_modules}

In this section we study modules over derived analytic rings.
The main result is \cref{thm:equivalence_of_modules}.
We refer to \cref{sec:introduction} for motivations of this result and for a sketch of the proof.

Let us introduce a few notations before stating the main theorem.

Let $\cX$ be an \inftopos.
In virtue of \cite[Example 3.1.6, Remark 4.1.2]{DAG-V}, we have an equivalence of \infcats induced by the evaluation on the affine line
\[\Strloc_{\cTdisck}(\cX)\xrightarrow{\sim}\Sh_{\CRing_k}(\cX),\]
where $\Sh_{\CRing_k}(\cX)$ denotes the \infcat of sheaves on $\cX$ with values in the $\infty$-category of simplicial commutative $k$-algebras.

This motivates the following definition:

\begin{defin}
	Let $\cX$ be an $\infty$-topos.
	We denote $\CRing_k(\cX)\coloneqq\Strloc_{\cTdisck}(\cX)$, and call it the \infcat of \emph{sheaves of simplicial commutative $k$-algebras on $\cX$}.
	We denote $\AnRing_k(\cX)\coloneqq\Strloc_{\cTank}(\cX)$, and call it the \infcat of \emph{sheaves of derived \kanal rings on $\cX$}.
	We have an algebraization functor
	\[(-)\alg\colon\AnRing_k(\cX)\to\CRing_k(\cX)\]
	induced by the analytification functor $\cTdisck\to\cTank$.
\end{defin}

\begin{defin} \label{def:abelian_group_objects}
	Let $\Ab$ be the $1$-category of abelian groups.
	Let $\cT_\Ab$ denote the opposite of the full subcategory of $\Ab$ spanned by free abelian groups of finite rank.
	Let $\cC$ be an $\infty$-category with finite products.
	The \emph{$\infty$-category of abelian group objects in $\cC$} is by definition the $\infty$-category
	\[ \Ab(\cC) \coloneqq \Fun^\times(\cT_\Ab, \cC) , \]
	where the right hand side denotes the full subcategory of $\Fun(\cT_\Ab, \cC)$ spanned by product-preserving functors.
\end{defin}

\begin{defin} \label{def:modules}
	For a $\cTdisck$-structured topos $X=(\cX,\cO_X)$, we define $\cO_X\Mod$ to be the \infcat of left $\cO_X$-module objects of $\Sh_{\DAb}(\cX)$, where $\cD(\Ab)$ denotes the derived \infcat of abelian groups.
	\end{defin}

\begin{defin} \label{def:analytic_modules}
	For a $\cTank$-structured topos $X=(\cX, \cO_X)$, we define $\cO_X\Mod\coloneqq \cO\alg_X\Mod$.
	In particular, an $\cO_X$-module is by definition an $\cO_X\alg$-module.
\end{defin}

The goal of this section is to prove the following result:

\begin{thm} \label{thm:equivalence_of_modules}
	Let $X = (\cX, \cO_X)$ be a derived analytic space.
	We have an equivalence of stable $\infty$-categories
	\[\cO_X\Mod \simeq \Sp(\Ab(\AnRing_k(\cX)_{/\cO_X})),\]
	where $\Sp(-)$ denotes the \infcat of spectrum objects in a given \infcat.
\end{thm}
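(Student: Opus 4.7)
The plan is to reduce the theorem to a statement about the algebraization functor $(-)\alg \colon \AnRing_k(\cX) \to \CRing_k(\cX)$ on connected slices. The first step is to establish, on the purely algebraic side, the equivalence
\[ \cO_X\alg\Mod \simeq \Sp(\Ab(\CRing_k(\cX)_{/\cO_X\alg})), \]
which is the sheafified, $k$-linear version of the identification between modules over a simplicial commutative ring $A$ and $\Sp(\Ab(\CRing_{/A}))$ advertised in the introduction. Taking this as given (or proving both sides in parallel by recognizing each as the stabilization of $\CRing_k(\cX)_{/\cO_X\alg}$), the theorem is reduced to showing that the algebraization functor induces an equivalence
\[ \Sp(\Ab(\AnRing_k(\cX)_{/\cO_X})) \xrightarrow{\ \sim\ } \Sp(\Ab(\CRing_k(\cX)_{/\cO_X\alg})). \]

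The next step is a chain of formal reductions, corresponding to the passages called ``reduction to connected objects'' and ``reduction to spaces'' in the introduction. The loop functor presents each side as the limit of a tower of shifted pointed connected objects, and in both settings $\Omega$ is computed on underlying objects; hence it suffices to show that algebraization restricts to an equivalence
\[ \AnRing_k(\cX)_{\underover{\cO_X}}^{\ge 1} \xrightarrow{\ \sim\ } \CRing_k(\cX)_{\underover{\cO_X\alg}}^{\ge 1} \]
on $1$-connected over-and-under categories. Since $\AnRing_k(-)$ and $\CRing_k(-)$ are defined through $\cT$-structures and both are sheaves of $\infty$-categories on $\cX$, a stalkwise descent argument further reduces the problem to the case $\cX = \cS$. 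What remains is to prove that for every derived $k$-analytic ring $A$, algebraization induces an equivalence
\[ (\AnRing_k)_{\underover{A}}^{\ge 1} \xrightarrow{\ \sim\ } (\CRing_k)_{\underover{A\alg}}^{\ge 1}. \]

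The crux of the argument, and what I expect to be the main obstacle, is this final comparison on a point, where the flatness arguments alluded to in the introduction are decisive. The heuristic is that a $1$-connected augmented morphism $B \to A$ in $\AnRing_k$ has fibre $I$ that is nilpotent to all orders of connectivity, so evaluating any convergent power series operation on $B$ reduces, modulo higher connectivity, to a polynomial operation determined by $A$ together with an $A\alg$-module action on $I$. To turn this heuristic into a proof one exploits the pullback axiom \cref{def:structure}(ii): for every étale morphism $U \to V$ in $\cTank$, the value $B(U)$ is rigidified by $B(V)$, $A(U)$ and $A(V)$, and the flatness of étale maps between smooth $k$-analytic spaces ensures that the resulting pullback squares really identify a $\cTank$-structure on $B$ with the data of $A$ together with the fibre $I$. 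This should yield essential surjectivity (any augmented simplicial commutative $k$-algebra with $1$-connected fibre over $A\alg$ admits a canonical extension to an analytic structure over $A$) and full faithfulness (this extension is unique), thereby closing the chain of reductions and hence the theorem.
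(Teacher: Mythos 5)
Your reduction steps faithfully mirror the paper: first invoke the algebraic identification $\cO_X\alg\Mod \simeq \Sp(\Ab(\CRing_k(\cX)_{/\cO_X\alg}))$ (the paper's \cref{cor:modules_in_positive_characteristic}), then reduce $\partial_\Ab(\Phi)$ being an equivalence to the statement that algebraization restricts to an equivalence on the $1$-connected over-and-under slices (\cref{thm:connected_closed_immersion_are_algebraic}), and then reduce further to $\cX = \cS$ stalkwise. So far this is exactly the paper's roadmap in \cref{sec:reduction_to_connected_objects,sec:reduction_to_spaces}.

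The gap is in your final step, and it is a real one. Having set up the adjoint pair $\Phi_*^{\ge 1} \colon (\AnRing_k)_{\underover{A}}^{\ge 1} \leftrightarrows (\CRing_k)_{\underover{A\alg}}^{\ge 1} \colon \Psi_*^{\ge 1}$ with $\Phi_*^{\ge 1}$ conservative, what one must actually prove is that the unit $\eta \colon B \to \Phi_*(\Psi_*(B))$ is an equivalence for every $1$-connected $B$. Since $\pi_0(\eta)$ is automatically an isomorphism, the right statement to aim for is that $\eta$ is a \emph{flat} map of simplicial commutative rings (\cref{prop:flatness}), which combined with the $\pi_0$-isomorphism and $1$-connectivity forces it to be an equivalence. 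Your heuristic --- that the fiber $I$ is so highly connective that power series operations reduce to polynomial ones, and that the local pullback squares for étale $U \to V$ determine the $\cTank$-structure on $B$ from $A$ and $I$ --- does not produce this flatness, and I do not see how it establishes essential surjectivity or full faithfulness independently of it. In particular ``flatness of étale maps between smooth $k$-analytic spaces'' is not where the work is: the decisive flatness is that of the canonical map
\[
A\alg[X_1,\dots,X_m] \longrightarrow A\alg[X_1,\dots,X_m]\an_A
\]
for $A$ a ring of germs of analytic functions, and establishing it requires first a geometric identification of the relative analytification as $y\inv\cO_{V\times\bA^m_k}$ (\cref{prop:geometric_interpretation_relative_analytification}) and then an explicit flatness computation by passage to formal completions (\cref{cor:flatness_smooth_algebras}), after a reduction to polynomial generators via a cell-complex argument (\cref{cor:flatness_reduction_to_generators} and \cref{lem:flatness_smooth_reduction}). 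Without something playing the role of these lemmas, the ``reconstruct $B$ from $A$ and $I$ via the local pullback axiom'' strategy remains a heuristic: the pullback squares for admissible morphisms constrain $B(U)$ in terms of $B(V)$ and $A$ when $U \to V$ is étale, but they do not by themselves rebuild the full $\cTank$-structure (one needs $B$ on all morphisms of smooth analytic spaces, not only admissible ones), and the nilpotency of $\pi_{\ge 1}(I)$ does not substitute for the flatness of relative analytification.
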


We split the proof into several steps.

\subsection{Construction of the functor}

Let $X = (\cX, \cO_X)$ be a derived analytic space.

The transformation of pregeometries
\[ (-)\an \colon \cTdisck \to \cTank \]
induces a functor
\[ \Phi \colon \AnRing_k(\cX)_{/\cO_X} \to \CRing_k(\cX)_{/\cO_X\alg} . \]

Note that the following diagram is commutative by construction:
\begin{equation} \label{eq:Phi}
\begin{tikzcd}
	\AnRing_k(\cX)_{/\cO_X} \arrow{r}{\Phi} \arrow{d} & \CRing_k(\cX)_{/\cO_X\alg} \arrow{d} \\
	\AnRing_k(\cX) \arrow{r}{(-)\alg} & \CRing_k(\cX).
\end{tikzcd}
\end{equation}

\begin{lem}
	The functor $\Phi$ has the following properties:
	\begin{enumerate}
		\item It is conservative;
		\item It commutes with limits and with sifted colimits.
	\end{enumerate}
\end{lem}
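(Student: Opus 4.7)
The plan is to reduce both statements to analogous properties of the underlying algebraization functor $(-)\alg \colon \AnRing_k(\cX) \to \CRing_k(\cX)$, via the commutative diagram \eqref{eq:Phi}. Equivalences in a slice \infcat are detected on the underlying objects, and limits and sifted colimits in a slice \infcat are created by the projection to the base, so it suffices to verify the corresponding properties for $(-)\alg$.

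For conservativity, I would let $\alpha \colon \cO \to \cO'$ be a local morphism in $\Strloc_{\cTank}(\cX)$ with $\alpha\alg$ an equivalence, and show that $\alpha(T)$ is an equivalence in $\cX$ for every smooth \kanal space $T$. The locality axiom provides pullback squares along admissible (\'etale) morphisms, so being an equivalence on $T$ can be tested along an \'etale cover. Every smooth \kanal space is \'etale-locally an open in some affine space $\mathbf A^n_k$, and after a further \'etale localization it suffices to test the claim on the $\mathbf A^n_k$ themselves. Since $\mathbf A^n_k = (\mathbb A^n_k)\an$ lies in the image of the transformation $\cTdisck \to \cTank$, the condition there is precisely that $\alpha\alg$ is an equivalence.

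For the limits and sifted colimits claim, I would observe that $(-)\alg$ is precomposition with the pregeometry transformation $(-)\an \colon \cTdisck \to \cTank$. Precomposition always commutes with pointwise operations, so it suffices to show that limits and sifted colimits in $\Strloc_\cT(\cX)$ are computed pointwise on $\cT$, for both pregeometries involved. For limits this is a standard consequence of the pregeometry formalism in \cite{DAG-V}. For sifted colimits, preservation of the first $\cT$-structure axiom (finite products) is automatic, since sifted colimits commute with finite products in the presentable $\infty$-topos $\cX$. The main obstacle I foresee is the preservation of the pullback-along-admissibles axiom under sifted colimits; I would handle this by decomposing sifted colimits into filtered colimits, which are left exact in $\cX$, and reflexive coequalizers, whose compatibility with the relevant pullbacks can be checked using \'etale descent, exploiting that admissibles are \'etale in both $\cTank$ and $\cTdisck$. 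Once pointwise computation is established, the preservation statements for $(-)\alg$, and therefore for $\Phi$, are formal.
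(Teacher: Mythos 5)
Your conservativity argument is sound and amounts to a self-contained rederivation of the results the paper simply cites (\cite[Proposition 11.9]{DAG-IX} and \cite[Lemma 3.13]{Porta_Yu_DNAnG_I}). The reduction chain --- descending along an \'etale cover via the effective-epimorphism axiom and the \v{C}ech nerve, passing \'etale-locally to opens in $\mathbf A^n_k$, and propagating the equivalence from $\mathbf A^n_k$ along any \'etale $V \to \mathbf A^n_k$ using the locality condition --- is correct. Do note that the descent step needs all levels of the \v{C}ech nerve, i.e.\ the iterated fiber products $U_{\alpha_0} \times_X \cdots \times_X U_{\alpha_n}$, to admit \'etale maps to some $\mathbf A^m_k$ simultaneously; this holds because they are \'etale over any of the $U_{\alpha_i}$.

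Your argument for part (2) has a genuine gap. You correctly identify the hard point --- preservation of the admissible-pullback axiom under sifted colimits --- but ``reflexive coequalizers, whose compatibility with the relevant pullbacks can be checked using \'etale descent'' is a placeholder rather than a proof. In the $\infty$-categorical setting the relevant generators are geometric realizations over $\Delta^{\mathrm{op}}$, not reflexive coequalizers, and geometric realizations do \emph{not} commute with pullbacks in an $\infty$-topos in general. Moreover, by reducing to the unsliced functor $(-)\alg$ you discard exactly the structure that makes the argument work: in the slice over $\cO_X$, every object $\cO$ carries a local morphism to $\cO_X$, so for every admissible $U \to X$ in $\cTank$ one has $\cO(U) \simeq \cO(X) \times_{\cO_X(X)} \cO_X(U)$ with the right-hand leg fixed. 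Universality of colimits in $\cX$ then shows that a pointwise sifted colimit still satisfies this formula, and both the admissible-pullback axiom and the effective-epimorphism axiom follow formally (for the latter, see the observation in the proof of \cref{lem:transitivity_cotangent_complex}). The paper instead simply cites \cite[Proposition 1.17]{Porta_DCAGI}, which packages this.
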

\begin{proof}
	The first property follows from \cite[Proposition 11.9]{DAG-IX} in the complex analytic case and from \cite[Lemma 3.13]{Porta_Yu_Derived_non-archimedean_analytic_spaces} in the non-archimedean case.
	The second property is a consequence of \cite[Proposition 1.17]{Porta_DCAGI}.
\end{proof}

\begin{lem} \label{lem:abelianization_of_functors}
	Let $f \colon \cC \to \cD$ be a functor between $\infty$-categories with finite products.
	If $f$ preserves finite products, then it induces a well-defined functor $\Ab(f) \colon \Ab(\cC) \to \Ab(\cD)$.
	Furthermore, if $f$ has one among the following properties:
	\begin{enumerate}
		\item $f$ is conservative;
		\item $f$ commutes with limits;
		\item $f$ commutes with sifted colimits;
	\end{enumerate}
	then $\Ab(f)$ has the same property.
\end{lem}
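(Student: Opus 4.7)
The plan is to observe that $\Ab(f)$ is defined by postcomposition $A \mapsto f \circ A$ on objects of $\Fun^\times(\cT_\Ab, \cC)$, and to deduce each property from the corresponding pointwise statement in the functor \infcats $\Fun(\cT_\Ab,-)$. The fact that this postcomposition lands in $\Ab(\cD) = \Fun^\times(\cT_\Ab, \cD)$ is immediate from the hypothesis that $f$ preserves finite products, since a composite of two product-preserving functors preserves products.

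First I would record the following general observation: the inclusion
\[ \iota \colon \Fun^\times(\cT_\Ab, \cC) \hookrightarrow \Fun(\cT_\Ab, \cC) \]
preserves and reflects limits as well as sifted colimits. Preservation and reflection of limits is immediate because limits in both \infcats are computed objectwise in $\cC$, and the property of preserving finite products is stable under limits of functors. For sifted colimits the key input is that sifted colimits in $\cC$ commute with finite products (see \cite[5.5.8.11]{HTT}); thus if $p \colon K \to \Fun^\times(\cT_\Ab, \cC)$ is a sifted diagram, its pointwise colimit in $\Fun(\cT_\Ab, \cC)$ again preserves finite products, so sifted colimits in $\Ab(\cC)$ are also pointwise. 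As a consequence, equivalences in $\Ab(\cC)$ are detected pointwise as well.

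Given this, each of (1)--(3) follows by a one-line pointwise verification. For (1), if $\alpha \colon A \to A'$ is a map in $\Ab(\cC)$ with $\Ab(f)(\alpha) = f \circ \alpha$ an equivalence, then for every $T \in \cT_\Ab$ the morphism $f(\alpha_T)$ is an equivalence in $\cD$; conservativity of $f$ yields that $\alpha_T$ is an equivalence for every $T$, hence $\alpha$ is an equivalence. For (2) and (3), given a diagram $p \colon K \to \Ab(\cC)$ of the appropriate shape, its limit or sifted colimit is computed pointwise by the previous paragraph; applying $f$ pointwise preserves this limit or colimit by hypothesis, and then the pointwise formula in $\Ab(\cD)$ identifies the result with the corresponding limit or colimit of $\Ab(f) \circ p$.

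The only genuinely nontrivial point is the claim that sifted colimits in $\Ab(\cC)$ are pointwise, which rests on the commutation of sifted colimits with finite products; everything else is a formal consequence of the pointwise computation of limits in functor \infcats. I expect no further obstacle.
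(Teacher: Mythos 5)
Your argument follows the paper's proof almost exactly: both reduce to the observation that the fully faithful inclusion $\Fun^\times(\cT_\Ab,-) \hookrightarrow \Fun(\cT_\Ab,-)$ commutes with limits and sifted colimits, so that the three listed properties transfer from the pointwise postcomposition $f_*$ on the larger functor category. The paper asserts this commutation without proof; you supply the justification (objectwise computation for limits, and for sifted colimits the commutation with finite products), which is the same idea spelled out in more detail.
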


\begin{proof}
	Unraveling the definitions we see that composition with $f$ restricts to a well-defined functor
	\[ \Ab(\cC) = \Fun^\times(\cT_\Ab, \cC) \to \Fun^\times(\cT_\Ab, \cD) = \Ab(\cD) . \]
	This functor fits into a commutative diagram
	\[ \begin{tikzcd}
	\Fun^\times(\cT_\Ab, \cC) \arrow{r}{\Ab(f)} \arrow[hook]{d} & \Fun^\times(\cT_\Ab, \cD) \arrow[hook]{d} \\
	\Fun(\cT_\Ab, \cC) \arrow{r}{f_*} & \Fun(\cT_\Ab, \cD) .
	\end{tikzcd} \]
	The vertical morphisms are fully faithful and furthermore they commute with limits and with sifted colimits.
	Observe now that if $f$ has one of the listed properties, then $f_*$ shares the same property for formal reasons.
	The commutativity of the above diagram allows then to deduce that also $\Ab(f)$ inherits these properties.
\end{proof}

Since $\Phi$ commutes with limits, \cref{lem:abelianization_of_functors} implies that $\Phi$ induces a well-defined functor
\[ \Ab(\Phi) \colon \Ab\left( \AnRing_k(\cX)_{/\cO_X} \right) \to \Ab \left( \CRing_k(\cX)_{/\cO_X\alg} \right) . \]
Moreover, the functor $\Ab(\Phi)$ is conservative and commutes with limits and sifted colimits.

\begin{cor}
	The functor $\Ab(\Phi)$ induces a well-defined functor of stable $\infty$-categories
	\begin{equation} \label{eq:abelian_derivative_algebraization}
		\partial_\Ab(\Phi) \colon \Sp\left(\Ab\left(\AnRing_k(\cX)_{/\cO_X}\right)\right) \to \Sp\left(\Ab\left(\CRing_k(\cX)_{/\cO_X\alg}\right)\right) .
	\end{equation}
\end{cor}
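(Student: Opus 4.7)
The plan is to deduce the corollary directly from the functoriality of the spectrum object construction, using the limit preservation property of $\Ab(\Phi)$ established just above.

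First I would verify that both $\Ab(\AnRing_k(\cX)_{/\cO_X})$ and $\Ab(\CRing_k(\cX)_{/\cO_X\alg})$ are pointed $\infty$-categories admitting finite limits, so that the spectrum object construction $\Sp(-)$ applies to them. For any $\infty$-category $\cC$ with finite products, the full subcategory $\Ab(\cC) = \Fun^\times(\cT_\Ab, \cC) \subset \Fun(\cT_\Ab, \cC)$ is closed under limits computed pointwise; consequently $\Ab(\cC)$ admits all limits that exist in $\cC$. Since the slice categories $\AnRing_k(\cX)_{/\cO_X}$ and $\CRing_k(\cX)_{/\cO_X\alg}$ admit finite limits (they are slices of presentable $\infty$-categories, cf.\ \cite[Proposition 1.17]{Porta_DCAGI} for the first), both $\Ab$ categories inherit finite limits. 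Pointedness is automatic: the functor $\cT_\Ab \to \cC$ sending everything to the terminal object is product-preserving and provides a zero object.

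Second, I would invoke the fact that for pointed $\infty$-categories $\cD, \cD'$ admitting finite limits, the construction $\Sp(\cD) = \lim(\cdots \xrightarrow{\Omega} \cD_* \xrightarrow{\Omega} \cD_*)$ is functorial in left exact functors: a finite-limit-preserving functor $F \colon \cD \to \cD'$ induces a canonical functor $\Sp(F) \colon \Sp(\cD) \to \Sp(\cD')$ by postcomposition, which automatically commutes with the loop functors because $F$ commutes with pullbacks over the zero object (cf.\ \cite[\S 1.4]{Lurie_Higher_algebra}). Moreover, $\Sp(\cD)$ is always a stable $\infty$-category, and $\Sp(F)$ is exact for formal reasons.

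Third, by the lemma established just above, $\Ab(\Phi)$ commutes with all limits; in particular it preserves finite limits and the zero object. I would therefore set
\[ \partial_\Ab(\Phi) \coloneqq \Sp(\Ab(\Phi)) \colon \Sp\!\left(\Ab\!\left(\AnRing_k(\cX)_{/\cO_X}\right)\right) \longrightarrow \Sp\!\left(\Ab\!\left(\CRing_k(\cX)_{/\cO_X\alg}\right)\right), \]
and the previous paragraph guarantees this is a well-defined functor between stable $\infty$-categories.

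There is no substantive obstacle here; the corollary is a formal consequence of the limit-preservation property of $\Ab(\Phi)$ combined with the $2$-functoriality of $\Sp$ on the $\infty$-category of pointed, finitely complete $\infty$-categories and left exact functors. The only point requiring mild care is confirming that the ambient categories are pointed with finite limits, which amounts to the verification in the first paragraph.
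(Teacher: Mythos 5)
Your argument is correct and takes essentially the same route as the paper: both reduce to the observation that $\Ab(\Phi)$ is left exact (established in the preceding lemma) and then invoke the functoriality of the spectrum-object construction in left-exact functors. The only cosmetic difference is the model of $\Sp$: the paper uses $\Sp(\cC) \simeq \mathrm{Exc}_*(\cS^{\mathrm{fin}}_*, \cC)$ from \cite[1.4.2.8]{Lurie_Higher_algebra} and observes that postcomposition with a left-exact functor preserves strong excisiveness, whereas you use the loop-tower description and separately verify pointedness of $\Ab(-)$.
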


\begin{proof}
	Recall from \cite[1.4.2.8]{Lurie_Higher_algebra} that given an $\infty$-category $\cC$, the $\infty$-category of spectra in $\cC$ is equivalent to
	\[ \Sp(\cC) \simeq \mathrm{Exc}_*(\cS^{\mathrm{fin}}_*, \cC) , \]
	where $\cS^{\mathrm{fin}}_*$ denotes the $\infty$-category of pointed finite spaces and $\mathrm{Exc}_*$ denotes the $\infty$-category of strongly excisive functors from $\cS^{\mathrm{fin}}_*$ to $\cC$, that is those functor $f \colon \cS^{\mathrm{fin}}_* \to \cC$ satisfying the following two conditions:
	\begin{enumerate}[(i)]
		\item $f$ takes final objects to final objects;
		\item $f$ takes pushout diagrams to pullback diagrams.
	\end{enumerate}
	Since $\Ab(\Phi)$ commutes with limits, it is clear that composition with $\Ab(\Phi)$ induces the functor \eqref{eq:abelian_derivative_algebraization}.
\end{proof}

By \cref{cor:modules_in_positive_characteristic}, we have an equivalence of stable \infcats
\[\cO_X\Mod\simeq\Sp(\Ab(\CRing_k(\cX)_{/A})).\]
Therefore, we can reduce \cref{thm:equivalence_of_modules} to the following theorem:

\begin{thm} \label{thm:equivalence_of_modules_precise}
	The functor
	\[ \partial_\Ab(\Phi) \colon \Sp(\Ab(\AnRing_k(\cX))_{/\cO_X} \to \Sp(\Ab(\CRing_k(\cX))_{/\cO\alg_X} \]
	is an equivalence of stable $\infty$-categories.
\end{thm}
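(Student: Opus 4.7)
The plan is to follow the three-step reduction announced in the introduction: reduce the statement on spectrum objects to one on $1$-connective objects of the appropriate undercategories, then reduce the topos case to the case of a point, and finally resolve the point case via flatness.

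\textbf{Step 1 (spectra to $1$-connective objects).} Both $\Sp(\Ab(\AnRing_k(\cX)_{/\cO_X}))$ and its algebraic counterpart are computed as $\Omega$-spectrum objects in the respective double undercategories: the $n$-th entry of such a spectrum is an $(n-1)$-connective object of $\underover{\cO_X}$ (resp.\ $\underover{\cO_X\alg}$), and $\Omega$ shifts connectivity. Since $\Phi$ preserves limits, zero objects, and finite products, it induces a levelwise functor on $\Omega$-spectra compatible with these identifications. Consequently, the equivalence of $\partial_\Ab(\Phi)$ reduces to the assertion that
\[ \Phi \colon \AnRing_k(\cX)_{\underover{\cO_X}}^{\ge 1} \xrightarrow{\sim} \CRing_k(\cX)_{\underover{\cO_X\alg}}^{\ge 1} \]
is an equivalence on $1$-connective augmented objects.

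\textbf{Step 2 (topos to point).} Both $\AnRing_k(\cX)$ and $\CRing_k(\cX)$ satisfy \'etale descent by axiom (iii) of \cref{def:structure}, and $\Phi$ is compatible with the formation of stalks. A $1$-connective augmented object $\cO_X \to \cA \to \cO_X$ is determined by its stalks, each of which lies in $\AnRing_k(\cS)_{\underover{\cO_{X,x}}}^{\ge 1}$. Checking the equivalence stalkwise reduces the problem to the case $\cX = \cS$.

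\textbf{Step 3 (point case via flatness).} For $A \in \AnRing_k(\cS)$, I would argue by induction on the Postnikov tower of an augmented object $A \to \cB \to A$ with $\cB$ being $1$-connective over $A$. At each stage $\tau_{\le n} \cB$ is a split square-zero type extension of $\tau_{\le n-1} \cB$ by $\pi_n(\cB)[n]$, classified by a derivation into a $1$-connective module $M$. The crucial point is that the analytic split square-zero extension $A \oplus M$ agrees with the algebraic one whenever $M$ is $1$-connective: any convergent power series evaluated on a $1$-connective $M$ factors through its polynomial truncation because $M^{\otimes 2}$ is at least $2$-connective, so all higher analytic operations collapse to the algebraic ones. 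A flatness argument then shows that every algebraic square-zero extension lifts uniquely to an analytic one in this range, and compatibility of $\Phi$ with sifted colimits and limits glues the Postnikov tower back together into a quasi-inverse to $\Phi$.

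The decisive obstacle is the point case of Step 3, namely establishing that for $1$-connective modules $M$ the analytic and algebraic split square-zero extensions of $A$ by $M$ coincide canonically. This is where the $1$-connectivity hypothesis is indispensable (the analogous statement fails for $\pi_0$) and where the technical flatness work of the paper's Flatness section provides the essential input.
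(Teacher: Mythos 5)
Your Steps 1 and 2 roughly track the paper's \cref{sec:reduction_to_connected_objects} and \cref{sec:reduction_to_spaces}: the paper also reduces via pointed objects and connectivity (\cref{lem:pointed_objects}, \cref{lem:connected_objects_abelian_groups}) to the statement that $\Phi_*^{\ge 1}$ is an equivalence, and then passes to stalks (via the left-adjointability result \cite[Theorem 1.12]{Porta_DCAGII} and the fact that $\cX$ has enough points). But your Step 3 departs substantially from the paper's argument, and it contains gaps that would prevent it from closing.

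First, there is a circularity. The ``analytic split square-zero extension $\cO_X \oplus \cF$'' that you lean on is defined in the paper (\cref{def:analytic_split_square_zero_extension}) as $\Omega^\infty_\Ab(\cF)$ for $\cF \in \cO_X\Mod$; this only makes sense once the equivalence $\cO_X\Mod \simeq \Sp(\Ab(\AnRing_k(\cX)_{/\cO_X}))$ of \cref{thm:equivalence_of_modules} is in hand --- precisely the statement being proved. Likewise, the Postnikov stages $\tau_{\le n}\cB \to \tau_{\le n-1}\cB$ are not split; they are classified by derivations into $\pi_n(\cB)[n+1]$, and matching those derivation spaces between the analytic and algebraic worlds is exactly the cotangent-complex comparison of \cref{subsec:analytic_cotangent_complex}, which is downstream of the theorem.

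Second, the heuristic ``convergent power series evaluated on a $1$-connective $M$ collapse to their polynomial truncation because $M^{\otimes 2}$ is $2$-connective'' does not capture where the actual difficulty lies. In any square-zero extension $M \cdot M = 0$ regardless of connectivity, so the Leibniz-type collapse you describe holds identically in the algebraic and analytic settings. The genuine analytic-vs-algebraic divergence is in the behavior of the base change along the structural maps, which is what the paper's flatness statement controls: \cref{prop:flatness} shows that the unit $\eta \colon B \to \Phi_*(\Psi_*(B))$ is a flat map of simplicial commutative rings for all $B \in \CRing_{\underover{A\alg}}$, and combined with the (easy) fact that $\pi_0(\eta)$ is an isomorphism this gives the equivalence. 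That flatness is proved by a very different mechanism than your Step 3: closure properties of the subcategory $\cC_A$ (\cref{lem:sorite_analytification_flat_algebras}), reduction to polynomial generators $A\alg[X_1, \ldots, X_m]$ (\cref{cor:flatness_reduction_to_generators}), a further reduction to $A$ being a stalk of a polydisk or affine space (\cref{lem:flatness_smooth_reduction}), a geometric identification of the relative analytification as a ring of germs (\cref{prop:geometric_interpretation_relative_analytification}), and an explicit flatness check by passing to formal completions (\cref{cor:flatness_smooth_algebras}). None of this work is discharged by your proposed Postnikov induction, and the phrase ``a flatness argument then shows\ldots'' is a placeholder for what is in fact the technical heart of the proof.
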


\subsection{Reduction to connected objects} \label{sec:reduction_to_connected_objects}

By the construction of $\partial_\Ab(\Phi)$, in order to prove \cref{thm:equivalence_of_modules_precise}, it would be enough to prove that $\Ab(\Phi)$ is an equivalence.
In fact, it is sufficient to prove that $\Ab(\Phi)$ is an equivalence up to a finite number of suspensions.
Let us explain this reduction step precisely.

Observe that the functor $\Phi \colon \AnRing_k(\cX)_{/\cO_X} \to \CRing_k(\cX)_{/\cO_X\alg}$ induces a well-defined functor
\[ \Phi_*\colon\AnRing_k(\cX)_{\underover{\cO_X}} \to \CRing_k(\cX)_{\underover{\cO_X\alg}}.\]

\begin{lem} \label{lem:pointed_objects}
	\begin{enumerate}
		\item Let $\cC$ be an $\infty$-category with finite limits and let $*_\cC$ denote a final object for $\cC$.
		Write $\cC_* \coloneqq \cC_{*_\cC /}$.
		Then the forgetful functor $\cC_* \to \cC$ induces equivalences
		\[ \Ab(\cC_*) \to \Ab(\cC) \quad \textrm{and} \quad \Sp(\cC_*) \to \Sp(\cC) . \]
		\item Let $f \colon \cC \to \cD$ be a functor between $\infty$-categories with finite limits.
		Suppose that $f$ commutes with finite limits.
		Then $f$ induces a well-defined functor $f_* \colon \cC_* \to \cD_*$.
		Moreover, the diagrams
		\[ \begin{tikzcd}
			\Ab(\cC_*) \arrow{r}{\Ab(f_*)} \arrow{d} & \Ab(\cD_*) \arrow{d} \\
			\Ab(\cC) \arrow{r}{\Ab(f)} & \Ab(\cD)
		\end{tikzcd} \quad \textrm{and} \quad \begin{tikzcd}
			\Sp(\cC_*) \arrow{r}{\partial(f_*)} \arrow{d} & \Sp(\cD_*) \arrow{d} \\
			\Sp(\cC) \arrow{r}{\partial(f)} & \Sp(\cD)
		\end{tikzcd} \]
		commute. In particular, $\Ab(f)$ (resp.\ $\partial(f)$) is an equivalence if and only if $\Ab(f_*)$ (resp.\ $\partial(f_*)$) is one.
	\end{enumerate}
\end{lem}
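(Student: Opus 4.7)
The whole lemma is a formal consequence of the fact that the indexing categories $\cT_\Ab$ and $\cS^{\mathrm{fin}}_*$ both have a zero object; this forces any sufficiently-finite-limit-preserving functor out of them into $\cC$ to factor canonically through the pointed slice $\cC_*$. The plan is simply to make this factorization explicit, producing a tautological inverse to the forgetful functor.

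For the equivalence $\Ab(\cC_*) \to \Ab(\cC)$, recall that $\cT_\Ab$ is the opposite of the category of finitely generated free abelian groups, so the trivial abelian group $0 \in \cT_\Ab$ is a zero object. A product-preserving functor $F \colon \cT_\Ab \to \cC$ therefore sends $0$ to the final object $*_\cC$, and for every $X \in \cT_\Ab$ the unique morphism $0 \to X$ in $\cT_\Ab$ induces a canonical basepoint $*_\cC \to F(X)$ in $\cC$. These basepoints are automatically natural in $X$, since for any morphism $X \to Y$ of $\cT_\Ab$ the composite $0 \to X \to Y$ agrees with $0 \to Y$ by the uniqueness of morphisms out of a zero object. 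Thus $F$ promotes canonically to a product-preserving functor $\widetilde F \colon \cT_\Ab \to \cC_*$, and the assignment $F \mapsto \widetilde F$ is visibly inverse to the forgetful functor $\Ab(\cC_*) \to \Ab(\cC)$.

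For the equivalence $\Sp(\cC_*) \to \Sp(\cC)$, I would use the model $\Sp(\cC) \simeq \Exc_*(\cS^{\mathrm{fin}}_*, \cC)$ from \cite[1.4.2.8]{Lurie_Higher_algebra} already invoked in the excerpt: the category $\cS^{\mathrm{fin}}_*$ of pointed finite spaces has a zero object (the one-point pointed space), and reduced excisive functors send it to $*_\cC$. Exactly as before, the unique morphisms out of the zero object provide canonical basepoints on the values of $F$, and the resulting factorization $\widetilde F \colon \cS^{\mathrm{fin}}_* \to \cC_*$ is still reduced and excisive, providing an inverse to the forgetful functor.

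For part (2), since $f$ commutes with finite limits it preserves final objects and hence induces $f_* \colon \cC_* \to \cD_*$. The commutativity of both squares is a direct unwinding of the definitions of $\Ab(f)$ and $\partial(f)$ together with the canonical liftings constructed in part (1): in each case both composites send a functor $F \colon \cT_\Ab \to \cC$ (or $\cS^{\mathrm{fin}}_* \to \cC$) to the composite $f \circ F$, equipped with the pointing induced by the zero object of the source. The last assertion then follows at once from the commutativity of the squares combined with part~(1). The only mild subtlety in the whole argument is the naturality check for the canonical basepoints, which is handled by the uniqueness of morphisms to and from the zero object; there is no substantive obstacle.
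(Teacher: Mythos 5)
Your proof of the abelian-group equivalence is exactly the argument the paper uses: since $\cT_\Ab$ has a zero object and a product-preserving $F\colon \cT_\Ab \to \cC$ sends it to $*_\cC$, the maps $A^0 \to A^n$ supply a canonical lift $\widetilde F\colon \cT_\Ab \to \cC_*$ inverse to the forgetful functor. The one place where the two proofs genuinely diverge is the spectrum case: the paper simply cites \cite[1.4.2.25]{Lurie_Higher_algebra} for $\Sp(\cC_*)\simeq\Sp(\cC)$, whereas you rerun the zero-object argument directly on $\cS^{\mathrm{fin}}_*$, checking that the lift $\widetilde F$ remains reduced and excisive. Your route is more uniform (both halves of (1) fall out of the same observation) and more self-contained. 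The cost is that you take on yourself the coherence burden that the citation hides: in an $\infty$-category, ``the composite $0\to X\to Y$ agrees with $0\to Y$ by uniqueness'' is a shorthand for the fact that $\Map(A^0,-)$ is contractible, and the canonical natural transformation $\mathrm{const}_{F(A^0)}\Rightarrow F$ really comes from the adjunction between the constant-diagram functor and evaluation at the initial object, not from a pointwise uniqueness check. That mechanism is standard, but it is worth being aware that the paper dodges it for spectra by outsourcing to Lurie, while your argument absorbs it. Part (2) is handled identically in both (well-definedness of $f_*$ via \cref{lem:abelianization_of_functors}, then commutativity of the squares by unwinding), so there is no gap.
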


\begin{proof}
	The forgetful functor $\cC_* \to \cC$ commutes with limits.
	Therefore, the existence of $f_*$ is a consequence of \cref{lem:abelianization_of_functors}.
	Knowing this, the second statement is a direct consequence of the first one.
	
	We now prove the first statement. The case of spectra has been discussed in \cite[1.4.2.25]{Lurie_Higher_algebra}.
	As for abelian groups, let $F \colon \cT_\Ab \to \cC$ be an $\infty$-functor that preserves products.
	Since $\cT_\Ab$ has a zero object, we see that $F$ factors canonically as
	\[ \widetilde{F} \colon \cT_\Ab \to \cC_* . \]
	This produces a functor $\Ab(\cC) \to \Ab(\cC_*)$ that is readily checked to be the inverse to the canonical functor $\Ab(\cC_*) \to \Ab(\cC)$.
\end{proof}

We need a digression on connected objects in $\infty$-categories.
We refer to \cite[5.5.6.18]{HTT} for the definition of truncation functors $\tau_{\le n}$ in a presentable $\infty$-category.

\begin{defin} \label{def:n-connected}
	Let $\cC$ be a presentable $\infty$-category.
	For any $n \ge 1$, we say that an object $X \in \cC$ is \emph{$n$-connected} if the canonical map $X \to *_\cC$ induces an equivalence
	\[ \tau_{\le n - 1} X \xrightarrow{\sim} *_\cC . \]
	We denote by $\cC^{\ge n}$ the full subcategory of $\cC$ spanned by $n$-connected objects.
\end{defin}

\begin{lem} \label{lem:connected_objects_abelian_groups}
	Let $\cC$ be a presentable $\infty$-category.
	Suppose that there exists an $\infty$-topos $\cX$ and a functor $F \colon \cC \to \cX$ such that:
	\begin{enumerate}
		\item $F$ is conservative;
		\item $F$ commutes with finite limits;
		\item $F$ commutes with the truncation functors.
	\end{enumerate}
	Then:
	\begin{enumerate}
		\item $\cC^{\ge n}$ is closed under finite products in $\cC$;
		\item there is a canonical equivalence of $\infty$-categories $\Ab(\cC^{\ge n}) \simeq \Ab(\cC)^{\ge n}$.
	\end{enumerate}
\end{lem}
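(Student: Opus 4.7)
The plan is to leverage the functor $F \colon \cC \to \cX$ to pull questions about connectedness and truncation in $\cC$ back to the analogous (and well understood) questions in the $\infty$-topos $\cX$. The three assumptions on $F$ combine to the following slogan: an object $Y \in \cC$ is $n$-connected if and only if $F(Y)$ is $n$-connected in $\cX$, and a morphism in $\cC$ is an equivalence if and only if its image under $F$ is. This is because $F$ commutes with the truncation functors and with the terminal object (a finite limit), and is conservative.

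For statement (1), let $X, Y \in \cC^{\ge n}$. Since $F$ preserves finite products, $F(X \times Y) \simeq F(X) \times F(Y)$ in $\cX$. In any $\infty$-topos, the full subcategory of $n$-connected objects is closed under finite products (cf.\ \cite[6.5.1.12, 6.5.1.13]{HTT}), so $F(X \times Y)$ is $n$-connected. Using that $F$ commutes with $\tau_{\le n-1}$ and with the terminal object, the map $\tau_{\le n-1}(X \times Y) \to *_\cC$ becomes an equivalence after applying $F$; conservativity then gives $X \times Y \in \cC^{\ge n}$. The terminal object is trivially $n$-connected, so this closes (1).

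The key auxiliary step for statement (2) is that the truncation functor $\tau_{\le n-1}^{\cC}$ preserves finite products. Indeed, for any $X, Y \in \cC$ there is a canonical map
\[ \tau_{\le n-1}^{\cC}(X \times Y) \to \tau_{\le n-1}^{\cC}(X) \times \tau_{\le n-1}^{\cC}(Y) , \]
and applying $F$ yields the analogous comparison map in $\cX$, which is an equivalence since truncation commutes with finite products in an $\infty$-topos. Conservativity of $F$ lifts this to an equivalence in $\cC$. Consequently, pointwise truncation sends product-preserving functors $\cT_\Ab \to \cC$ to product-preserving functors, so it descends to a well-defined endofunctor of $\Ab(\cC)$. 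Since $\Ab(\cC) \hookrightarrow \Fun(\cT_\Ab, \cC)$ is a full subcategory closed under limits, the universal property of pointwise truncation in $\Fun(\cT_\Ab, \cC)$ restricts to show that $A \mapsto \tau_{\le n-1}^{\cC} \circ A$ is precisely the truncation functor of $\Ab(\cC)$.

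With this identification, an abelian group object $A \colon \cT_\Ab \to \cC$ lies in $\Ab(\cC)^{\ge n}$ if and only if $\tau_{\le n-1}^{\cC}(A(G)) \simeq *_\cC$ for every $G \in \cT_\Ab$. Because every object of $\cT_\Ab$ is of the form $\Z^r$ and $A$ preserves products, $A(G)$ is a finite product of copies of $A(\Z) = U(A)$, so using (1) this condition reduces to $U(A) \in \cC^{\ge n}$. The latter is exactly the condition that $A$ factors through the full subcategory $\cC^{\ge n} \hookrightarrow \cC$, i.e.\ that $A \in \Ab(\cC^{\ge n})$. Thus the fully faithful functor $\Ab(\cC^{\ge n}) \to \Ab(\cC)$ induced by the inclusion has essential image equal to $\Ab(\cC)^{\ge n}$, yielding the desired equivalence. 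The one delicate point in this argument is verifying that the truncation in $\Ab(\cC)$ is computed pointwise, and this is handled by the product-preservation of $\tau_{\le n-1}^{\cC}$ established above.
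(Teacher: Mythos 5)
Your proof is correct and follows essentially the same route as the paper's: establish that $\tau_{\le n-1}$ preserves finite products (hence statement (1)), then show that an object of $\Ab(\cC)$ is $n$-connected precisely when its underlying object is, and conclude by identifying essential images. The one place where you are more careful than the paper is the explicit verification that truncation in $\Ab(\cC)$ is computed pointwise (via the observation that pointwise truncation preserves $\Ab(\cC)$ inside $\Fun(\cT_\Ab,\cC)$ and that the universal property restricts); the paper's proof tacitly uses this identification when it passes from $\tau_{\le n}\circ F \simeq 0$ to $F \in \Ab(\cC)^{\ge n}$, so your explicitness is an improvement rather than a detour. The other small difference is cosmetic: you verify (1) by transporting through $F$ to $\cX$ and invoking conservativity, whereas the paper infers directly that $\tau_{\le n-1}^\cC$ preserves products and applies it to $X\times Y$; both hinge on the same input. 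The full faithfulness of $\Ab(\cC^{\ge n}) \to \Ab(\cC)$, which the paper derives from a lemma of Gepner--Haugseng--Nikolaus, you leave implicit but it follows from the standard fact that postcomposition with a fully faithful functor is fully faithful together with statement (1), so there is no gap.
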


\begin{proof}
	Recall from \cite[6.5.1.2]{HTT} that the truncation functor $\tau_{\le n} \colon \cX \to \cX$ commutes with finite products.
	The hypotheses on $F$ guarantee that the same goes for the truncation functor $\tau_{\le n} \colon \cC \to \cC$.
	At this point, the first statement follows immediately.
	
	Let us now prove the second statement.
	We start by recalling that there is an equivalence
	\[ \Ab(\cC) \simeq \Fun^\times( \cTAb, \cC) , \]
	where $\cTAb$ is the opposite category of free abelian groups of finite rank.
	We denote the free abelian group of rank $n$ by $A^n$.
	
	We claim that an object $F \in \Ab(\cC)$ belongs to $\Ab(\cC)^{\ge n}$ if and only if its image in $\cC$ belongs to $\cC^{\ge n}$.
	To see this, let $0 \in \Ab(\cC)$ denote the constant functor associated to $*_\cC$.
	Let furthermore $F \colon \cTAb \to \cC$ be a product preserving functor.
	Since $\tau_{\le n}$ commutes with finite products, $\tau_{\le n} \circ F$ is again a product preserving functor.
	It follows that the morphism $\tau_{\le n} \circ F \to 0$ is an equivalence if and only if it is an equivalence when evaluated on $A^1 \in \cTAb$.
	Since the forgetful functor $\Ab(\cC) \to \cC$ coincides (by definition) with the evaluation at $A^1$, this completes the proof of the claim.
	
	Now we remark that statement (1) implies that the inclusion
	\[ i \colon \cC^{\ge n} \hookrightarrow \cC \]
	commutes with finite products.
	Using \cite[Lemma 5.2]{Gepner_Lax_2015}, we see that the induced functor
	\[ \Fun(\cTAb, \cC^{\ge n}) \to \Fun(\cTAb, \cC) \]
	is fully faithful.
	It follows that the induced functor
	\[ \Ab(i) \colon \Ab(\cC^{\ge n}) \to \Ab(\cC) \]
	is fully faithful as well.
	Moreover, the diagram
	\[ \begin{tikzcd}
		\Ab(\cC^{\ge n}) \arrow{r} \arrow{d} & \Ab(\cC) \arrow{d} \\
		\cC^{\ge n} \arrow{r}{i} & \cC
	\end{tikzcd} \]
	commutes.
	It follows that $\Ab(i)$ factors through
	\[ j \colon \Ab(\cC^{\ge n}) \to \Ab(\cC)^{\ge n} , \]
	and that also $j$ is fully faithful.
	We are left to prove that $j$ is essentially surjective.
	Let $F \in \Ab(\cC)^{\ge n}$.
	Then by the above claim, the image of $F$ in $\cC$ belongs to $\cC^{\ge n}$.
	We can therefore see $F$ as an element in $\Ab(\cC^{\ge n})$, thus completing the proof.
\end{proof}

Since the functor
\[ \Phi_* \colon \AnRing_k(\cX)_{\underover{\cO_X}} \to \CRing_k(\cX)_{\underover{\cO_X\alg}} \]
commutes with limits and sifted colimits, it admits a left adjoint
\[ \Psi_* \colon \CRing_k(\cX)_{\underover{\cO_X\alg}} \to \AnRing_k(\cX)_{\underover{\cO_X}} . \]

\begin{lem}
	The functor $\Psi_*$ takes $\CRing_k(\cX)_{\underover{\cO_X\alg}}^{\ge 1}$ to $\AnRing_k(\cX)_{\underover{\cO_X}}^{\ge 1}$, where $(-)^{\ge 1}$ is in the sense of \cref{def:n-connected}.
\end{lem}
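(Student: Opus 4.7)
The strategy is to reduce this to a formal consequence of the adjunction $\Psi_* \dashv \Phi_*$, combined with the fact that the algebraization functor is compatible with $0$-truncations. More precisely, I plan to establish first that $\Phi_*$ preserves $0$-truncated objects, and then use the standard principle that a left adjoint automatically preserves $n$-connected objects when its right adjoint preserves $n$-truncated ones.

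For the first step, I would show that $\Phi \colon \AnRing_k(\cX)_{/\cO_X} \to \CRing_k(\cX)_{/\cO_X\alg}$ preserves $0$-truncated objects, and then deduce the same property for $\Phi_*$, since $Y$ in $\cC_{\underover{X}}$ is $0$-truncated iff its underlying object in $\cC_{/X}$ is. By the commutative square \eqref{eq:Phi}, this in turn reduces to the fact that the algebraization functor $(-)\alg \colon \AnRing_k(\cX) \to \CRing_k(\cX)$ commutes with the truncation functor $\tau_{\le 0}$. This last commutation follows from the compatibility of the pregeometries $\cTank$ and $\cTdisck$ with $n$-truncations, cited in the proof of \cref{lem:analytification_formal_properties}(\ref{item:truncation_functors}): namely \cite[Theorem 3.23]{Porta_Yu_DNAnG_I} in the non-archimedean case, \cite[Proposition 11.4]{DAG-IX} in the complex case, together with \cite[4.3.28]{DAG-V} for $\cTdisck$.

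For the second step, I would exploit the pointed structure of the under-over categories (whose zero object is $\cO_X$, resp.\ $\cO_X\alg$, with identity maps). In a pointed presentable $\infty$-category, $Y$ is $1$-connected iff $\Map(Y, Z) \simeq *$ for every $0$-truncated $Z$: one direction is formal, the other follows by testing with $Z = \tau_{\le 0} Y$ and observing that the canonical map $Y \to \tau_{\le 0} Y$ must then factor through the zero object. Given $B \in \CRing_k(\cX)_{\underover{\cO_X\alg}}^{\ge 1}$ and an arbitrary $0$-truncated $A$ in $\AnRing_k(\cX)_{\underover{\cO_X}}$, the adjunction yields
\[ \Map\bigl(\Psi_*(B), A\bigr) \simeq \Map\bigl(B, \Phi_*(A)\bigr) \simeq *, \]
where the second equivalence uses that $\Phi_*(A)$ is $0$-truncated by the first step and that $B$ is $1$-connected. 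This gives $\Psi_*(B) \in \AnRing_k(\cX)_{\underover{\cO_X}}^{\ge 1}$, as desired.

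The main subtlety I expect is keeping careful track of how the truncation functor behaves when passing from $\AnRing_k(\cX)$ and $\CRing_k(\cX)$ to the corresponding under-over categories, and making sure the connectedness condition in the under-over category is equivalent to the usual $\pi_0$-isomorphism condition on the augmentation; the adjunction argument itself is then purely formal.
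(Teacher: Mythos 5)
Your proposal is correct and is essentially a cleaner, unpacked exposition of what the paper compresses into the factorization $\pi_0 \circ \Psi_* \simeq \pi_0 \circ \Psi_* \circ \pi_0$. You work on the right-adjoint side (establish that $\Phi_*$ preserves $0$-truncated objects) and then transpose via the formal duality between truncatedness and connectedness, whereas the paper states the corresponding factorization directly on the left-adjoint side without indicating where it comes from; the factorization is itself most naturally proved by exactly the adjoint transposition you carry out. Your version is preferable insofar as it isolates the single input that does the work.

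One reduction in your first step is not quite right, though. You reduce ``$\Phi$ preserves $0$-truncated objects of $\AnRing_k(\cX)_{/\cO_X}$'' to ``$(-)\alg$ commutes with $\tau_{\le 0}$ on $\AnRing_k(\cX)$,'' citing compatibility of the pregeometries with truncations. But $0$-truncatedness in an over-category $\cC_{/X}$ is a \emph{relative} condition on the structure morphism $A \to X$, namely that its iterated diagonal eventually become an equivalence, and this is not the absolute condition that the underlying object $A$ be $0$-truncated in $\cC$. When $\cO_X$ is not itself discrete the two notions genuinely differ: $\mathrm{id}_{\cO_X}$ is $(-2)$-truncated in the slice even though $\cO_X$ need not be $0$-truncated. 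Commuting with the absolute $\tau_{\le 0}$ therefore does not by itself yield the preservation you need. The cheaper and correct route is to use what the paper records immediately before the statement: $\Phi$, hence $\Phi_*$, commutes with limits, and any limit-preserving functor between presentable $\infty$-categories that preserves the final object automatically preserves $n$-truncated objects, since these are detected by the finite-limit condition that the iterated diagonal of $A \to *$ be an equivalence. This in fact sidesteps the compatibility of $\cTank$ and $\cTdisck$ with truncations entirely. With that substitution, your argument is complete and correct.
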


\begin{proof}
	It is enough to remark that the functor
	\[ \pi_0 \circ \Psi_* \colon \CRing_k(\cX)_{\underover{\cO_X\alg}} \to \AnRing_k^{\le 0}(\cX)_{\underover{\pi_0(\cO_X)}} \]
	is naturally equivalent to the functor
	\[ \pi_0 \circ \Psi_* \circ \pi_0 \colon \CRing_k^{\le 0}(\cX)_{\underover{\pi_0(\cO_X\alg)}} \to \AnRing_k^{\le 0}(\cX)_{\underover{\pi_0(\cO_X)}},\]
	where $(-)^{\le 0}$ denotes the full subcategory spanned by 0-truncated objects (cf.\ \cite[5.5.6.1]{HTT}).
\end{proof}

In particular, $\Psi_*$ induces a functor
\[ \Psi_*^{\ge 1} \colon \CRing_k(\cX)_{\underover{\cO_X\alg}}^{\ge 1} \to \AnRing_k(\cX)_{\underover{\cO_X}}^{\ge 1} , \]
and moreover $\Psi_*^{\ge 1}$ is a left adjoint to $\Phi_*^{\ge 1}$.

The main goal of this subsection is to reduce the proof of \cref{thm:equivalence_of_modules_precise} to the following statement:

\begin{thm} \label{thm:connected_closed_immersion_are_algebraic}
	The adjoint functors
	\[ \Phi_*^{\ge 1} \colon \AnRing_k(\cX)_{\underover{\cO_X}}^{\ge 1} \leftrightarrows \CRing_k(\cX)_{\underover{\cO_X\alg}}^{\ge 1} \colon \Psi_*^{\ge 1} \]
	form an equivalence.
\end{thm}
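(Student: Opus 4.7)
The plan is to establish this equivalence in two distinct stages: first reduce to the point-like case $\cX = \cS$, and then prove the equivalence of connected undercategories in that case via a Postnikov tower plus flatness argument, reducing further to the case of square-zero extensions by connected modules where analytic and algebraic data coincide essentially tautologically.

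For the first stage, I would argue as follows. Both $\AnRing_k(\cX)$ and $\CRing_k(\cX)$ are, by definition, $\infty$-categories of $\cT$-structures on $\cX$ with local morphisms, and the functor $\Phi_*$ is induced by the transformation of pregeometries $\cTdisck \to \cTank$, which commutes with the standard stalk functors associated to geometric points of $\cX$. Since $X = (\cX, \cO_X)$ is a derived analytic space, the underlying $\infty$-topos $\cX$ is hypercomplete and \'etale-locally equivalent to the small \'etale topos of an ordinary analytic space; in particular, it admits enough points. Under these conditions, the unit and counit of the adjunction $(\Psi_*^{\ge 1}, \Phi_*^{\ge 1})$ are equivalences globally if and only if they are equivalences on every stalk. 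Formation of stalks commutes with the truncation and connectivity functors used to define the $(-)^{\ge 1}$ subcategories, so this stalkwise reduction cuts the problem down to proving the analogous statement when $\cX = \cS$ and $\cO_X$ is replaced by an arbitrary derived $k$-analytic ring $A$.

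At the point level, I would exploit that every $1$-connected object $A \to B \to A$ in $\AnRing_k^{\ge 1}_{\underover{A}}$ is the limit of its Postnikov tower $\{\tau_{\le n} B\}_{n \ge 0}$, with successive stages obtained as analytic square-zero extensions by the modules $\pi_{n+1}(B)[n+1]$, which are $1$-connected as $\pi_0(A)$-modules. Since $\Phi_*$ commutes with limits and sifted colimits and is conservative, and the analogous tower exists in the algebraic setting, the equivalence reduces to matching, stage by stage, the analytic and algebraic square-zero extensions of a common base by a $1$-connected module. The key flatness input is the following: for a $1$-connected module $M$, any algebraic square-zero extension $A\alg \oplus M$ admits a unique analytic enhancement, which is the split analytic square-zero extension $A \oplus M$; this uniqueness is what makes $\Psi_*^{\ge 1}$ act as a strict two-sided inverse to $\Phi_*^{\ge 1}$ on this class of objects. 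Intuitively, convergent power series evaluated on elements of positive homotopical degree reduce, modulo the square-zero relation, to their finite Taylor polynomials, so no new analytic information beyond the algebraic one can appear.

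The main obstacle will be making precise the claim that the analytic enhancement of a free simplicial commutative $k$-algebra on a $1$-connected module is flat over the base analytic structure and agrees with the split analytic square-zero extension. This is delicate because one must avoid circular reasoning with the module equivalence \cref{thm:equivalence_of_modules} that this very theorem is ultimately used to prove; in particular, the flatness argument has to be phrased entirely in terms of $\cTank$- and $\cTdisck$-structures on $\cS$ and the compatibility of these pregeometries with $n$-truncations (as recalled after \cref{lem:analytification_formal_properties}), rather than in terms of the module categories themselves. I anticipate that the technical crux is an explicit analysis of $\cTank$-structures on free $1$-connected objects, showing that the unit $B \to \Phi_*\Psi_* B$ (equivalently the counit $\Psi_* \Phi_* B \to B$) induces an equivalence on each Postnikov layer, which is where the flatness of the analytic enhancement of free algebras on connected modules is used decisively.
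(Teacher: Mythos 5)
Your reduction to the point case $\cX = \cS$ is essentially the paper's first step (the paper does it via a geometric point and the left-adjointability of the relevant square, citing \cite[Theorem 1.12]{Porta_DCAGII}; you should make that compatibility explicit rather than assert it), but your second stage is where the argument breaks down. The Postnikov-tower strategy you propose is circular at the stage where the circularity actually matters. You plan to decompose a $1$-connected $B$ via its Postnikov tower, with layers realized as \emph{analytic} square-zero extensions, and then to show that analytic and algebraic square-zero extensions by a connected module coincide. But the very notion of analytic (split) square-zero extension $A \oplus M$, \cref{def:analytic_split_square_zero_extension}, is defined through $\Omega^\infty_\Ab$ landing in $\Sp(\Ab(\AnRing_k(\cX)_{/\cO_X}))$ and then transported to $\cO_X\Mod$ via \cref{thm:equivalence_of_modules} --- which is the theorem whose proof is, through \cref{thm:equivalence_of_modules_precise}, being reduced to the present statement. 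Likewise, the claim that the Postnikov stages of a derived analytic ring are \emph{analytic} square-zero extensions is \cref{cor:Postnikov_tower_analytic_square_zero}, whose proof rests on \cref{thm:lifting_derivations} and the analytic cotangent complex, all downstream of the module theorem. You flag this worry yourself and gesture at rephrasing everything in terms of $\cTank$- and $\cTdisck$-structures and their compatibility with truncation, but you do not actually indicate how to carry that out, and the claim that ``any algebraic square-zero extension admits a unique analytic enhancement'' is not an input you can borrow from anywhere at this stage --- it is essentially equivalent to what you are trying to prove.

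The paper's actual proof (Proposition \ref{prop:flatness} and the rest of Section \ref{sec:flatness}) sidesteps all of this with a more elementary argument that does not use the cotangent complex, analytic square-zero extensions, or the Postnikov decomposition of analytic rings. After reducing to $\cX = \cS$, it observes that since $\Phi_*^{\ge 1}$ is conservative and $\pi_0(\eta)$ is an isomorphism, it suffices to show that the unit $\eta \colon B \to \Phi_*(\Psi_*(B))$ is a \emph{flat} map of simplicial commutative rings (a flat map inducing an isomorphism on $\pi_0$ between connective objects is an equivalence). Flatness is established by cell induction, reducing to polynomial algebras $A\alg[X_1,\dots,X_m]$, computing the relative analytification geometrically as a ring of germs of holomorphic functions on $V \times \mathbf A^m_k$ (\cref{prop:geometric_interpretation_relative_analytification}), and verifying flatness by passing to formal completions at maximal ideals. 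This is the ``intuition about convergent power series'' you articulate, but made rigorous by a direct Noetherian completion argument rather than by a reduction to square-zero extensions. If you want to salvage a Postnikov-style proof, you would need to first construct analytic square-zero extensions by hand within $\AnRing_k(\cS)$, independently of the module equivalence, and prove their uniqueness by a separate argument --- at which point you would likely be reproducing a version of the flatness computation anyway.
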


The next two subsections will be devoted to the proof of \cref{thm:connected_closed_immersion_are_algebraic}.
Now let us explain how to deduce \cref{thm:equivalence_of_modules_precise} from \cref{thm:connected_closed_immersion_are_algebraic}:

\begin{proof}[Proof of \cref{thm:equivalence_of_modules_precise} assuming \cref{thm:connected_closed_immersion_are_algebraic}]
	Since $\Phi_*^{\ge 1}$ is an equivalence, the same goes for
	\[ \Ab(\Phi_*^{\ge 1}) \colon \Ab\left(\AnRing_k(\cX)_{\underover{\cO_X}}^{\ge 1}\right) \to \Ab\left( \CRing_k(\cX)_{\underover{\cO_X\alg}}^{\ge 1} \right) . \]
	Notice that \cref{thm:connected_closed_immersion_are_algebraic} guarantees, in particular, that $\Psi_*^{\ge 1}$ commutes with finite limits.
	In particular, composition with $\Psi_*^{\ge 1}$ induces a well-defined functor
	\[ \Ab(\Psi_*^{\ge 1}) \colon \Ab\left( \CRing_k(\cX)_{\underover{\cO_X\alg}}^{\ge 1} \right) \to \Ab\left(\AnRing_k(\cX)_{\underover{\cO_X}}^{\ge 1}\right) \]
	which is left adjoint to $\Ab(\Phi_*^{\ge 1})$.

	In order to prove that
	\[ \partial_\Ab(\Phi_*) \colon \Sp\left(\Ab\left(\AnRing_k(\cX)_{\underover{\cO_X}}\right)\right) \to \Sp\left(\Ab\left(\CRing_k(\cX)_{\underover{\cO_X\alg}}\right)\right) \]
	is an equivalence, it is enough to prove that for any
	\[ M \in \Ab\left(\CRing_k(\cX)_{\underover{\cO_X\alg}}\right) \]
	the canonical map
	\[ \Sigma(M) \to \Ab(\Phi_*)( \Ab(\Psi_*)(\Sigma(M))) \]
	is an equivalence.
	Here $\Sigma$ denotes the suspension functor (see the discussion around \cite[1.1.2.6]{Lurie_Higher_algebra}).
	
	Notice that the natural inclusion
	\[ \CRing_k(\cX)_{\underover{\cO_X\alg}} \hookrightarrow \Fun(\cTdisck, \cX)_{/\cO_X} \]
	is conservative, commutes with limits and with truncations.
	In particular, we can apply \cref{lem:connected_objects_abelian_groups} to deduce the equivalence
	\[ \Ab\left( \CRing_k(\cX)_{\underover{\cO_X\alg}} \right)^{\ge 1} \simeq \Ab\left( \CRing_k(\cX)_{\underover{\cO_X\alg}}^{\ge 1} \right) . \]
	Observe now that
	\[ \Sigma(M) \in \Ab\left( \CRing_k(\cX)_{\underover{\cO_X\alg}} \right)^{\ge 1} . \]
		In particular
	\[ \Ab(\Psi_*)(\Sigma(M)) \simeq \Ab(\Psi_*^{\ge 1})(\Sigma(M)) \in \Ab\left( \AnRing_k(\cX)_{\underover{\cO_X\alg}}^{\ge 1} \right). \]
	As a consequence,
	\[ \Ab(\Phi_*)( \Ab(\Psi_*)(\Sigma(M)) \simeq \Ab(\Phi_*^{\ge 1})( \Ab(\Psi_*^{\ge 1})(\Sigma(M) ) . \]
	Since $\Ab(\Phi_*^{\ge 1})$ is an equivalence and $\Ab(\Psi_*^{\ge 1})$ is its left adjoint, the conclusion follows.
\end{proof}

\subsection{Reduction to the case of spaces} \label{sec:reduction_to_spaces}

Here we explain how to reduce the proof of \cref{thm:connected_closed_immersion_are_algebraic} to the case where $\cX$ is the $\infty$-category of spaces $\cS$.

In order to prove \cref{thm:connected_closed_immersion_are_algebraic}, it is enough to prove that the pair of functors $(\Psi_*^{\ge 1}, \Phi_*^{\ge 1})$ form an equivalence of categories.
Fix a geometric point $x\inv \colon \cX \leftrightarrows \cS \colon x_*$.
Invoking \cite[Theorem 1.12]{Porta_DCAGII} we conclude that the induced diagram
\[ \begin{tikzcd}
	\AnRing_k(\cX)_{\underover{\cO_X}}^{\ge 1} \arrow{d}{\Phi_*^{\ge 1}} \arrow{r}{x\inv} & \AnRing_k(\cS)_{\underover{x\inv \cO_X}}^{\ge 1} \arrow{d}{\Phi_*^{\ge 1}} \\
	\CRing_k(\cX)_{\underover{\cO_X\alg}}^{\ge 1} \arrow{r}{x\inv} & \CRing_k(\cS)_{\underover{x\inv\cO_X\alg}}^{\ge 1}
\end{tikzcd} \]
commutes and it is left adjointable.
Since $\cX$ has enough points (see \cite[Remark 3.3]{Porta_Yu_Derived_non-archimedean_analytic_spaces}), we see that it is enough to check that the adjunction
\[ \Phi_*^{\ge 1} \colon \AnRing_k(\cS)_{\underover{x\inv \cO_X}}^{\ge 1} \leftrightarrows \CRing_k(\cS)_{\underover{x\inv \cO_X\alg}}^{\ge 1} \colon \Psi_*^{\ge 1} \]
is an equivalence.
We can therefore take $\cX = \cS$.
To ease the notations, we set
\[ A \coloneqq x\inv \cO_X . \]
Furthermore, we write $\AnRing_k$ instead of $\AnRing_k(\cS)$, and similarly we write $\CRing_k$ for $\CRing_k(\cS)$.

\subsection{Flatness} \label{sec:flatness}

Here we will achieve the proof of \cref{thm:connected_closed_immersion_are_algebraic}, i.e.\ the functor
\[ \Phi_*^{\ge 1} \colon \AnRing_{\underover{A}}^{\ge 1} \to \CRing_{\underover{A\alg}}^{\ge 1} \]
is an equivalence.
We already observed that $\Phi_*^{\ge 1}$ has a left adjoint $\Psi_*^{\ge 1}$.
Furthermore, we know that $\Phi_*$ is conservative, and hence so is $\Phi_*^{\ge 1}$.
Therefore, it is enough to prove that for every $B \in \CRing_{\underover{A\alg}}^{\ge 1}$, the unit transformation
\[ \eta \colon B \to \Phi_*^{\ge 1}( \Psi_*^{\ge 1}(B) ) \]
is an equivalence.
Notice that
\[ \pi_0(B) \simeq \pi_0(A\alg) \simeq \pi_0 ( \Phi_*^{\ge 1}( \Psi_*^{\ge 1}(B) ) ) . \]
In particular, $\pi_0(\eta)$ is an isomorphism.
In order to complete the proof of \cref{thm:connected_closed_immersion_are_algebraic}, it is therefore sufficient to prove the following result:

\begin{prop} \label{prop:flatness}
	For every $B \in \CRing_{\underover{A\alg}}$, the canonical map
	\[ \eta \colon B \to \Phi_*( \Psi_*(B) ) \]
	is a flat map of simplicial commutative rings.
	\end{prop}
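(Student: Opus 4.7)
The plan is to reduce the flatness claim to verifying it on a generating family of free augmented algebras, and to identify $\Phi_*\Psi_*$ explicitly on that family. Since $\Psi_*$ is a left adjoint and $\Phi_*$ commutes with sifted colimits, the composite $\Phi_*\circ\Psi_*$ preserves sifted colimits in $\CRing_{\underover{A\alg}}$, and hence so does the arrow-valued functor $B\mapsto\eta_B$. Flatness of a morphism of simplicial commutative rings is preserved under sifted colimits of morphisms, so it will suffice to verify flatness of $\eta_B$ for $B$ in a sifted-colimit generating family of $\CRing_{\underover{A\alg}}$.

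A natural such family is the collection of free doubly-pointed algebras $A\alg[x_1,\ldots,x_n]$, equipped with the augmentation sending each $x_i$ to $0$; every object of $\CRing_{\underover{A\alg}}$ arises as a sifted colimit of these via the standard simplicial resolution by free augmented polynomial algebras. By the universal property of $\Psi_*$, the object $\Psi_*(A\alg[x_1,\ldots,x_n])$ corepresents the functor sending $A\to C\to A$ in $\AnRing_{\underover{A}}$ to $n$-tuples of elements of the augmentation ideal $\fib(C\alg\to A\alg)$. Using \cref{lem:analytification_formal_properties}(\ref{item:analytification_affines}) together with the universal role of $\mathbf A^n_k$ in $\cTank$, I would identify $\Psi_*(A\alg[x_1,\ldots,x_n])$ with the derived analytic ring associated to a germ at the origin of the analytic affine $n$-space over $A$, so that $\Phi_*\Psi_*(A\alg[x_1,\ldots,x_n])$ is the corresponding analytic localization of $A\alg[x_1,\ldots,x_n]$ at the augmentation ideal.

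The main obstacle is then the final step: establishing that this analytic localization map
\[ A\alg[x_1,\ldots,x_n] \longrightarrow \Phi_*\Psi_*(A\alg[x_1,\ldots,x_n]) \]
is flat. Passing to $\pi_0$, the coherence of $\pi_0(A\alg)$ guaranteed by \cref{def:derived_analytic_space} reduces the assertion to the classical fact that the analytic localization of a polynomial ring over a coherent $k$-analytic ring is flat over the polynomial ring: in the non-archimedean case via flatness of Tate algebras over polynomial algebras, and in the complex case via flatness of rings of convergent power series over polynomial rings. To lift flatness from $\pi_0$ to higher homotopy groups, I would use the Tor spectral sequence together with the compatibility of analytic localization with Postnikov truncation, writing each $\pi_i(\Phi_*\Psi_*(A\alg[x_1,\ldots,x_n]))$ as a base change of the $\pi_0$ statement by the coherent $\pi_0(A\alg)$-module $\pi_i(A\alg)$, and concluding that the relevant higher Tor groups vanish.
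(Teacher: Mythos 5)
Your reduction via sifted colimits is not valid. The assertion that ``flatness of a morphism of simplicial commutative rings is preserved under sifted colimits of morphisms'' is false. For a concrete counterexample, take the constant simplicial ring $A_\bullet = \Z$ and let $B_\bullet = \mathrm{Bar}_\bullet(\Z,\Z[t],\Z)$ be the two-sided bar construction computing the derived quotient $\Z\otimes^{L}_{\Z[t]}\Z$ along $t\mapsto 0$ and $t\mapsto 2$. Each $B_n=\Z[t_1,\ldots,t_n]$ is a polynomial ring over $\Z$, so each map $\Z\to B_n$ is flat, yet the geometric realization $\Z\to\Z/2$ is not flat. Thus checking flatness of $\eta_B$ on free generators does not propagate to a general $B$ through a geometric realization, and your global strategy collapses at its first step.

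What actually makes the reduction work in the paper is the property $(P_A)$ of the attaching maps in a cell decomposition: the requirement that the square
\[\begin{tikzcd} R \arrow{r}\arrow{d} & T \arrow{d}\\ (R\an_A)\alg \arrow{r} & (T\an_A)\alg \end{tikzcd}\]
be a pushout, which holds for effective epimorphisms by the unramifiedness of $\cTank$. As shown in \cref{lem:sorite_analytification_flat_algebras}, this is precisely what forces the unit $\eta_{B\otimes_R T}$ of a cell attachment to be a base change of $\eta_B$, so that flatness persists; closure of flat maps under retracts and \emph{filtered} colimits (which, unlike sifted ones, do preserve flatness) handles the remaining ingredients of a cell complex, and \cref{cor:flatness_reduction_to_generators} packages this. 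Your proposed identification of $\Psi_*$ on free objects with germs along an analytic affine space is in the spirit of \cref{prop:geometric_interpretation_relative_analytification}, but your final step is also more complicated than necessary: the paper avoids any Tor spectral sequence over a derived base by first replacing $A$ with the \emph{discrete} stalk of a polydisk $\bD^n_k$ or of $\bA^n_{\mathbb C}$ via a closed embedding (\cref{lem:flatness_smooth_reduction}), and this reduction again relies on the unramifiedness of $\cTank$ that your argument never invokes.
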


\begin{notation}
	In order to ease notation, in virtue of the commutative diagram \eqref{eq:Phi}, let us denote from now on $\Phi_*$ by $(-)\alg$.
	Moreover, let us denote $\Psi_*$ by $(-)\an_A$ and call it the \emph{functor of analytfication relative to} $A$.
\end{notation}

\begin{rem}
	In the complex case, a proof of the above result already appeared in \cite[Appendix A]{Porta_DCAGII}.
	In this section, we expand the proof given in loc.\ cit.\ and we introduce slight modifications in order to obtain a uniform proof that works both in the non-archimedean and in the complex case.
\end{rem}

The proof of \cref{prop:flatness} occupies the remaining of this subsection.
We start by introducing the full subcategory $\cC_A$ of $\CRing_{\underover{A\alg}}$ spanned by those $B \in \CRing_{\underover{A\alg}}$ such that the canonical map
\[ B \to (B\an_A)\alg \]
is flat.
We observe that $\cC_A$ is closed under various operations:

\begin{lem} \label{lem:sorite_analytification_flat_algebras}
	The full subcategory $\cC_A$ enjoys the following properties:
	\begin{enumerate}
		\item $A\alg \in \cC_A$;
		\item $\cC_A$ is closed under retracts;
		\item $\cC_A$ is closed under filtered colimits;
		\item Let $R \to T$ is an effective epimorphism in $\CRing_{\underover{A\alg}}$ such that the square
		\[ \begin{tikzcd}
			R \arrow{r} \arrow{d} & T \arrow{d} \\
			(R\an_A)\alg \arrow{r} & (T\an_A)\alg
		\end{tikzcd} \]
		is a pushout.
		Let $R \to B$ be any map in $\CRing_{\underover{A\alg}}$.
		If $B$, $R$ and $T$ belong to $\cC_A$, then so does the pushout $B \otimes_R T$.
	\end{enumerate}
\end{lem}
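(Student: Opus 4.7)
The plan is to verify the four properties in turn, with (4) bearing the technical weight.

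For (1), since $A\alg$ is the initial object of $\CRing_k(\cX)_{\underover{A\alg}}$ and $(-)\an_A = \Psi_*$ is a left adjoint, we have $(A\alg)\an_A \simeq A$; algebraizing, the unit map becomes the identity $A\alg \to A\alg$, which is trivially flat. For (2), both $(-)\an_A$ and $(-)\alg$ are functors and hence preserve retract diagrams in the arrow category, and a retract of a flat morphism of simplicial commutative rings is flat. For (3), $(-)\an_A$ preserves all colimits as a left adjoint, and $(-)\alg$ preserves sifted (in particular filtered) colimits by \cite[Proposition 1.17]{Porta_DCAGI}; the unit of $\colim_i B_i$ is therefore equivalent to the filtered colimit of the units of the $B_i$, and filtered colimits of flat morphisms remain flat.

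Property (4) is the main step. Set $R' \coloneqq (R\an_A)\alg$, $T' \coloneqq (T\an_A)\alg$, and $B' \coloneqq (B\an_A)\alg$; the hypothesis then reads $T' \simeq T \otimes_R R'$, and $B \to B'$, $R \to R'$, $T \to T'$ are all flat. Since $(-)\an_A$ is a left adjoint,
\[ (B \otimes_R T)\an_A \simeq B\an_A \sqcup_{R\an_A} T\an_A. \]
The key technical claim to establish is the identification
\[ ((B \otimes_R T)\an_A)\alg \simeq B' \otimes_{R'} T' \simeq B' \otimes_R T, \]
where the second equivalence uses the pushout hypothesis and associativity of $\otimes$. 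Granted this, the unit morphism $B \otimes_R T \to ((B \otimes_R T)\an_A)\alg$ is identified with the base change along $R \to T$ of the flat morphism $B \to B'$, and is thus itself flat.

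The main obstacle is therefore to justify the above identification. In general $(-)\alg$ does \emph{not} preserve pushouts of analytic rings, so one must exploit the specific hypothesis: the effective epimorphism $R \to T$ should be promoted to an effective epimorphism $R\an_A \to T\an_A$ in $\AnRing_k(\cX)$ (which can be checked after algebraization, using that effective epimorphisms are detected on $\pi_0$ and that the pushout hypothesis identifies $\pi_0(T\an_A)\alg$ with $\pi_0(T) \otimes_{\pi_0(R)} \pi_0(R\an_A)\alg$), and $(-)\alg$ commutes with pushouts of analytic rings along such effective epimorphisms because these pushouts are computed at the level of the underlying simplicial commutative rings. This is where the structure of the pregeometry $\cTank$ and the compatibility of analytic structures with quotients by ideals must be used; the pushout hypothesis in the given square is precisely the input that guarantees this compatibility propagates from $R \to T$ to pushouts along an arbitrary morphism $R \to B$.
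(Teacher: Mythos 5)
Your proposal is correct and follows essentially the same route as the paper's proof: items (1)–(3) are the same formal observations, and for (4) you correctly identify that the crux is commuting $(-)\alg$ with the pushout $B\an_A \sqcup_{R\an_A} T\an_A$, that this requires $R\an_A \to T\an_A$ to be an effective epimorphism (which you deduce from the hypothesis square on $\pi_0$, as in the paper), and that unramifiedness of $\cTank$ (the content of \cite[Corollary 3.11 and Proposition 3.17]{Porta_Yu_DNAnG_I}) supplies the needed compatibility. The paper packages the bookkeeping into a commutative cube and a two-step $3\times 2$ pushout argument to identify $C \to (C\an_A)\alg$ as a base change of $B \to (B\an_A)\alg$, which is exactly the identification you sketch; the only thing missing from your write-up is the explicit citation of the unramifiedness result rather than a description of what it ought to say.
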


\begin{proof}
	Statement (1) follows directly from the fact that $(A\alg)\an_A \simeq A$.
	Statement (2) follows because flat maps are stable under retracts.
	Statement (3) is a consequence of the following two facts: on one side, flat maps are stable under filtered colimits and, on the other side, the functors $(-)\alg$ and $(-)\an_A$ commute with filtered colimits.
	We now prove statement (4).
	Set $C \coloneqq B \otimes_R T$ and consider the commutative cube
	\[ \begin{tikzcd}[row sep=small,column sep=small]
		R \arrow{dd} \arrow{dr} \arrow{rr} & & T \arrow{dd} \arrow{dr} \\
		{} & (R\an_A)\alg \arrow[crossing over]{rr} & & (T\an_A)\alg \arrow{dd} \\
		B \arrow{rr} \arrow{dr} & & C \arrow{dr} \\
		{} & (B\an_A)\alg \arrow[leftarrow, crossing over]{uu} \arrow{rr} & & (C\an_A)\alg .
	\end{tikzcd} \]
	Since $(-)\an_A$ is a left adjoint, we see that the diagram
	\[ \begin{tikzcd}
		R\an_A \arrow{r} \arrow{d} & T\an_A \arrow{d} \\
		B\an_A \arrow{r} & C\an_A
	\end{tikzcd} \]
	is a pushout diagram in $\AnRing_{\underover{A}}$.
	Moreover, since the top square in the above cube is a pushout by assumption, we see that the map $R\an_A \to T\an_A$ is an effective epimorphism.
	Therefore, the unramifiedness of $\cTank$ implies that the front square in the above cube is a pushout as well (cf.\ \cite[Corollary 3.11 and Proposition 3.17]{Porta_Yu_Derived_non-archimedean_analytic_spaces}).
	It follows that the outer square in the diagram
	\[ \begin{tikzcd}
		R \arrow{r} \arrow{d} & B \arrow{d} \arrow{r} & (B\an_A)\alg \arrow{d} \\
		T \arrow{r} & C \arrow{r} & (C\an_A)\alg
	\end{tikzcd} \]
	is a pushout.
	Since the square on the left is a pushout by construction, we conclude that the right square is a pushout as well.
	Since flat maps are stable under base change and $B \to (B\an_A)\alg$ is flat, we deduce that the same goes for $C \to (C\an_A)\alg$.
	In other words, $C \in \cC_A$.
\end{proof}

Motivated by statement (4) in the above lemma, we introduce the following temporary definition:

\begin{defin}
	Let $p \colon R \to T$ be an effective epimorphism in $\CRing_{\underover{A\alg}}$.
	We say that $p$ has the property ($P_A$) if the diagram
	\[ \begin{tikzcd}
		R \arrow{r} \arrow{d} & T \arrow{d} \\
		(R\an_A)\alg \arrow{r} & (R\an_A)\alg
	\end{tikzcd} \]
	is a pushout.
\end{defin}

With this terminology, \cref{lem:sorite_analytification_flat_algebras} immediately implies the following:

\begin{cor} \label{cor:flatness_reduction_to_generators}
	Suppose that there exists a collection of objects $S = \{B_\alpha\}_{\alpha \in I}$ in $\CRing_{\underover{A\alg}}$ such that:
	\begin{enumerate}
		\item every object in $\CRing_{\underover{A\alg}}$ is a retract of an $S$-cell complex;
		\item the structural morphisms $B_\alpha \to A\alg$ have the property ($P_A$);
		\item each $B_\alpha$ belongs to $\cC_A$.
	\end{enumerate}
	Then $\cC_A = \CRing_{\underover{A\alg}}$.
\end{cor}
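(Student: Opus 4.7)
The plan is to combine the four closure properties established in \cref{lem:sorite_analytification_flat_algebras} with a transfinite induction along the cell decomposition of an arbitrary object of $\CRing_{\underover{A\alg}}$.

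First, by hypothesis (1), every $B \in \CRing_{\underover{A\alg}}$ is a retract of some $S$-cell complex $X$. Since \cref{lem:sorite_analytification_flat_algebras}(2) says $\cC_A$ is closed under retracts, it suffices to prove $X \in \cC_A$ for every $S$-cell complex $X$. I would then unfold the definition of an $S$-cell complex as a transfinite colimit $X \simeq \colim_{\beta < \lambda} X_\beta$, with $X_0 = A\alg$, with each successor stage given by a pushout
\[
	X_{\beta+1} \simeq X_\beta \otimes_{B_{\alpha_\beta}} A\alg
\]
attached along some morphism $B_{\alpha_\beta} \to X_\beta$ (with $B_{\alpha_\beta} \in S$) and with the structural morphism $B_{\alpha_\beta} \to A\alg$ forming the other leg of the pushout, and with limit stages computed as filtered colimits.

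Next, I would prove by transfinite induction on $\beta$ that each $X_\beta$ lies in $\cC_A$. The base $X_0 = A\alg$ is covered by \cref{lem:sorite_analytification_flat_algebras}(1). For a successor step, I would apply \cref{lem:sorite_analytification_flat_algebras}(4) with $R = B_{\alpha_\beta}$, $T = A\alg$, and the existing cell $X_\beta$ playing the role of $B$: the objects $R$, $T$, and $X_\beta$ all belong to $\cC_A$ by hypotheses (3), (1) of the corollary and by the inductive hypothesis respectively, while the morphism $B_{\alpha_\beta} \to A\alg$ is an effective epimorphism satisfying property $(P_A)$ by hypothesis (2). Limit stages are dealt with by \cref{lem:sorite_analytification_flat_algebras}(3), since filtered colimits preserve $\cC_A$. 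Passing once more through \cref{lem:sorite_analytification_flat_algebras}(3) gives $X \in \cC_A$, completing the proof.

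The argument is essentially a formal consequence of \cref{lem:sorite_analytification_flat_algebras}, so I do not expect any genuine obstacle. The one point that requires care is verifying that the pushout square used at the successor stage fits into the hypothesis of \cref{lem:sorite_analytification_flat_algebras}(4): concretely, one must identify the input square $R \to T$, $(R\an_A)\alg \to (T\an_A)\alg$ with the image of $B_{\alpha_\beta} \to A\alg$ under the analytification/algebraization functors, and check that it is a pushout square — which is exactly the content of property $(P_A)$ assumed in hypothesis (2).
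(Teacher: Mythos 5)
Your proof is correct and is precisely the formal unfolding of what the paper treats as immediate: the corollary is stated as following ``immediately'' from \cref{lem:sorite_analytification_flat_algebras}, and no proof is supplied in the text. The transfinite induction you carry out --- peeling off the retract via part (2), the base case $X_0 = A\alg$ via part (1), the successor step via part (4) with $R = B_{\alpha_\beta}$, $T = A\alg$, $B = X_\beta$, and limit stages via part (3) --- is exactly the intended argument.
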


We are therefore reduced to find a set $S$ of objects in $\CRing_{\underover{A\alg}}$ with the above properties.
In order to achieve this goal, we need a further reduction step: we want to replace $A\alg$ with the ring of germs of holomorphic functions at any geometric point of $\bD^n_k$ in the non-archimedean case, and of $\bA^n_{\mathbb C}$ in the complex case.

We start by observing that the collection of $A\alg$-linear morphisms
\[ A\alg[X_1, \ldots, X_m] \to A\alg \]
for various $m$ form a set $S_A$ of elements in $\CRing_{\underover{A\alg}}$ with the property that every other object is a retract of an $S$-cell complex.
The following is the key reduction step:

\begin{lem} \label{lem:flatness_smooth_reduction}
	Let $f \colon R \to A$ be an effective epimorphism in $\AnRing_k$.
	The following holds:
	\begin{enumerate}
		\item If $B \in \CRing_{\underover{R\alg}}$ belongs to $\cC_R$, then $B \otimes_{R\alg} A\alg \in \CRing_{\underover{A\alg}}$ belongs to $\cC_A$;
		\item If $B \to C$ is an effective epimorphism in $\CRing_{\underover{R\alg}}$ that satisfies the property ($P_R$), then the induced morphism
		\[ B \otimes_{R\alg} A\alg \to C \otimes_{R\alg} A\alg \in \CRing_{\underover{A\alg}} \]
		satisfies the property ($P_A$).
		\item The base change functor
		\[ - \otimes_{R\alg} A\alg \colon \CRing_{\underover{R\alg}} \to \CRing_{\underover{A\alg}} \]
		takes $S_R$ to $S_A$.
		Furthermore every object in $S_A$ lies in the essential image of $S_R$ via this functor.
	\end{enumerate}
\end{lem}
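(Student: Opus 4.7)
The plan is to reduce all three assertions to two inputs: the base-change identity
\[ (B \otimes_{R\alg} A\alg)\an_A \simeq B\an_R \otimes_R A \]
in $\AnRing_{\underover{A}}$, and the unramifiedness of $\cTank$ (as already invoked in the proof of \cref{lem:sorite_analytification_flat_algebras}; see \cite[Corollary 3.11, Proposition 3.17]{Porta_Yu_DNAnG_I} and \cite[Proposition 12.10]{DAG-IX}), which guarantees that a pushout in $\AnRing_k$ along an effective epimorphism remains a pushout after applying $(-)\alg$. I will first verify the identity by a direct adjunction computation: for any $T \in \AnRing_{\underover{A}}$, both sides corepresent the functor $T \mapsto \Map_{\CRing_{\underover{R\alg}}}(B, T\alg)$, where on the right $T\alg$ is regarded as doubly pointed over $R\alg$ via the structural map $R \to A$.

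For (1), I use the identity together with unramifiedness applied to the base change $B\an_R \to B\an_R \otimes_R A$ of the effective epimorphism $f \colon R \to A$, which is itself an effective epimorphism. This yields a chain
\[ \left( (B \otimes_{R\alg} A\alg)\an_A \right)\alg \simeq (B\an_R \otimes_R A)\alg \simeq (B\an_R)\alg \otimes_{R\alg} A\alg. \]
Under these equivalences, the unit map $B \otimes_{R\alg} A\alg \to ((B \otimes_{R\alg} A\alg)\an_A)\alg$ is identified with the base change along $R\alg \to A\alg$ of the unit $B \to (B\an_R)\alg$, which is flat by hypothesis. Since flatness of simplicial commutative rings is preserved by base change, the conclusion $B \otimes_{R\alg} A\alg \in \cC_A$ follows.

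For (2), the same identifications exhibit the square witnessing property ($P_A$) for $B \otimes_{R\alg} A\alg \to C \otimes_{R\alg} A\alg$ as the base change along $R\alg \to A\alg$ of the square witnessing ($P_R$) for $B \to C$; both the pushout property and the effective-epimorphism property of the top arrow transport under base change. Assertion (3) reduces to the immediate computation $R\alg[X_1, \ldots, X_m] \otimes_{R\alg} A\alg \simeq A\alg[X_1, \ldots, X_m]$ with augmentations matching up, so the generators of $S_R$ go to generators of $S_A$, and every element of $S_A$ is visibly obtained in this way. The main subtlety I anticipate lies in setting up the base-change identity homotopically and in checking the effective-epimorphism hypothesis each time unramifiedness is invoked; once these are in place, the rest is formal adjunction and colimit manipulation.
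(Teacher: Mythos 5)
Your approach is a genuine reorganization of the paper's argument. The paper proves (1) by constructing a commutative cube, establishing that the front square is a pushout via the universal property of relative analytifications and transitivity, then invoking unramifiedness for $f \colon R \to A$ to descend to $(-)\alg$; for (2) it builds a \emph{second} cube and applies unramifiedness to a different effective epimorphism, namely $B\an_R \to C\an_R$. You instead extract the base-change identity $(B \otimes_{R\alg} A\alg)\an_A \simeq B\an_R \otimes_R A$ once and for all by an adjunction calculation, and then both (1) and (2) follow from a single application of unramifiedness (at $f \colon R \to A$) plus the fact that $- \otimes_{R\alg} A\alg$ preserves pushouts. That is a real simplification: in particular your treatment of (2) avoids the paper's intermediate step of first deducing that $B\an_R \to C\an_R$ is an effective epimorphism. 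Two small remarks on the setup: the corepresenting functor you write down should live in the comma \infcat $\CRing_k(\cS)_{R\alg // A\alg}$ rather than in $\CRing_{\underover{R\alg}}$, since $T\alg$ has an augmentation to $A\alg$ and not to $R\alg$; and you should record that the source map of the $(P_A)$ square is still an effective epimorphism, which follows because $\pi_0$ of the base change of $B \to C$ along $R\alg \to A\alg$ is again surjective.

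There is, however, a genuine gap in your proof of (3). The identification $R\alg[X_1, \ldots, X_m] \otimes_{R\alg} A\alg \simeq A\alg[X_1, \ldots, X_m]$ as an object of $\CRing_{A\alg/}$ is indeed immediate, but the essential-surjectivity claim for $S_A$ is not. An object of $S_A$ is a free polynomial $A\alg$-algebra together with a choice of augmentation, i.e.\ an $m$-tuple of elements of $\pi_0(A\alg)$. To exhibit such an object in the image of $- \otimes_{R\alg} A\alg$ you must lift that augmentation to one on $R\alg[X_1, \ldots, X_m]$ over $R\alg$, which amounts to lifting the $m$-tuple along $\pi_0(R\alg) \to \pi_0(A\alg)$. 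This is where the hypothesis that $f$ is an effective epimorphism is actually used (surjectivity on $\pi_0$); the statement is false for general $f$. Saying the augmentations ``match up'' and that ``every element of $S_A$ is visibly obtained'' elides exactly the step for which the lemma's standing hypothesis is needed. With the lifting argument added, your (3) matches the paper's.
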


\begin{proof}
	We start by proving (1).
	Denote by $(-)\an$ the left adjoint to the underlying algebra functor
	\[ (-)\alg \colon \AnRing_{/A} \to \CRing_{/A} . \]
	We therefore obtain the following commutative cube:
	\[ \begin{tikzcd}[row sep=small,column sep=small]
		(R\alg)\an \arrow{rr} \arrow{dd} \arrow{dr} & & B\an \arrow{dd} \arrow{dr} \\
		{} & R \arrow[crossing over]{rr} & & B\an_R \arrow{dd} \\
		(A\alg)\an \arrow{rr} \arrow{dr} & & C\an \arrow{dr} \\
		{} & A \arrow{rr} \arrow[leftarrow, crossing over]{uu} & & C\an_A .
	\end{tikzcd} \]
	The universal property of the relative analytifications $(-)\an_R$ and $(-)\an_A$ shows that the top and the bottom squares are pushout squares.
	Furthermore, since $(-)\an$ is a left adjoint, we see that the square on the back is a pushout as well.
	The transitivity property for pushouts implies that the front square is a pushout.

	Since the morphism $f \colon R \to A$ is an epimorphism, unramifiedness of $\cTank$ implies that the functor $(-)\alg$ preserves the pushout in the front.
	Consider now the following commutative diagram
	\[ \begin{tikzcd}
		R\alg \arrow{r} \arrow{d} & B \arrow{r} \arrow{d} & (B\an_R)\alg \arrow{d} \\
		A\alg \arrow{r} & C \arrow{r} & (C\an_A)\alg .
	\end{tikzcd} \]
	The square on the left is a pushout by definition, and we proved above that the outer square is also a pushout.
	It follows that the left square is a pushout as well.
	Since $B \to (B\an_R)\alg$ is flat, it follows that the same goes for $C \to (C\an_A)\alg$, thus completing the proof.
	
	We now prove statement (2).
	Consider the commutative cube
	\[ \begin{tikzcd}[row sep=small,column sep=small]
		B \arrow{rr} \arrow{dd} \arrow{dr} & & C \arrow{dd} \arrow{dr} \\
		{} & (B\an_R)\alg \arrow[crossing over]{rr} & & (C\an_R)\alg \arrow{dd} \\
		B \otimes_{R\alg} A\alg \arrow{rr} \arrow{dr} & & C \otimes_{R\alg} A\alg \arrow{dr} \\
		{} & ((B \otimes_{R\alg} A\alg)\an_A)\alg \arrow{rr} \arrow[leftarrow, crossing over]{uu} & & ((C \otimes_{R\alg} A\alg)\an_A)\alg .
	\end{tikzcd} \]
	The hypotheses guarantee that the top and the back squares are pushout.
	As a consequence, we deduce that the map $B\an_R \to C\an_R$ is an effective epimorphism.
	We claim that the front square is a pushout as well.
	Indeed, we have the following commutative diagram:
	\[ \begin{tikzcd}
		R  \arrow{r} \arrow{d} & B\an_R \arrow{r} \arrow{d} & C\an_R \arrow{d} \\
		A \arrow{r} & ( B \otimes_{R\alg} A\alg )\an_A \arrow{r} & ( C \otimes_{R\alg} A\alg )\an_A .
	\end{tikzcd} \]
	The argument given in the proof of statement (1) implies that the outer and the left squares are pushout.
	Therefore, the same goes for the square on the right.
	Since $B\an_R \to C\an_R$ is an effective epimorphism, the unramifiedness of $\cTank$ guarantees that $(-)\alg$ commutes with this pushout.
	Therefore, the front square in the previous commutative cube is a pushout as well.
	The transitivity property of pushout squares implies then that the bottom square is also a pushout.
	In other words, the map
	\[ B \otimes_{R\alg} A\alg \to C \otimes_{R\alg} A\alg \]
	has the property ($P_A$).
	
	Finally, we prove statement (3). Let
	\[ p \colon A\alg[X_1, \ldots, X_m] \to A\alg \]
	be an $A\alg$-linear morphism.
	This morphism chooses $m$ elements $a_1, \ldots, a_m \in \pi_0(A\alg)$.
	Since the map $\pi_0(p) \colon \pi_0(R\alg) \to \pi_0(A\alg)$ is surjective, we can find elements $r_1, \ldots, r_m \in \pi_0(R\alg)$ such that
	\[ \pi_0(p)( r_i ) = a_i , \]
	for $1 \le i \le m$.
	We can now choose a morphism
	\[ q \colon R\alg[X_1, \ldots, X_m] \to R\alg \]
	that selects the elements $r_1, \ldots, r_m$.
	Applying the base change functor $- \otimes_{R\alg} A\alg$ we obtain a new map
	\[ p' \colon A\alg[X_1, \ldots, X_m] \to A\alg . \]
	Observe that both $p$ and $p'$ define elements in
	\[ \pi_0 \Map_{\CRing_{A\alg}}( A\alg[X_1, \ldots, A_m], A\alg ) \simeq \pi_0(A\alg)^m . \]
	The construction reveals that $p$ and $p'$ coincide as element in the above set.
	In other words, we can find a homotopy $p \simeq p'$ in $\CRing_{A\alg}$.
	This completes the proof.
	\end{proof}

Combining \cref{lem:flatness_smooth_reduction} and \cref{cor:flatness_reduction_to_generators}, we deduce that whenever $R \to A$ is an effective epimorphism in $\AnRing_k$, if $\cC_R = \CRing_{\underover{R\alg}}$ holds, then $\cC_A = \CRing_{\underover{A\alg}}$ holds as well.

We now use the hypothesis that $A$ is the stalk of a derived analytic space $X = (\cX, \cO_X)$ at a geometric point $x_* \colon \cS \leftrightarrows \cX \colon x\inv$.
In particular, using \cite[Lemma 6.3]{Porta_Yu_Derived_non-archimedean_analytic_spaces} in the non-archimedean case and \cite[Proposition 12.13]{DAG-IX} in the complex case, we can suppose that $X$ admits a closed embedding into a smooth analytic space:
\[ j \colon X \hookrightarrow U . \]
In the non-archimedean case, we can take $U$ to be a polydisk $\mathbf D^n_k$, while in the complex case we can take $U$ to be an affine space $\mathbf A^n_{\mathbb C}$.
In either case, let
\[ y_* \colon \cS \leftrightarrows \cX \colon y\inv \]
be the geometric point defined as the composition $y_* \coloneqq j_* \circ x_*$.
Set
\[ R \coloneqq y\inv \cO_U \]
and observe that the induced map $f \colon R \to A$ is an effective epimorphism.
The above argument allows us to replace $A$ by $R$.
In other words, we can assume from the very beginning that $A$ is of the form $x\inv \cO_U$ for some geometric point of $U$, where $U$ is a polydisk $\bD^n_k$ in the non-archimedean case and it is $\mathbf A^n_k$ in the complex case.
Using \cref{cor:flatness_reduction_to_generators}, we are therefore reduced to prove that for every $A\alg$-linear morphism
\[ f \colon A\alg[X_1, \ldots, X_m] \to A\alg \]
the following properties are verified:
\begin{enumerate}
	\item $A\alg[X_1, \ldots, X_m]$ belongs to $\cC_A$.
	\item the morphism $f$ has the property ($P_A$);
\end{enumerate}
In order to prove these statements, we need a geometric characterization of the relative analytification
\[ A\alg[X_1, \ldots, X_m]\an_A \in \AnRing_{\underover{A}} . \]

The map $f \colon A\alg[X_1, \ldots, X_m] \to A\alg$ selects $m$ elements $a_1, \ldots, a_m \in A\alg$.
Since $A = x\inv \cO_U$ is the ring of germs of holomorphic functions around the point $x$, we can find an \'etale neighborhood $V$ of $x$ so that the elements $a_1, \ldots , a_m$ extend to holomorphic functions $\widetilde{a_1}, \ldots, \widetilde{a_m}$ on $V$.
In both cases, we can interpret these holomorphic functions as a section of the relative algebraic space
\[ \pi \colon V\alg \times \mathbb A^m_k \to V\alg . \]
We denote the section determined by the functions $\widetilde{a_1}, \ldots, \widetilde{a_m}$ by $s \colon V\alg \to V\alg \times \mathbb A^m_k$.
The analytification relative to $V$ takes $s$ to a section
\[ s\an_V \colon V \to V \times \mathbf A^m_k . \]
Denote by $y$ the point of $V \times \mathbf A^m_k$ which is the image of the point $x \in V$ via $s\an_V$.
Since $V \times \mathbf A^m_k$ is the analytification of $V\alg \times \mathbb A^m_k$ relative to $V$, there is a canonical map
\[ q \colon (V \times \mathbf A^m_k)\alg \to V\alg \times \bbA^m_k \]
making the following diagram commutative:
\[ \begin{tikzcd}
	{} & V\alg \arrow{dl}[swap]{(s\an_V)\alg} \arrow{dr}{s} \\
	(V \times \mathbf A^m_k)\alg \arrow{rr}{q} \arrow{dr}[swap]{(\pi\an_V)\alg} & & V\alg \times \bbA^m_k \arrow{dl}{\pi} \\
	{} & V\alg .
\end{tikzcd} \]
By passing at the stalk at $x$ the map $q$ induces a well-defined map
\[ \alpha \colon A\alg[X_1, \ldots, X_m] \to x\inv \cO_{V \times \mathbf A^m_k}\alg \to y\inv \cO_{V \times \mathbf A^m_k}\alg . \]
We can now prove the following result:

\begin{prop} \label{prop:geometric_interpretation_relative_analytification}
	The map $\alpha \colon A\alg[X_1, \ldots, X_m] \to y\inv \cO_{V \times \mathbf A^n_k}\alg$ exhibits $y\inv \cO_{V \times \mathbf A^n_k}$ as analytification of $A\alg[X_1, \ldots, X_m]$ relative to $A$.
	In particular, it induces an equivalence
	\[ A\alg[X_1, \ldots, X_m]\an_A \simeq y\inv \cO_{V \times \bA^m_k} \]
	in $\AnRing_{\underover{A}}$.
\end{prop}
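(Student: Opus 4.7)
My strategy is to verify directly the universal property that characterizes relative analytification. Recall that, by construction, $(-)\an_A$ is the left adjoint to the forgetful functor $(-)\alg \colon \AnRing_{k,/A} \to \CRing_{k,/A\alg}$. So it suffices to show that for every $C \in \AnRing_{k,/A}$, composition with $\alpha$ induces a natural equivalence
\[
\Map_{\AnRing_{k,/A}}\bigl(y\inv \cO_{V \times \mathbf A^m_k},\, C\bigr) \xrightarrow{\ \sim\ } \Map_{\CRing_{k,/A\alg}}\bigl(A\alg[X_1,\ldots,X_m],\, C\alg\bigr).
\]

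I would first compute the right-hand side. Since $A\alg[X_1,\ldots,X_m]$ is the free simplicial commutative $A\alg$-algebra on $m$ generators, mapping out of it over $A\alg$ is equivalent to choosing $m$ elements of $C\alg$ whose images in $A\alg$ are the prescribed elements $a_1,\ldots,a_m$. Equivalently, this mapping space is the fiber of the projection $(C\alg)^m \to (A\alg)^m$ over the point $(a_1,\ldots,a_m)$.

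For the left-hand side, I would use three ingredients. First, the étale morphism $V \to U$ and the fact that stalks are insensitive to the choice of étale neighborhood gives a canonical identification $y\inv \cO_V \simeq x\inv \cO_U = A$. Second, the universal property of the analytification $(-)\an \colon \cTetk \to \cTank$, together with the fact that $\mathbf A^m_k \simeq (\mathbb A^m_k)\an$, shows that for a $\cTank$-structure $C$, evaluation at the coordinate functions establishes a natural equivalence between local morphisms out of the stalk of $\mathbf A^m_k$ at a point and choices of $m$ elements of $C\alg$ lifting the coordinates of that point. Third, unramifiedness of $\cTank$ (used earlier in \cref{lem:flatness_smooth_reduction}) combined with the compatibility of stalks with products implies that $y\inv \cO_{V \times \mathbf A^m_k}$ is the pushout, in $\AnRing_k$, of $y\inv \cO_V$ and the stalk of $\mathbf A^m_k$ at the image of $y$. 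Combining these, a local morphism $y\inv \cO_{V \times \mathbf A^m_k} \to C$ over $A$ corresponds to giving a local morphism $A \to C$ over $A$ (i.e., the given structure map) together with $m$ elements of $C\alg$ whose images in $A\alg$ are $a_1,\ldots,a_m$. This matches the description of the right-hand side above, and the commutative triangle defining $\alpha$ ensures the two identifications agree.

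\textbf{Main obstacle.} The key technical point will be carefully justifying the pushout decomposition of $y\inv \cO_{V \times \mathbf A^m_k}$ and the universal-property identification of morphisms out of the stalk of $\mathbf A^m_k$. The first uses that the stalk functor at a product point commutes with the finite product $V \times \mathbf A^m_k$ in $\AnRing_k$, which itself follows from \cite[\S 3 and \S 6]{Porta_Yu_DNAnG_I} in the non-archimedean case and from \cite[\S 12]{DAG-IX} in the complex case. The second is essentially the defining property of the transformation of pregeometries $\cTdisck \to \cTank$ together with the admissibility of étale morphisms. Once both are in place, matching the fiber descriptions on the two sides is a straightforward diagram chase using the commutative triangle that defines $\alpha$.
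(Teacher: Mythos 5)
Your plan — verify the adjunction universal property directly by computing both mapping spaces — is the same starting point as the paper's proof, and your computation of the right-hand side (maps out of the free $A\alg$-algebra are $m$-tuples in $C\alg$ lifting $(a_1,\ldots,a_m)$) matches what the paper implicitly uses via the section $s$. However, your treatment of the left-hand side diverges from the paper and contains two genuine gaps.

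First, the coproduct decomposition $y\inv\cO_{V\times\bA^m_k} \simeq y\inv\cO_V\,\widehat\otimes_k\, p\inv\cO_{\bA^m_k}$ in $\AnRing_k$ is asserted, not proved, and the justification offered does not support it. Unramifiedness of $\cTank$ (as cited) governs the behavior of $(-)\alg$ under pushouts along \emph{effective epimorphisms} of $\cTank$-structures; it says nothing about whether passage to a stalk commutes with the fiber product $V\times_{\Sp(k)}\bA^m_k$ in $\RHTop(\cTank)$. The structure sheaf of a fiber product of $\cTank$-structured topoi is not built as a tensor product of structure sheaves, so its stalk is not \emph{a priori} a coproduct of stalks; establishing this would require a localization-of-$\Spec$ argument comparable in difficulty to the result you are trying to prove. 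The paper deliberately avoids this by never passing directly to the stalk at $y$: it first pulls back the entire topos of $V\times\bA^m_k$ along the geometric point $x_*\colon\cS\to\cV$, obtaining a $\cTank$-structured topos $W_2$ (which is not a point), and invokes \cite[Lemma 2.1.3]{DAG-V} to recognize $W_2$ as the relative analytification of the algebraic analogue $W_1$. Only afterward does it descend to the stalk, via the section $s_1$, using an explicit fiber sequence.

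Second, the step "local morphisms out of $p\inv\cO_{\bA^m_k}$ correspond to $m$ elements of $C\alg$ lifting the coordinates of $p$" conflates two different universal properties. The adjunction between $(-)\alg$ and $(-)\an$ computes $\Map_{\RTop(\cTank)}(\Sp(C),\,\bA^m_k)$, a space of morphisms of $\cTank$-structured topoi, which fibers over the space of geometric points of $\bA^m_k$. To extract the space of \emph{local} morphisms of $\cTank$-structures out of the stalk at the \emph{fixed} point $p$, one needs to take the fiber of this fibration at $p$ and identify it with the stalk mapping space — exactly the fiber-sequence argument
\[
\Map_{\underover{A\alg}}\bigl(s_1\inv g_1\inv\cO_{V\alg\times\bbA^m_k},\,C\alg\bigr)
\longrightarrow \Map_{\underover{\cS_{A\alg}}}\bigl(\cS_{C\alg},\,W_1\bigr)
\longrightarrow \Map_{\RTop_{\underover{\cS}}}(\cS,\cW_1)
\]
that the paper writes out (together with the contractibility of the base). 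Your phrase "evaluation at the coordinate functions establishes a natural equivalence" hides precisely this nontrivial passage. Filling in both gaps carefully would reconstruct the paper's pullback-topos argument rather than replace it, so as written the proposal is incomplete at the crux.
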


\begin{proof}
	In order to ease the notations, we set $R \coloneqq A\alg[X_1, \ldots, X_m]$ and $B \coloneqq y\inv \cO_{V \times \mathbf A^m_k}$.
	Furthermore, we denote by $\Map_{\underover{A}}$ and $\Map_{\underover{A\alg}}$ the mapping spaces in the $\infty$-categories $\AnRing_{\underover{A}}$ and $\CRing_{\underover{A\alg}}$, respectively.
	
	We have to check that for any $C \in \AnRing_{\underover{A}}$, the map
	\[ \Map_{\underover{A}}(B, C) \to \Map_{\underover{A\alg}}(R, C\alg) \]
	induced by $\alpha \colon R \to B\alg$ is an equivalence.
	Let us introduce the following temporary notation: given an object $C$ in either $\AnRing_k$ or in $\CRing_k$, we denote by $\cS_C$ the structured $\infty$-topos $(\cS, C)$.
	When $C \in \AnRing_k$, we set, as usual, $\cS_C\alg \coloneqq (\cS, C\alg)$.
	Moreover, we denote by $\Map_{\underover{\cS_A}}$ and $\Map_{\underover{\cS_A\alg}}$ the mapping spaces in the $\infty$-categories $\RTop(\cTank)_{\underover{\cS_A}}$ and $\RTop(\cTdisck)_{\underover{\cS_A\alg}}$, respectively.
	The very definition of the mapping spaces in $\RTop(\cTank)$ and in $\RTop(\cTdisck)$ induce following pair of natural equivalences:
	\[ \Map_{\underover{A}}(B, C) \simeq \Map_{\underover{\cS_A}}( \cS_C, \cS_B ) \]
	and
	\[ \Map_{\underover{A\alg}}(R, C\alg) \simeq \Map_{\underover{\cS_A\alg}}(\cS_C\alg,  \cS_R) . \]
	Finally, we represent the $\cTank$-structured topoi $V$ and $V \times \bA^m_k$ as the pairs $(\cV, \cO_V)$ and $(\cY, \cO_{V \times \bA^m_k})$, respectively.
	We represent the $\cTdisck$-structured topos $V\alg \times \bbA^n_k$ as the pair $(\cZ, \cO_{V\alg \times \bbA^m_k})$.
	Form the pullbacks of topoi
	\[ \begin{tikzcd}
		\cW_2 \arrow{d}{g_{2*}} \arrow{r} & \cW_1 \arrow{r} \arrow{d}{g_{1*}} & \cS \arrow{d}{x_*} \\
		\cY \arrow{r}{q_*} & \cZ \arrow{r}{\pi_*} & \cV .
	\end{tikzcd} \]
	Using \cite[Lemma 2.1.3]{DAG-V}, we see that $W_2 \coloneqq (\cW_2, g_2\inv \cO_{V \times \bA^m_k})$ is the analytification of $W_1 \coloneqq (\cW_1, g_1\inv \cO_{V\alg \times \bbA^m_k})$ relative to $\cS_A$.
	In particular, for every $C  \in \AnRing_{A//A}$, we obtain an equivalence
	\[ \Map_{\cS_A // \cS_A}(\cS_C, W_2) \simeq \Map_{\cS_A\alg // \cS_A\alg}(\cS_C\alg, W_1) . \]
	In order to complete the proof, it is now sufficient to show that there are equivalences
	\[ \Map_{\cS_A\alg // \cS_A\alg}(\cS_C\alg, W_1) \simeq \Map_{\cS_A\alg // \cS_A\alg}(\cS_C\alg, \cS_R) \]
	and
	\[ \Map_{\cS_A // \cS_A}(\cS_C, W_2) \simeq \Map_{\cS_A // \cS_A}(\cS_C, \cS_B) . \]
	We argue for the first one.
	The map
	\[ s \colon V\alg \to V\alg \times \bbA^m_k \]
	induces a map
	\[ s_1 \colon \cS_A \to W_1 , \]
	and there is a canonical equivalence
	\[ R \simeq s_1\inv g_1\inv \cO_{V\alg \times \bbA^m_k} . \]
	Consider the natural fiber sequence
	\[ \Map_{\underover{A\alg}}(s_1\inv g_1\inv \cO_{V\alg \times \bbA^n_k}, C\alg) \to \Map_{\underover{\cS_A\alg}}(\cS_C, W_1) \to \Map_{\RTop_{\underover{\cS}}}(\cS, \cW_1) . \]
	Since $\Map_{\RTop_{\underover{\cS}}}(\cS, \cW_1) \simeq *$, we conclude that the first map is an equivalence.
	The second equivalence is proved in a similar way.
	The proof is now complete.
\end{proof}

Now we move to the next step of the proof of \cref{prop:flatness}:

\begin{cor} \label{cor:flatness_smooth_algebras}
	For every $A\alg$-linear map
	\[ f \colon A\alg[X_1, \ldots, X_m] \to A\alg , \]
	the canonical map $\eta\colon A\alg[X_1, \ldots, X_m] \to A\alg[X_1, \ldots, X_m]\an_A$ is flat.
\end{cor}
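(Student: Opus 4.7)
The plan is to use \cref{prop:geometric_interpretation_relative_analytification} to reduce the flatness of $\eta$ to the classical flatness of the analytification comparison map for smooth algebraic spaces, combined with flatness of ordinary localization on the polynomial ring.

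By \cref{prop:geometric_interpretation_relative_analytification}, applying $(-)\alg$ to $\eta$ produces a map
\[
\eta\alg \colon A\alg[X_1, \ldots, X_m] \longrightarrow y\inv \cO_{V \times \bA^m_k}\alg,
\]
whose source is a polynomial ring over the local ring $A\alg = x\inv \cO_V\alg$ of the smooth analytic space $V$ at $x$, and whose target is the stalk at $y$ of the analytic structure sheaf of the smooth space $V \times \bA^m_k$. Since $V$ is smooth, $A\alg$ is discrete, and the target is likewise discrete (being the local ring of a smooth analytic space), so the question reduces to flatness of a map of ordinary commutative rings. In particular, since the underlying algebra functor is conservative on connective simplicial commutative rings and flatness is tested on $\pi_0$, flatness of $\eta\alg$ will imply flatness of $\eta$.

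Setting $b_i \coloneqq \widetilde{a_i}(x) \in k$, so that $q(y)$ corresponds to the closed point $(x, b_1, \ldots, b_m)$ of $V\alg \times \bbA^m_k$, I would factor $\eta\alg$ as
\[
A\alg[X_1, \ldots, X_m] \xrightarrow{\ \beta \ } A\alg[X_1, \ldots, X_m]_{(X_1 - b_1, \ldots, X_m - b_m)} \xrightarrow{\ \gamma \ } y\inv \cO_{V \times \bA^m_k}\alg,
\]
where $\beta$ is a Zariski localization, and hence flat. The map $\gamma$ is the stalk at $y$ of the comparison morphism of locally ringed topoi $(V \times \bA^m_k)\alg \to V\alg \times \bbA^m_k$ induced by the analytification of $\bbA^m_k$ relative to $V$. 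Flatness of $\gamma$ is the classical flatness of analytification of smooth algebraic varieties: in the complex case this amounts to the ring of germs of holomorphic functions being faithfully flat over the localized polynomial ring, and in the non-archimedean case it is the corresponding flatness statement for Berkovich (or rigid) analytification. Composing the two flat morphisms yields flatness of $\eta\alg$, and thus of $\eta$.

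The main delicate point I anticipate is the non-archimedean case when $y$ is a general (not necessarily rigid) geometric point: one must check that the stalk functor commutes appropriately with the analytification comparison and that the resulting map of stalks is flat. Since $V$ is a polydisk and $\bA^m_k$ is the analytic affine space, both smooth, standard flatness results for analytification of schemes locally of finite type over $k$ can be invoked at the level of stalks at arbitrary geometric points, and this completes the argument.
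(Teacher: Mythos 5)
Your outline matches the paper's in spirit---invoke \cref{prop:geometric_interpretation_relative_analytification}, factor through a localization, and handle the residual local map---but there are two real problems. First, the factorization as written does not exist: $\eta\alg$ does not factor through $A\alg[X_1,\ldots,X_m]_{(X_1-b_1,\ldots,X_m-b_m)}$. Elements of the maximal ideal of $A\alg$ (for instance the coordinate germs $T_i$ in the paper's notation) lie outside the prime $(X_1-b_1,\ldots,X_m-b_m)$, so your localization inverts them, yet they map to non-units in $y\inv\cO_{V\times\bA^m_k}\alg$. The correct prime to localize at is $f\inv(\fm) = \fm_x + (X_1-b_1,\ldots,X_m-b_m)$, as the paper does in the complex case.

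Second, and more substantively, flatness of the local map $\gamma$ is not an instance of ``classical flatness of analytification of smooth algebraic varieties'': its source is a localization of $\C\{T\}_x[X_1,\ldots,X_m]$ (resp.\ $k\langle T\rangle_x[X_1,\ldots,X_m]$), which has analytic coefficients and hence is not the local ring of any scheme of finite type over $k$. The GAGA-type flatness statements you want to cite compare a scheme-theoretic local ring to the stalk of its analytification; they say nothing directly about this ``mixed'' ring. One can repair the step, for instance by factoring further through the genuine algebraic local ring $k[T,X]_{(T,X-b)}$ and using that a flat local homomorphism of Noetherian local rings is faithfully flat, but you do not supply such an argument; and in the non-archimedean case the residue field at the geometric point $x$ need not be $k$, so even the phrase ``the closed point $(x,b_1,\ldots,b_m)$'' requires care. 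The paper avoids all of this by proving the flatness directly: both rings are Noetherian, and at each maximal ideal of the target the formal completions of source and target both equal $\kappa(\fm)\llb T,X\rrb$, so the completed map is an isomorphism and flatness follows.
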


\begin{proof}
	Using \cref{prop:geometric_interpretation_relative_analytification}, we can describe $A\alg[X_1, \ldots, X_m]\an_A$ as the ring of germs of holomorphic functions $y\inv \cO_{V \times \bA^n_k}$.
	
	Let us treat the non-archimedean case first.
	In this case, we have
	\[ A\alg \simeq k\langle T_1, \ldots, T_n \rangle_x , \]
	and
	\[ y\inv \cO_{V \times \bA^n_k} \simeq k\langle T_1, \ldots, T_n, X_1, \ldots, X_m \rangle_y . \]
	We have to prove that the canonical map
	\[ k\langle T_1, \ldots, T_n \rangle_x[X_1, \ldots, X_m] \to k\langle T_1, \ldots, T_n, X_1, \ldots, X_m \rangle_y \]
	is flat.
	Since the passage to germs preserves flatness, it is enough to prove that the map of commutative rings
	\[ i \colon k \langle T_1, \ldots, T_n \rangle[X_1, \ldots, X_m] \to k\langle T_1, \ldots, T_n, X_1, \ldots, X_m \rangle \]
	is flat.
	Since both rings are noetherian, it is enough to check flatness after passing to the formal completion at every maximal ideal of $k\langle T_1, \ldots, T_n, X_1, \ldots, X_m \rangle$.
	If $\mathfrak m$ is such a maximal ideal, then we have equivalences
	\[ (k\langle T_1, \ldots, T_n \rangle[X_1, \ldots, X_m])^{\wedge}_{i\inv(\mathfrak m)} \simeq \kappa(\mathfrak m)\llb T_1, \ldots, T_n, X_1, \ldots, X_m \rrb \]
	and
	\[ ( k\langle T_1, \ldots, T_n, X_1, \ldots, X_m \rangle )^{\wedge}_{\mathfrak m} \simeq \kappa(\mathfrak m)\llb T_1, \ldots, T_n, X_1, \ldots, X_m \rrb, \]
	where $\kappa(\mathfrak m)$ denotes the residue field.
	It follows that $i$ induces an isomorphism on the formal completions, and therefore that $i$ is flat.
	
	Let us now deal with the complex case.
	In this case, we have
	\[ A\alg \simeq \mathbb C\{ T_1, \ldots, T_n\}_x \]
	and
	\[ y\inv \cO_{V \times \bA^n_k} \simeq \mathbb C\{T_1, \ldots, T_n, X_1, \ldots, X_m\}_y \]
	where the right hand side denote the rings of germs of holomorphic functions on $V$ at $x$ and of $V \times \bA^n_k$ at $y$, respectively.
	Thus, we have to prove that the natural map
	\begin{equation} \label{eq:flatness_analytification_smooth_algebras_complex_case_I}
		\mathbb C\{ T_1, \ldots, T_n \}_x[X_1, \ldots, X_m] \to \mathbb C\{T_1, \ldots, T_n, X_1, \ldots, X_m\}_y
	\end{equation}
	is flat.
	Consider the map
	\[ f \colon \mathbb C\{T_1, \ldots, T_n\}_x[X_1, \ldots, X_m] \to \mathbb C\{T_1, \ldots, T_n\}_x , \]
	and let $\mathfrak m$ denote the maximal ideal of $\mathbb C\{T_1, \ldots, T_n\}_x$.
	Since $f$ is $\mathbb C\{T_1, \ldots, T_n\}_x$-linear, we see that $f\inv(\mathfrak m)$ is again a maximal ideal of $\mathbb C\{T_1, \ldots, T_n\}_x[X_1, \ldots, X_m]$ and that the map \eqref{eq:flatness_analytification_smooth_algebras_complex_case_I} induces a canonical map
	\begin{equation} \label{eq:flatness_analytification_smooth_algebras_complex_case_II}
		( \mathbb C\{ T_1, \ldots, T_n \}_x[X_1, \ldots, X_m] )_{f\inv(\mathfrak m)} \to \mathbb C\{T_1, \ldots, T_n, X_1, \ldots, X_m\}_y .
	\end{equation}
	Since the localization map 
	\[ \mathbb C\{T_1, \ldots, T_n \}_x[X_1, \ldots, X_m] \to ( \mathbb C\{ T_1, \ldots, T_n \}_x[X_1, \ldots, X_m] )_{f\inv(\mathfrak m)} \]
	is flat, it is enough to prove that also the map \eqref{eq:flatness_analytification_smooth_algebras_complex_case_II} is flat.
	Observe that both the source and the target of that map are noetherian local rings.
	In particular, it is enough to check that \eqref{eq:flatness_analytification_smooth_algebras_complex_case_II} becomes flat after passing at the formal completion at the maximal ideals.
	Since we can identify both formal completions with the ring of formal power series $\mathbb C\llb T_1, \ldots, T_n, X_1, \ldots, X_m \rrb$, the conclusion follows.
\end{proof}

The last step of the proof of \cref{prop:flatness} is provided by the following:

\begin{cor}
	Every $A\alg$-linear map
	\[ f \colon A\alg[X_1, \ldots, X_m] \to A\alg \]
	has the property ($P_A$).
\end{cor}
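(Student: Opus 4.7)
The plan is to identify all four corners of the square defining property $(P_A)$ explicitly, and then to invoke \cref{cor:flatness_smooth_algebras} together with the geometric description of $(A\alg[X_1, \ldots, X_m])\an_A$ from \cref{prop:geometric_interpretation_relative_analytification} to reduce to a classical computation in analytic local rings.

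First, since $A$ is already a derived analytic ring, the universal property of relative analytification gives $(A\alg)\an_A \simeq A$, so the right-hand vertical arrow in the defining square is the identity on $A\alg$. Property $(P_A)$ is therefore equivalent to the assertion that the canonical map
\[ A\alg \otimes^{L}_{A\alg[X_1, \ldots, X_m]} \bigl((A\alg[X_1, \ldots, X_m])\an_A\bigr)\alg \longrightarrow A\alg \]
is an equivalence in $\CRing_k$. By \cref{prop:geometric_interpretation_relative_analytification}, I would rewrite the right factor as $(y\inv \cO_{V \times \bA^m_k})\alg$, where $y = s\an_V(x)$ is the image of $x$ under the analytic section determined by $f$; and by \cref{cor:flatness_smooth_algebras}, the structural map $\eta$ is flat, so the derived tensor product above agrees with the underived one.

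Next, I would exploit the fact that $f$ is surjective on $\pi_0$ with kernel generated by the regular sequence $(X_j - a_j)_{j=1}^{m}$, where $a_j \coloneqq f(X_j) \in \pi_0(A\alg)$. This identifies the underived tensor product with the quotient
\[ (y\inv \cO_{V \times \bA^m_k})\alg \big/ (X_j - \tilde a_j)_{j=1}^{m}, \]
where $\tilde a_j$ are holomorphic representatives of $a_j$ on the étale neighbourhood $V$ of $x$. To identify this quotient with $A\alg$, I would use the analytic automorphism $(v, X) \mapsto (v, X - \tilde a(v))$ of $V \times \bA^m_k$, which sends the section $s\an_V$ to the zero section $V \times \{0\}$ and sends the functions $X_j - \tilde a_j$ to the coordinate functions $X_j$. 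Since the zero section is cut out at $y$ by the regular sequence $(X_j)_{j=1}^m$ with quotient sheaf $\cO_V$, passing to stalks at $x$ and to underlying algebras yields
\[ (y\inv \cO_{V \times \bA^m_k})\alg \big/ (X_j - \tilde a_j)_{j=1}^{m} \simeq (x\inv \cO_V)\alg = A\alg, \]
as required.

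The main obstacle is the geometric identification of the quotient with $A\alg$, which hinges on the smoothness of $V$ (a polydisk in the non-archimedean case, an affine space in the complex case) and on the analytic section $s\an_V$ being a regular closed immersion. The flatness result of \cref{cor:flatness_smooth_algebras} is essential here: it sidesteps a direct analysis of the Koszul complex of $(X_j - \tilde a_j)$ inside $y\inv \cO_{V \times \bA^m_k}$ and reduces the problem to a standard statement about analytic local rings of smooth spaces.
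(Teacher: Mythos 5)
Your proposal is correct and follows essentially the same route as the paper: identify the relative analytification via \cref{prop:geometric_interpretation_relative_analytification}, observe that the pushout square lives in discrete rings, check it there classically, and use \cref{cor:flatness_smooth_algebras} to upgrade the underived pushout to a pushout in $\CRing_k$. The one substantive addition over the paper's compressed write-up is that you make the underived computation explicit via the coordinate change $(v,X)\mapsto(v, X-\tilde a(v))$ moving the section $s\an_V$ to the zero section — a detail the paper tacitly asserts with ``Therefore, the square above is a pushout in the category of (underived) rings'' — and it is worth noting that this step relies on the earlier reduction in the section to the case where $A$ is the stalk of a \emph{smooth underived} analytic space, so that all four corners of the square are in fact discrete noetherian local rings.
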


\begin{proof}
	Unraveling the definitions, we see that we have to prove that the square
	\[ \begin{tikzcd}
		A\alg[X_1, \ldots, X_m] \arrow{r}{f} \arrow{d}{\eta} & A\alg \arrow{d}{\id} \\
		A\alg[X_1, \ldots, X_m]\an_A \arrow{r} & A\alg
	\end{tikzcd} \]
	is a pushout in $\CRing_k$.
	Using \cref{prop:geometric_interpretation_relative_analytification}, $A\alg[X_1, \ldots, X_m]\an_A$ can be described as $y\inv \cO_{V \times \bA^m_k}$, where the notations are those introduced right before \cref{prop:geometric_interpretation_relative_analytification}.
	Therefore, the square above is a pushout in the category of (underived) rings.
	By \cref{cor:flatness_smooth_algebras}, the map $\eta$ is flat.
	We conclude that the square above is a pushout in $\CRing_k$.
\end{proof}

\section{Analytic cotangent complex}\label{sec:cotangent_complex}

In this section we introduce the analytic cotangent complex and we establish its basic properties.
In the first subsection, we work in the general framework of structured topoi for a given pregeometry.
The main tool we employ is Lurie's formalism of tangent category.
However, an adaptation is needed due to our framework of analytic modules in \cref{sec:analytic_modules}.
In Subsection \ref{subsec:analytic_cotangent_complex}, we specialize the general formalism to the setting of derived analytic geometry.
The remaining subsections concern various properties of the analytic cotangent complex.

\subsection{The cotangent complex formalism} \label{subsec:cotangent_formalism}

Let $\Cat_\infty$ denote the \infcat of \infcats.
Let $\Catlex$ denote the subcategory of $\Cat_\infty$ spanned by those $\infty$-categories having finite limits and by those functors that preserve them.
Let $\cTAb$ be the Lawvere theory of abelian groups (cf.\ \cref{def:abelian_group_objects}).
For $n \ge 0$, we denote by $A^n$ the free abelian group on $n$ elements seen as an element in $\cTAb$.

Using \cref{lem:abelianization_of_functors}, we see that the the assignment $\cC \mapsto \Ab(\cC)$ can be promoted to an $\infty$-functor
\[ \Ab(-) \coloneqq \Fun^\times(\cTAb, -) \colon \Catlex \to \Catlex . \]
We call this functor the \emph{abelianization functor}.

Let $\cC$ be an $\infty$-category with finite limits and consider the Cartesian fibration
\[ p \colon \Fun(\Delta^1, \cC) \to \Fun(\{1\}, \cC) \simeq \cC . \]
Observe that the associated $\infty$-functor $\cC\op \to \Catinf$ factors through $\Catlex$.
Let $\cC_\Ab$ be the full subcategory of $\Fun(\Delta^1 \times \cTAb, \cC)$ spanned by those functors
\[ F \colon \Delta^1 \times \cTAb \to \cC \]
satisfying the following conditions:
\begin{enumerate}
	\item the restriction $\left. F \right|_{\{0\} \times \cTAb}$ commutes with fiber products;
	\item the canonical map $F(0, A^0) \to F(1, A^0)$ is an equivalence;
	\item for every $A^n \in \cTAb$, the canonical map $F(1, A^n) \to F(1, A^0)$ is an equivalence.
\end{enumerate}
Let $e \colon \Delta^1 \to \Delta^1 \times \cTAb$ be the functor selecting the morphism
\[ (0, A^1) \to (0, A^0) . \]
Finally, we consider the composition
\[ q \colon \cC_\Ab \hookrightarrow \Fun(\Delta^1 \times \cTAb, \cC) \xrightarrow{e_*} \Fun(\Delta^1, \cC) \xrightarrow{p} \cC , \]
where $e_*$ is given by precomposition with $e$.

\begin{lem} \label{lem:abelianized_tangent_bundle}
	The functor $q \colon \cC_\Ab \to \cC$ is a Cartesian fibration.
	Furthermore:
	\begin{enumerate}
		\item a morphism $f$ in $\cC_\Ab$ is $q$-Cartesian if and only if $e_*(f)$ is $p$-Cartesian in $\Fun(\Delta^1, \cC)$;
		\item for any $x \in \cC$, the fiber $(\cC_\Ab)_x$ is equivalent to $\Ab(\cC_{/x})$;
		\item a diagram $g \colon K^\vartriangleleft \to \cC_\Ab$ is a (co)limit diagram if and only if $g$ is a $q$-(co)limit diagram and $q \circ g$ is a (co)limit diagram in $\cC$.
	\end{enumerate}
\end{lem}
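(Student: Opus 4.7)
The plan is to identify $\cC_\Ab$ with a concrete model of the total space of a Cartesian fibration over $\cC$ whose fiber over $x$ is $\Ab(\cC_{/x})$. All three assertions then follow by standard facts about Cartesian fibrations.

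First I would unpack the defining conditions of $\cC_\Ab$. Given $F \colon \Delta^1 \times \cTAb \to \cC$ in $\cC_\Ab$, set $x \coloneqq F(0,A^0)$. The morphisms $A^n \to A^0$ in $\cTAb$ yield canonical arrows $F(0,A^n) \to x$, so $F|_{\{0\}\times \cTAb}$ refines to a functor $G_F \colon \cTAb \to \cC_{/x}$ with $G_F(A^0) = \id_x$. Since the products $A^n \simeq A^1 \times_{A^0} \cdots \times_{A^0} A^1$ in $\cTAb$ are fiber products over $A^0$, condition (1) is equivalent to $G_F$ being product-preserving, i.e.\ $G_F \in \Ab(\cC_{/x})$. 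Conditions (2)--(3) then force $F|_{\{1\}\times \cTAb}$ to be the constant diagram at $x$ and the transition transformation $F|_{\{0\}\times \cTAb} \to F|_{\{1\}\times \cTAb}$ to agree with the structural maps $G_F(A^n) \to x$; conversely every pair $(x, G)$ with $G \in \Ab(\cC_{/x})$ arises this way. This identifies $\cC_\Ab$ with the Grothendieck construction of the functor $\cC\op \to \Catlex$ sending $x \mapsto \Ab(\cC_{/x})$, with transition maps given by pullback $f^* \colon \cC_{/x'} \to \cC_{/x}$ (which preserves finite limits and hence abelian group objects), and $q$ with the natural projection. In particular $q$ is a Cartesian fibration.

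Granting this identification, (2) is immediate. For (1), the $q$-Cartesian lift of $f \colon x \to x'$ at $(x', G')$ is the base-change morphism $(x, f^*G') \to (x', G')$; hence a morphism $(x,G) \to (x',G')$ is $q$-Cartesian iff the induced map $G \to f^* G'$ in $\Ab(\cC_{/x})$ is an equivalence, which by product-preservation of $G$ and $G'$ reduces to its value at $A^1$. This translates into the condition that the square $e_*(f)$
\[ \begin{tikzcd} F(0,A^1) \arrow{r} \arrow{d} & F'(0,A^1) \arrow{d} \\ F(0,A^0) \arrow{r} & F'(0,A^0) \end{tikzcd} \]
is a pullback in $\cC$, which is precisely the condition for $e_*(f)$ to be $p$-Cartesian in $\Fun(\Delta^1, \cC)$. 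For (3), since $q$ is a Cartesian fibration it is in particular an inner fibration, and the statement is a direct application of \cite[Proposition 4.3.1.5]{HTT} together with the dual statement for colimits obtained by passing to opposite categories.

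The main obstacle is making the Grothendieck construction identification of the first step rigorous in the $\infty$-categorical setting. One approach is to construct $\cC_\Ab$ intrinsically as a pullback in $\Catinf$ involving $\Fun^\times(\cTAb, \Fun(\Delta^1, \cC))$ and a suitable constant-along-$\{1\}\times\cTAb$ subcategory, and then transfer the known Cartesian fibration structure on the target evaluation $\Fun(\Delta^1, \cC) \to \cC$ to $q$; alternatively one may directly verify that $q$ is Cartesian by exhibiting the pullback squares from (1) as explicit Cartesian lifts and appealing to the relevant criterion from \cite{HTT}.
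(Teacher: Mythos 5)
Your proposed route — first identifying $\cC_\Ab$ with the Grothendieck construction of $x \mapsto \Ab(\cC_{/x})$, then reading off all three assertions — is conceptually sound, but the identification you outline is precisely the substantive content of the lemma, and you leave it as an acknowledged "obstacle." In the $\infty$-categorical setting the passage from the heuristic "pairs $(x, G)$ with $G \in \Ab(\cC_{/x})$" to an actual Cartesian fibration cannot be waved through without either invoking straightening/unstraightening (which would then require you to verify that the explicitly given simplicial set $\cC_\Ab$ is equivalent to the unstraightening) or verifying the Cartesian-fibration axioms directly. Your argument does neither, so as written the proposal has a genuine gap at its first step, and everything downstream ("granting this identification...") inherits it.

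Notably, the second alternative you sketch at the end — "directly verify that $q$ is Cartesian by exhibiting the pullback squares from (1) as explicit Cartesian lifts" — is exactly what the paper does. The paper's proof starts from the observation that evaluation at a final object of $\cD$ turns $\Fun(\cD, \cC) \to \cC$ into a Cartesian fibration whose Cartesian edges are the "objectwise pullback" transformations; it then checks by hand (using conditions (1)--(3) defining $\cC_\Ab$, and an induction on $n$ via $A^{n+1} \simeq A^n \times A^1$) that $\cC_\Ab$ is closed under Cartesian lifts inside $\Fun(\Delta^1 \times \cTAb, \cC)$, which yields the Cartesian fibration structure and part (1) simultaneously. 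For part (2) the paper does not appeal to the Grothendieck construction at all: it identifies $\cTAb^\vartriangleright$ with the full subcategory of $\Delta^1 \times \cTAb$ spanned by $\{0\} \times \cTAb$ and $(1, A^0)$, applies \cite[4.3.2.15]{HTT} twice to get $\cC_\Ab \simeq \Fun^\times(\cTAb^\vartriangleright, \cC)$, and then passes to the fiber. This is more elementary than the straightening approach and avoids any appeal to the (nontrivial) functoriality of $x \mapsto \Ab(\cC_{/x})$. For part (3) the paper cites the parallel argument for the tangent bundle in \cite[7.3.1.12]{Lurie_Higher_algebra}, whereas your citation of \cite[4.3.1.5]{HTT} concerns relative (co)limits and would still need to be combined with something to deduce the statement about actual (co)limits in $\cC_\Ab$; the two-part criterion ("$q$-(co)limit diagram" plus "(co)limit in the base") is exactly what \cite[7.3.1.12]{Lurie_Higher_algebra} packages.

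In short: your conceptual picture is right, and your fallback suggestion matches the paper's strategy, but the proposal as written does not establish the lemma — the key identification is asserted rather than proved, and that is where the work lies.
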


\begin{proof}
	We first remark that if $\cD$ is an $\infty$-category with final object $*_\cD$ then evaluation at $*_\cD$ induces a Cartesian fibration
	\[ \Fun(\cD, \cC) \to \cC , \]
	and moreover a natural transformation $f \colon F \to G$ in $\Fun(\cD, \cC)$ is a Cartesian edge if and only if for every object $x \in \cD$, the square
	\[ \begin{tikzcd}
		F(x) \arrow{r}{f} \arrow{d} & G(x) \arrow{d} \\
		F(*_\cD) \arrow{r}{f} & G(*_\cD)
	\end{tikzcd} \]
	is a pullback square in $\cC$.
	It follows that evaluation at $(1, A^0) \in \Delta^1 \times \cTAb$ induces a Cartesian fibration
	\[ \Fun(\Delta^1 \times \cTAb, \cC) \to \cC , \]
	and that moreover
	\[ e_* \colon \Fun(\Delta^1 \times \cTAb, \cC) \to \Fun(\Delta^1, \cC) \]
	preserves Cartesian edges.
	
	Let now $G \in \cC_\Ab$ and suppose that $f \colon F \to G$ is a Cartesian edge in $\Fun(\Delta^1 \times \cTAb, \cC)$.
	We claim that $F \in \cC_\Ab$ as well.
	Indeed, observe that the morphism $(1, A^n) \to (1, A^0)$ induces a pullback square
	\[ \begin{tikzcd}
		F(1, A^n) \arrow{r}{f} \arrow{d} & G(1, A^n) \arrow{d} \\
		F(1, A^0) \arrow{r}{f} & G(1, A^0) .
	\end{tikzcd} \]
	Since the vertical morphism on the right is an equivalence, the same goes for the one on the left.
	The same reasoning applied to the morphism $(0, A^0)  \to (1, A^0)$ shows that
	\[ F(0, A^0) \to F(1, A^0) \]
	is an equivalence.
	We are left to prove that $F(0, A^{n+m}) \simeq F(0, A^n) \times F(0, A^m)$.
	Consider the diagram
	\[ \begin{tikzcd}
		F(0, A^{n+m}) \arrow{r} \arrow{d} & F(0, A^n) \arrow{d} \arrow{r} & F(0, A^0) \arrow{d} \\
		G(0,A^{n+m}) \arrow{r} & G(0, A^n) \arrow{r} & G(0, A^0) .
	\end{tikzcd} \]
	Since $f$ is a Cartesian edge, we see that the outer square and the one on the right are pullback.
	It follows that the same goes for the one on the left.
	Since $G(0, A^{n+m}) \simeq G(0, A^n) \times G(0, A^m)$, we conclude that $F(0, A^{n+m}) \simeq F(0, A^n) \times F(0, A^m)$ as well.
		
	Recall now that for objects $F \in \cC_\Ab$ the canonical morphism $F(0,A^0) \to F(1, A^0)$ is an equivalence.
	We therefore deduce that the functor $q \colon \cC_\Ab \to \cC$ is a Cartesian fibration and that the composition
	\[ \cC_\Ab \hookrightarrow \Fun( \Delta^1 \times \cTAb, \cC ) \xrightarrow{e_*} \Fun( \Delta^1, \cC ) \]
	preserves Cartesian edges.
	Let now $f \colon F \to G$ be a morphism in $\cC_\Ab$ and suppose that $e_*(f)$ is $p$-Cartesian.
	Since both $F$ and $G$ belong to $\cC_\Ab$, it is enough to check that the squares of the form
	\[ \begin{tikzcd}
		F(0, A^n) \arrow{d} \arrow{r}{f} & G(0, A^n) \arrow{d} \\
		F(1, A^0) \arrow{r}{f} & G(1, A^0)
	\end{tikzcd} \]
	are pullback diagrams.
	When $n = 0$, this is true because both vertical maps are equivalences, and when $n = 1$ this follows from the hypothesis that $e_*(f)$ is $p$-Cartesian.
	The general case follows by induction, using the fact that $F(0,A^{n+1}) \simeq F(0, A^n) \times F(0, A^1)$ and $G(0, A^{n+1}) \simeq G(0, A^n) \times G(0,A^1)$.
	This completes the proof of (1).
	
	We now turn to statement (2).
	Recall that
	\[ \Fun(\cTAb, \cC_{/x}) \simeq \Fun_x(\cTAb^\vartriangleright, \cC) . \]
	We can identify $\cTAb^\vartriangleright$ with the full subcategory of $\Delta^1 \times \cTAb$ spanned by $\{0\} \times \cTAb$ and the object $(1, A^0)$.
	Using \cite[4.3.2.15]{HTT} twice, we see that the restriction along $\cTAb^\vartriangleright \hookrightarrow \Delta^1 \times \cTAb$ induces an equivalence
	\[ \cC_\Ab \simeq  \Fun^\times(\cTAb^\vartriangleright, \cC) . \]
	Passing to the fiber at $x \in \cC$, we obtain the equivalence
	\[ (\cC_\Ab)_x \simeq \Ab(\cC_{/x}) \]
	we were looking for.
	
	As for statement (3), the same proof of \cite[7.3.1.12]{Lurie_Higher_algebra} applies.
\end{proof}

\begin{defin}
	Let $\cC$ be a presentable $\infty$-category.
	The \emph{abelianized tangent bundle} of $\cC$ is by definition the stabilization of the Cartesian fibration
	\[ q \colon \cC_\Ab \to \cC \]
	constructed above.
	It is denoted by $\mathrm{T}_{\Ab}(\cC)$.
\end{defin}

Using \cref{lem:abelianized_tangent_bundle}, we see that the abelianized tangent bundle to $\cC$ is a Cartesian fibration
\[ \pi \colon \rT_\Ab(\cC) \to \cC , \]
whose fiber at $x \in \cC$ is equivalent to $\Sp(\Ab(\cC_{/x}))$.

Now let us explain how to use the language of the abelianized tangent bundle to introduce the analytic cotangent complex.
We have:

\begin{lem} \label{lem:abelianized_tangent_bundle_II}
	Let $\cC$ be a presentable $\infty$-category.
	Then:
	\begin{enumerate}
		\item $\rT_\Ab(\cC)$ is a presentable $\infty$-category;
		\item The canonical map $q \colon \rT_\Ab(\cC) \to \cC$ commutes with limits and filtered colimits.
	\end{enumerate}
\end{lem}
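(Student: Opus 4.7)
My plan is to follow the template of Lurie's treatment of the ordinary tangent bundle in \cite[\S 7.3.1]{Lurie_Higher_algebra}, adapting it to the present setting where the role of $\Delta^1$ is played by $\Delta^1 \times \cTAb$.

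First, I would establish presentability of the intermediate $\infty$-category $\cC_\Ab$. Since $\cC$ is presentable and $\Delta^1 \times \cTAb$ is essentially small, the functor $\infty$-category $\Fun(\Delta^1 \times \cTAb, \cC)$ is presentable. Each of the three conditions defining $\cC_\Ab$ (product preservation on $\{0\}\times\cTAb$ and the two equivalences involving $A^0$) is an accessibility-preserving condition cutting out a full subcategory closed under limits in the ambient functor category. By \cite[5.5.4.15]{HTT}, $\cC_\Ab$ is therefore presentable, and its limits and filtered colimits are computed pointwise in $\Fun(\Delta^1 \times \cTAb, \cC)$, since the defining conditions are stable under pointwise filtered colimits (these commute with finite products and preserve equivalences).

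Next, I would realize $\rT_\Ab(\cC)$ concretely as the pullback
\[ \rT_\Ab(\cC) \simeq \Exc_*(\cS^{\mathrm{fin}}_*, \cC_\Ab) \times_{\Exc_*(\cS^{\mathrm{fin}}_*, \cC)} \cC, \]
where $\cC$ sits inside $\Exc_*(\cS^{\mathrm{fin}}_*, \cC)$ via constant diagrams and the horizontal map is $\Exc_*(\cS^{\mathrm{fin}}_*, q)$. Since $\cC_\Ab$ is presentable, $\Exc_*(\cS^{\mathrm{fin}}_*, \cC_\Ab) \simeq \Sp(\cC_\Ab)$ is presentable by \cite[1.4.4.4]{Lurie_Higher_algebra}; the pullback of presentable $\infty$-categories along accessible limit-preserving functors is again presentable by standard results in \cite[\S 5.5]{HTT}. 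This model makes transparent that the fiber of $\pi$ over $x \in \cC$ is $\Sp(\Ab(\cC_{/x}))$, as required.

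For statement (2), the projection $\pi$ factors through evaluation of a functor out of $\cS^{\mathrm{fin}}_* \times \Delta^1 \times \cTAb$ at a fixed object, and evaluation functors preserve all limits and colimits computed pointwise. Since limits and filtered colimits in $\rT_\Ab(\cC)$ are themselves computed pointwise in the ambient functor category (via the stabilized analogue of \cref{lem:abelianized_tangent_bundle}(3) together with the fact that $\Exc_*$ is closed under pointwise limits and filtered colimits), the claim follows. I expect the main obstacle to be verifying rigorously that the excisive-functor pullback above genuinely recovers the fiberwise stabilization of $q$; this can be carried out either directly from the defining universal property of the relative stabilization, or by combining straightening/unstraightening with the stabilization functor on presentable $\infty$-categories, but in either case requires care in tracking the relative structure over $\cC$.
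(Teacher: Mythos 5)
Your first paragraph matches the paper's treatment of $\cC_\Ab$. The gap lies exactly where you suspected: the pullback
\[ \Exc_*(\cS^{\mathrm{fin}}_*, \cC_\Ab) \times_{\Exc_*(\cS^{\mathrm{fin}}_*, \cC)} \cC \]
does not compute the fiberwise stabilization. The subscript $*$ in $\Exc_*$ enforces that the value at the terminal object of $\cS^{\mathrm{fin}}_*$ be a final object of the target category. Hence for any $g \in \Exc_*(\cS^{\mathrm{fin}}_*, \cC_\Ab)$ the image $q(g(*))$ is a final object of $\cC$, so the pullback along a constant-diagram map $\cC \to \Exc_*(\cS^{\mathrm{fin}}_*, \cC)$ would only receive nontrivial contributions over final objects of $\cC$. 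In fact that map does not even land in $\Exc_*(\cS^{\mathrm{fin}}_*, \cC)$: a constant functor is excisive but is reduced only when its value is final. The correct fiber over $x$ is $\Sp((\cC_\Ab)_x) \simeq \Sp(\Ab(\cC_{/x}))$, which requires $g(*)$ to be final in the \emph{fiber} $(\cC_\Ab)_x$ rather than in all of $\cC_\Ab$. This fiberwise condition is not encoded by $\Exc_*$ applied to the total category, nor does dropping the $*$ repair it, since then nothing forces $g(*)$ to be the zero object of $\Ab(\cC_{/x})$.

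The paper sidesteps this by giving a direct description of $\mathrm{Stab}(q)$ as a full subcategory $\cE \subseteq \Fun(\cS^{\mathrm{fin}}_*, \cC_\Ab)$ cut out by three conditions: excisiveness; the \emph{fiberwise} reduction condition that $g(*)$ be a final object of the fiber over $q(g(*))$; and the condition that $q \circ g$ factor through the core $\cC^\simeq$. The latter two are precisely what your pullback fails to capture. With $\cE$ so defined, the inclusion into $\Fun(\cS^{\mathrm{fin}}_*, \cC_\Ab)$ is checked to commute with limits and filtered colimits, yielding presentability of $\cE$ and the preservation claims for the projection (evaluation at $S^0$) at once. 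To repair your argument you would need to replace the global $\Exc_*$-pullback by this subcategory construction, or set up a genuinely relative version of reduced excisive functors over $\cC$ and verify it agrees with the fiberwise stabilization; the pullback as written does not.
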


\begin{proof}
	It follows from the proof of \cref{lem:abelianized_tangent_bundle} that $\cC_\Ab$ can be realized as an accessible localization of $\Fun(\cTAb \times \Delta^1, \cC)$.
	In particular, $\cC_\Ab$ is presentable.
	Moreover, \cref{lem:abelianized_tangent_bundle}(3) implies that the map
	\[ q \colon \cC_\Ab \to \cC \]
	preserves both limits and colimits.
	We are therefore reduced to prove the following statements.
	Let $p \colon \cX \to S$ be a presentable fibration which preserves limits and filtered colimits and where $\cX$ is presentable\footnote{This last condition is redundant. See \cite[Theorem 10.3]{Gepner_Lax_2015}.}. Then:
	\begin{enumerate}
		\item the $\infty$-category $\mathrm{Stab}(p)$ is presentable;
		\item the functor $\pi \colon \mathrm{Stab}(p)$ commutes with limits and filtered colimits;
		\item the functor $\pi \colon \mathrm{Stab}(p) \to S$ is a presentable fibration.
	\end{enumerate}
	Condition (3) follows from the definition of $\mathrm{Stab}(p)$ \cite[7.3.1.1, 7.3.1.7]{Lurie_Higher_algebra}.
	The first two statements follow from the fact that $\mathrm{Stab}(p)$ can be realized as an accessible localization of $\Fun(\cS_*^{\mathrm{fin}}, \cX)$.
	Indeed, let $\cE$ be the full subcategory of $\Fun(\cS_*^{\mathrm{fin}}, \cX)$ spanned by those functor
	\[ g \colon \cS_*^{\mathrm{fin}} \to \cX \]
	such that:
	\begin{enumerate}
		\item $g$ is excisive;
		\item if $s = p(g(*)) \in S$, then $g(*)$ is a final object for $\cX_s$;
		\item the composition $p \circ g \colon \cS_*^{\mathrm{fin}}$ factors through $S^\simeq$, the maximal $\infty$-groupoid contained inside $S$.
	\end{enumerate}
	Observe that the inclusion
	\[ \cE \hookrightarrow \Fun(\cS_*^{\mathrm{fin}}, \cX) \]
	commutes with limits and filtered colimits.
	It follows that $\cE$ is an accessible localization of the presentable $\infty$-category $\Fun(\cS_*^{\mathrm{fin}}, \cX)$ and that the projection
	\[ \cE \hookrightarrow \Fun(\cS_*^{\mathrm{fin}}, \cX) \to \cX \xrightarrow{p} S \]
	induced by evaluation at the object $S^0 \in \cS_*^{\mathrm{fin}}$ commutes with limits and filtered colimits.
	
	We are only left to identify $\cE$ with $\mathrm{Stab}(p)$.
	Reasoning as in the proof of \cref{lem:abelianized_tangent_bundle}, we see that the map $\cE \to \cX$ takes Cartesian edges to Cartesian edges.
	Furthermore, the fiber at $s \in S$ can be canonically identified with the full subcategory of
	\[ \mathrm{Exc}(\cS_*^{\mathrm{fin}}, \cX_s) \]
	spanned by those functor that take final objects to final objects.
	In other words, $\cE_s \simeq \Sp( \cX_s )$.
	This completes the proof.
\end{proof}

Now let $\cT$ be any pregeometry and let $X \coloneqq (\cX, \cO_X)$ be a $\cT$-structured topos.
Recall from \cite[Proposition 1.15]{Porta_DCAGI} that the $\infty$-category $\Strloc_{\cT}(\cX)_{/\cO_X}$ is presentable.
Let $\cA \in \Strloc_{\cT}(\cX)_{/\cO_X}$ be any $\cT$-structure equipped with a local morphism to $\cO_X$.
Then the $\infty$-category
\[ \cT_{X,\cA} \coloneqq \Strloc_\cT(\cX)_{\cA // \cO_X} \]
is again presentable.
As a consequence, we can apply the above results to see that
\[ \pi \colon \rT_\Ab(\cT_{X, \cA}) \to \cT_{X, \cA} \]
is a functor between presentable categories that preserves limits and colimits.
It fits in a commutative triangle
\[ \begin{tikzcd}
	\rT_\Ab(\cT_{X, \cA}) \arrow{rr}{G} \arrow{dr}{\pi} & & \Fun(\Delta^1, \cT_{X, \cA}) \arrow{dl} \\
	{} & \cT_{X, \cA} ,
\end{tikzcd} \]
where $G$ is the natural functor.
Observe that the fiber of G at an object $\cB \in \cT_{X, \cA}$ can be identified with the following composition:
\[ \Sp(\Ab(\Strloc_{\cT}(\cX)_{\cA // \cB})) \xrightarrow{\Omega^\infty} \Ab(\Strloc_{\cT}(\cX)_{\cA // \cB}) \xrightarrow{U} \Strloc_{\cT}(\cX)_{\cA // \cB} , \]
where $U$ denotes the forgetful functor.
Let us denote by $\Omega^\infty_\Ab$ the composition $U\circ\Omega^\infty$.
In particular, it admits a left adjoint, that we denote $\Sigma^\infty_\Ab$.
We can therefore combine \cref{lem:abelianized_tangent_bundle} and \cite[7.2.3.11]{Lurie_Higher_algebra} to conclude that $G$ admits a left adjoint relative to $\cT_{X, \cA}$ (in the sense of \cite[7.3.2.2]{Lurie_Higher_algebra}).
We denote this left adjoint by $F$.
Finally, we let
\[ s \colon \cT_{X, \cA} \to \Fun(\Delta^1, \cT_{X, \cA}) \]
the functor defined informally by sending $\cA \xrightarrow{f} \cB \xrightarrow{g} \cX$ to the diagram
\[ \begin{tikzcd}
	\cA \arrow{r}{\mathrm{id}_\cA} \arrow{d}{\mathrm{id}_\cA} & \cA \arrow{r} \arrow{d}{f} & \cO_X \arrow{d}{\mathrm{id}_{\cO_X}} \\
	\cA \arrow{r}{f} & \cB \arrow{r}{g} & \cO_X .
\end{tikzcd} \]
Notice that the existence of the functor $s$ is a direct application of \cite[4.3.2.15]{HTT}.

\begin{defin} \label{def:relative_cotangent_complex_algebras}
	Let $X \coloneqq (\cX, \cO_X)$ be a $\cT$-structured topos and let $\cA \in \Strloc_{\cT}(\cX)_{/\cO_X}$.
	The \emph{$\cT$-theoretic cotangent complex functor relative to $X$ and $\cA$} is the composition
	\[ \mathbb L^{\cT}_{X, \cA} \colon \cT_{X, \cA} \xrightarrow{s} \Fun(\Delta^1, \cT_{X, \cA}) \xrightarrow{F} \rT_\Ab(\cT_{X, \cA}) . \]
	Let $\cB \in \Strloc_{\cT}(\cX)_{/\cO_X}$ and let $\varphi \colon \cA \to \cB$ be a morphism.
	The \emph{relative $\cT$-theoretic cotangent complex of $\varphi$}, denoted by $\mathbb L^\cT_{\varphi}$, or by $\mathbb L^\cT_{\cB / \cA}$ when the morphism is clear from the context, is the object
	\[ \mathbb L^{\cT}_{X, \cA}( \cB ) \in \Sp( \Ab (\Strloc_{\cT}(\cX)_{\cA // \cB}) ) . \]
	When $\cA$ is an initial object of $\Strloc_\cT(\cX)_{/\cO_X}$ we refer to $\mathbb L^{\cT}_{X, \cA}$ as the \emph{absolute cotangent complex} and we omit $\cA$ from the above notations.
\end{defin}

Let $\cT$ be a pregeometry, $\cX$ an $\infty$-topos and $\cO$ a $\cT$-structure on $\cX$.
Since $\Strloc_{\cT}(\cX)_{/\cO}$ is presentable, it admits pushouts.
We denote by $\cB_1 \otimes_{\cA}^{\cT} \cB_2$ the pushout of the diagram
\[ \cB_1 \leftarrow \cA \to \cB_2 \]
in $\Strloc_{\cT}(\cX)_{/\cO}$. 
Furthermore, we can rewrite the $\cT$-theoretic cotangent complex of $\varphi \colon \cA \to \cB$ in $\Strloc_{\cT}(\cX)$ as
\[ \mathbb L^\cT_{\cB / \cA} \simeq \Sigma^\infty_\Ab(\cB \otimes_{\cA}^\cT \cB) . \]

\begin{defin} \label{def:relative_cotangent_complex_spaces}
	Let $\cT$ be a pregeometry and let $X = (\cX, \cO_X)$ and $Y = (\cY, \cO_Y)$ be $\cT$-structured topoi.
	Let $f = (f_*, f^\sharp) \colon (\cX, \cO_X) \to (\cY, \cO_Y)$ be a morphism in $\RTop(\cT)$.
	The \emph{relative $\cT$-theoretic cotangent complex of $f$}, denoted by $\mathbb L_f^{\cT}$, is defined to be the relative $\cT$-theoretic cotangent complex of $f^\sharp \colon f\inv \cO_Y \to \cO_X$ in the sense of \cref{def:relative_cotangent_complex_algebras}.
	We will denote $\mathbb L_f^{\cT}$ by $\mathbb L^{\cT}_{X / Y}$ when the morphism $f$ is clear from the context.
\end{defin}

We now deduce some basic properties of the cotangent complex using the formal properties in \cite[§7.3.3]{Lurie_Higher_algebra}.
We start by fixing some notations.

Let $X \coloneqq (\cX, \cO_X)$, $Y \coloneqq (\cY, \cO_Y)$ be $\cT$-structured topoi and let $f \colon X \to Y$ be a morphism between them.
We denote the underlying geometric morphism of $\infty$-topoi by
\[ f_* \colon \cX \leftrightarrows \cY \colon f\inv, \]
and the underlying local morphism of $\cT$-structures by
\[ f^\sharp \colon f\inv \cO_Y \to \cO_X . \]

Since the functor $f\inv$ commutes with finite limits, composition with it induces a well-defined functor
\begin{equation} \label{eq:topological_pullback}
	\Strloc_\cT(\cY)_{/\cO_Y} \to \Strloc_\cT(\cX)_{/f\inv \cO_Y} .
\end{equation}
Observe that this functor commutes again with limits and sifted colimits.
In particular, it induces a functor
\[ \Sp( \Ab ( \Strloc_\cT(\cY)_{/\cO_Y} ) ) \to \Sp( \Ab( \Strloc_\cT(\cX)_{/f\inv \cO_Y} ) ) , \]
which we still denote by $f\inv$.

On the other hand, composition with $f^\sharp$ induces a functor
\begin{equation} \label{eq:tensor_product}
	f^\sharp_! \colon \Strloc_\cT(\cX)_{/f\inv \cO_Y} \to \Strloc_{\cT}(\cX)_{/ \cO_X} .
\end{equation}
Although this functor does not commute with finite limits, pullback along $f^\sharp \colon f\inv \cO_Y \to \cO_X$ provides a right adjoint to $f^\sharp_!$, that we denote by $f^\sharp_*$.
Notice that $f^\sharp_*$ commutes with filtered colimits.
Composition with $f^\sharp_*$ induces a functor
\[ \Sp( \Ab( \Strloc_\cT(\cX)_{/ \cO_X} ) ) \to \Sp ( \Ab ( \Strloc_{\cT}(\cX)_{/f\inv \cO_Y}) ) \]
that commutes with limits and filtered colimits.
The adjoint functor theorem guarantees then the existence of a left adjoint, which we denote by
\[ f^{\sharp *} \colon \Sp ( \Ab ( \Strloc_{\cT}(\cX)_{/f\inv \cO_Y}) ) \to \Sp ( \Ab ( \Strloc_{\cT}(\cX)_{/\cO_X}) ) \]

Finally, composing $f^{\sharp*}$ and $f\inv$ provides a functor
\[ f^* \colon \Sp(\Ab (\Strloc_{\cT}(\cY)_{/\cO_Y}) ) \xrightarrow{f\inv} \Sp(\Ab(\Strloc_{\cT}(\cX)_{/f\inv \cO_Y})) \xrightarrow{f^{\sharp*}} \Sp ( \Ab ( \Strloc_{\cT}(\cX)_{/\cO_X}) ) . \]

\begin{lem} \label{lem:transitivity_cotangent_complex}
	Let $f \colon X \to Y$ be a morphism of $\cT$-structured topoi. Then the diagram
	\[ \begin{tikzcd}
	\Sp(\Ab(\Strloc_{\cT}(\cY)_{/ \cO_Y})) \arrow{rr}{f\inv} & & \Sp(\Ab(\Strloc_{\cT}(\cX)_{/ f\inv \cO_Y})) \\
	\Strloc_{\cT}(\cY)_{/ \cO_Y} \arrow{rr}{f\inv} \arrow{u}{\Sigma^\infty_\Ab} & & \Strloc_{\cT}(\cX)_{/ f\inv \cO_Y} \arrow{u}{\Sigma^\infty_\Ab}
	\end{tikzcd} \]
	commutes. In particular, $f\inv( \anL_Y) \simeq \anL_{f\inv \cO_Y}$.
\end{lem}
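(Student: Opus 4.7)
The plan is to establish the commutativity of the square via a Yoneda argument that converts it into a tautological compatibility between $\Omega^\infty_\Ab$ and postcomposition with the right adjoint of $f\inv$, and then to derive the ``in particular'' statement from the universal description of the absolute cotangent complex.

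First I would check that $f\inv$ preserves finite limits (and indeed all small colimits) as part of a geometric morphism, so that postcomposition with $f\inv$ induces a functor $f\inv \colon \Strloc_\cT(\cY)_{/\cO_Y} \to \Strloc_\cT(\cX)_{/f\inv\cO_Y}$; by \cite[Theorem 2.1.1]{DAG-V} this functor admits a right adjoint $f^\cT_*$. Since $f\inv$ preserves finite products, \cref{lem:abelianization_of_functors} yields an induced functor $\Ab(f\inv)$ on abelian group objects; preservation of finite limits then gives a functor $\partial_\Ab(f\inv) \colon \Sp(\Ab(\cdot)) \to \Sp(\Ab(\cdot))$ on spectra, whose right adjoint $\partial_\Ab(f^\cT_*)$ is obtained by the same procedure applied to $f^\cT_*$.

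The key observation is that the square
\[\begin{tikzcd}
\Sp(\Ab(\Strloc_\cT(\cX)_{/f\inv\cO_Y})) \arrow{r}{\partial_\Ab(f^\cT_*)} \arrow{d}{\Omega^\infty_\Ab} & \Sp(\Ab(\Strloc_\cT(\cY)_{/\cO_Y})) \arrow{d}{\Omega^\infty_\Ab} \\
\Strloc_\cT(\cX)_{/f\inv\cO_Y} \arrow{r}{f^\cT_*} & \Strloc_\cT(\cY)_{/\cO_Y}
\end{tikzcd}\]
commutes tautologically: under the identification $\Sp(\Ab(\cC)) \simeq \Exc_*(\cS_*^{\mathrm{fin}}, \Fun^\times(\cTAb, \cC))$ from \cite[1.4.2.8]{Lurie_Higher_algebra}, the functor $\partial_\Ab(f^\cT_*)$ is postcomposition with $f^\cT_*$, while $\Omega^\infty_\Ab$ is evaluation at $(S^0, A^1)$, and postcomposition commutes with evaluation. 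Given this, for any $\cB \in \Strloc_\cT(\cY)_{/\cO_Y}$ and $E \in \Sp(\Ab(\Strloc_\cT(\cX)_{/f\inv\cO_Y}))$, chaining the adjunctions $\Sigma^\infty_\Ab \dashv \Omega^\infty_\Ab$, $f\inv \dashv f^\cT_*$, and $\partial_\Ab(f\inv) \dashv \partial_\Ab(f^\cT_*)$ produces natural equivalences
\begin{align*}
\Map(\Sigma^\infty_\Ab(f\inv\cB), E) &\simeq \Map(f\inv\cB, \Omega^\infty_\Ab E) \simeq \Map(\cB, f^\cT_* \Omega^\infty_\Ab E) \\
&\simeq \Map(\cB, \Omega^\infty_\Ab \partial_\Ab(f^\cT_*) E) \simeq \Map(\partial_\Ab(f\inv)\Sigma^\infty_\Ab \cB, E).
\end{align*}
Yoneda then yields $\Sigma^\infty_\Ab \circ f\inv \simeq \partial_\Ab(f\inv) \circ \Sigma^\infty_\Ab$, which is exactly the commutativity claimed.

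For the ``in particular'' assertion, I would use the description $\mathbb L^\cT_{\cO_Y} \simeq \Sigma^\infty_\Ab(\cO_Y \otimes^\cT \cO_Y)$ recorded after \cref{def:relative_cotangent_complex_algebras}, where the coproduct is formed in $\Strloc_\cT(\cY)_{/\cO_Y}$ and is equipped with the fold map to $\cO_Y$. Because $f\inv$ is a left adjoint it preserves this coproduct, so combining with the commutative square gives $f\inv \mathbb L^\cT_{\cO_Y} \simeq \Sigma^\infty_\Ab(f\inv\cO_Y \otimes^\cT f\inv\cO_Y) \simeq \mathbb L^\cT_{f\inv\cO_Y}$. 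The main potential obstacle is verifying the existence of the right adjoint $f^\cT_*$ on the slice categories of local $\cT$-structures; once that is supplied by Lurie's structured topos formalism, the rest is pure adjunction manipulation.
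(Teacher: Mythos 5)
Your proposal follows the same overall strategy as the paper's proof: reduce the commutativity of the square of left adjoints to commutativity of the square of right adjoints, and observe that the latter is automatic because $\Omega^\infty_\Ab$ is evaluation at $(S^0,A^1)$ while the spectral functor induced by a limit-preserving functor is postcomposition. Your explicit Yoneda chain and the argument for the ``in particular'' clause via $\mathbb L^{\cT}_{\cO_Y} \simeq \Sigma^\infty_\Ab(\cO_Y \otimes^\cT \cO_Y)$ match the paper's reasoning.

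The genuine gap is precisely the one you flag at the end, and your proposed fix does not work. The existence of the right adjoint $f^\cT_*$ to $f^{-1}\colon \Strloc_\cT(\cY)_{/\cO_Y} \to \Strloc_\cT(\cX)_{/f^{-1}\cO_Y}$ is not supplied by Theorem 2.1.1 of DAG-V: that result produces the relative spectrum functor attached to a \emph{transformation of pregeometries}, not a right adjoint of the slice functor $f^{-1}$ for a fixed pregeometry $\cT$. Nor does the adjoint functor theorem apply off the shelf, since it is not a priori clear that $f^{-1}$ preserves all colimits of local structures: the discussion preceding the lemma only records preservation of limits and sifted colimits, and condition (iii) of \cref{def:structure} (covers go to effective epimorphisms) is not visibly preserved by postcomposition with $f_*$, so one cannot simply ``push forward''. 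Observe that your ``in particular'' step also invokes preservation of the coproduct $\otimes^\cT$ by $f^{-1}$, which is exactly this colimit-preservation question. The paper's proof is devoted to closing this gap: it introduces the auxiliary $\infty$-category $\Str_\cT'(\cX)$ of product- and admissible-pullback-preserving functors with cartesian natural transformations, obtained by dropping the covering axiom; shows that $\Strloc_\cT(\cX)_{/\cO_X} \to \Str_\cT'(\cX)_{/\cO_X}$ is an equivalence because the covering condition is automatically inherited over a base; and then constructs $f^\cT_*$ explicitly as postcomposition with $f_*\colon \cX \to \cY$ into $\Str_\cT'(\cY)_{/f_*f^{-1}\cO_Y}$ followed by pullback along the unit $\cO_Y \to f_*f^{-1}\cO_Y$. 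With that construction in hand, the remaining ``pure adjunction manipulation'' is indeed exactly how the paper concludes.
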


\begin{proof}
		Introduce the $\infty$-category $\Str_\cT'(\cX)$ whose objects are functors
	\[ \cO \colon \cT \to \cX \]
	that commute with products and admissible pullbacks, and whose morphisms are natural transformations $\varphi \colon \cO \to \cO'$ such that for every admissible morphism $j \colon U \to V$ in $\cT$ the square
	\[ \begin{tikzcd}
	\cO(U) \arrow{r} \arrow{d} & \cO(U) \arrow{d} \\
	\cO(V) \arrow{r} & \cO'(V)
	\end{tikzcd} \]
	is a pullback square.
	Then the natural functor $\Strloc_{\cT}(\cX)_{/\cO_X} \to \Str_\cT'(\cX)_{/\cO_X}$ is fully faithful.
	Let $\cO \in \Str_\cT'(\cX)$ and let $\varphi \colon \cO \to \cO_X$ be a morphism.
	Let $\{U_i \to U\}$ be an admissible cover in $\cT$.
	Then the diagram
	\[ \begin{tikzcd}
	\coprod \cO(U_i) \arrow{r} \arrow{d} & \cO(U) \arrow{d} \\
	\coprod \cO_X(U_i) \arrow{r} & \cO_X(U)
	\end{tikzcd} \]
	is a pullback. Since the bottom horizontal morphism is an effective epimorphism, the same goes for the top horizontal one.
	In other words, $\cO \in \Strloc_\cT(\cX)_{/\cO_X}$.
	This shows that there is an canonical equivalence
	\begin{equation} \label{eq:non_local_structures}
	\Str_\cT'(\cX)_{/\cO_X} \simeq \Strloc_{\cT}(\cX)_{/\cO_X} .
	\end{equation}
	
	We can now argue as follows.
	Composition with
	\[ f_* \colon \cX \to \cY \]
	induces a well-defined functor
	\[ f_* \colon \Str_\cT'(\cX)_{/f\inv \cO_Y} \to \Str_\cT'(\cY)_{/f_* f\inv \cO_Y} . \]
	Moreover, pullback along the natural transformation $\cO_Y \to f_* f\inv \cO_Y$, we obtain a functor
		\[ \Str_\cT'(\cY)_{/f_* f\inv \cO_Y} \to \Str_\cT'(\cY)_{/ \cO_Y} . \]
	Composing these two functors and using the equivalence \eqref{eq:non_local_structures} we obtain a functor
	\[ f_* \colon \Strloc_\cT(\cX)_{/f\inv \cO_Y} \simeq \Str_\cT'(\cX)_{/ f\inv \cO_Y} \to \Str_\cT'(\cY)_{/ \cO_Y} \simeq \Strloc_{\cT}(\cY)_{/\cO_Y} . \]
	This functor is the right adjoint for the functor
	\[ f\inv \colon \Strloc_\cT(\cY)_{/ \cO_Y} \longrightarrow \Strloc_\cT(\cX)_{/f\inv \cO_Y} . \]
		It follows that composition with $f_*$ induces a functor
	\[ f_* \colon \Sp(\Ab(\Str_\cT'(\cY)_{/\cO_Y})) \to \Sp(\Ab(\Str_{\cT}'(\cY)_{/\cO_Y})) \]
	that is right adjoint to the functor $f\inv$ constructed above.
	It is now enough to check that the diagram of right adjoints
	\[ \begin{tikzcd}
	\Sp\big(\Ab\big(\Str_\cT'(\cY)_{/\cO_Y}\big)\big) \arrow{d}{\Omega^\infty_\Ab} & \Sp\big(\Ab\big(\Str_{\cT}'(\cX)_{/f\inv \cO_Y}\big)\big) \arrow{l}[swap]{f_*} \arrow{d}{\Omega^\infty_\Ab} \\
	\Str_\cT'(\cY)_{/\cO_Y} & \Str_\cT'(\cX)_{/f\inv \cO_Y} \arrow{l}[swap]{f_*}
	\end{tikzcd} \]
	commutes.
	This follows because, given $F \in \Sp(\Ab( \Str_\cT'(\cX)_{/f\inv \cO_Y} ))$, we have natural identifications
	\[ f_*( \Omega^\infty_\Ab(F) ) \simeq f_* \circ F(S^0, A^1) \simeq (f_* \circ F)(S^0, A^1) \simeq \Omega^\infty_\Ab( f_* F) . \]
		The proof is therefore complete.
\end{proof}

\begin{prop} \label{prop:transitivity_cotangent_complex}
	Let $\cT$ be a pregeometry and let $f \colon X \to Y$ and $g \colon Y \to Z$ be morphisms of $\cT$-structured topoi, where $X = (\cX, \cO_X)$.
	Then there is a fiber sequence
	\[ f^* \mathbb L^\cT_{Y/Z} \to \mathbb L^\cT_{X / Z} \to \mathbb L^\cT_{X / Y} \]
	in $\Sp(\Ab(\Strloc_{\cT}(\cX)_{/\cO_X}))$.
\end{prop}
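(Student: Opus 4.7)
The plan is to reduce the geometric statement to an algebraic transitivity sequence inside $\Strloc_\cT(\cX)$, and then translate back using the (relative version of) \cref{lem:transitivity_cotangent_complex}.

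First, unwind the definitions. By \cref{def:relative_cotangent_complex_spaces} and the canonical identification $(gf)\inv \simeq f\inv g\inv$, we have
\[ \mathbb L^\cT_{X/Y} = \mathbb L^\cT_{\cO_X / f\inv \cO_Y}, \qquad \mathbb L^\cT_{X/Z} \simeq \mathbb L^\cT_{\cO_X / f\inv g\inv \cO_Z}. \]
Thus the target fiber sequence would follow from an algebraic transitivity statement applied to the composable pair $f\inv g\inv \cO_Z \to f\inv \cO_Y \to \cO_X$ in $\Strloc_\cT(\cX)_{/\cO_X}$: namely, that for any composable $\cA \to \cB \to \cC$ in $\Strloc_\cT(\cX)$ there is a canonical fiber sequence
\[ \psi_!\, \mathbb L^\cT_{\cB/\cA} \to \mathbb L^\cT_{\cC/\cA} \to \mathbb L^\cT_{\cC/\cB} \]
in $\Sp(\Ab(\Strloc_\cT(\cX)_{/\cC}))$, where $\psi \colon \cB \to \cC$ is the composite and $\psi_!$ denotes the left adjoint to the pullback along $\psi$ on abelianized spectrum objects. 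Taking $(\cA,\cB,\cC) = (f\inv g\inv \cO_Z, f\inv \cO_Y, \cO_X)$ and noting $\psi_! = f^{\sharp\,*}$ (by the construction of $f^*$), this yields
\[ f^{\sharp\,*}\, \mathbb L^\cT_{f\inv\cO_Y / f\inv g\inv \cO_Z} \to \mathbb L^\cT_{\cO_X / f\inv g\inv \cO_Z} \to \mathbb L^\cT_{\cO_X / f\inv \cO_Y}. \]

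Second, identify the leftmost term with $f^*\mathbb L^\cT_{Y/Z}$. The proof of \cref{lem:transitivity_cotangent_complex} uses only that $f_* \colon \Strloc_\cT(\cX) \to \Strloc_\cT(\cY)$ commutes with limits and that $\Omega^\infty_\Ab$ is computed objectwise; both properties are preserved when one restricts to slice categories over a chosen base. Consequently the same argument gives a relative analogue,
\[ f\inv \mathbb L^\cT_{\cO_Y / g\inv \cO_Z} \simeq \mathbb L^\cT_{f\inv \cO_Y / f\inv g\inv \cO_Z}, \]
so that $f^* \mathbb L^\cT_{Y/Z} = f^{\sharp\,*}\, f\inv \mathbb L^\cT_{Y/Z}$ is identified with the desired term. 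This completes the proof, modulo the algebraic transitivity sequence.

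The main obstacle is the algebraic transitivity sequence. For the \emph{unabelianized} tangent bundle this is \cite[\S7.3.3]{Lurie_Higher_algebra}, but our cotangent complex takes values in $\Sp(\Ab(-))$ rather than in $\Sp(-)$, so a direct citation is not available. I would handle this by retracing Lurie's argument in our abelianized setting. The proof there only requires three inputs: (i) presentability of the total space of the relative tangent bundle, (ii) the fact that the relative cotangent complex is defined as a left adjoint to a stabilized forgetful functor, and (iii) that this left adjoint is compatible with composition and base change of the base object. In our framework (i) is \cref{lem:abelianized_tangent_bundle_II}, (ii) is the construction in \cref{subsec:cotangent_formalism} (taking $F$ the left adjoint to $G$ and setting $\mathbb L^\cT = F\circ s$), and (iii) follows from \cref{lem:abelianized_tangent_bundle}(3) together with the standard manipulation of pushout squares of structures. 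The verification is formal but lengthy; the only genuinely new point compared to Lurie's proof is that one must consistently carry along the abelian group structure, which causes no difficulty since $\Ab(-) = \Fun^\times(\cTAb, -)$ interacts well with slice categories and with stabilization by \cref{lem:abelianized_tangent_bundle}.
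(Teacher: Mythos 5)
Your proof is correct and follows essentially the same route as the paper's, which simply records that, using \cref{lem:abelianized_tangent_bundle_II} and \cref{lem:transitivity_cotangent_complex}, the proof of \cite[7.3.3.6]{Lurie_Higher_algebra} carries over verbatim to the abelianized tangent bundle. Your two-step decomposition --- reducing to the algebraic triangle $f\inv g\inv\cO_Z \to f\inv\cO_Y \to \cO_X$ inside the fixed topos $\cX$, then combining the abelianized version of Lurie's abstract transitivity (for which \cref{lem:abelianized_tangent_bundle,lem:abelianized_tangent_bundle_II} supply the needed inputs) with a relative form of \cref{lem:transitivity_cotangent_complex} to identify $f^{\sharp*}\mathbb{L}^{\cT}_{f\inv\cO_Y/f\inv g\inv\cO_Z}$ with $f^*\mathbb{L}^{\cT}_{Y/Z}$ --- is precisely what that citation is abbreviating.
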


\begin{proof}
	Using Lemmas \ref{lem:abelianized_tangent_bundle_II} and \ref{lem:transitivity_cotangent_complex}, the same proof of \cite[7.3.3.6]{Lurie_Higher_algebra} applies.
\end{proof}

\begin{cor} \label{cor:cotangent_complex_etale}
	Let $\cT$ be a pregeometry.
	If $f \colon X \to Y$ is an \'etale morphism of $\cT$-structured topoi (cf.\ \cite[Definition 2.3.1]{DAG-V}), then $\mathbb L_{Y/X}^\cT \simeq 0$.
\end{cor}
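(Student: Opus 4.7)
The plan is to reduce the vanishing to a trivial property of the cotangent complex by extracting, from the hypothesis that $f$ is étale, the fact that the local morphism on structures $f^\sharp \colon f\inv \cO_Y \to \cO_X$ is an equivalence. Indeed, by \cite[Definition 2.3.1]{DAG-V}, a morphism in $\RTop(\cT)$ is étale precisely when its underlying geometric morphism is étale \emph{and} its induced local morphism of $\cT$-structures is an equivalence; it is the latter condition that I will use. Since, by \cref{def:relative_cotangent_complex_spaces}, $\mathbb L^\cT_{X/Y}$ is defined to be the cotangent complex of $f^\sharp$, the problem reduces to showing that the $\cT$-theoretic cotangent complex of an equivalence vanishes.

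To carry out this last step, I would invoke the explicit description
\[ \mathbb L^\cT_{\cB/\cA} \simeq \Sigma^\infty_\Ab(\cB \otimes_\cA^\cT \cB) \]
recorded just after \cref{def:relative_cotangent_complex_algebras}, specialized to $\cA = f\inv \cO_Y$, $\cB = \cO_X$, $\varphi = f^\sharp$. Since $\varphi$ is an equivalence, the pushout $\cB \otimes_\cA^\cT \cB$ is canonically equivalent to $\cB$, and both canonical inclusions as well as the codiagonal $\cB \otimes_\cA^\cT \cB \to \cB$ become identities after identifying $\cA \simeq \cB$. Consequently, the object $\cB \otimes_\cA^\cT \cB$ represents the initial (and simultaneously terminal) object of the under-over category $\Strloc_\cT(\cX)_{\cA // \cB}$.

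To finish, I would use that $\Sigma^\infty_\Ab$ is a left adjoint, as discussed in the construction preceding \cref{def:relative_cotangent_complex_algebras}, and hence preserves initial objects; the initial object of the stable $\infty$-category $\Sp(\Ab(\Strloc_\cT(\cX)_{\cA // \cB}))$ is the zero object. Combining these observations yields $\mathbb L^\cT_{X/Y} \simeq 0$, as asserted. No substantive obstacle arises: once the étaleness condition is translated into the statement that $f^\sharp$ is an equivalence, the conclusion is a purely formal consequence of the fact that $\Sigma^\infty_\Ab$ is a left adjoint landing in a stable $\infty$-category.
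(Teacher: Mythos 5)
Your proof is correct, and it takes a genuinely different route from the one in the paper. The paper's own argument invokes the transitivity fiber sequence of \cref{prop:transitivity_cotangent_complex} with $Z = \ast$, implicitly using \cref{lem:transitivity_cotangent_complex} and the fact that $f^\sharp$ is an equivalence to conclude that the first map $f^* \mathbb L^{\cT}_{Y} \to \mathbb L^{\cT}_{X}$ is itself an equivalence, so the cofiber $\mathbb L^{\cT}_{X/Y}$ vanishes. You instead bypass the transitivity sequence entirely: after extracting from [DAG-V, 2.3.1] that $f^\sharp \colon f^{-1}\cO_Y \to \cO_X$ is an equivalence, you evaluate the formula $\mathbb L^{\cT}_{\cB/\cA} \simeq \Sigma^\infty_\Ab\bigl(\cB \otimes_{\cA}^{\cT} \cB\bigr)$ directly and observe that, when $\cA \to \cB$ is an equivalence, the pushout collapses to the zero object of $\Strloc_\cT(\cX)_{\cA // \cB}$, which the left adjoint $\Sigma^\infty_\Ab$ sends to $0$ in the stable target. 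This is arguably more self-contained: it isolates the underlying mechanism (the relative cotangent complex of an equivalence is zero) that is also implicitly at work in the paper's appeal to ``localising on $X$,'' and it needs no delicate bookkeeping about pullback functors across geometric morphisms. One cosmetic point: the corollary's subscript $\mathbb L^{\cT}_{Y/X}$ is a typo for $\mathbb L^{\cT}_{X/Y}$ (consistent with \cref{def:relative_cotangent_complex_spaces}); your proof correctly addresses the intended statement.
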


\begin{proof}
	It follows from the transitivity sequence of \cref{prop:transitivity_cotangent_complex} by taking $Z$ to be a point and localising on $X$.
\end{proof}

\begin{prop} \label{prop:base_change_cotangent_complex}
	Suppose given a pullback diagram
	\[ \begin{tikzcd}
	X' \arrow{r} \arrow{d}{g} & Y' \arrow{d}{f} \\
	X \arrow{r} & Y
	\end{tikzcd} \]
	in the category $\RTop(\cT)$.
	Then the natural morphism
	\[ g^*(\mathbb L^\cT_{X / Y}) \to \mathbb L^\cT_{X' / Y'} \]
	is an equivalence.
\end{prop}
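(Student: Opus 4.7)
I will adapt Lurie's base-change argument for cotangent complexes (\cite[7.3.3.7]{Lurie_Higher_algebra}) to our structured-topos framework. Write $h \colon X \to Y$ and $h' \colon X' \to Y'$ for the horizontal morphisms of the pullback square, with local structure maps $h^\sharp \colon h^{-1}\cO_Y \to \cO_X$ and $h'^\sharp \colon h'^{-1}\cO_{Y'} \to \cO_{X'}$. By \cref{def:relative_cotangent_complex_algebras}, $\mathbb L^\cT_{X/Y} \simeq \Sigma^\infty_\Ab(\cO_X \otimes^\cT_{h^{-1}\cO_Y} \cO_X)$, with the pushout taken in $\Strloc_\cT(\cX)_{h^{-1}\cO_Y // \cO_X}$, and the functor $g^*$ factors as $g^{\sharp*} \circ g^{-1}$, a composition of two left adjoints.

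First, I would apply $g^{-1}$: by \cref{lem:transitivity_cotangent_complex} it commutes with $\Sigma^\infty_\Ab$ and, being a left adjoint on the underlying $\cT$-structures, it preserves the inner pushout. Commutativity of the square of topoi identifies $g^{-1}h^{-1}\cO_Y \simeq h'^{-1}f^{-1}\cO_Y$, and the maps $h'^{-1}(f^\sharp) \colon h'^{-1}f^{-1}\cO_Y \to h'^{-1}\cO_{Y'}$, $h'^\sharp$, and $g^\sharp$ assemble into a compatible cone terminating at $\cO_{X'}$. Applying the left adjoint $g^{\sharp*}$, which also commutes with $\Sigma^\infty_\Ab$ and with pushouts in the inner argument, yields a canonical comparison morphism $g^*\mathbb L^\cT_{X/Y} \to \mathbb L^\cT_{X'/Y'}$. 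To show it is an equivalence, I would test against an arbitrary $M' \in \Sp(\Ab(\Strloc_\cT(\cX')_{/\cO_{X'}}))$ via Yoneda: by the $(g^*, g_*)$-adjunction on modules and the universal property of $\mathbb L^\cT$ as representing derivations, both $\Map(g^*\mathbb L^\cT_{X/Y}, M')$ and $\Map(\mathbb L^\cT_{X'/Y'}, M')$ compute spaces of sections of split square-zero extensions, respectively of $\cO_X \oplus g_*M' \to \cO_X$ in $(\Strloc_\cT(\cX))_{h^{-1}\cO_Y // \cO_X}$ and of $\cO_{X'} \oplus M' \to \cO_{X'}$ in $(\Strloc_\cT(\cX'))_{h'^{-1}\cO_{Y'} // \cO_{X'}}$. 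These are identified via the universal property of the pullback $X' = X \times_Y Y'$ in $\RTop(\cT)$ applied to the $\cT$-structured topos $(\cX', \cO_{X'} \oplus M')$.

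The main obstacle will be verifying that the split square-zero extension is compatible with pullback in $\RTop(\cT)$, namely that the direct image $g_*(\cO_{X'} \oplus M')$ agrees with the split square-zero extension $\cO_X \oplus g_*M'$ of $\cO_X$. This reduces to the naturality of $\Omega^\infty_\Ab$ under the base-change functors on abelianized tangent bundles; such compatibilities were already exploited in the proof of \cref{thm:equivalence_of_modules} (see in particular \cref{sec:reduction_to_spaces}), and the proof here will follow the same pattern of left-adjointability arguments, ultimately reducing to statements about geometric morphisms of $\infty$-topoi that are transparent at the level of points.
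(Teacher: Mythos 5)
Your proposal matches the paper's own proof, which simply cites \cite[7.3.3.7]{Lurie_Higher_algebra} together with \cref{lem:abelianized_tangent_bundle_II} and \cref{lem:transitivity_cotangent_complex}; your second and third paragraphs are a faithful reconstruction of Lurie's representability argument (test against $M'$, identify both sides with sections of split square-zero extensions, use the pullback/pushout universal property) transported to the structured-topos setting. One small caution: \cref{lem:transitivity_cotangent_complex} only establishes that $\Sigma^\infty_\Ab$ commutes with the geometric pullback $g\inv$, not with $g^{\sharp*}$; the latter requires a separate (though easy) left-adjointability argument dual to the $\Omega^\infty_\Ab$ compatibility of \cref{lem:underlying_algebra_split_square_zero_extension}, and this same point feeds both into your construction of the comparison map in the first paragraph and into your claim $g_*(\cO_{X'}\oplus M')\simeq \cO_X\oplus g_*M'$. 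Also note that $g^{\sharp*}$ does not literally ``commute with pushouts in the inner argument'' in the naive sense, since $g^\sharp_!$ only changes the augmentation; the comparison morphism is more transparently obtained from the cone you describe, or from the transitivity sequence of \cref{prop:transitivity_cotangent_complex}, but this does not affect the Yoneda step, which carries the actual content.
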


\begin{proof}
	Using Lemmas \ref{lem:abelianized_tangent_bundle_II} and \ref{lem:transitivity_cotangent_complex}, the same proof of \cite[7.3.3.7]{Lurie_Higher_algebra} applies.
\end{proof}

\begin{rem} \label{rem:warning_cotangent_complex_pullback}
	The above proposition works for any pregeometry $\cT$.
	Nevertheless, we are seldom interested in working with the full $\infty$-category $\RTop(\cT)$.
	For example, when $\cT = \cTetk$ is the \'etale pregeometry, we are only interested in working with the full subcategory of $\RTop(\cTetk)$ spanned by derived algebraic \DM stacks.
	Similarly, when $\cT = \cTank$, we are interested in working with the full subcategory of $\RTop(\cTank)$ spanned by derived analytic spaces.
	In general, the inclusion of these full subcategories does not commute with pullbacks.
	In other words, the \cref{prop:base_change_cotangent_complex} has to be proven again in the cases of interests.
	
	The complex analytic case is an exception.
	Indeed, \cite[Proposition 12.12]{DAG-IX} guarantees that the inclusion $\dAn_{\mathbb C} \hookrightarrow \RTop(\cTanc)$ commutes with pullbacks.
	The case of derived algebraic \DM stacks can also be dealt with easily: the question being local, one can reduce to the affine case, where the result follows directly from \cite[7.3.3.7]{Lurie_Higher_algebra}.
	However, the non-archimedean analytic case is trickier and requires techniques that will be introduced in the next subsection.
	We refer to \cref{prop:base_change_cotangent_complex_analytic} for the proof.
\end{rem}

\subsection{The analytic cotangent complex} \label{subsec:analytic_cotangent_complex}

From this point on, we will specialize to the pregeometry $\cTank$.
If $f \colon X \to Y$ is a morphism in $\RTop(\cTank)$, we write $\anL_{X/Y}$ instead of $\mathbb L^{\cTank}_{X / Y}$.
It is an element in $\Sp(\Ab( \AnRing_k(\cX)_{/\cO_X} ))$.
Nonetheless, using the equivalence
\[ \Sp(\Ab(\AnRing_k(\cX)_{/\cO_X})) \simeq \cO_X\Mod . \]
provided by \cref{thm:equivalence_of_modules}, we consider $\anL_{X/Y}$ as an element in $\cO_X\Mod$.
Since this stable $\infty$-category has a canonical t-structure (cf.\ \cite[1.7]{DAG-VII}), we have the cohomology sheaves $\pi_i(\anL_{X/Y})$.

As in the algebraic setting, the analytic cotangent complex is closely related to analytic derivations.

\begin{defin} \label{def:analytic_split_square_zero_extension}
	Let $X = (\cX, \cO_X)$ be a derived analytic space and let $\cF \in \cO_X\Mod^{\ge 0}$.
	The \emph{analytic split square-zero extension of $\cO_X$ by $\cF$} is the derived analytic ring
	\[ \cO_X \oplus \cF \coloneqq \Omega^\infty_\Ab(\cF) \in \AnRing_k(\cX)_{/\cO_X} . \]
\end{defin}

This definition is motivated by \cite[7.3.4.15]{Lurie_Higher_algebra}.
Let us show that the notion of analytic split square-zero extension is compatible with the underlying algebra:

\begin{lem} \label{lem:underlying_algebra_split_square_zero_extension}
	Let $\varphi \colon \cT' \to \cT$ be a transformation of pregeometries and let $X = (\cX, \cO_X)$ be a $\cT$-structured topos.
	Then the functor
	\[ \varphi_* \colon \Strloc_{\cT}(\cX)_{/\cO_X} \to \Strloc_{\cT'}(\cX)_{/\cO_X\circ \varphi} \]
	given by precomposition with $\varphi$ induces a commutative square
	\[ \begin{tikzcd}
		\Sp(\Ab( \Strloc_{\cT'}(\cX)_{/\cO_X \circ \varphi} )) \arrow{d}{\Omega^\infty_\Ab} & \Sp(\Ab( \Strloc_{\cT}(\cX)_{/\cO_X} )) \arrow{l}[swap]{\varphi_*} \arrow{d}{\Omega^\infty_\Ab} \\
		\Strloc_{\cT'}(\cX)_{/\cO_X \circ \varphi} & \Strloc_{\cT}(\cX)_{/\cO_X} \arrow{l}[swap]{\varphi_*} .
	\end{tikzcd} \]
\end{lem}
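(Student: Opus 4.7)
The plan is to identify $\Omega^\infty_\Ab$ explicitly as an evaluation functor, and then observe that postcomposition with a finite-limit-preserving functor commutes tautologically with such an evaluation. The entire lemma is essentially a bookkeeping exercise, so I expect no serious obstacle.

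First I would check that $\varphi_*$ is genuinely defined on the slice categories of local structures. Since $\varphi \colon \cT' \to \cT$ is a transformation of pregeometries, it preserves admissible morphisms, admissible pullbacks, and covers, which is exactly what is needed to ensure that precomposition with $\varphi$ sends $\Strloc_\cT(\cX)$ into $\Strloc_{\cT'}(\cX)$; the slice refinement along $\cO_X$ is then automatic. Because limits in $\Strloc_\cT(\cX)$ are computed pointwise, the functor $\varphi_*$ preserves all limits, in particular finite products. By \cref{lem:abelianization_of_functors}, this produces a limit-preserving functor
\[ \Ab(\varphi_*) \colon \Ab\bigl(\Strloc_\cT(\cX)_{/\cO_X}\bigr) \to \Ab\bigl(\Strloc_{\cT'}(\cX)_{/\cO_X \circ \varphi}\bigr), \]
given concretely by postcomposition with $\varphi_*$. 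Preservation of finite limits then means that postcomposition with $\Ab(\varphi_*)$ takes excisive pointed functors to excisive pointed functors, producing the top horizontal arrow of the claimed square as the functor $\partial(\Ab(\varphi_*))$.

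Next I would recall the concrete descriptions of the vertical arrows. Under the identification $\Sp(\cD) \simeq \Exc_*(\cS^{\mathrm{fin}}_*, \cD)$ used in the paper, the functor $\Omega^\infty$ is evaluation at $S^0 \in \cS^{\mathrm{fin}}_*$; under $\Ab(\cC) \simeq \Fun^\times(\cT_\Ab, \cC)$, the forgetful functor $U$ is evaluation at the free abelian group $A^1 \in \cT_\Ab$ of rank one. Consequently $\Omega^\infty_\Ab = U \circ \Omega^\infty$ is canonically identified with the two-step evaluation functor $F \mapsto F(S^0)(A^1)$.

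Given this identification, the commutativity of the square is immediate: for any $F \in \Sp(\Ab(\Strloc_\cT(\cX)_{/\cO_X}))$, both ways around the square produce
\[ \varphi_*\bigl( F(S^0)(A^1) \bigr), \]
since postcomposition with the fixed functor $\varphi_*$ commutes with evaluation at a fixed object of the source $\infty$-category in a purely formal way. The only subtle point worth writing out is the verification that $\varphi_*$ preserves the slice and the local-structure conditions; everything else reduces to unpacking definitions, and the square commutes by a canonical natural equivalence.
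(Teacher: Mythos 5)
Your proof is correct and follows essentially the same route as the paper: the paper also identifies $\Omega^\infty_\Ab$ as evaluation at $(S^0, A^1)$ and observes that this tautologically commutes with postcomposition by $\varphi_*$, after noting that $\varphi_*$ preserves limits so that postcomposition is well-defined on spectra of abelian objects. The extra scaffolding you include (checking $\varphi_*$ preserves local structures, and invoking \cref{lem:abelianization_of_functors}) is implicit in the paper but does not change the argument.
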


\begin{proof}
	Since $\varphi_* \colon \Strloc_\cT(\cX)_{/\cO_X} \to \Strloc_{\cT'}(\cX)_{/\cO_X}$ commutes with limits, composition with $\varphi_*$ induces a well-defined functor
	\[ \varphi_* \colon \Sp(\Ab(\Strloc_{\cT}(\cX)_{/\cO_X})) \to \Sp(\Ab(\Strloc_{\cT'}(\cX)_{/\cO_X})) . \]
	Let $\cF \colon \cS^{\mathrm{fin}}_* \times \cTAb \to \Strloc_{\cT}(\cX)_{/\cO_X}$ be an element in
	\[ \Sp(\Ab(\Strloc_{\cT}(\cX)_{/\cO_X})) . \]
	Then
	\[ \Omega^\infty_\Ab( \varphi_*( F ) ) \simeq (\varphi_*(F))(S^0, A^1) \simeq F(S^0,A^1) \circ \varphi \simeq \varphi_*( \Omega^\infty_\Ab(F) ) . \]
	The proof is thus complete.
\end{proof}

\begin{cor} \label{cor:split_square-zero_extension}
	Let $X = (\cX, \cO_X)$ be a derived analytic space and let $\cF \in \cO_X\Mod^{\ge 0}$.
	Then $( \cO_X \oplus \cF )\alg$ is the split square-zero extension of $\cO_X\alg$ by $\cF$.
\end{cor}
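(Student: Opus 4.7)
The plan is to deduce this corollary as a direct application of \cref{lem:underlying_algebra_split_square_zero_extension} combined with the algebraic analogue of \cref{thm:equivalence_of_modules}.

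First, I would apply \cref{lem:underlying_algebra_split_square_zero_extension} to the transformation of pregeometries $\varphi = (-)\an \colon \cTdisck \to \cTank$. Under this transformation, the induced functor $\varphi_*$ on structured topoi is precisely the algebraization functor $(-)\alg$. The lemma therefore yields a commutative square
\[ \begin{tikzcd}
\Sp(\Ab(\AnRing_k(\cX)_{/\cO_X})) \arrow{r}{(-)\alg} \arrow{d}{\Omega^\infty_\Ab} & \Sp(\Ab(\CRing_k(\cX)_{/\cO_X\alg})) \arrow{d}{\Omega^\infty_\Ab} \\
\AnRing_k(\cX)_{/\cO_X} \arrow{r}{(-)\alg} & \CRing_k(\cX)_{/\cO_X\alg}.
\end{tikzcd} \]
Unwinding \cref{def:analytic_split_square_zero_extension}, this gives $(\cO_X \oplus \cF)\alg \simeq \Omega^\infty_\Ab(\cF\alg)$, where $\cF\alg$ denotes the image of $\cF$ under the top horizontal arrow.

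Second, I would invoke the algebraic equivalence \eqref{eq:algebraic_modules}, which identifies $\cO_X\alg\Mod$ with $\Sp(\Ab(\CRing_k(\cX)_{/\cO_X\alg}))$. Under this equivalence, $\cF\alg$ (viewed as a spectrum object) corresponds to $\cF$ itself, now regarded as an $\cO_X\alg$-module in the sense of \cref{def:modules} — this matching is the very content of \cref{def:analytic_modules}. It then remains to verify that, on the algebraic side, the functor $\Omega^\infty_\Ab$ sends a module $M$ to the classical split square-zero extension $\cO_X\alg \oplus M$. This is a standard fact in derived algebraic geometry, parallel to \cite[7.3.4.14–7.3.4.18]{Lurie_Higher_algebra} for $\mathbb E_\infty$-rings, and the argument sheafifies directly from the case of a point, where it can be checked by unwinding the definitions of $\Sp$ and $\Ab$ applied to $\CRing_{k/A}$ using the Lawvere-theoretic description of $A\Mod$.

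The only subtlety is in the second step: one must be careful to match the $\cO_X\alg$-module structure on $\cF$ coming from the analytic reinterpretation (\cref{thm:equivalence_of_modules}) with the one coming from the algebraic reinterpretation \eqref{eq:algebraic_modules}. But this compatibility is built into the construction of the equivalence in \cref{thm:equivalence_of_modules_precise}, which is defined as the functor $\partial_\Ab(\Phi)$ induced by precomposition with analytification — exactly the operation appearing in the commutative square above. Thus no new computation is needed, and the corollary follows formally.
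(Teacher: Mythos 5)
Your proposal is correct and takes essentially the same approach as the paper: the paper's proof consists of the single line ``Applying \cref{lem:underlying_algebra_split_square_zero_extension} to the transformation of pregeometries $(-)\an$, the conclusion follows directly.'' You spell out the two ingredients the paper leaves implicit: that under the equivalence of \cref{thm:equivalence_of_modules_precise} the algebraization $\cF\alg$ of the spectrum object matches $\cF$ as an $\cO_X\alg$-module, and that $\Omega^\infty_\Ab$ applied on the algebraic side yields the classical split square-zero extension. (One cosmetic difference: the paper invokes the lemma with the pregeometry map $\cTetk \to \cTank$, while you use $\cTdisck \to \cTank$; yours matches the paper's own Definition 4.4 of $(-)\alg$ more directly, and both choices give the same underlying algebra, so this is immaterial.)
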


\begin{proof}
	Applying \cref{lem:underlying_algebra_split_square_zero_extension} to the transformation of pregeometries
	\[ (-)\an \colon \cTetk \to \cTank , \]
	the conclusion follows directly.
\end{proof}

\begin{defin} \label{def:analytic_derivation}
	Let $X = (\cX, \cO_X)$ be a derived analytic space and $\cA \in \allowbreak \AnRing_k(\cX)_{/\cO_X}$.
	Let $\cF \in \cO_X\Mod^{\ge 0}$.
	The space of \emph{$\cA$-linear analytic derivations of $\cO_X$ into $\cF$} is the space
	\[ \DerAn_{\cA}(\cO_X, \cF) \coloneqq \Map_{\AnRing_k(\cX)_{\cA // \cO_X}}( \cO_X, \cO_X \oplus \cF ). \]
\end{defin}

With this definition, we have the following characterization of the analytic cotangent complex:

\begin{prop} \label{prop:UMP_analytic_cotangent_complex}
	Let $X = (\cX, \cO_X)$ be a derived analytic space and $\cA \in \AnRing_k(\cX)_{/\cO_X}$.
	Then for any $\cF \in \cO_X\Mod^{\ge 0}$ there is a canonical equivalence
	\[ \Map_{\cO_X\Mod}(\anL_{\cO_X / \cA}, \cF) \simeq \DerAn_{\cA}(\cO_X, \cF) . \]
\end{prop}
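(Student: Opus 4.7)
The plan is to apply the relative adjunction $F \dashv G$ from \cref{subsec:cotangent_formalism} on the fiber over $\cO_X$, and then to unwind the two sides of the resulting equivalence.

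By \cref{def:relative_cotangent_complex_algebras}, the analytic cotangent complex $\anL_{\cO_X/\cA} = F(s(\cO_X))$ lies in the fiber of $\pi \colon \rT_\Ab(\cT_{X,\cA}) \to \cT_{X,\cA}$ over $\cO_X$, which is $\Sp(\Ab(\AnRing_k(\cX)_{\cA//\cO_X}))$; combining \cref{thm:equivalence_of_modules} with the standard stabilization argument for under-over slices, this fiber is identified with $\cO_X \Mod$. The fiberwise description of $G$ in terms of $\Omega^\infty_\Ab$, combined with \cref{def:analytic_split_square_zero_extension}, identifies $G(\cF)$ with the projection $\cO_X \oplus \cF \to \cO_X$, regarded as an object of the fiber $(\cT_{X,\cA})_{/\cO_X}$ of $p$ over $\cO_X$.

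Consequently, the fiberwise adjunction yields a natural equivalence
\[
\Map_{\cO_X\Mod}(\anL_{\cO_X/\cA}, \cF) \simeq \Map_{(\cT_{X,\cA})_{/\cO_X}}(s(\cO_X), G(\cF)).
\]
Since $\cO_X$ is the terminal object of $\cT_{X,\cA}$, the overcategory $(\cT_{X,\cA})_{/\cO_X}$ is equivalent to $\cT_{X,\cA}$; under this identification, $s(\cO_X)$ corresponds to $\cO_X$ itself (the terminal object) and $G(\cF)$ corresponds to the analytic split square-zero extension $\cO_X \oplus \cF$. Therefore the right-hand side becomes
\[
\Map_{\AnRing_k(\cX)_{\cA // \cO_X}}(\cO_X, \cO_X \oplus \cF),
\]
which is precisely $\DerAn_\cA(\cO_X, \cF)$ by \cref{def:analytic_derivation}.

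The main delicate point is the careful identification of $s(\cO_X)$ and $G(\cF)$ inside the fiber $(\cT_{X,\cA})_{/\cO_X}$ after invoking the relative adjunction; once these identifications are in hand, the remainder of the argument is formal and follows directly from the constructions in \cref{subsec:cotangent_formalism}.
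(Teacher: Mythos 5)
Your proof invokes the relative adjunction $F \dashv G$ directly and reduces to the fiberwise adjunction over $\cO_X$; the paper's proof instead unwinds the same adjunction through the explicit formula $\anL_{\cO_X/\cA}\simeq\Sigma^\infty_\Ab(\cO_X\,\widehat\otimes_\cA\,\cO_X)$ stated at the end of \cref{subsec:cotangent_formalism}. These are the same adjunction, just packaged differently, so the strategies coincide in substance.

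However, there is a genuine gap in the step where you assert that ``under the identification $(\cT_{X,\cA})_{/\cO_X}\simeq\cT_{X,\cA}$, the object $s(\cO_X)$ corresponds to $\cO_X$ itself.'' According to the paper's description of $s$ in \cref{subsec:cotangent_formalism}, $s(\cB)$ is the arrow $\cA\to\cB$ in $\cT_{X,\cA}$. As an object of the fiber $(\cT_{X,\cA})_{/\cB}\simeq\AnRing_k(\cX)_{\cA//\cB}$, this arrow is the \emph{initial} object $\cA$, not the terminal object $\cB$. Substituting the initial object into the right-hand side of your fiberwise adjunction formula gives $\Map_{\cT_{X,\cA}}(\cA,\cO_X\oplus\cF)$, which is contractible since $\cA$ is initial --- not the space of derivations. (Equivalently, since the fiberwise left adjoint $F_{\cO_X}=\Sigma^\infty_\Ab$ preserves initial objects, $F(s(\cO_X))$ would be the zero spectrum; this is precisely why the formula in \cref{subsec:cotangent_formalism} rewrites the cotangent complex as $\Sigma^\infty_\Ab(\cB\,\widehat\otimes_\cA\,\cB)$, a \emph{pointed} object of $\AnRing_k(\cX)_{\cB//\cB}$, rather than applying $\Sigma^\infty_\Ab$ to $s(\cB)$ directly.) Your identification is the one that would hold were $s$ the diagonal functor $\cB\mapsto\id_\cB$ (Lurie's convention in the absolute case), and it is the identification needed to recover the correct universal property; but it does not follow from the paper's stated definition of $s$, and you should either justify it by reconciling it with the pushout formula $\Sigma^\infty_\Ab(\cO_X\,\widehat\otimes_\cA\,\cO_X)$ --- which is exactly what the paper's chain of equivalences does --- or flag explicitly that you are reading $s$ as the diagonal. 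As written, the proof silently substitutes the terminal object where the stated definitions produce the initial one.
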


\begin{proof}
	We have:
	\begin{align*}
		\DerAn_{\cA}(\cO_X, \cF) & \simeq \Map_{\AnRing_k(\cX)_{\cA // \cO_X}}( \cO_X, \cO_X \oplus \cF ) \\
		& = \Map_{\AnRing_k(\cX)_{\cA // \cO_X}}( \cO_X, \Omega^\infty_\Ab(\cF) ) \\
		& \simeq \Map_{\cO_X\Mod}( \Sigma^\infty_\Ab( \cO_X {\cotimes}_\cA \cO_X ), \cF ) \\
		& \simeq \Map_{\cO_X\Mod}( \anL_{\cO_X / \cA}, \cF ) . 
	\end{align*}
	The proof is therefore complete.
\end{proof}

To conclude this section, we discuss the behavior of the equivalence
\[ \Sp(\Ab(\AnRing_k(\cX)_{/\cO_X})) \simeq \cO_X\alg \Mod \]
under pullback along morphism of derived analytic spaces.

\begin{prop} \label{prop:pullback_Goodwillie_analytic}
	Let $f \colon X \to Y$ be a morphism of derived analytic spaces.
	Let $\cA \to f\inv \cO_Y$ be a morphism in $\AnRing_k(\cX)_{/\cO_X}$.
		Denote by
	\[ (-)\an \colon \CRing_k( \cX )_{\cA\alg / / \cO_X\alg} \to \AnRing_k(\cX)_{\cA // \cO_X} \]
	the left adjoint to the underlying algebra functor.
	Then:
	\begin{enumerate}
		\item The diagram
		\[ \begin{tikzcd}
		\AnRing_k(\cY)_{\cA // \cO_Y} \arrow{r}{f\inv} & \AnRing_k(\cX)_{\cA // f\inv \cO_Y} \arrow{r}{f^\sharp_!} & \AnRing_k(\cX)_{\cA // \cO_X} \\
		\CRing_k(\cY)_{\cA\alg // \cO_Y\alg} \arrow{u}{(-)\an} \arrow{r}{f\inv} & \CRing_k(\cX)_{\cA\alg // f\inv \cO_Y\alg} \arrow{r}{f^\sharp_!} \arrow{u}{(-)\an} & \CRing_k(\cX)_{\cA\alg // \cO_X\alg} \arrow{u}{(-)\an}
		\end{tikzcd} \]
		commutes.
		\item The diagram
		\[ \begin{tikzcd}
		\Sp\left(\Ab\left(\CRing_k(\cX)_{/f\inv \cO_Y\alg}\right)\right) \arrow{rr}{f^{\sharp*}} \arrow{d}{\simeq} & & \Sp\left(\Ab\left(\CRing_k(\cX)_{/\cO_X\alg}\right)\right) \arrow{d}{\simeq} \\
		f\inv \cO_Y\alg \Mod \arrow{rr}{- \otimes_{f\inv \cO_Y} \cO_X} & & \cO_X\alg \Mod
		\end{tikzcd} \]
		commutes.
		\item There is a natural equivalence $f^* \anL_{Y} \simeq \anL_{f\inv \cO_Y} \otimes_{f\inv \cO_Y} \cO_X$.
	\end{enumerate}
\end{prop}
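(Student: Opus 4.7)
The plan is to address the three statements in order, with each building on the previous.

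For part (1), I would pass to right adjoints and invoke uniqueness of left adjoints. The right adjoint of $(-)\an$ is the algebraization functor $(-)\alg$, induced by precomposition with the transformation of pregeometries $\cTdisck \to \cTank$. The right adjoint of $f\inv$ is the pushforward $f_*$ on structured topoi, and the right adjoint of $f^\sharp_!$ is pullback $f^\sharp_*$ along $f^\sharp$ in the relevant overcategory. The diagram of right adjoints commutes essentially by inspection: $(-)\alg$ is a precomposition operation on $\cT$-structures, whereas $f_*$ is a postcomposition operation at the level of topoi and $f^\sharp_*$ is a slice-category construction, and these manifestly commute with precomposition in the $\cT$-variable. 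Uniqueness of left adjoints then yields the desired square.

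For part (2), I would argue in the purely algebraic setting, applied to the morphism $f^\sharp\alg \colon f\inv \cO_Y\alg \to \cO_X\alg$ in $\CRing_k(\cX)$. The general claim, for any morphism $\psi \colon A \to B$ in $\CRing_k(\cX)$, is that under the equivalence $A\Mod \simeq \Sp(\Ab(\CRing_k(\cX)_{/A}))$ of \cref{cor:modules_in_positive_characteristic}, extension of scalars $-\otimes_A B$ corresponds to $\psi^{*}$ on spectrum objects. Here again I would check this by passing to right adjoints: on the module side, restriction of scalars along $\psi$; on the spectrum side, the functor induced by pullback along $\psi$ in the overcategory. Both right adjoints intertwine the functors $\Omega^\infty_\Ab$ on source and target (this is the content of \cref{lem:transitivity_cotangent_complex} transposed to the algebraic setting), so the induced left adjoints agree under the module equivalence.

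For part (3), I would combine (1), (2), and \cref{lem:transitivity_cotangent_complex}. By definition $f^* = f^{\sharp*} \circ f\inv$. The transitivity lemma, applied to the pregeometry $\cTank$, gives an equivalence
\[ f\inv(\anL_Y) \simeq \anL_{f\inv \cO_Y} \quad \text{in } \Sp(\Ab(\AnRing_k(\cX)_{/f\inv \cO_Y})), \]
and part (1) ensures that this identification is compatible with the passage to the equivalent module description of \cref{thm:equivalence_of_modules}, so that $\anL_{f\inv \cO_Y}$ lives naturally in $f\inv \cO_Y\alg\Mod$. Part (2) then identifies $f^{\sharp*}$ with $-\otimes_{f\inv\cO_Y}\cO_X$, giving the desired equivalence.

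The main obstacle I foresee is the bookkeeping in part (2): the equivalence of \cref{thm:equivalence_of_modules} is established through several reduction steps (passage to connected objects, reduction to the case of spaces, flatness), and one must verify that each of these reductions is compatible with the pullback operations $\psi_!$, $\psi^*$, and restriction of scalars. In practice this reduces to the observation that each reduction step is a natural construction in the underlying topos-theoretic variables, and hence automatically commutes with base change; but making this fully rigorous is where most of the technical work lies.
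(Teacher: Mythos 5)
Your proposal takes essentially the same approach as the paper: the paper's proof says exactly that parts (1) and (2) follow from the commutativity of the corresponding diagrams of right adjoints, and that (3) is a formal consequence of (1), (2), and Lemma~\ref{lem:transitivity_cotangent_complex}. The concern you raise in the last paragraph is not a real obstacle: part (2) uses only Corollary~\ref{cor:modules_in_positive_characteristic} (the purely algebraic statement), not Theorem~\ref{thm:equivalence_of_modules}, so the reduction steps you mention are irrelevant to it; and the compatibility of the analytic/algebraic module equivalences with $f^{\sharp*}$ that is needed in part (3) is exactly what part (1) delivers after passing to right adjoints and stabilizing, since the vertical right adjoints become the equivalences $\partial_{\Ab}(\Phi)$ of Theorem~\ref{thm:equivalence_of_modules_precise}.
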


\begin{proof}
	The first two statements follow from the commutativity of the corresponding diagrams of right adjoints.
	The last statement is a formal consequence of the previous ones and \cref{lem:transitivity_cotangent_complex}.
\end{proof}

\subsection{Cotangent complex and analytification} \label{subsec:cotangent_complex_analytification}

The goal of this subsection is to show that cotangent complex is compatible with analytification.
This result allows us to compute the first examples of analytic cotangent complexes (cf.\ \cref{cor:cotangent_complex_An}).
Finally, we will use these computations in order to prove the base change property of analytic cotangent complex in the non-archimedean setting (cf.\ \cref{prop:base_change_cotangent_complex_analytic}).

Let $X = (\cX, \cO_X)$ be a derived algebraic \DM stack locally almost of finite presentation over $k$.
Recall from \cref{sec:analytification} that the analytification functor
\[ (-)\an \colon \RTop(\cTetk) \longrightarrow \RHTop(\cTank) \]
is right adjoint to the algebraization functor
\[ \RHTop(\cTank) \longrightarrow \RTop(\cTetk) . \]
The counit of the adjunction produces a canonical map
\[ p \colon (\cX\an, \cO_{X\an}\alg) \to (\cX, \cO_X) .  \]

\begin{defin} \label{defin:analytification_functor_modules}
	We refer to the induced functor
	\[ p^* \colon \cO_X\Mod \longrightarrow \cO_{X\an} \Mod \]
	as the \emph{analytification functor}, and we denote it by $(-)\an$.
\end{defin}

\begin{thm} \label{thm:analytification_cotangent_complex}
	Let $X = (\cX, \cO_X)$ be a derived algebraic \DM stack locally almost of finite presentation over $k$.
	There is a canonical morphism
	\[ \varphi \colon \mathbb L_{X\an}\an \to (\mathbb L_X)\an \]
	in $\cO_{X\an}\Mod$.
	Moreover, $\varphi$ is an equivalence.
\end{thm}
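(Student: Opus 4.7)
The plan is to first construct $\varphi$ from universal properties, and then prove it is an equivalence by following the cell-complex strategy of \cref{prop:analytification}, reducing to the smooth affine case where both sides admit an explicit computation. For the construction, the transformation of pregeometries $(-)\an \colon \cTetk \to \cTank$ together with \cref{cor:split_square-zero_extension} show that the underlying algebra of the analytic split square-zero extension $\cO_{X\an} \oplus \cF$ is the ordinary algebraic split square-zero extension $\cO_{X\an}\alg \oplus \cF$. Composing with the counit $p \colon (\cX\an, \cO_{X\an}\alg) \to (\cX, \cO_X)$ of the analytification adjunction, the universal algebraic derivation $\cO_X \to \cO_X \oplus \mathbb L_X$ promotes through $p$ to a derivation with values in $p^* \mathbb L_X = (\mathbb L_X)\an$, and the universal property of the analytic cotangent complex \cref{prop:UMP_analytic_cotangent_complex} packages this data into a canonical comparison morphism between $\anL_{X\an}$ and $(\mathbb L_X)\an$.

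To prove $\varphi$ is an equivalence, I would first reduce to the affine case: both cotangent complexes are \'etale-local (analytically via \cref{cor:cotangent_complex_etale} and algebraically by the corresponding statement of Lurie), and analytification commutes with \'etale base change by \cref{lem:analytification_formal_properties}(\ref{item:analytification_affines}). We may therefore assume $X = \Spec(A)$ with $A$ a simplicial commutative $k$-algebra almost of finite presentation. Let $\cC \subset \dAff_k\afp$ be the full subcategory spanned by those $X$ for which $\varphi_X$ is an equivalence. Following the strategy of the proof of \cref{prop:analytification}, I would verify three closure properties in turn: (i) $\cC$ contains $\cTetk$, which reduces to the base case $X = \mathbb A^n_k$; (ii) $\cC$ is closed under retracts, which is immediate since equivalences are; and (iii) $\cC$ is closed under pullbacks in $\dAff_k\afp$ along closed immersions, using the analytic transitivity \cref{prop:transitivity_cotangent_complex}, the algebraic base change of \cite[7.3.3.7]{Lurie_Higher_algebra}, and the pullback-compatibility of $(-)\an$ established during the proof of \cref{prop:analytification}. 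The cell-complex argument of loc.\ cit.\ then forces $\cC = \dAff_k\afp$.

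The base case $X = \mathbb A^n_k$ is verified directly. The algebraic cotangent complex $\mathbb L_X$ is free of rank $n$ with basis $dX_1, \ldots, dX_n$, so $(\mathbb L_X)\an$ is a free $\cO_{X\an}$-module of rank $n$. On the analytic side, the geometric identification of relative analytifications in \cref{prop:geometric_interpretation_relative_analytification} together with the universal property of analytic derivations makes $\anL_{X\an}$ free of rank $n$ on the same generators $dX_1, \ldots, dX_n$ (the universal analytic derivation assigns to a convergent power series $f$ the formal sum $\sum_i (\partial f / \partial X_i)\, dX_i$, and the Leibniz rule for convergent power series guarantees that this is the full space of analytic derivations). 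The canonical morphism $\varphi$ matches the two sets of generators, so it is an equivalence.

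The main obstacle is the closure property (iii) above. At this stage of the paper, the general analytic base change formula \cref{prop:base_change_cotangent_complex_analytic} is not yet established—indeed, as noted in \cref{rem:warning_cotangent_complex_pullback}, its non-archimedean incarnation is proved later using the present theorem—so one cannot simply invoke base change on the analytic side. The argument instead extracts the needed closure from the algebraic base change, combined with the analytic transitivity \cref{prop:transitivity_cotangent_complex} and the fact, already isolated during the proof of \cref{prop:analytification}, that $(-)\an$ sends pullbacks along closed immersions in $\dAff_k\afp$ to pullbacks in $\RHTop(\cTank)$ (this uses unramifiedness of $\cTetk$ and \cref{cor:analytification_closed_immersions_II}). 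Assembling these ingredients identifies the comparison morphism for a pullback with the pullback of the comparison morphisms of the factors, yielding the required closure and completing the proof.
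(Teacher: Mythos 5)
The approach you propose is genuinely different from the paper's, and it has real gaps. The paper's proof is a purely formal adjunction argument: it uses \cref{lem:transitivity_cotangent_complex} and \cref{prop:pullback_Goodwillie_analytic}(2) to identify $(\mathbb L_X)\an$ with $p^{\sharp*}(\mathbb L_{p\inv \cO_X})$, then transports this through \cref{lem:underlying_algebra_split_square_zero_extension} to reduce the whole theorem to showing that $p\inv \cO_X$ analytifies to $\cO_{X\an}$, which it establishes by a mapping-space computation invoking the universal property of the analytification adjunction. No reduction to affines, no cell complexes, no explicit base case.

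The first problem in your proposal is the construction of $\varphi$ itself. Pushing the universal algebraic derivation $\cO_X \to \cO_X \oplus \mathbb L_X$ forward through $p$ produces, a priori, only an \emph{algebraic} derivation of $\cO_{X\an}\alg$ into $(\mathbb L_X)\an$ (a section of the algebraic split square-zero extension in $\CRing_k(\cX\an)$). What \cref{prop:UMP_analytic_cotangent_complex} consumes is an \emph{analytic} derivation, i.e.\ a section in $\AnRing_k(\cX\an)_{/\cO_{X\an}}$ of the analytic split square-zero extension. The forgetful functor $\AnRing_k(\cX\an)_{/\cO_{X\an}} \to \CRing_k(\cX\an)_{/\cO_{X\an}\alg}$ is not full, so the lift is not automatic; verifying that this particular algebraic derivation arises from an analytic one is precisely the content of showing $(p\inv \cO_X)\an \simeq \cO_{X\an}$, which is the hard part of the paper's proof. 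You are implicitly relying on what you must prove.

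The second, more serious gap is the base case. You assert that $\anL_{\mathbf A^n_k}$ is free of rank $n$ and appeal to \cref{prop:geometric_interpretation_relative_analytification} plus an informal ``Leibniz rule for convergent power series.'' But within the framework of this paper analytic derivations are defined categorically via split square-zero extensions, not by explicit formulas; there is no ``Leibniz for power series'' lemma to cite, and \cref{prop:geometric_interpretation_relative_analytification} identifies a relative analytification as a ring of germs but says nothing about its cotangent complex. The freeness of $\anL_{\mathbf A^n_k}$ is exactly \cref{cor:cotangent_complex_An}, which the paper derives \emph{from} \cref{thm:analytification_cotangent_complex}; invoking it here would be circular. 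An independent proof of the base case would essentially replay the paper's universal-property argument in the case $X = \mathbb A^n_k$, at which point the cell-complex scaffolding buys you nothing. Your closure step (iii) is closer to salvageable: the pullback-preservation from the proof of \cref{prop:analytification} places the relevant square in $\RTop(\cTank)$ (via \cite[Proposition 6.2]{Porta_Yu_DNAnG_I} in the non-archimedean case, \cite[Proposition 12.10]{DAG-IX} in the complex case), after which \cref{prop:base_change_cotangent_complex}---which holds at the level of $\RTop(\cT)$ and does not depend on the present theorem---gives the closed-immersion case of analytic base change; combined with transitivity and naturality of $\varphi$ in fiber sequences this could close the induction, but as written the argument is too compressed to verify.
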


\begin{proof}
	Applying \cref{lem:transitivity_cotangent_complex} with $\cT = \cTetk$ to the morphism $p \colon (\cX\an, \cO_{X\an}\alg) \to (\cX, \cO_X)$, we see that
	\[ p\inv \mathbb L_X \simeq \mathbb L_{p\inv \cO_X} , \]
	where we wrote $\mathbb L_{p \inv \cO_X}$ instead of $\mathbb L_{p\inv \cO_X}^{\cTetk}$.
	On the other hand, pulling back along the morphism $p^\sharp \colon p\inv \cO_X \to \cO_{X\an}\alg$ induces the following commutative diagram
	\begin{equation} \label{eq:analytification_cotangent_complex_I}
		\begin{tikzcd}
			\Sp\big(\Ab\big(\CRing_k(\cX\an)_{/p\inv \cO_X}\big)\big) \arrow{d}{\Omega^\infty_\Ab} & \Sp\big(\Ab\big(\CRing_k(\cX\an)_{/\cO_{X\an}\alg}\big)\big) \arrow{d}{\Omega^\infty_\Ab} \arrow{l}[swap]{p^\sharp_*} \\
			\CRing_k(\cX\an)_{/p\inv \cO_X} & \CRing_k(\cX\an)_{/\cO_{X\an}\alg} \arrow{l}[swap]{p^\sharp_*} .
		\end{tikzcd}
	\end{equation}
	Passing to the left adjoints and applying \cref{prop:pullback_Goodwillie_analytic}(2), we obtain
	\[ (\mathbb L_X)\an \simeq \mathbb L_{p\inv \cO_X} \otimes_{p\inv \cO_X} \cO_{X\an}\alg \simeq p^{\sharp *}( \mathbb L_{p\inv \cO_X} ) . \]

	Now we apply \cref{lem:underlying_algebra_split_square_zero_extension} to the canonical transformation of pregeometries
	\[ (-)\an \colon \cTetk \to \cTank \]
	to deduce that the square
	\begin{equation} \label{eq:analytification_cotangent_complex_II}
		\begin{tikzcd}
			\Sp\big(\Ab\big( \CRing_k(\cX\an)_{/\cO_{X\an}\alg} \big)\big) \arrow{d}{\Omega^\infty_\Ab} & \Sp\big(\Ab\big(\AnRing_k(\cX\an)_{/\cO_X}\big)\big) \arrow{l}[swap]{\sim} \arrow{d}{\Omega^\infty_\Ab} \\
			\CRing_k(\cX\an)_{/\cO_{X\an}\alg} & \AnRing_k(\cX\an)_{/\cO_X\an} \arrow{l}[swap]{(-)\alg}
		\end{tikzcd}
	\end{equation}
	commutes, where the top morphism is an equivalence in virtue of \cref{thm:equivalence_of_modules_precise}.
	
	Combining diagrams \eqref{eq:analytification_cotangent_complex_I} and \eqref{eq:analytification_cotangent_complex_II}, we obtain the commutativity of the following diagram:
	\[ \begin{tikzcd}
		\Sp \big( \Ab\big( \CRing_k(\cX\an)_{/p\inv \cO_X} \big)\big) \arrow{d}{\Omega^\infty_\Ab} & \Sp\big(\Ab\big( \AnRing_k(\cX\an)_{/\cO_{X\an}} \big)\big) \arrow{l} \arrow{d}{\Omega^\infty_\Ab} \\
		\CRing_k(\cX\an)_{/p\inv \cO_X} & \AnRing_k(\cX\an)_{/\cO_{X\an}} , \arrow{l}[swap]{\Phi}
	\end{tikzcd} \]
	where $\Phi$ is the composition $p^\sharp_* \circ (-)\alg$.
	Since both $(-)\alg$ and $p^\sharp_*$ are right adjoint, $\Phi$ has a left adjoint, that we denote
	\[ \Psi \colon \CRing_k(\cX\an)_{/p\inv \cO_X} \to \AnRing_k(\cX\an)_{/\cO_{X\an}} . \]
	
	To complete the proof, it is then enough to prove that $\Psi(p\inv \cO_X) \simeq \cO_{X\an}$.
	Let us denote by
	\[ (-)\an \colon \CRing_k(\cX\an)_{/\cO_{X\an}\alg} \to \AnRing_k(\cX\an)_{/\cO_{X\an}} \]
	the left adjoint to $(-)\alg$.
	Unraveling the definitions, we see that it is enough to prove that
	\[ ( p\inv \cO_X )\an \simeq \cO_{X\an} . \]
	This amounts to prove that for every $\cO \in \AnRing_k(\cX\an)_{/\cO_{X\an}}$, the map $p^\sharp \colon p\inv \cO_X \to \cO_{X\an}\alg$ induces an equivalence
	\begin{equation} \label{eq:analytification_cotangent_complex_III}
		\Map_{/ \cO_{X\an}}( \cO_{X\an}, \cO ) \simeq \Map_{/\cO_{X\an}\alg}( p\inv \cO_X, \cO\alg ) .
	\end{equation}
	Consider the commutative square
	\[ \begin{tikzcd}
		\Map_{X\an /}( (\cX\an, \cO), (\cX\an, \cO_{X\an}) ) \arrow{r} \arrow{d}{\alpha} & \Map_{\RTop}(\cX\an, \cX\an) \arrow{d}{\id} \\
		\Map_{(\cX\an, \cO_{X\an}\alg)/}( (\cX\an, \cO\alg), (\cX\an, p\inv \cO_X) ) \arrow{r} \arrow{d}{\beta} & \Map_{\RTop}(\cX\an, \cX\an) \arrow{d} \\
		\Map_{(\cX\an, \cO_{X\an}\alg)/} ( (\cX\an, \cO\alg), (\cX, \cO_X) ) \arrow{r} & \Map_{\RTop}( \cX\an, \cX ) .
	\end{tikzcd} \]
	The fiber of the top (resp.\ middle) horizontal morphism at the identity of $\cX\an$ is canonically equivalent to the left (resp.\ right) hand side of \eqref{eq:analytification_cotangent_complex_III}.
	It is therefore enough to prove that the map $\alpha$ becomes an equivalence after passing to the fiber at $p_* \colon \cX\an \to \cX$.
	The conclusion now follows from the following two observations: on one side, the composition $\beta \circ \alpha$ is an equivalence in virtue of the universal property of the analytification; on the other side, $\beta$ becomes an equivalence after passing to the fiber at $p_*$.
	Thus, the proof is complete.
\end{proof}

\begin{cor} \label{cor:analytification_relative_cotangent_complex}
	Let $f \colon X \to Y$ be a morphism of derived algebraic \DM stacks locally almost of finite presentation over $k$.
	Then there is a canonical morphism $\varphi \colon \anL_{X\an / Y\an} \to \mathbb (\mathbb L_{X / Y})\an$ and moreover $\varphi$ is an equivalence.
\end{cor}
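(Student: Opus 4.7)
The plan is to obtain $\varphi$ by comparing the two transitivity fiber sequences (one algebraic, one analytic) after applying analytification to the former. The key preliminary step is to establish that analytification of modules commutes with pullback along morphisms of derived algebraic \DM stacks. Given $f \colon X \to Y$, naturality of the counit of the analytification adjunction gives a commutative square of $\cTetk$-structured topoi
\[ \begin{tikzcd}
	(\cX\an, \cO_{X\an}\alg) \arrow{r}{p_X} \arrow{d}{f\an} & X \arrow{d}{f} \\
	(\cY\an, \cO_{Y\an}\alg) \arrow{r}{p_Y} & Y ,
\end{tikzcd} \]
and combining the equivalence $p_X\inv f\inv \simeq (f\an)\inv p_Y\inv$ of inverse image functors on sheaves with the identification of extension of scalars for $p_X^\sharp$, $p_Y^\sharp$, $(f\an)^\sharp$, $f^\sharp$ (as in \cref{prop:pullback_Goodwillie_analytic}), this yields for every $\cF \in \cO_Y\Mod$ a canonical equivalence $(f^* \cF)\an \simeq (f\an)^*(\cF\an)$.

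Second, I would apply \cref{prop:transitivity_cotangent_complex} in the pregeometry $\cTetk$ to $f$, producing the fiber sequence
\[ f^* \mathbb L_Y \to \mathbb L_X \to \mathbb L_{X/Y} \]
in $\cO_X\Mod$. Since the analytification functor on modules is a pullback along $p_X$ between stable $\infty$-categories, it is exact and so preserves this fiber sequence. Using \cref{thm:analytification_cotangent_complex} to identify $(\mathbb L_X)\an \simeq \anL_{X\an}$ and $(\mathbb L_Y)\an \simeq \anL_{Y\an}$, and applying the compatibility of the previous paragraph, one rewrites the analytified sequence as
\[ (f\an)^* \anL_{Y\an} \to \anL_{X\an} \to (\mathbb L_{X/Y})\an . \]

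Third, applying \cref{prop:transitivity_cotangent_complex} in the pregeometry $\cTank$ to $f\an \colon X\an \to Y\an$, one gets a fiber sequence
\[ (f\an)^* \anL_{Y\an} \to \anL_{X\an} \to \anL_{X\an/Y\an} . \]
The naturality of the equivalence in \cref{thm:analytification_cotangent_complex} with respect to $f$ identifies the first map in the two fiber sequences, so the universal property of cofibers produces the canonical morphism $\varphi \colon \anL_{X\an/Y\an} \to (\mathbb L_{X/Y})\an$ and shows it is an equivalence.

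The main obstacle I anticipate is the bookkeeping behind the pullback compatibility $(f^*\cF)\an \simeq (f\an)^*(\cF\an)$ and the verification that the naturality of \cref{thm:analytification_cotangent_complex} is compatible with the transitivity sequence. Both should ultimately reduce to the compatibilities between the adjoint functors $(-)\alg$, $(-)\an$, $f^\sharp_!$, $f^\sharp_*$, and $f\inv$ already packaged in \cref{prop:pullback_Goodwillie_analytic} and the arguments in the proof of \cref{thm:analytification_cotangent_complex}, rather than requiring any new input.
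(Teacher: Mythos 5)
Your argument is correct and takes essentially the same route as the paper: the paper's own proof is a one-line citation of the combination of \cref{thm:analytification_cotangent_complex}, \cref{prop:transitivity_cotangent_complex} and \cref{prop:pullback_Goodwillie_analytic}, and your write-up simply unfolds that combination into the two transitivity sequences, the exactness of analytification on modules, and the pullback compatibility from \cref{prop:pullback_Goodwillie_analytic}.
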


\begin{proof}
	Both statements follow at once combining \cref{thm:analytification_cotangent_complex}, \cref{prop:transitivity_cotangent_complex} and \cref{prop:pullback_Goodwillie_analytic}.
\end{proof}

\begin{cor} \label{cor:cotangent_complex_An}
	The analytic cotangent complex of $\mathbf A^n_k$ is free of rank $n$.
	In particular, it is perfect and in tor-amplitude $0$.
\end{cor}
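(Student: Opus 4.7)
The plan is to reduce the statement to the algebraic case via \cref{thm:analytification_cotangent_complex}. Since $\mathbf A^n_k$ is by definition the analytification of the algebraic affine space $\mathbb A^n_k$, which is smooth of relative dimension $n$ over $k$ and in particular locally almost of finite presentation, the theorem gives a canonical equivalence
\[ \anL_{\mathbf A^n_k} \simeq (\mathbb L_{\mathbb A^n_k})\an \]
in $\cO_{\mathbf A^n_k}\Mod$. It is therefore enough to understand the right hand side.

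First, I would recall the algebraic computation: because $\mathbb A^n_k = \Spec(k[X_1,\ldots,X_n])$ and $k[X_1,\ldots,X_n]$ is a polynomial algebra, the algebraic cotangent complex $\mathbb L_{\mathbb A^n_k}$ is free of rank $n$ on the differentials $dX_1,\ldots,dX_n$; this is a standard fact that also follows directly from the universal property of the algebraic cotangent complex together with the fact that derivations out of a polynomial algebra are determined freely by their values on the generators.

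Next, I would observe that the analytification functor on modules of \cref{defin:analytification_functor_modules} is obtained by pulling back along the counit $p \colon (\cX\an, \cO_{X\an}\alg) \to (\cX, \cO_X)$, and hence is a symmetric monoidal left adjoint. In particular, it preserves the unit and finite direct sums, so it sends the free module $\cO_{\mathbb A^n_k}^{\oplus n}$ to the free module $\cO_{\mathbf A^n_k}^{\oplus n}$. Combining this with the equivalence above yields
\[ \anL_{\mathbf A^n_k} \simeq \cO_{\mathbf A^n_k}^{\oplus n} , \]
which is manifestly free of rank $n$, perfect, and concentrated in tor-amplitude $0$.

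I do not foresee any real obstacle here: both inputs (the algebraic computation and the compatibility of analytification with direct sums) are immediate, and the only substantive content has already been packaged into \cref{thm:analytification_cotangent_complex}. The main point to be careful about is to verify that the analytification of modules really does send $\cO_{\mathbb A^n_k}$ to $\cO_{\mathbf A^n_k}$, which is clear from the definition of $p^*$ as $p^{\sharp *}\circ p\inv$ together with the fact that $p^\sharp \colon p\inv \cO_{\mathbb A^n_k} \to \cO_{\mathbf A^n_k}\alg$ exhibits the target as the base change of the source.
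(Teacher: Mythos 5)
Your proof is correct and takes the same route as the paper: reduce to the algebraic case via \cref{thm:analytification_cotangent_complex} and use that $\mathbb L_{\mathbb A^n_k}$ is free of rank $n$. The paper leaves the details of why analytification preserves free modules implicit, while you spell them out, but the argument is identical in substance.
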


\begin{proof}
	Since $\mathbf A^n_k \simeq (\mathbb A^n_k)\an$, the statement is an immediate consequence of \cref{thm:analytification_cotangent_complex}.
\end{proof}

\begin{prop} \label{prop:base_change_cotangent_complex_analytic}
	For any pullback square
	\begin{equation} \label{eq:pullback_contangent_complex}
	\begin{tikzcd}
	X' \arrow{r} \arrow{d}{g} & Y' \arrow{d}{f} \\
	X \arrow{r}{u} & Y
	\end{tikzcd}
	\end{equation}
	in $\dAnk$, we have a canonical equivalence
	\[ g^* \anL_{X' / Y'} \xrightarrow{\sim} \anL_{X / Y} . \]
\end{prop}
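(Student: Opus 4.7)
The strategy is to reduce to pullbacks that are already pullbacks in $\RTop(\cTank)$, where the formal base change result \cref{prop:base_change_cotangent_complex} applies directly. The obstruction, flagged in \cref{rem:warning_cotangent_complex_pullback}, is that pullbacks in $\dAnk$ need not coincide with pullbacks in the ambient $\RTop(\cTank)$ in the non-archimedean case.

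In the complex analytic case the argument is immediate: by \cite[Proposition 12.12]{DAG-IX} the inclusion $\dAnc \hookrightarrow \RHTop(\cTanc)$ preserves pullbacks, so the given square is also a pullback in $\RTop(\cTanc)$, and \cref{prop:base_change_cotangent_complex} applies. In the non-archimedean case, the question is local on $X'$. After passing to a small enough neighborhood, the plan is to factor the morphism $f \colon Y' \to Y$ in $\dAnk$ as
\[ Y' \stackrel{j}{\hookrightarrow} Y \times \bD^n_k \stackrel{\pi}{\longrightarrow} Y , \]
with $j$ a closed immersion and $\pi$ the projection. Such a factorization exists by the local structure theory of derived $k$-affinoid spaces from \cite{Porta_Yu_DNAnG_I}: one chooses enough generators of $\pi_0(\cO_{Y'})$ over $\pi_0(f\inv \cO_Y)$ and forms the corresponding surjection $A\langle T_1, \ldots, T_n \rangle \twoheadrightarrow B$ at the level of derived $k$-affinoid algebras.

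Pulling back along $u$ then decomposes the original pullback square into two pullback squares in $\dAnk$, stacked vertically: a bottom square which is the base change of the projection $\pi$ along $u$, producing $X \times \bD^n_k \to Y \times \bD^n_k$; and a top square which is the base change of the closed immersion $j$ along the resulting map $X \times \bD^n_k \to Y \times \bD^n_k$. For the bottom square, the pullback in $\dAnk$ is a product with the admissible object $\bD^n_k$ of $\cTank$, hence coincides with the corresponding pullback in $\RTop(\cTank)$; for the top square, \cite[Proposition 6.2]{Porta_Yu_DNAnG_I} ensures that the pullback along a closed immersion is computed in $\RHTop(\cTank)$. In both cases \cref{prop:base_change_cotangent_complex} applies to yield the respective base change equivalences for the relative analytic cotangent complex.

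Composing the two equivalences obtained at each level (with $g$ factored as the vertical composition $g_2 \circ g_1$) produces the required equivalence. The main obstacle is establishing the local factorization through the polydisk with a genuine closed immersion on top; once this is in place the rest is a formal consequence of the already-established \cref{prop:base_change_cotangent_complex} together with the compatibility of closed immersions with the inclusion $\dAnk \hookrightarrow \RHTop(\cTank)$. A secondary point to check is that products with $\bD^n_k$ really can be computed both in $\dAnk$ and in $\RTop(\cTank)$ compatibly, which is a direct consequence of $\bD^n_k$ being an object of the pregeometry used to define both.
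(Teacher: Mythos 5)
Your overall strategy mirrors the paper's: dispose of the complex case via [DAG-IX, 12.12], then in the non-archimedean case factor a morphism locally into a closed immersion followed by a projection, dealing with those two cases separately via the already-established formal base change \cref{prop:base_change_cotangent_complex}. There are, however, two substantive differences, one of which is a gap.

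First, a minor point on the factorization: you factor $f \colon Y' \to Y$, whereas the paper factors $u \colon X \to Y$. Either choice can be made to work, but your localization claim is slightly off. You say "the question is local on $X'$," which is true, but a local factorization of $f$ requires working étale-locally on $Y'$ and $Y$; those localizations then pull back to localizations of $X'$ and $X$. The paper is careful to say the question is local on $X$ \emph{and} $Y$, which is what licenses the factorization of $u$. You should be similarly explicit about where you are allowed to localize and why it suffices.

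The more serious issue is the projection step. You assert that the pullback square
\[
\begin{tikzcd}
X \times \bD^n_k \arrow{r} \arrow{d} & Y \times \bD^n_k \arrow{d}{\pi} \\
X \arrow{r}{u} & Y
\end{tikzcd}
\]
is already a pullback in $\RTop(\cTank)$ (so that \cref{prop:base_change_cotangent_complex} applies directly), justifying this with the remark that $\bD^n_k$ is an object of the pregeometry and "products with $\bD^n_k$ can be computed compatibly in $\dAnk$ and $\RTop(\cTank)$." That assertion may well be true, but it is a nontrivial claim about the interaction of the inclusion $\dAnk \hookrightarrow \RHTop(\cTank)$ with limits — and, notably, it is \emph{not} a claim the paper makes or cites. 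The only preservation-of-limits statement the paper uses at this point is \cite[Proposition 6.2]{Porta_Yu_DNAnG_I}, which concerns pullbacks along closed immersions, not products. Precisely because the product case is delicate, the paper takes a longer route: it reduces, via a ladder of pullback squares and a further local closed immersion $Y \hookrightarrow \bD^m_k$, to a square involving only polydisks; then it replaces polydisks by the analytic affine spaces $\bA^l_k$ using that the inclusions $\bD^l_k \hookrightarrow \bA^l_k$ are étale (so cotangent complexes are insensitive to them, by \cref{cor:etale_characterised_by_cotangent_complex} or \cref{cor:cotangent_complex_etale}); and finally it invokes \cref{thm:analytification_cotangent_complex} and \cref{cor:analytification_relative_cotangent_complex} to transfer the base-change identity from the algebraic side, where it is known. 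Unless you can supply a reference or a proof that fiber products with $\bD^n_k$ are preserved by $\dAnk \hookrightarrow \RHTop(\cTank)$ — which would be a useful lemma in its own right — you should replace your one-line appeal with the paper's reduction to the algebraic case.

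Finally, a terminological remark: "admissible" in the pregeometry framework is a property of morphisms, not of objects, so "the admissible object $\bD^n_k$" is a misuse; you mean simply that $\bD^n_k$ is an object of $\cTank$.
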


\begin{proof}
	In the complex case, this is an immediate consequence of \cref{prop:base_change_cotangent_complex} and of \cref{rem:warning_cotangent_complex_pullback}.
	Let us now turn to the non-archimedean case.
	Using the transitivity fiber sequence, we see that there is a canonical map
	\[ g^* \anL_{X' / Y'} \to \anL_{X / Y} , \]
	and we claim that this map is an equivalence.
	This question is local on $X$ and on $Y$, and we can therefore suppose that $u \colon X \to Y$ factors as
	\[ \begin{tikzcd}
	X \arrow[hook]{r}{j} & Y \times \mathbf D^n_k \arrow{r}{p} & Y ,
	\end{tikzcd} \]
	where $j$ is a closed immersion and $p$ is the projection.
	We therefore get the following commutative diagram
	\[ \begin{tikzcd}
	X' \arrow{r}{i} \arrow{d}{g} & Y' \times \mathbf D^n_k \arrow{r}{q} \arrow{d}{h} & Y' \arrow{d}{f} \\
	X \arrow{r}{j} & Y \times \mathbf D^n_k \arrow{r}{p} & Y .
	\end{tikzcd} \]
	This diagram induces a morphism of fiber sequences
	\[ \begin{tikzcd}
	g^* j^* \anL_{Y \times \mathbf D^n_k / Y} \arrow{r} \arrow{d} &g^*  \anL_{X / Y} \arrow{r} \arrow{d} & g^* \anL_{X / Y \times \mathbf D^n_k} \arrow{d} \\
	i^* \anL_{Y' \times \mathbf D^n_k / Y'} \arrow{r} & \anL_{X' / Y'} \arrow{r} & \anL_{X' / Y' \times \mathbf D^n_k} .
	\end{tikzcd} \]
	Since $g^* j^* \simeq i^* h^*$, we are reduced to prove the following statements:
	\begin{enumerate}
		\item the morphism $h^* \anL_{Y \times \mathbf D^n_k / Y} \to \anL_{Y' \times \mathbf D^n_k / X'}$ is an equivalence;
		\item the morphism $g^* \anL_{X / Y \times \mathbf D^n_k} \to \anL_{X' / Y' \times \mathbf D^n_k}$ is an equivalence.
	\end{enumerate}
	In other words, we are reduced to prove the proposition in the special case where $u$ is either a closed immersion or a projection of the form $Y \times \mathbf D^n_k \to Y$.
	
	We first deal with the case of the closed immersion.
	Using \cite[Proposition 6.2]{Porta_Yu_Derived_non-archimedean_analytic_spaces}, we see that the above pullback square remains a pullback when considered in $\RTop(\cTank)$.
	We can therefore conclude by \cref{prop:base_change_cotangent_complex}.
	
	Let us now deal with the case of the projection $p \colon Y \times \mathbf D^n_k \to Y$.
	Consider the following ladder of pullback squares
	\[ \begin{tikzcd}
	Y' \times \mathbf D^n_k \arrow{r} \arrow{d} & Y' \arrow{d} \\
	Y \times \mathbf D^n_k \arrow{r} \arrow{d} & Y \arrow{d} \\
	\mathbf D^n_k \arrow{r} & \Sp(k) .
	\end{tikzcd} \]
	Reasoning as before, it is enough to prove that the proposition holds true for the outer square and the bottom one.
	By symmetry, it is sufficient to prove that the proposition holds for the bottom square.
	Since the question is local on $Y$, we can choose a closed immersion
	\[ j \colon Y \hookrightarrow \mathbf D^m_k . \]
	We can therefore further decompose the bottom square as
	\[ \begin{tikzcd}
	Y \times \mathbf D^n_k \arrow[hook]{r} \arrow{d} & \mathbf D^{n+m}_k \arrow{r} \arrow{d} & \mathbf D^n_k \arrow{d} \\
	Y \arrow[hook]{r}{j} & \mathbf D^m_k \arrow{r} & \Sp(k) . 
	\end{tikzcd} \]
	Once again, it is sufficient to prove the proposition for the square on the left and the one on the right.
	Since $j$ is a closed immersion, we already know that the proposition holds true for the square on the left.
	We are thus reduced to deal with the square on the right.
	Since the maps $\mathbf D^{n+m}_k \to \mathbf D^m_k$ and $\mathbf D^{n+m}_k \to \mathbf D^n_k$ are the projections, we see that they are the restriction of maps
	\[ \mathbf A^{n+m}_k \to \mathbf A^m_k , \quad \mathbf A^{n+m}_k \to \mathbf A^n_k . \]
	Furthermore, the inclusions $\mathbf D^l_k \to \mathbf A^l_k$ are \'etale.
	As a consequence, we can replace the polydisks by affine spaces.
	In this case, the proposition is a direct consequence of \cref{cor:analytification_relative_cotangent_complex}.
\end{proof}

\subsection{The analytic cotangent complex of a closed immersion} \label{subsec:cotangent_complex_closed_immersion}

The main result of this subsection asserts that the analytic cotangent complex of a closed immersion can be computed as the algebraic cotangent complex after forgetting the analytic structures.
We will then deduce from this result the connectivity estimates on the analytic cotangent complex.

Here is the precise statement:

\begin{thm} \label{thm:algebraic_vs_analytic_cotangent_complex}
	Let $\cX$ be an $\infty$-topos and let $f \colon \cA \to \cB$ be a morphism in $\AnRing_k(\cX)$.
	If $f$ is an effective epimorphism, then there is a canonical equivalence
	\[ (\anL_{\cB / \cA})\alg \simeq \mathbb L_{\cB\alg / \cA\alg} \]
	in $\cB\alg \Mod$, where $(\anL_{\cB / \cA})\alg$ denotes the image of $\anL_{\cB / \cA}$ under the functor
	\[ (-)\alg \colon \Sp\big( \Ab\big( \AnRing_k(\cX)_{\cA // \cB} \big)\big) \to \Sp\big( \Ab\big( \CRing_k(\cX)_{\cA\alg // \cB\alg} \big)\big) . \]
\end{thm}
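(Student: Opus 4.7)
The plan is to verify the equivalence by reducing it to a compatibility between the constructions of the two cotangent complexes, and then to apply \cref{thm:connected_closed_immersion_are_algebraic} together with the unramifiedness of $\cTank$.

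\textbf{Step 1 (Universal property reduction).} By \cref{prop:UMP_analytic_cotangent_complex} and its algebraic counterpart (whose proof is formally identical), combined with the identification $\cB \Mod = \cB\alg \Mod$ from \cref{def:analytic_modules} and the identification $(\cB \oplus M)\alg \simeq \cB\alg \oplus M$ from \cref{cor:split_square-zero_extension}, the asserted equivalence in $\cB\alg \Mod$ is equivalent to showing that for every $M \in \cB\alg \Mod$ the forgetful map
\[ \Phi_* \colon \Map_{\AnRing_k(\cX)_{\cA // \cB}}(\cB, \cB \oplus M) \to \Map_{\CRing_k(\cX)_{\cA\alg // \cB\alg}}(\cB\alg, \cB\alg \oplus M) \]
is an equivalence. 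Equivalently, both cotangent complexes can be realized as $\Sigma^\infty_\Ab$ applied to a pushout:
\[ \anL_{\cB/\cA} \simeq \Sigma^\infty_\Ab(\cB \otimes_\cA \cB), \qquad \mathbb L_{\cB\alg/\cA\alg} \simeq \Sigma^\infty_\Ab(\cB\alg \otimes_{\cA\alg} \cB\alg), \]
with the pushouts taken in $\AnRing_k(\cX)$ and $\CRing_k(\cX)$ respectively, each equipped with one of the canonical inclusions together with the codiagonal to make them pointed objects over the target.

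\textbf{Step 2 (Connectivity and algebraization of the pushout).} Since $f\colon \cA \to \cB$ is an effective epimorphism, the induced map $\pi_0(\cA) \to \pi_0(\cB)$ is surjective. Consequently, $\pi_0(\cB \otimes_\cA \cB) \simeq \pi_0(\cB) \otimes_{\pi_0(\cA)} \pi_0(\cB) \simeq \pi_0(\cB)$, so the codiagonal exhibits $\cB \otimes_\cA \cB$ as an object of $\AnRing_k(\cX)_{\underover{\cB}}^{\ge 1}$. Moreover, by the unramifiedness of $\cTank$ (cf.\ the argument in the proof of \cref{lem:flatness_smooth_reduction}, which relies on \cite[Corollary 3.11 and Proposition 3.17]{Porta_Yu_DNAnG_I} applied to the effective epimorphism $f$), the algebraization functor preserves this pushout:
\[ \Phi_*(\cB \otimes_\cA \cB) \simeq \cB\alg \otimes_{\cA\alg} \cB\alg. \]

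\textbf{Step 3 (Conclusion via the 1-connected equivalence).} Apply \cref{thm:connected_closed_immersion_are_algebraic} with $\cO_X = \cB$: the functor $\Phi_*^{\ge 1}$ is an equivalence of $\infty$-categories between the 1-connected parts $\AnRing_k(\cX)_{\underover{\cB}}^{\ge 1}$ and $\CRing_k(\cX)_{\underover{\cB\alg}}^{\ge 1}$. Being an equivalence, it commutes with all colimits, hence with the left adjoint $\Sigma^\infty_\Ab$. Combining this with Step 2 yields
\[ (\anL_{\cB/\cA})\alg \simeq \Phi_*\bigl(\Sigma^\infty_\Ab(\cB \otimes_\cA \cB)\bigr) \simeq \Sigma^\infty_\Ab\bigl(\Phi_*(\cB \otimes_\cA \cB)\bigr) \simeq \Sigma^\infty_\Ab(\cB\alg \otimes_{\cA\alg} \cB\alg) \simeq \mathbb L_{\cB\alg/\cA\alg}, \]
as required.

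\textbf{Main obstacle.} The subtle point is that \cref{thm:connected_closed_immersion_are_algebraic} is formulated for the under-over setup $\cO_X // \cO_X$, while $\anL_{\cB/\cA}$ is defined in terms of the under-over setup $\cA // \cB$; moreover, in general the algebraization $\Phi_*$ is only a right adjoint, so it is not expected to commute with the left adjoint $\Sigma^\infty_\Ab$. Both difficulties resolve simultaneously by restricting to the 1-connected part: the key object $\cB \otimes_\cA \cB$ belongs to $\AnRing_k(\cX)_{\underover{\cB}}^{\ge 1}$ precisely because $f$ is an effective epimorphism, and on this 1-connected part, $\Phi_*^{\ge 1}$ is an equivalence of $\infty$-categories (not merely a right adjoint) and therefore commutes with arbitrary (co)limits.
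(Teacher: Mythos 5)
Your proposal correctly identifies the formula $\anL_{\cB/\cA} \simeq \Sigma^\infty_\Ab(\cB \otimes_\cA \cB)$, the observation that $\cB \otimes_\cA \cB$ is $1$-connected over $\cB$ when $f$ is an effective epimorphism, and the fact that unramifiedness of $\cTank$ lets $(-)\alg$ commute with the pushout $\cB \otimes_\cA \cB$. All of these are genuine ingredients and the last one also appears in the paper's proof. However, there are two gaps in the way you close the argument.

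First, \cref{thm:connected_closed_immersion_are_algebraic} is stated for $\cO_X$ the structure sheaf of a \emph{derived analytic space} $X = (\cX, \cO_X)$, and its proof genuinely uses this hypothesis: the reduction to spaces in Subsection \ref{sec:reduction_to_spaces} and the flatness argument in Subsection \ref{sec:flatness} work with stalks of derived analytic structure sheaves. By contrast, \cref{thm:algebraic_vs_analytic_cotangent_complex} is asserted for an arbitrary effective epimorphism $\cA \to \cB$ in $\AnRing_k(\cX)$, with no hypothesis that $\cB$ be the structure sheaf of a derived analytic space. So the theorem you invoke does not apply at the stated level of generality.

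Second, even granting the derived analytic hypothesis, the step ``Being an equivalence, it commutes with all colimits, hence with the left adjoint $\Sigma^\infty_\Ab$'' is not a valid deduction. The functor $\Sigma^\infty_\Ab$ does not restrict to a functor between $1$-connected parts: its target is the stable $\infty$-category $\Sp(\Ab(\AnRing_k(\cX)_{\cB//\cB}))$, on which ``$1$-connected'' is not a meaningful constraint, and its right adjoint $\Omega^\infty_\Ab$ takes $\cF$ to $\cB \oplus \cF$, which (by \cref{cor:split_square-zero_extension}) has $\pi_0 \simeq \pi_0(\cB\alg) \oplus \pi_0(\cF)$ and hence is \emph{not} $1$-connected over $\cB$ for general $\cF$. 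Consequently the square involving $\Omega^\infty_\Ab$ and $\Phi_*$ does not restrict to a square between $1$-connected parts, and the equivalence $\Phi_*^{\ge 1}$ by itself gives no control over the Beck--Chevalley transformation you need. The paper's \cref{lem:effective_epimorphisms_infinite_suspension} supplies exactly this missing left adjointability, by applying unramifiedness of $\cTank$ a \emph{second} time: for any $\cO$ in the $\cB//\cB$ category, the structural map $\cO \to \cB$ has a section, hence is an effective epimorphism, whence $(\Sigma(\cO))\alg \simeq (\cB \cotimes_\cO \cB)\alg \simeq \cB\alg \otimes_{\cO\alg} \cB\alg \simeq \Sigma(\cO\alg)$. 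This is what makes $\Sigma^\infty_\Ab$ commute with $(-)\alg$; the approach is more elementary (no appeal to the deep \cref{thm:connected_closed_immersion_are_algebraic}) and works in the stated generality.
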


The proof of the above theorem relies on the following lemma:

\begin{lem} \label{lem:effective_epimorphisms_infinite_suspension}
	Let $\cX$ be an $\infty$-topos and let $f \colon \cA \to \cB$ be a morphism in $\AnRing_k(\cX)$.
	Suppose that $f$ is an effective epimorphism.
	Then the commutative diagram
	\begin{equation} \label{eq:left_adjointable_closed_immersion_I}
		\begin{tikzcd}
			\Sp\big( \Ab\big( \CRing_k(\cX)_{\cA\alg // \cB\alg} \big) \big) \arrow{d}{\Omega^\infty_\Ab} & \Sp\big(\Ab\big( \AnRing_k(\cX)_{\cA // \cB} \big)\big) \arrow{l} \arrow{d}{\Omega^\infty_\Ab} \\
			\CRing_k(\cX)_{\cA\alg // \cB\alg} & \AnRing_k(\cX)_{\cA // \cB} \arrow{l}[swap]{(-)\alg} .
		\end{tikzcd}
	\end{equation}
	is left adjointable.
\end{lem}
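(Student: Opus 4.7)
The plan is to reduce the left adjointability of the square to a concrete equivalence of mapping spaces, and then to verify that equivalence using the effective epimorphism hypothesis together with the unramifiedness of $\cTank$. First, I combine \cref{thm:equivalence_of_modules_precise} with \cref{lem:pointed_objects} to observe that the top horizontal functor $\partial_\Ab((-)\alg)$ in the square is an equivalence of stable $\infty$-categories. Left adjointability then reduces to checking that the Beck-Chevalley natural transformation
\[ \gamma \colon \Sigma^\infty_\Ab \circ (-)\alg \longrightarrow \partial_\Ab((-)\alg) \circ \Sigma^\infty_\Ab \]
is an equivalence on every $\cC \in \AnRing_k(\cX)_{\cA // \cB}$.

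Using the adjunction $\Sigma^\infty_\Ab \dashv \Omega^\infty_\Ab$ on both sides, together with \cref{def:analytic_split_square_zero_extension} and \cref{cor:split_square-zero_extension} (which identifies the algebraization of the analytic split square-zero extension $\cB \oplus M$ with the algebraic split square-zero extension $\cB\alg \oplus M$), the map $\gamma$ translates, after passing to mapping spaces, into the assertion that the natural map
\[ \Map_{\AnRing_k(\cX)_{\cA // \cB}}(\cC, \cB \oplus M) \longrightarrow \Map_{\CRing_k(\cX)_{\cA\alg // \cB\alg}}(\cC\alg, \cB\alg \oplus M) \]
induced by $(-)\alg$ (with $\cB \oplus M$ on the left denoting the analytic split square-zero extension) is an equivalence for every $M \in \cB\alg\Mod$. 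Both mapping spaces are fibers of the corresponding forgetful maps from the unpointed slices $\AnRing_k(\cX)_{/\cB}$ (resp.\ $\CRing_k(\cX)_{/\cB\alg}$) over the chosen $\cA$-pointings, so the problem reduces to matching both the unpointed slice mapping spaces and the $\cA$-pointing fibers under $(-)\alg$.

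Here the hypothesis that $\cA \to \cB$ is an effective epimorphism is crucial. By the unramifiedness of $\cTank$ (Corollary 3.11 and Proposition 3.17 of \cite{Porta_Yu_DNAnG_I}), the algebraization functor $(-)\alg$ preserves pushouts in $\AnRing_k(\cX)$ along effective epimorphisms landing in $\cB$, which yields both the matching of the unpointed slice mapping spaces and the matching of the $\cA$-pointing fibers. Combining these produces the required equivalence of mapping spaces, and hence the left adjointability of the square. The main obstacle I anticipate is the careful bookkeeping of how unramifiedness transfers from pushouts of analytic rings to the matching of mapping spaces into the analytic split square-zero extension; once this bookkeeping is settled, the remaining identifications follow formally.
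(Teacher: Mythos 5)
Your proposal has two substantive gaps, and they concern different aspects of the argument.

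First, the invocation of \cref{thm:equivalence_of_modules_precise} is not valid in the generality of the lemma. That theorem (and the chain of results it depends on — \cref{thm:connected_closed_immersion_are_algebraic}, \cref{prop:flatness}) requires $(\cX, \cO_X)$ to be a \emph{derived analytic space}: the flatness argument in Subsection 4.4 crucially uses that the stalk $A = x^{-1}\cO_X$ is a ring of germs of holomorphic functions admitting a surjection from a ring of germs on a polydisk or affine space. By contrast, \cref{lem:effective_epimorphisms_infinite_suspension} is stated for an \emph{arbitrary} $\infty$-topos $\cX$ and an arbitrary morphism $\cA \to \cB$ in $\AnRing_k(\cX)$, with no assumption that $(\cX, \cB)$ is a derived analytic space. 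Your first step therefore only applies to a special case, and cannot be the basis for a proof of the full statement. (It is not a circularity — the flatness results live entirely in Section 4 — but it is an inapplicable hypothesis.)

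Second, even granting that restriction, the final step does not close. The adjunction $\Sigma^\infty_\Ab \dashv \Omega^\infty_\Ab$ together with \cref{cor:split_square-zero_extension} shows that the mapping-space equivalence you write down is literally a restatement of the Beck--Chevalley transformation $\gamma_{\cC}$ being an equivalence (apply $\Map(-,M)$ and unwind); it is not a \emph{reduction}. You then assert that ``unramifiedness implies $(-)\alg$ preserves certain pushouts, which yields the matching of mapping spaces,'' but preservation of pushouts by a functor does not on its own give fully faithfulness on mapping spaces into split square-zero extensions, and you do not supply the intermediate argument. The paper's proof works instead by decomposing the square (\ref{eq:left_adjointable_closed_immersion_I}) into two squares via the equivalences $\Sp\bigl(\Ab(\AnRing_k(\cX)_{\cA//\cB})\bigr) \simeq \Sp\bigl(\Ab(\AnRing_k(\cX)_{\cB//\cB})\bigr)$ (and similarly algebraically): the lower square is handled by unramifiedness applied to $(\cO \widehat\otimes_\cA \cB)\alg \simeq \cO\alg \otimes_{\cA\alg}\cB\alg$ since $\cA\to\cB$ is an effective epimorphism; the upper square is handled by observing that every $\cO \in \AnRing_k(\cX)_{\cB//\cB}$ maps to $\cB$ via an effective epimorphism (it has a section), so unramifiedness gives $(\Sigma\cO)\alg \simeq \Sigma(\cO\alg)$, and one concludes by passing to the filtered colimit defining $\Sigma^\infty_\Ab$. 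That is the mechanism by which pushout-preservation enters; your write-up jumps past it. If you want to salvage your reformulation, you would still need to reproduce this suspension argument, which makes the detour through the mapping-space formulation and \cref{thm:equivalence_of_modules_precise} unnecessary.
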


\begin{proof}
	Using the canonical equivalences
	\begin{gather*}
		\Sp\big(\Ab\big(\CRing_k(\cX)_{\cA\alg // \cB\alg}\big)\big) \simeq \Sp\big(\Ab\big(\CRing_k(\cX)_{\cB\alg // \cB\alg}\big)\big), \\
		\Sp\big(\Ab\big(\AnRing_k(\cX)_{\cA // \cB}\big)\big) \simeq \Sp\big(\Ab\big(\AnRing_k(\cX)_{\cB // \cB}\big)\big) ,
	\end{gather*}
	we can decompose the square \eqref{eq:left_adjointable_closed_immersion_I} as
	\begin{equation} \label{eq:left_adjointable_closed_immersion_II}
		\begin{tikzcd}
			\Sp\big( \Ab\big( \CRing_k(\cX)_{\cA\alg // \cB\alg} \big) \big) \arrow{d}{\Omega^\infty_\Ab} & \Sp\big(\Ab\big( \AnRing_k(\cX)_{\cA // \cB} \big)\big) \arrow{l} \arrow{d}{\Omega^\infty_\Ab} \\
			\CRing_k(\cX)_{\cB\alg // \cB\alg} \arrow{d}{f\alg_!} & \AnRing_k(\cX)_{\cB // \cB} \arrow{l}[swap]{(-)\alg} \arrow{d}{f_!} \\
			\CRing_k(\cX)_{\cA\alg // \cB\alg} & \AnRing_k(\cX)_{\cA // \cB} \arrow{l}[swap]{(-)\alg} .
		\end{tikzcd}
	\end{equation}
	It is then enough to prove that both the upper and the lower squares are left adjointable.

	For the lower one, the statement is a consequence of the unramifiedness of the pregeometry $\cTank$: see \cite[Proposition 11.12]{DAG-IX} for the complex case and \cite[Proposition 3.17(iii)]{Porta_Yu_Derived_non-archimedean_analytic_spaces} for the non-archimedean case.
	Indeed, the left adjoints of $f_!$ and of $f\alg_!$ can respectively be described as the functors
	\[ \AnRing_k(\cX)_{\cA // \cB} \ni \cO \mapsto \cO {\cotimes}_\cA \cB \in \AnRing_k(\cX)_{\cB // \cB} \]
	and
	\[ \CRing_k(\cX)_{\cA\alg // \cB\alg} \ni \cO \mapsto \cO \otimes_{\cA\alg} \cB\alg \in \AnRing_k(\cX)_{\cB\alg // \cB\alg} . \]
	Since $f \colon \cA \to \cB$ is an effective epimorphism, unramifiedness of $\cTank$ implies that
	\[ ( \cO {\cotimes}_{\cA} \cB )\alg \simeq \cO\alg \otimes_{\cA\alg} \cB\alg . \]
	
	As for the upper one, it is enough to observe that given $\cO \in \AnRing_k(\cX)_{\cB // \cB}$, the canonical map  $\cO \to \cB$ has a section and it is therefore an effective epimorphism.
	In particular, using the unramifiedness of $\cTank$ once again, we obtain:
	\[ (\Sigma(\cO))\alg \simeq ( \cB {\cotimes}_\cO \cB )\alg \simeq \cB\alg \otimes_{\cO\alg} \cB\alg \simeq \Sigma( \cO\alg ). \]
	It follows that the upper square of \eqref{eq:left_adjointable_closed_immersion_II} is left adjointable as well.
\end{proof}

\begin{proof}[Proof of \cref{thm:algebraic_vs_analytic_cotangent_complex}]
	Applying \cref{lem:effective_epimorphisms_infinite_suspension} to the morphism $f$, we see that the square
	\[ \begin{tikzcd}
		\Sp\big(\Ab\big( \CRing_k(\cX)_{\cA\alg // \cB\alg} \big)\big) & \Sp\big( \Ab\big( \AnRing_k(\cX)_{\cA // \cB} \big)\big) \arrow{l}[swap]{(-)\alg} \\
		\CRing_k(\cX)_{\cA\alg // \cB\alg} \arrow{u}{\Sigma^\infty_\Ab} & \AnRing_k(\cX)_{\cA // \cB} \arrow{u}[swap]{\Sigma^\infty_\Ab} \arrow{l}[swap]{(-)\alg}
	\end{tikzcd} \]
	is commutative.
	Since $\cB$ is sent to $\cB\alg$ by the lower horizontal morphism, we conclude that
	\[ \mathbb L_{\cB\alg / \cA\alg} \simeq \Sigma^\infty_\Ab(\cB\alg) \simeq (\Sigma^\infty_\Ab(\cB))\alg \simeq (\anL_{\cB / \cA})\alg . \]
	The proof is therefore complete.
\end{proof}

\begin{cor} \label{cor:cotangent_complex_closed_immersion}
	Let $X = (\cX, \cO_X)$ and $Y = (\cY, \cO_Y)$ be derived analytic spaces and let $f \colon X \to Y$ be a closed immersion.
	There is a canonical equivalence $\mathbb L_{X\alg / Y\alg} \simeq \anL_{X/Y}$, where $X\alg$ and $Y\alg$ denote the $\cTetk$-structured topoi $(\cX, \cO_X\alg)$ and $(\cY, \cO_Y\alg)$ respectively.
\end{cor}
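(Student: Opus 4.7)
The plan is to deduce the corollary as a direct application of \cref{thm:algebraic_vs_analytic_cotangent_complex} to the morphism $f^\sharp \colon f\inv \cO_Y \to \cO_X$ in $\AnRing_k(\cX)$. Setting $\cA = f\inv \cO_Y$ and $\cB = \cO_X$ and unpacking \cref{def:relative_cotangent_complex_spaces}, we have $\anL_{X/Y} = \anL_{\cO_X / f\inv \cO_Y}$ and $\mathbb L_{X\alg / Y\alg} = \mathbb L_{\cO_X\alg / f\inv \cO_Y\alg}$; since the algebraization functor is induced by precomposition with a morphism of pregeometries, it commutes with the inverse image functor $f\inv$, so that $(f\inv \cO_Y)\alg \simeq f\inv(\cO_Y\alg)$. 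Once we know $f^\sharp$ is an effective epimorphism, \cref{thm:algebraic_vs_analytic_cotangent_complex} gives
\[ (\anL_{\cO_X / f\inv \cO_Y})\alg \simeq \mathbb L_{\cO_X\alg / f\inv \cO_Y\alg} , \]
which, under the identification $\cO_X \Mod = \cO_X\alg \Mod$ of \cref{def:analytic_modules} together with \cref{thm:equivalence_of_modules}, is exactly the desired equivalence.

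The only substantive point is therefore to verify that $f^\sharp \colon f\inv \cO_Y \to \cO_X$ is an effective epimorphism in $\AnRing_k(\cX)$. By the characterizations of closed immersions of derived analytic spaces (see \cite[Proposition 6.2]{Porta_Yu_DNAnG_I} in the non-archimedean case and \cite[Proposition 12.10]{DAG-IX} in the complex case), a closed immersion is required to induce a surjection $\pi_0(f^\sharp\alg)$ of ordinary sheaves of rings. I would then argue that effective epimorphisms in $\AnRing_k(\cX)$ are detected on the underlying simplicial commutative rings: the functor $(-)\alg \colon \AnRing_k(\cX) \to \CRing_k(\cX)$ is conservative and, being a left adjoint, preserves colimits, in particular it preserves the formation of Čech nerves. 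Combining this with the standard characterization of effective epimorphisms in $\CRing_k(\cX)$ via surjectivity on $\pi_0$ reduces the question to the $\pi_0$-surjectivity already furnished by the definition of closed immersion.

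The only potential subtlety in this plan is the last step: one should be careful that ``effective epimorphism'' in the presentable $\infty$-category $\AnRing_k(\cX)$ is well-behaved under algebraization. I expect this to be routine, since the unramifiedness of $\cTank$ (already invoked in \cref{lem:effective_epimorphisms_infinite_suspension}) ensures that the pushout constructions that appear in the Čech nerve of $f^\sharp$ are computed by $(-)\alg$ in the usual way, so that a map whose algebraization is an effective epimorphism is itself one. No new computation is needed beyond \cref{thm:algebraic_vs_analytic_cotangent_complex}, which is where all the genuine work was done.
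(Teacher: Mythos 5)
Your proposal follows the paper's proof exactly in its first half: apply \cref{thm:algebraic_vs_analytic_cotangent_complex} to $f^\sharp\colon f\inv\cO_Y\to\cO_X$ once that morphism is known to be an effective epimorphism. The second half — your justification of the effective epimorphism step — is both unnecessary and slightly off. The paper treats the assertion ``$f$ is a closed immersion $\Rightarrow f^\sharp$ is an effective epimorphism'' as immediate from the definition of a closed immersion of $\cTank$-structured topoi (this is built into the definition, cf.\ \cite[Proposition 6.2]{Porta_Yu_DNAnG_I} and \cite[Proposition 12.10]{DAG-IX}); there is nothing to deduce. Your attempt to deduce it via the functor $(-)\alg$ contains a real mistake: $(-)\alg\colon\AnRing_k(\cX)\to\CRing_k(\cX)$ is a \emph{right} adjoint (its left adjoint is $(-)\an$), not a left adjoint as you assert, so it does not automatically preserve all colimits — it only preserves limits and sifted colimits. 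Consequently the argument ``$(-)\alg$ preserves Čech nerves, hence reflects effective epimorphisms'' does not go through as stated. Fortunately none of this is needed: since $f^\sharp$ is an effective epimorphism by definition of a closed immersion, the corollary is indeed an immediate consequence of \cref{thm:algebraic_vs_analytic_cotangent_complex}, and your first paragraph is already the complete argument.
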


\begin{proof}
	The analytic cotangent complex $\anL_{X /Y}$ is by definition the analytic cotangent complex of the morphism $f\inv \cO_Y \to \cO_X$ in $\AnRing_k(\cX)$.
	Since $f$ is a closed immersion, this morphism is an effective epimorphism.
	The statement now follows from \cref{thm:algebraic_vs_analytic_cotangent_complex}.
\end{proof}

An important consequence of this fact is the connectivity estimates on the analytic cotangent complex.

\begin{prop} \label{prop:connectivity_estimates}
	Let $\cX$ be an $\infty$-topos and let $f \colon \cA \to \cB$ be a morphism in $\AnRing_k(\cX)$.
	Let $\mathrm{cofib}(f)$ denote the cofiber of the underlying map of $\DAb$-valued sheaves.
	If $\mathrm{cofib}(f)$ is $n$-connective for $n \ge 1$, then there is a canonical $(2n)$-connective map
	\[ \varepsilon_f \colon \mathrm{cofib}(f) \otimes_{A\alg} B\alg \to \mathbb L_{B / A}\an . \]
\end{prop}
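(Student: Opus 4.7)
The plan is to construct $\varepsilon_f$ from the universal analytic derivation, use the hypothesis $n \ge 1$ to deduce that $f$ is an effective epimorphism so that \cref{thm:algebraic_vs_analytic_cotangent_complex} applies, and then reduce the connectivity bound to the classical one for the cotangent complex of simplicial commutative rings.

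For the construction, we consider the universal $\cA$-linear analytic derivation
\[ d \colon \cB \to \cB \oplus \anL_{\cB / \cA} \]
in $\AnRing_k(\cX)_{\cA // \cB}$, given by the unit of the adjunction $\Sigma^\infty_\Ab \dashv \Omega^\infty_\Ab$. By \cref{cor:split_square-zero_extension}, $d\alg$ lands in the algebraic split square-zero extension, so it decomposes as $b \mapsto (b, D(b))$ for an $\cA\alg$-linear map $D \colon \cB\alg \to \anL_{\cB / \cA}$. The fact that $d$ is a morphism under $\cA$ forces $D \circ f\alg \simeq 0$, so $D$ factors through the cofiber to give an $\cA\alg$-linear map $\mathrm{cofib}(f) \to \anL_{\cB / \cA}$. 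Extension of scalars along $\cA\alg \to \cB\alg$ then yields the sought map $\varepsilon_f \colon \mathrm{cofib}(f) \otimes_{\cA\alg} \cB\alg \to \anL_{\cB / \cA}$.

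For the connectivity bound, the hypothesis $n \ge 1$ gives $\pi_0(\mathrm{cofib}(f)) = 0$, so $\pi_0(f)$ is an isomorphism and $f$ is in particular an effective epimorphism in $\AnRing_k(\cX)$. Hence \cref{thm:algebraic_vs_analytic_cotangent_complex} supplies a canonical identification $(\anL_{\cB / \cA})\alg \simeq \mathbb L_{\cB\alg / \cA\alg}$ in $\cB\alg \Mod$. The construction above is manifestly functorial under algebraization, as one sees by combining \cref{lem:underlying_algebra_split_square_zero_extension} with \cref{cor:split_square-zero_extension} to pass between the universal analytic and algebraic derivations. Consequently, under this identification $\varepsilon_f$ corresponds to the analogous map
\[ \varepsilon_{f\alg} \colon \mathrm{cofib}(f\alg) \otimes_{\cA\alg} \cB\alg \to \mathbb L_{\cB\alg / \cA\alg} . \]
We then conclude by invoking the classical connectivity estimate for the algebraic cotangent complex of simplicial commutative rings: for a morphism $g \colon A' \to B'$ with $\mathrm{cofib}(g)$ $n$-connective and $n \ge 1$, the universal comparison map $\mathrm{cofib}(g) \otimes_{A'} B' \to \mathbb L_{B'/A'}$ is $(2n)$-connective, a Hurewicz-type statement in the spirit of \cite[Corollary 7.4.3.2]{Lurie_Higher_algebra} provable by a Postnikov tower argument on $B'$ relative to $A'$.

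The main technical step will be verifying rigorously that $\varepsilon_f$ matches $\varepsilon_{f\alg}$ under the chain of equivalences provided by \cref{thm:equivalence_of_modules_precise}, \cref{thm:algebraic_vs_analytic_cotangent_complex}, and \cref{cor:split_square-zero_extension}. Once naturality under algebraization is carefully spelled out, the estimate reduces cleanly to the well-known algebraic one, and no new input from the analytic side is needed beyond the already-established compatibility between analytic and algebraic cotangent complexes for effective epimorphisms.
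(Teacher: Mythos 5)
Your proposal follows essentially the same route as the paper's proof: observe that $n\ge 1$ forces $\pi_0(\mathrm{cofib}(f))=0$, hence $f$ is an effective epimorphism, apply \cref{thm:algebraic_vs_analytic_cotangent_complex} to identify $\anL_{\cB/\cA}$ with $\mathbb L_{\cB\alg/\cA\alg}$, and then invoke the algebraic connectivity estimate (\cite[7.4.3.1]{Lurie_Higher_algebra}). You spell out the construction of $\varepsilon_f$ via the universal derivation and \cref{cor:split_square-zero_extension}, and you flag the compatibility of $\varepsilon_f$ with $\varepsilon_{f\alg}$ under algebraization; the paper treats both points as immediate, so the extra care you take there is consistent with (and slightly more explicit than) what is written.
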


\begin{proof}
	Since $\pi_0(\mathrm{cofib}(f)) = 0$, we see that $f$ is an effective epimorphism.
	Therefore, \cref{thm:algebraic_vs_analytic_cotangent_complex} implies that $\anL_{\cB / \cA} \simeq \mathbb L_{\cB\alg / \cA\alg}$.
	At this point, the statement follows immediately from \cite[7.4.3.1]{Lurie_Higher_algebra}.
\end{proof}

\begin{cor} \label{cor:connectivity_estimates}
	Let $\cX$ be an $\infty$-topos and let $f \colon \cA \to \cB$ be a morphism in $\AnRing_k(\cX)$.
	Assume that $\mathrm{cofib}(f)$ is $n$-connective for some $n \ge 1$.
	Then $\anL_{B / A}$ is $n$-connective.
	The converse holds provided that $f$ induces an isomorphism $\pi_0(A) \to \pi_0(B)$.
\end{cor}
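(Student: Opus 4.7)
The plan is to derive both directions of the corollary directly from \cref{prop:connectivity_estimates}. The forward direction is immediate: if $\mathrm{cofib}(f)$ is $n$-connective with $n \ge 1$, then the tensor product $\mathrm{cofib}(f) \otimes_{A\alg} B\alg$ is still $n$-connective, because $B\alg$ is connective over $A\alg$. The $(2n)$-connective map $\varepsilon_f$ supplied by \cref{prop:connectivity_estimates} then induces isomorphisms on $\pi_i$ for $i < n$ (in particular for $i < 2n$), so $\pi_i(\anL_{B/A}) = 0$ in that range.

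For the converse, assume both $\pi_0(A) \xrightarrow{\sim} \pi_0(B)$ and that $\anL_{B/A}$ is $n$-connective. I would prove by induction on $k$ that $\pi_k(\mathrm{cofib}(f)) = 0$ for every $k < n$. The base case $k = 0$ is exactly the $\pi_0$-hypothesis. For the inductive step, the previous steps give $\mathrm{cofib}(f)$ $k$-connective for some $1 \le k < n$. Applying \cref{prop:connectivity_estimates} at this connectivity level yields a $(2k)$-connective comparison map $\varepsilon_f$, and since $k < 2k$ the induced map on $\pi_k$ is an isomorphism; thus
\[
\pi_k\big(\mathrm{cofib}(f) \otimes_{A\alg} B\alg\big) \simeq \pi_k(\anL_{B/A}) = 0,
\]
the right-hand vanishing coming from the $n$-connectivity of $\anL_{B/A}$ and the inequality $k < n$.

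The step that closes the induction is the identification $\pi_k(\mathrm{cofib}(f) \otimes_{A\alg} B\alg) \simeq \pi_k(\mathrm{cofib}(f))$, and this is where the $\pi_0$-hypothesis plays an essential role. Since $\mathrm{cofib}(f)$ is $k$-connective and $B\alg$ is connective over $A\alg$, the standard computation of the bottom homotopy of a derived tensor product gives a natural isomorphism
\[
\pi_k\big(\mathrm{cofib}(f) \otimes_{A\alg} B\alg\big) \simeq \pi_k(\mathrm{cofib}(f)) \otimes_{\pi_0(A\alg)} \pi_0(B\alg),
\]
and the hypothesis $\pi_0(A) \xrightarrow{\sim} \pi_0(B)$ reduces the right-hand side to $\pi_k(\mathrm{cofib}(f))$ itself. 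I do not anticipate a serious obstacle: the argument is essentially bookkeeping of connectivities, with all the substantive geometric content already packaged into \cref{prop:connectivity_estimates}.
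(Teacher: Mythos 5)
Your proof is correct and follows essentially the same route as the paper's one‑line proof, which simply cites \cref{thm:algebraic_vs_analytic_cotangent_complex} together with \cite[7.4.3.2]{Lurie_Higher_algebra}. What you have done is unwind that citation: Lurie's 7.4.3.2 is itself proved exactly by the inductive argument you give, using the Hurewicz‑type estimate of his 7.4.3.1 (transported to the analytic setting as \cref{prop:connectivity_estimates}, which in turn already invokes \cref{thm:algebraic_vs_analytic_cotangent_complex}). So the mathematical content is identical; yours is just the expanded version.

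One small caveat worth flagging explicitly in a write‑up: when you assert that a $(2k)$-connective map induces an isomorphism on $\pi_k$ ``since $k < 2k$,'' you are implicitly using the convention under which an $m$-connective map has $m$-connective \emph{fiber} (so that $\pi_i$ is an isomorphism for all $i < m$). This is indeed the convention Lurie uses in this part of \cite{Lurie_Higher_algebra}—it is what makes his proof of 7.4.3.2 close at the step $k=1$—but it is worth stating, because under the other common convention ($m$-connective map means $m$-connective \emph{cofiber}) the isomorphism on $\pi_k$ only holds for $k \ge 2$, and the base of the induction would require a separate argument at $k = 1$. Also, a minor slip: in the forward direction the parenthetical ``(in particular for $i < 2n$)'' should read the other way around—the isomorphism holds for $i < 2n$ and hence in particular for $i < n$. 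Neither affects the correctness of the argument.
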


\begin{proof}
	It follows from \cref{thm:algebraic_vs_analytic_cotangent_complex} and \cite[7.4.3.2]{Lurie_Higher_algebra}.
\end{proof}

\begin{lem}
	Let $f \colon A \to B$ be a morphism in $\AnRing_k(\cS)$.
	Then $\mathbb L_{B / A}\an$ is connective.
\end{lem}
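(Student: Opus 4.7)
I will factor $f \colon A \to B$ as $A \xrightarrow{g} C \xrightarrow{h} B$, where $h$ is an effective epimorphism and $C$ is an ``analytic polynomial extension'' of $A$. Then I will verify that $\anL_{C/A}$ and $\anL_{B/C}$ are both connective and conclude via the transitivity fiber sequence $h^* \anL_{C/A} \to \anL_{B/A} \to \anL_{B/C}$ from \cref{prop:transitivity_cotangent_complex}.

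\textbf{The factorization.} Choose a set $I$ of elements of $\pi_0(B\alg)$ generating it as a $\pi_0(A\alg)$-algebra (for instance $I = \pi_0(B\alg)$) and lift them to an $A\alg$-linear map $A\alg[X_i]_{i \in I} \to B\alg$ in $\CRing_k(\cS)$ which is $\pi_0$-surjective. Define $C \coloneqq (A\alg[X_i])\an_A$ via the left adjoint to $\Phi \colon \AnRing_k(\cS)_{A/} \to \CRing_k(\cS)_{A\alg/}$; this left adjoint exists by presentability together with the fact that $\Phi$ preserves limits. The chosen algebraic map corresponds by adjunction to $h \colon C \to B$ in $\AnRing_k(\cS)$, and because the composition $A\alg[X_i] \to C\alg \to B\alg$ is $\pi_0$-surjective, so is $h\alg$, making $h$ an effective epimorphism.

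\textbf{Connectivity of each piece.} For $\anL_{B/C}$, the effective-epimorphism hypothesis allows application of \cref{thm:algebraic_vs_analytic_cotangent_complex}, which yields $(\anL_{B/C})\alg \simeq \mathbb L_{B\alg/C\alg}$; the latter is connective by the standard algebraic fact that the cotangent complex of a morphism between connective simplicial commutative rings is connective, and under the identification $\cO\Mod = \cO\alg\Mod$ of \cref{def:analytic_modules} this transfers to connectivity of $\anL_{B/C}$. For $\anL_{C/A}$ I compute the analytic cotangent complex directly from its universal property: for any $M \in C\alg\Mod$, the $((-)\an_A, \Phi)$-adjunction combined with \cref{cor:split_square-zero_extension} gives
\[ \Map(\anL_{C/A}, M) \simeq \Map_{\AnRing_k(\cS)_{A // C}}(C, C \oplus M) \simeq \Map_{\CRing_k(\cS)_{A\alg // C\alg}}(A\alg[X_i], C\alg \oplus M) \simeq \prod_{i \in I} M, \]
exhibiting $\anL_{C/A}$ as the free module $\bigoplus_{i \in I} C\alg$, in particular connective.

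\textbf{Main obstacle.} The delicate point is the good behavior of the relative analytification $(-)\an_A$ on (possibly infinitely-generated) free polynomial algebras, especially the verification that the unit $A\alg[X_i] \to ((A\alg[X_i])\an_A)\alg$ does not collapse generators on $\pi_0$. Existence of the adjoint follows from presentability, and the $\pi_0$ property can be extracted from the flatness analysis of \cref{sec:flatness} in the finite-rank case combined with the preservation of filtered colimits by the left adjoint. Once these are secured, the transitivity fiber sequence has connective outer terms, so $\anL_{B/A}$ is connective.
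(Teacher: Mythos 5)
Your argument takes a genuinely different route from the paper's, and it is considerably longer. The paper proves this in a direct one-paragraph argument: since the analytic split square-zero extension $B \oplus M = \Omega^\infty_\Ab(M)$ depends only on the connective truncation $\tau_{\ge 0} M$ (a fact visible on underlying algebras by \cref{cor:split_square-zero_extension} and transferred back to $\AnRing_k(\cS)$ by conservativity of $(-)\alg$), the corepresented functor $M \mapsto \DerAn_A(B,M)$ factors through $\tau_{\ge 0}$, so $\Map(\anL_{B/A}, \tau_{\le -1} M) \simeq *$ for every $M$, which is exactly connectivity. Your factorization through an intermediate analytic polynomial extension and the transitivity sequence is a natural idea, and the step treating $\anL_{B/C}$, via \cref{thm:algebraic_vs_analytic_cotangent_complex} and connectivity of the algebraic cotangent complex, is sound.

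There is, however, a genuine gap in the step computing $\anL_{C/A}$. Your first equivalence $\Map(\anL_{C/A}, M) \simeq \Map_{\AnRing_k(\cS)_{A//C}}(C, C \oplus M)$ for $M$ ranging over $C\alg\Mod$ is precisely the universal property of \cref{prop:UMP_analytic_cotangent_complex}; but that proposition rests on the equivalence $\cO_X\Mod \simeq \Sp(\Ab(\AnRing_k(\cX)_{/\cO_X}))$ of \cref{thm:equivalence_of_modules}, which the paper only establishes when $(\cX, \cO_X)$ is a derived analytic space. Your $C = (A\alg[X_i]_{i\in I})\an_A$, the relative analytification of a free (and generically infinitely generated) polynomial $A\alg$-algebra, is not a derived analytic ring, so this equivalence is not available for it. Without it, your computation identifies $\anL_{C/A}$ only as an object of $\Sp(\Ab(\AnRing_k(\cS)_{/C}))$ corepresenting $M \mapsto \prod_i \Omega^\infty(M\alg)$; concluding that the algebraization $(\anL_{C/A})\alg \in C\alg\Mod$ is connective, let alone free, requires a left-adjointability statement for the relevant $\Sigma^\infty_\Ab$-squares, which the paper proves only under an effective epimorphism hypothesis (\cref{lem:effective_epimorphisms_infinite_suspension}) that $A \to C$ does not satisfy. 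The same issue recurs when identifying $(h^*\anL_{C/A})\alg$ with a base change along $C\alg \to B\alg$ before invoking the transitivity sequence in $B\alg\Mod$.

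By contrast, the ``main obstacle'' you single out, namely controlling $\pi_0$ of the unit $A\alg[X_i] \to C\alg$, is not actually an obstacle: the surjectivity of $\pi_0(h\alg)$ follows immediately from the surjectivity on $\pi_0$ of the composite $A\alg[X_i] \to C\alg \to B\alg$, which holds by your very choice of generators, with no further control on the unit needed.
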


\begin{proof}
	Let $M \in B\Mod$.
	Then
	\[ \Omega^\infty_B(M) \simeq \Omega^\infty_B(\tau_{\ge 0} M) . \]
	In particular, we obtain
	\begin{align*}
	\Map_{B\Mod}(\mathbb L_{B / A}\an, M) & \simeq \Map_{A // B}(B, \Omega^\infty_B(M)) \\
	& \simeq \Map_{A // B}(B, \Omega^\infty_B(\tau_{\ge 0} M)) \\
	& \simeq \Map_{B \Mod}(\mathbb L_{B / A}\an, \tau_{\ge 0} M)
	\end{align*}
	We conclude that for all $M\in B\Mod$, we have
	\[\Map_{B\Mod}(\anL_{B/A},\tau_{\le -1}M)\simeq 0.\]
	So $\anL_{B/A}$ is connective.
\end{proof}

\Cref{cor:connectivity_estimates} has the following important consequence:

\begin{cor} \label{cor:etale_characterised_by_cotangent_complex}
	Let $f \colon X \to Y$ be a morphism of derived analytic spaces.
	Then $f$ is \'etale if and only if $\trunc(f)$ is \'etale and $\mathbb L_{X / Y}\an \simeq 0$.
\end{cor}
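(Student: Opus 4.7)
The plan is to treat the two implications separately, with essentially all of the work going into the converse via a connectivity bootstrap.

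For the \enquote{only if} direction I would simply invoke \cref{cor:cotangent_complex_etale} to obtain $\anL_{X/Y}\simeq 0$, and observe that $\trunc(f)$ has the same underlying geometric morphism as $f$, which is étale by hypothesis, while the equivalence $f^\sharp \colon f\inv\cO_Y \xrightarrow{\sim} \cO_X$ descends to an equivalence $\pi_0(f^\sharp)\colon f\inv\pi_0(\cO_Y)\xrightarrow{\sim}\pi_0(\cO_X)$, showing that $\trunc(f)$ is étale.

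For the converse, suppose $\trunc(f)$ is étale and $\anL_{X/Y}\simeq 0$. Since $f$ and $\trunc(f)$ share the same underlying geometric morphism, that morphism is étale by hypothesis, and the only remaining point is to show that $f^\sharp \colon f\inv\cO_Y \to \cO_X$ is an equivalence in $\AnRing_k(\cX)$. Using the conservativity of the algebraization functor $(-)\alg\colon\AnRing_k(\cX)\to\CRing_k(\cX)$ (cf.\ \cref{sec:reduction_to_connected_objects}), this reduces to showing that the cofiber of the underlying map of $\DAb$-valued sheaves vanishes. The étaleness of $\trunc(f)$ forces $\pi_0(f^\sharp)$ to be an isomorphism, so this cofiber is already $1$-connective.

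The key step will then be to run \cref{cor:connectivity_estimates} as a bootstrap. Since $\anL_{X/Y}\simeq 0$ is $n$-connective for every $n\ge 1$, the converse half of \cref{cor:connectivity_estimates} (applicable precisely because $\pi_0(f^\sharp)$ is an isomorphism) yields that $\mathrm{cofib}(f^\sharp)$ is $n$-connective for every $n\ge 1$. Combined with the fact that it is already connective as the cofiber of a map between connective objects, all of its homotopy sheaves must vanish, so $\mathrm{cofib}(f^\sharp)\simeq 0$ and $f^\sharp$ is an equivalence, as required. The only mildly delicate point is this translation between the étaleness of $\trunc(f)$ and the two inputs \enquote{underlying geometric morphism is étale} and \enquote{$\pi_0(f^\sharp)$ is an isomorphism} needed to feed \cref{cor:connectivity_estimates}; once that is sorted, the proof is a direct application of the machinery assembled in this section.
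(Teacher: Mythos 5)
Your proof is correct and follows essentially the same approach as the paper: the forward direction is \cref{cor:cotangent_complex_etale} together with the observation that étaleness descends to $\pi_0$, and the converse feeds the vanishing of $\anL_{X/Y}$ together with the $\pi_0$-isomorphism hypothesis into \cref{cor:connectivity_estimates}, then uses conservativity of the forgetful functors to conclude that $f^\sharp$ is an equivalence. The only cosmetic remark is that calling the application of \cref{cor:connectivity_estimates} a \enquote{bootstrap} is a slight overstatement---one simply applies the converse direction once for each $n\ge 1$, with no feedback between stages---but the substance is identical to the paper's argument.
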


\begin{proof}
	If $f$ is \'etale then \cref{cor:cotangent_complex_etale} shows that $\mathbb L_{X / Y}\an \simeq 0$.
	In this case, we also have $\trunc(X) \simeq \trunc(Y) \times_Y X$ and therefore $\trunc(f)$ is \'etale.
	Vice-versa, if $\trunc(f)$ is \'etale, we see that the underlying morphism of $\infty$-topoi is \'etale.
	Moreover, the morphism $f^\sharp \colon f\inv \cO_Y \to \cO_X$ induces an equivalence on $\pi_0$ by hypothesis, and its cotangent complex vanishes.
	It follows from \cref{cor:connectivity_estimates} that is an equivalence, completing the proof.
\end{proof}

Using the results obtained so far, we can also prove the following important property of the analytic cotangent complex:

\begin{prop} \label{prop:cotangent_complex_pullback}
	Let
	\[ \begin{tikzcd}
	X' \arrow{r}{q} \arrow{d}{p} & X \arrow{d}{f} \\
	Y' \arrow{r}{g} & Y
	\end{tikzcd} \]
	be a pullback square in $\dAnk$.
	Then the canonical diagram
	\[ \begin{tikzcd}
	q^* f^* \anL_Y \arrow{r} \arrow{d} & q^* \anL_X \arrow{d} \\
	p^* \anL_{Y'} \arrow{r} & \anL_{X'}
	\end{tikzcd} \]
	is a pushout square in $\cO_{X'}\Mod$.
\end{prop}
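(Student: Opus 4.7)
The plan is to leverage the stability of $\cO_{X'}\Mod$ together with the transitivity fiber sequence of \cref{prop:transitivity_cotangent_complex} and the base change equivalence of \cref{prop:base_change_cotangent_complex_analytic}. In a stable $\infty$-category, a commutative square is a pushout if and only if the induced map between the cofibers of the two horizontal arrows is an equivalence, so the strategy is to compute both cofibers via transitivity and recognize the induced map as the base change comparison.

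Concretely, I would extend the given square to a commutative diagram
\[ \begin{tikzcd}
q^* f^* \anL_Y \arrow{r} \arrow{d} & q^* \anL_X \arrow{r} \arrow{d} & q^* \anL_{X/Y} \arrow{d}{\alpha} \\
p^* \anL_{Y'} \arrow{r} & \anL_{X'} \arrow{r} & \anL_{X'/Y'}
\end{tikzcd} \]
in $\cO_{X'}\Mod$. The bottom row is the transitivity fiber sequence for the composition $X' \to Y' \to *$. The top row is obtained by applying the exact functor $q^*$ to the transitivity fiber sequence for $X \to Y \to *$, and is therefore again a fiber sequence. The left square commutes because $f \circ q \simeq g \circ p$, which gives the identification $q^* f^* \anL_Y \simeq p^* g^* \anL_Y$ together with a canonical factorization through $p^* \anL_{Y'}$ coming from the transitivity sequence for $Y' \to Y \to *$. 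The right vertical map $\alpha$ is the base change comparison map, which by \cref{prop:base_change_cotangent_complex_analytic} applied to the given pullback square is an equivalence.

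Since the two rows are fiber sequences in the stable $\infty$-category $\cO_{X'}\Mod$ and the rightmost vertical map is an equivalence, a standard diagram chase (or the fact that in a stable $\infty$-category, fiber sequences are cofiber sequences and a map of cofiber sequences is an equivalence on cofibers iff the square of initial maps is a pushout) shows that the left square is a pushout. This is precisely the square in the statement, completing the proof.

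The only point requiring mild care is the naturality and commutativity of the extended diagram, in particular verifying that the canonical map appearing as $\alpha$ on the right is indeed the base change map of \cref{prop:base_change_cotangent_complex_analytic} rather than some other a priori different comparison. This is a routine check that boils down to the naturality of the transitivity fiber sequence with respect to the commuting square $f \circ q \simeq g \circ p$, and does not introduce any genuine obstacle.
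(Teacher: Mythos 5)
Your argument is correct, and it takes a genuinely different route from the paper's. The paper's proof works by localizing on $X$, $Y$, $Y'$ and factoring both $f$ and $g$ as a closed immersion followed by a projection to $Y$; it then handles the closed-immersion case by reduction to the algebraic statement [HA 7.3.2.18] via \cref{cor:cotangent_complex_closed_immersion} and the unramifiedness of $\cTank$, and handles the projection case by passing from polydisks to affine spaces (\'etale) and invoking \cref{thm:analytification_cotangent_complex} together with \cref{prop:base_change_cotangent_complex_analytic}. Your proof bypasses the local factorization entirely: you extend the square to a map of transitivity fiber sequences, identify the rightmost vertical with the base-change comparison $q^*\anL_{X/Y} \to \anL_{X'/Y'}$, and use stability of $\cO_{X'}\Mod$ (a square is a pushout iff the induced map on cofibers of horizontals is an equivalence). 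What you buy is modularity — the whole argument is a single formal consequence of \cref{prop:transitivity_cotangent_complex} and \cref{prop:base_change_cotangent_complex_analytic} — while the paper's local factorization is effectively encapsulated inside the proof of \cref{prop:base_change_cotangent_complex_analytic}, which you cite as a black box.

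The one place deserving slightly more than a wave of the hand is the commutativity of the right-hand square, i.e.\ that the canonical map $q^*\anL_{X/Y} \to \anL_{X'/Y'}$ arising from functoriality of the transitivity sequence agrees with the base-change comparison of \cref{prop:base_change_cotangent_complex_analytic}. Both maps are built as the composite $q^*\anL_{X/Y} \to \anL_{X'/Y} \to \anL_{X'/Y'}$, using the two ways of factoring $X' \to Y$ through the square, so they do agree; but since the paper's \cref{prop:transitivity_cotangent_complex} is stated as a pointwise fiber sequence rather than as a functor on arrows, it would strengthen your write-up to spell out the intermediate object $\anL_{X'/Y}$ and verify commutativity of the two resulting triangles rather than invoking naturality abstractly. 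With that made explicit, the proof is complete and cleaner than the paper's.
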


\begin{proof}
	Notice that if both $f$ and $g$ are closed immersion, the statement is a direct consequence of \cite[Proposition 3.17]{Porta_Yu_Derived_non-archimedean_analytic_spaces}, of \cref{cor:cotangent_complex_closed_immersion} and of \cite[7.3.2.18]{Lurie_Higher_algebra}.
	Furthermore, the question is local on $X$, $Y$ and $Y'$.
	We can therefore suppose that $f$ and $g$ factor respectively as
	\[ \begin{tikzcd}
		X \arrow[hook]{r}{i} & Y \times \bD^n_k \arrow{r}{\pi} & Y , & Y' \arrow[hook]{r}{j} & Y \times \bD^n_k \arrow{r}{\pi'} & Y ,
	\end{tikzcd} \]
	where $i$ and $j$ are closed immersions and $\pi$, $\pi'$ are the canonical projections.
	Since we already dealt with the case where both morphisms are closed immersions, we are reduced to prove the result for the following pullback square:
	\[ \begin{tikzcd}
		Y \times \bD^{n+m}_k \arrow{r} \arrow{d} & Y \times \bD^n_k \arrow{d} \\
		Y \times \bD^m_k \arrow{r} & Y .
	\end{tikzcd} \]
	Since the canonical inclusions $\bD^l_k \hookrightarrow \bA^l_k$ are \'etale, \cref{cor:etale_characterised_by_cotangent_complex} implies that we can replace the disks by the analytic affine spaces.
	The result is now a direct consequence of \cref{thm:analytification_cotangent_complex} and \cref{prop:base_change_cotangent_complex_analytic}.
\end{proof}

We conclude this subsection by the proving a finiteness result for the analytic cotangent complex.

\begin{defin}
	Let $X = (\cX, \cO_X)$ be a derived analytic space.
	The stable \infcat $\cO_X\Mod$ is naturally equipped with a $t$-structure (cf.\ \cite[2.1.3]{DAG-VIII}).
	We define the stable $\infty$-category $\Coh(X)$ of \emph{coherent sheaves} on $X$ to be the full subcategory of $\cO_X\Mod$ spanned by $\cF \in \cO_X\Mod$ such that $\pi_i(\cF)$ is a coherent sheaf of $\pi_0(\cO_X\alg)$-modules for every $i$.
	Furthermore, for every $n \in \mathbb Z$, we set
	\begin{gather*}
		\Coh^{\ge n}(X) \coloneqq \Coh(X) \cap \cO_X\Mod^{\ge n}, \quad \Coh^{\le n}(X) \coloneqq \Coh(X) \cap \cO_X\Mod^{\le n} , \\
		\Coh^+(X) \coloneqq \Coh(X) \cap \cO_X\Mod^+, \quad \Coh^-(X) \coloneqq \Coh(X) \cap \cO_X\Mod^- .
	\end{gather*}
\end{defin}

\begin{cor} \label{cor:finiteness_cotangent_complex}
	Let $f \colon X \to Y$ be a morphism of derived analytic spaces.
	Then $\anL_{X / Y}$ belongs to $\Coh^{\ge 0}(X)$.
\end{cor}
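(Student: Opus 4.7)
The corollary requires two things: connectivity of $\anL_{X/Y}$, and coherence of its homotopy sheaves over $\pi_0(\cO_X\alg)$. Both conditions are local on $X$, and both interact well with pullback to geometric points via \cref{prop:pullback_Goodwillie_analytic}, so I would reduce each to a manageable local model.

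For the connectivity statement, I plan a stalkwise argument. At each geometric point $x$ of $\cX$, the stalk $x\inv \anL_{X/Y}$ is identified by \cref{prop:pullback_Goodwillie_analytic}(3) with the analytic cotangent complex of the stalk morphism in $\AnRing_k(\cS)$, which is shown to be connective by the lemma immediately preceding \cref{cor:etale_characterised_by_cotangent_complex}. Since the standard $t$-structure on $\cO_X\Mod$ detects connectivity on stalks, this yields $\anL_{X/Y} \in \cO_X\Mod^{\ge 0}$.

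For coherence, I plan to factor $f$ locally, using the local embedding theorems (\cite[Lemma 6.3]{Porta_Yu_DNAnG_I} in the non-archimedean case and \cite[Proposition 12.13]{DAG-IX} in the complex case), as
\[ X \hookrightarrow Y \times \bD^n_k \xrightarrow{\pi} Y, \]
with the first map a closed immersion $i$ and the second the projection. The transitivity fiber sequence (\cref{prop:transitivity_cotangent_complex}) places $\anL_{X/Y}$ between $i^*\anL_{Y\times\bD^n_k/Y}$ and $\anL_{X/Y\times\bD^n_k}$. Since $\pi_0(\cO_X\alg)$ is a coherent sheaf of rings, coherent modules are stable under kernels, cokernels, and extensions, so $\Coh^{\ge 0}(X)$ is closed under fiber sequences of connective modules via the long exact sequence in homotopy. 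It therefore suffices to establish coherence for the two outer terms. The projection term is the easier one: the embedding $\bD^n_k \hookrightarrow \bA^n_k$ is étale, hence has trivial analytic cotangent complex by \cref{cor:etale_characterised_by_cotangent_complex}, and combined with \cref{cor:cotangent_complex_An} this makes $\anL_{\bD^n_k}$ free of rank $n$; base change (\cref{prop:base_change_cotangent_complex_analytic}) then gives $\anL_{Y\times\bD^n_k/Y}$ locally free of rank $n$, and its pullback to $X$ remains so.

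The closed immersion term is where the real work lies. By \cref{cor:cotangent_complex_closed_immersion}, $\anL_{X/Y\times\bD^n_k} \simeq \mathbb L_{X\alg/(Y\times\bD^n_k)\alg}$, so we are dealing with an algebraic cotangent complex associated to an effective epimorphism of sheaves of simplicial commutative rings. I would apply \cite[7.4.3.1]{Lurie_Higher_algebra} to approximate this cotangent complex by the cofiber of the algebraized map of structure sheaves tensored up to $\cO_X\alg$; the cofiber is $1$-connective with $\pi_1$ equal to the ideal sheaf of the closed immersion (coherent since $\pi_0$ of both structure sheaves is coherent), and its higher homotopy sheaves are coherent by the long exact sequence combined with the coherence of $\pi_j$ of the structure sheaves built into \cref{def:derived_analytic_space}. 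An induction on degree then transfers coherence from this cofiber to all $\pi_j$ of $\mathbb L_{X\alg/(Y\times\bD^n_k)\alg}$. The chief obstacle is precisely this inductive coherence argument, where one must verify that the obstruction terms at each Postnikov stage remain coherent; once that is carried out, assembling the two pieces through the transitivity sequence gives the corollary.
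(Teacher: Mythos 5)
Your decomposition matches the paper's almost exactly: local closed embedding into a polydisk (affine space in the complex case), the transitivity fiber sequence, the \'etaleness of $\mathbf D^n_k \hookrightarrow \mathbf A^n_k$, and the analytification computation of \cref{cor:cotangent_complex_An} for the smooth piece. Two cosmetic differences: the paper first reduces to the absolute case via \cref{prop:transitivity_cotangent_complex} and then embeds $X$ into $\mathbf D^n_k$, whereas you factor $f$ relatively as $X \hookrightarrow Y \times \mathbf D^n_k \to Y$; and you supply a separate stalkwise connectivity argument (via \cref{prop:pullback_Goodwillie_analytic}(3) and the lemma preceding \cref{cor:etale_characterised_by_cotangent_complex}) which is harmless but unnecessary once the $\Coh^{\ge 0}$ claim is established, since connectivity is built into $\Coh^{\ge 0}$.

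The one real divergence is in the closed-immersion term, where you correctly identify that the burden lies. The paper cites \cref{cor:cotangent_complex_closed_immersion} and asserts $\anL_{X/\mathbf D^n_k} \in \Coh^{\ge 0}(X)$ immediately, implicitly leaning on the standard derived-algebraic-geometry fact that the algebraic cotangent complex of a closed immersion of derived ringed topoi with coherent structure sheaves is connective and coherent. You instead sketch a proof of that fact via Lurie's connectivity estimate [7.4.3.1] and a Postnikov-stage induction, but you explicitly flag the inductive coherence step as unfinished --- so your version carries a gap at exactly the step you name the ``chief obstacle''. One small imprecision worth correcting in that sketch: $\pi_1$ of $\mathrm{cofib}(f\alg)$ is not the ideal sheaf but an extension of the ideal sheaf $\ker(\pi_0(\cA) \to \pi_0(\cB))$ by $\mathrm{coker}(\pi_1(\cA) \to \pi_1(\cB))$; since both terms are coherent, the conclusion survives, but the identification as stated is wrong. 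If you want to close the gap, the cleanest route is to pass to stalks, where the source becomes a noetherian local ring and the coherence of the algebraic cotangent complex of a surjection with finitely generated homotopy groups is a known almost-perfectness statement in Lurie's framework.
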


\begin{proof}
	Using \cref{prop:transitivity_cotangent_complex}, we see that it is enough to prove the statement in the absolute case.
	Moreover, notice that the question is local on $X$.
	
	We first deal with the non-archimedean case.
	Since we are working locally on $X$, we can use \cite[Lemma 6.3]{Porta_Yu_Derived_non-archimedean_analytic_spaces} to guarantee the existence of a closed immersion $j \colon X \hookrightarrow \mathbf D^n_k$.
	\cref{cor:cotangent_complex_closed_immersion} guarantees that $\anL_{X / \mathbf D^n_k}$ belongs to $\Coh^{\ge 0}(X)$.
	Using the transitivity fiber sequence
	\[ j^* \anL_{\mathbf D^n_k} \to \anL_X \to \anL_{X / \mathbf D^n_k} , \]
	we are therefore reduced to prove that the same thing holds true for $j^* \anL_{\mathbf D^n_k}$, and hence for $\anL_{\mathbf D^n_k}$.
	For the latter statement, we observe that there is a canonical morphism $\mathbf D^n_k \hookrightarrow \mathbf A^n_k$ which is an affinoid domain and in particular it is \'etale.
	As a consequence, it is enough to prove that $\anL_{\mathbf A^n_k} \in \Coh^{\ge 0}(\mathbf A^n_k)$.
	This is a consequence of \cref{cor:cotangent_complex_An}.
	
	In the complex analytic situation, the same proof works. We simply notice that we can always find, locally on $X$, a closed embedding $X \hookrightarrow \mathbf A^n_{\mathbb C}$ (cf.\ \cite[Lemma 12.13]{DAG-IX}).
\end{proof}

\subsection{Postnikov towers}

An invaluable tool in derived algebraic geometry is the Postnikov tower associated to a derived scheme.
More precisely, the fact that the transition maps in this tower are square-zero extensions allows to translate many problems in derived geometry into deformation theoretic questions.
This technique is extremely useful also in derived analytic geometry and we will use it repeatedly in the rest of this paper.

\begin{defin}
	Let $X \coloneqq (\cX, \cO_X)$ be a $\cTank$-structured topos and let $\cF \in \cO_X\Mod^{\ge 1}$ be an $\cO_X$-module.
	An \emph{analytic square-zero extension of $X$ by $\cF$} is a structured topos $X' \coloneqq (\cX, \cO)$ equipped with a morphism $f \colon X \to X'$ satisfying the following conditions:
	\begin{enumerate}
		\item The underlying geometric morphism of $\infty$-topoi is equivalent to the identity of $\cX$;
		\item There exists an analytic derivation $d \colon \anL_X \to \cF[1]$ such that the square
		\[ \begin{tikzcd}
		\cO \arrow{r} \arrow{d}{f^\sharp} & \cO_X \arrow{d}{\eta_d} \\
		\cO_X \arrow{r}{\eta_0} & \cO_X \oplus \cF[1]
		\end{tikzcd} \] 
		is a pullback square in $\AnRing_k(\cX)$.
	\end{enumerate}
\end{defin}

\begin{notation} \label{notation:analytic_square_zero_derivation}
	Let $X \coloneqq (\cX, \cO_X)$ be a derived analytic space, $\cF \in \Coh^{\ge 1}(X)$ be a coherent sheaf and $d \colon \anL_X \to \cF$ an analytic derivation.
	We denote by $\cO_X \oplus_d \cF$ the pullback
	\[ \begin{tikzcd}
		\cO_X \oplus_d \cF \arrow{r} \arrow{d} & \cO_X \arrow{d}{\eta_d} \\
		\cO_X \arrow{r}{\eta_0} & \cO_X \oplus \cF .
	\end{tikzcd} \]
	We denote by $X_d[\cF]$ the $\cTank$-structured topos $(\cX, \cO_X \oplus_d \cF)$.
	Notice that when $d$ is the zero derivation, $\cO_X \oplus_d \cF$ coincides with the split square-zero extension $\cO_X \oplus \cF[-1]$.
	We denote $X[\cF] \coloneqq X_0[\cF[1]]$, and call it the \emph{split square-zero extension} of $X$ by $\cF$.
\end{notation}

Recall that if $X \coloneqq (\cX, \cO_X)$ is a derived analytic space, then
\[ \tau_{\le n} \cO_X \colon \cTank \to \cX \]
is again a $\cTank$-structure (see \cite[Theorem 3.23]{Porta_Yu_Derived_non-archimedean_analytic_spaces} for the non-archimedean case and \cite[Proposition 11.4]{DAG-IX} for the complex case).
In particular, the $n$-th truncation
\[ \mathrm t_{\le n}(X) \coloneqq (\cX, \tau_{\le n} \cO_X) \]
is again a derived analytic space.
The main goal of this subsection is to prove that the canonical morphisms $\mathrm t_{\le n}(X) \hookrightarrow \mathrm t_{\le n +1}(X)$ are analytic square-zero extensions.
We will deduce it from the following more general result:

\begin{thm} \label{thm:lifting_derivations}
	Let $\cX$ be an $\infty$-topos and let $f \colon \cB \to \cA$ be an effective epimorphism in $\AnRing_k(\cX)$.
	Let $n$ be a non-negative integer and suppose that $f\alg \colon \cB\alg \to \cA\alg$ is an $n$-small extension in the sense of \cite[7.4.1.18]{Lurie_Higher_algebra}.
		Then $f$ is an analytic square-zero extension.
\end{thm}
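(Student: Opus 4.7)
The strategy is to reduce to Lurie's algebraic theorem on $n$-small extensions \cite[7.4.1.26]{Lurie_Higher_algebra} via the algebraization functor $(-)\alg \colon \AnRing_k(\cX) \to \CRing_k(\cX)$, which preserves limits (being right adjoint to $(-)\an$) and is conservative, in combination with \cref{thm:algebraic_vs_analytic_cotangent_complex} identifying analytic and algebraic relative cotangent complexes along effective epimorphisms.

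Let $\cF \coloneqq \fib(f)$ computed in $\DAb$-valued sheaves. Since $f\alg$ is $n$-small, $\pi_0(f)$ is an isomorphism and $\cF$ lies in $\cO_X\Mod^{[1,n]}$; via the equivalence $\cA\Mod = \cA\alg\Mod$ of \cref{thm:equivalence_of_modules}, we regard $\cF$ as an $\cA$-module. Lurie's theorem applied to $f\alg$ produces a canonical identification $\mathbb L_{\cA\alg/\cB\alg} \simeq \cF[1]$ exhibiting $\cB\alg$ as the pullback $\cA\alg \times_{\cA\alg \oplus \cF[1]} \cA\alg$ along $\eta_0$ and $\eta_{d\alg}$, where $d\alg \colon \mathbb L_{\cA\alg} \to \cF[1]$ is the composite of the algebraic transitivity map with this identification.

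Because $f$ is an effective epimorphism, \cref{thm:algebraic_vs_analytic_cotangent_complex} gives $\anL_{\cA/\cB} \simeq \mathbb L_{\cA\alg/\cB\alg}$, and composing with Lurie's identification produces $\anL_{\cA/\cB} \simeq \cF[1]$. Define the analytic derivation $d \colon \anL_\cA \to \cF[1]$ as the analytic transitivity composite $\anL_\cA \to \anL_{\cA/\cB} \simeq \cF[1]$. By the transitivity fiber sequence (\cref{prop:transitivity_cotangent_complex}), $d$ vanishes after restriction along $\anL_\cB \otimes_\cB \cA \to \anL_\cA$, so the two composites $\eta_d \circ f$ and $\eta_0 \circ f$ from $\cB$ to $\cA \oplus \cF[1]$ agree in $\AnRing_k(\cX)$; the universal property of the pullback yields a canonical comparison map
\[
g \colon \cB \longrightarrow \cA \oplus_d \cF[1] \coloneqq \cA \times_{\cA \oplus \cF[1],\, \eta_d,\, \eta_0} \cA
\]
in $\AnRing_k(\cX)$.

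To conclude, apply $(-)\alg$ to $g$. By \cref{cor:split_square-zero_extension} and the limit-preservation of $(-)\alg$, one has $(\cA \oplus_d \cF[1])\alg \simeq \cA\alg \times_{\cA\alg \oplus \cF[1],\, (\eta_d)\alg,\, \eta_0} \cA\alg$. The main obstacle is showing that $(\eta_d)\alg$ coincides with $\eta_{d\alg}$, equivalently that the algebraic derivation underlying $d$ equals Lurie's $d\alg$. This reduces to commutativity of the naturality square
\[
\begin{tikzcd}
\mathbb L_{\cA\alg} \arrow{r} \arrow{d} & \mathbb L_{\cA\alg/\cB\alg} \arrow{d}{\sim} \\
\anL_\cA \arrow{r} & \anL_{\cA/\cB},
\end{tikzcd}
\]
whose horizontal arrows are the transitivity maps and whose vertical arrows arise from the natural comparison from algebraic to analytic cotangent complexes (Yoneda-dual to restriction of analytic derivations to algebraic ones), the right vertical being an equivalence by \cref{thm:algebraic_vs_analytic_cotangent_complex}. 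This is a functoriality statement for the cotangent complex formalism of \cref{subsec:cotangent_formalism} under the transformation of pregeometries $\cTdisck \to \cTank$. Granting it, $g\alg$ is identified with Lurie's comparison map, which is an equivalence by $n$-smallness of $f\alg$; conservativity of $(-)\alg$ then promotes this to an equivalence in $\AnRing_k(\cX)$, exhibiting $f$ as an analytic square-zero extension classified by $d$.
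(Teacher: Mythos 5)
Your proposal follows the paper's strategy step for step: define an analytic derivation from the transitivity map, form the associated analytic square-zero extension, produce the comparison map $g$ via transitivity, algebraize, invoke HA 7.4.1.26, and conclude by conservativity of $(-)\alg$. The concern you raise about identifying $(\eta_d)\alg$ with the algebraic transitivity derivation is a genuine point; the paper dispenses with it using \cref{cor:split_square-zero_extension} together with the (implicit) functoriality of the cotangent-complex construction under the transformation of pregeometries $\cTdisck \to \cTank$, which is exactly the naturality square you draw.

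There is, however, one real error. You claim that Lurie's theorem gives a canonical identification $\mathbb L_{\cA\alg/\cB\alg} \simeq \cF[1]$ where $\cF = \fib(f)$. This is false: $\cF$ is bounded (that is the content of $n$-smallness), but the relative cotangent complex $\mathbb L_{\cA\alg/\cB\alg}$ is not bounded in general. For instance, for the square-zero extension $\F_p[x]/(x^2) \to \F_p$, the fiber is discrete but $\mathbb L_{\F_p/\F_p[x]/(x^2)}$ has nonvanishing homotopy in infinitely many degrees. What \cite[7.4.3.1]{Lurie_Higher_algebra} gives is a highly connective map $\cF[1] \otimes_{\cB\alg} \cA\alg \to \mathbb L_{\cA\alg/\cB\alg}$, which induces an equivalence $\cF[1] \simeq \tau_{\le 2n}\mathbb L_{\cA\alg/\cB\alg}$ only after truncation. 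Consequently the derivation must land in the \emph{truncation} $\tau_{\le 2n}\anL_{\cA/\cB}$, not in $\anL_{\cA/\cB}$ itself identified with $\cF[1]$. This is exactly why the paper defines $d$ as the composite $\anL_\cA \to \anL_{\cA/\cB} \to \tau_{\le 2n}\anL_{\cA/\cB}$ rather than anything involving $\cF[1]$ directly. Once you insert this truncation and re-route the argument through $\tau_{\le 2n}\anL_{\cA/\cB}$, the rest of your proof goes through and coincides with the paper's.
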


\begin{proof}
	Consider the analytic derivation
	\[ d \colon \anL_\cA \to \anL_{\cA / \cB} \to \tau_{\le 2n} \anL_{\cA / \cB} \]
	and introduce the associated analytic square-zero extension
	\[ \begin{tikzcd}
	\cB' \arrow{r} \arrow{d} & \cA \arrow{d}{\eta_d} \\
	\cA \arrow{r}{\eta_0} & \cA \oplus \tau_{\le 2n} \anL_{\cA / \cB} .
	\end{tikzcd} \]
	We claim that the diagram
	\[ \begin{tikzcd}
	\cB \arrow{r}{f} \arrow{d}{f} & \cA \arrow{d}{\eta_d} \\
	\cA \arrow{r}{\eta_0} & \cA \oplus \tau_{\le 2n} \anL_{\cA / \cB}
	\end{tikzcd} \]
	is commutative.
	Indeed, the space of morphisms in $\AnRing_k(\cX)_{/\cA}$ from $\cB$ to $\cA \oplus \tau_{\le 2n} \anL_{\cA / \cB}$ is equivalent to the space
	\[ \Map_{\cB \Mod}(\anL_\cB, \tau_{\le 2n} \anL_{\cA / \cB}) . \]
	The composition $\eta_d \circ f$ corresponds to the composition
	\[ \anL_\cB \to \anL_\cB \otimes_{\cB\alg} \cA\alg \to \anL_\cA \xrightarrow{d} \tau_{\le 2n} \anL_{\cA / \cB} , \]
	and it is therefore homotopic to zero.
	This produces a canonical map
	\[ g \colon \cB \to \cB' . \]
	We claim that $g$ is an equivalence.
	
	Recall that the functor $(-)\alg$ is conservative (see \cite[Lemma 3.13]{Porta_Yu_Derived_non-archimedean_analytic_spaces} for the non-archimedean case and \cite[Proposition 11.9]{DAG-IX} for the complex case).
	In particular, it is enough to check that $g\alg$ is an equivalence.
	Using \cref{cor:split_square-zero_extension}, we can identify $(\cA \oplus \tau_{\le 2n} \anL_{\cB / \cA})\alg$ with the split square-zero extension
	\[ \cA\alg \oplus \tau_{\le 2n} \anL_{\cB / \cA} . \]
	As a consequence, $\eta_d\alg$ corresponds to the algebraic derivation
	\[ \mathbb L_\cA \to \anL_\cA \to \tau_{\le 2n} \anL_{\cA / \cB} . \]
	Since $f$ is an effective epimorphism, we can apply \cref{thm:algebraic_vs_analytic_cotangent_complex} to deduce that
	\[ \anL_{\cA / \cB} \simeq \mathbb L_{\cA\alg / \cB\alg} . \]
	Using \cite[7.4.1.26]{Lurie_Higher_algebra}, we conclude that the canonical morphism
	\[ g\alg \colon \cB\alg \to (\cB')\alg \]
	is an equivalence.
	This completes the proof.
\end{proof}

\begin{cor} \label{cor:Postnikov_tower_analytic_square_zero}
	For any derived analytic space $X$, every $n \ge 0$, the canonical map $\mathrm t_{\le n}(X) \hookrightarrow \mathrm t_{\le n+1}(X)$ is an analytic square-zero extension.
\end{cor}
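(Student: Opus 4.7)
The plan is to deduce the statement directly from \cref{thm:lifting_derivations} applied to the truncation morphism
\[ f \colon \tau_{\le n+1} \cO_X \to \tau_{\le n} \cO_X \]
in $\AnRing_k(\cX)$. Since truncation of analytic rings makes sense by \cite[Theorem 3.23]{Porta_Yu_DNAnG_I} and \cite[Proposition 11.4]{DAG-IX}, the morphism $f$ is well defined in $\AnRing_k(\cX)$, and by the compatibility of $(-)\alg$ with truncations, $f\alg$ is the analogous truncation morphism of sheaves of simplicial commutative rings. To apply \cref{thm:lifting_derivations}, two hypotheses must be verified: that $f$ is an effective epimorphism, and that $f\alg$ is an $n$-small extension in the sense of \cite[7.4.1.18]{Lurie_Higher_algebra}.

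First I would verify that $f$ is an effective epimorphism. By \cite[Lemma 3.13]{Porta_Yu_DNAnG_I} (resp.\ \cite[Proposition 11.9]{DAG-IX}), the algebraization functor $(-)\alg$ is conservative and preserves limits and sifted colimits, so it detects effective epimorphisms. Hence it suffices to check that $f\alg$ is an effective epimorphism of sheaves of simplicial commutative rings, and this follows from the fact that $f\alg$ induces an isomorphism on $\pi_0$.

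Next I would verify the $n$-smallness of $f\alg$. The fiber of $f\alg$, computed in the underlying $\infty$-category of $\cD(\Ab)$-valued sheaves, is canonically equivalent to $\pi_{n+1}(\cO_X\alg)[n+1]$. This sheaf is concentrated in a single degree, so it is $(n+1)$-connective and $(2n+2)$-truncated, and the multiplication map on it factors through the analogue in the discrete ring $\pi_0(\cO_X\alg)$-modules, so the square of the augmentation ideal vanishes. Together with the surjectivity on $\pi_0$, these are precisely the conditions describing an $n$-small extension in \cite[7.4.1.18]{Lurie_Higher_algebra}, as used in the Postnikov analysis of simplicial commutative rings in \cite[\S 7.4]{Lurie_Higher_algebra}.

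Having checked both hypotheses, \cref{thm:lifting_derivations} produces an analytic derivation $d$ such that $f$ fits into the requisite pullback square; unwinding the construction in the proof of \cref{thm:lifting_derivations}, the derivation in question can be taken to be
\[ d \colon \anL_{\tau_{\le n} \cO_X} \to \tau_{\le 2n}\, \anL_{\tau_{\le n} \cO_X / \tau_{\le n+1} \cO_X} \simeq \pi_{n+1}(\cO_X)[n+2], \]
where the last equivalence uses \cref{cor:cotangent_complex_closed_immersion} together with the standard computation of the relative cotangent complex of a square-zero extension of simplicial commutative rings via its fiber. No step is a serious obstacle beyond carefully matching conventions between the two definitions of $n$-smallness; the main point is really just the applicability of \cref{thm:lifting_derivations}.
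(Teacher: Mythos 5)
Your proposal is the paper's approach: note that $(-)\alg$ commutes with truncation (citing \cite[Theorem 3.23]{Porta_Yu_DNAnG_I}, resp.\ \cite[Proposition 11.4]{DAG-IX}) and then apply \cref{thm:lifting_derivations}. The paper's proof is one sentence; you simply spell out the verification of the hypotheses, which the paper leaves implicit. One correction in the $n$-smallness check: the fiber of $\tau_{\le n+1}\cO_X\alg \to \tau_{\le n}\cO_X\alg$ is $\pi_{n+1}(\cO_X\alg)[n+1]$, concentrated in degree $n+1$, hence $(n+1)$-connective and $(n+1)$-truncated. An $m$-small extension in the sense of \cite[7.4.1.18]{Lurie_Higher_algebra} requires the fiber to be $m$-connective and $(2m)$-truncated, so any $m$ with $(n+1)/2 \le m \le n+1$ works; your choice $m=n$ fails for $n=0$ (a fiber concentrated in degree $1$ is not $0$-truncated). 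The safe choice is $m=n+1$, valid for all $n \ge 0$; since \cref{thm:lifting_derivations} is stated for an arbitrary non-negative integer, the conclusion is unaffected. Also, the nullhomotopy of the multiplication $I \otimes I \to I$ is automatic from connectivity alone ($I \otimes I$ is $(2n+2)$-connective while $I$ is $(n+1)$-truncated), which is a cleaner justification than your remark about the square of the augmentation ideal factoring through $\pi_0$-modules, though the conclusion is the same.
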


\begin{proof}
	Using \cite[Theorem 3.23]{Porta_Yu_Derived_non-archimedean_analytic_spaces} in the non-archimedean case and \cite[Proposition 11.4]{DAG-IX} in the complex case, we deduce that there are natural equivalences
	\[ (\tau_{\le n} \cO_X)\alg \simeq \tau_{\le n}(\cO_X\alg) . \]
	The result is then a direct consequence of \cref{thm:lifting_derivations}.
\end{proof}

\subsection{The cotangent complex of a smooth morphism} \label{subsec:cotangent_complex_smooth}

As an application of the results we have obtained so far, we prove in this subsection that the cotangent complex of a smooth morphism of derived analytic spaces is perfect and in tor-amplitude 0.

\begin{defin}
	Let $\cX$ be an \inftopos and let $f \colon \cA \to \cB$ be a morphism in $\AnRing_k(\cX)$.
	We say that $f$ is \emph{strong} if the morphism $f\alg \colon \cA\alg \to \cB\alg$ is strong, i.e.\ if for every $i \ge 0$, it induces an equivalence
	\[ \pi_i(\cA\alg) \otimes_{\pi_0(\cB\alg)} \pi_0(\cA\alg) \xrightarrow{\sim} \pi_i(\cB\alg) . \]
\end{defin}

\begin{defin}
	Let $f \colon X \to Y$ be a morphism of derived analytic spaces.
	We say that $f$ is \emph{smooth} if it satisfies the following two conditions:
	\begin{enumerate}
		\item Locally on both $X$ and $Y$, $\trunc(f)$ is a smooth morphism of ordinary analytic spaces;
		\item The morphism $f\inv \cO_Y \to \cO_X$ is strong.
	\end{enumerate}
\end{defin}

\begin{lem} \label{lem:extension_strong_morphisms}
	Let $f \colon X \to Y$ and $g \colon Y \to Z$ be morphisms of derived analytic spaces.
	If $g$ and $g \circ f$ are strong, then the same goes for $f$.
\end{lem}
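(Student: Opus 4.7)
The plan is to reduce to a purely algebraic statement on the underlying simplicial commutative rings and then argue by a direct chain of isomorphisms. By definition, strongness of a morphism of derived analytic rings is a property of its algebraization, so I am reduced to showing: given morphisms $\alpha \colon f^{-1}g^{-1}\cO_Z\alg \to f^{-1}\cO_Y\alg$ and $\beta \colon f^{-1}\cO_Y\alg \to \cO_X\alg$ in $\CRing_k(\cX)$ such that $\beta \circ \alpha$ and $g^\sharp\alg \colon g^{-1}\cO_Z\alg \to \cO_Y\alg$ are strong, the morphism $\beta$ is strong. Here I use that $(g \circ f)^\sharp = \beta \circ \alpha$ at the level of algebras, and that $\alpha = f^{-1}(g^\sharp\alg)$.

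The key inputs are that $f^{-1}$ is exact (so it commutes with $\pi_i$) and symmetric monoidal (so it commutes with tensor products). Applying $f^{-1}$ to the strongness of $g^\sharp\alg$ yields, for each $i \ge 0$, an equivalence
\[ \pi_i(f^{-1}g^{-1}\cO_Z\alg) \otimes_{\pi_0(f^{-1}g^{-1}\cO_Z\alg)} \pi_0(f^{-1}\cO_Y\alg) \xrightarrow{\sim} \pi_i(f^{-1}\cO_Y\alg). \]
Then, tensoring this equivalence on the right with $\pi_0(\cO_X\alg)$ over $\pi_0(f^{-1}\cO_Y\alg)$ and simplifying the iterated tensor product on the source, I obtain
\[ \pi_i(f^{-1}g^{-1}\cO_Z\alg) \otimes_{\pi_0(f^{-1}g^{-1}\cO_Z\alg)} \pi_0(\cO_X\alg) \xrightarrow{\sim} \pi_i(f^{-1}\cO_Y\alg) \otimes_{\pi_0(f^{-1}\cO_Y\alg)} \pi_0(\cO_X\alg). \]
The left-hand side is identified with $\pi_i(\cO_X\alg)$ by the strongness of $(g \circ f)^\sharp\alg = \beta \circ \alpha$, which yields precisely the strongness condition for $\beta$.

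I do not expect a serious obstacle here: the entire argument is formal once one has the exactness and monoidality of $f^{-1}$ together with the two strongness hypotheses. The only point requiring a little care is to verify that the algebraization of $(g\circ f)^\sharp$ factors as $\beta \circ \alpha$ with $\alpha = f^{-1}(g^\sharp\alg)$, but this is immediate from the definition of composition in $\RTop(\cTank)$ and the fact that $(-)\alg$ commutes with inverse image functors.
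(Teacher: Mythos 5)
Your proof is correct and takes essentially the same approach as the paper: both reduce to the algebraizations, both use that $f^{-1}$ is exact and symmetric monoidal to transport the strongness of $g^\sharp\alg$ to $\alpha = f^{-1}(g^\sharp\alg)$, and both conclude by the same chain of base-change identifications of homotopy sheaves. The only cosmetic difference is that the paper further reduces to stalks (using that $\cX$ has enough points) before writing the chain of isomorphisms, whereas you carry it out directly at the level of sheaves, which is equally valid.
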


\begin{proof}
	Since $f\inv$ commutes with homotopy groups and it is monoidal, we see that it preserves strong morphisms.
	Therefore we are reduced to prove the following statement: if $A, B, C$ are sheaves of connective $\mathbb E_\infty$-rings on $\cX$ and $\alpha \colon A \to B$ and $\beta \colon B \to C$ are such that $\alpha$ and $\beta \circ \alpha$ are strong, then the same goes for $\beta$.
	Since $\cX$ has enough points, we are immediately reduced to the analogous statement for connective $\mathbb E_\infty$-rings.
	In this case, we only need to remark that:
	\[ \pi_i(B) \otimes_{\pi_0(B)} \pi_0(C) \simeq \pi_i(A) \otimes_{\pi_0(A)} \pi_0(B) \otimes_{\pi_0(B)} \pi_0(C) \simeq \pi_i(B) , \]
	so that the statement follows.
\end{proof}

The following lemma is a generalization of \cite[Lemma 6.3]{Porta_Yu_Derived_non-archimedean_analytic_spaces} and of \cite[Lemma 12.13]{DAG-IX}:

\begin{lem} \label{lem:vanishing_obstruction_theory}
	Let $X = (\cX, \cO_X)$ be a derived affinoid (resp.\ Stein) space.
	Suppose that $Y = (\cY, \cO_Y)$ is discrete and that $\anL_Y$ is perfect and in tor-amplitude $0$.
	Then any map $f \colon \trunc(X) \to Y$ admits an extension $\widetilde{f} \colon X \to Y$.
\end{lem}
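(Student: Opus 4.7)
The plan is to extend $f$ step by step along the analytic Postnikov tower $\cO_X \simeq \lim_n \tau_{\le n}\cO_X$ and then pass to the limit. I will inductively construct, for every $n \ge 0$, a morphism $f_n \colon \mathrm t_{\le n}(X) \to Y$ with $f_0 = f$ and with each $f_{n+1}$ restricting to $f_n$ along the canonical inclusion $\mathrm t_{\le n}(X) \hookrightarrow \mathrm t_{\le n+1}(X)$. Since all truncations of $X$ share the underlying $\infty$-topos $\cX$, the geometric morphism is rigidly fixed by that of $f$, and only the morphism of $\cTank$-structures $\varphi_n \coloneqq f_n^\sharp \colon f\inv \cO_Y \to \tau_{\le n}\cO_X$ needs to be produced. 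Note that $Y$ being discrete means $f\inv \cO_Y$ is $0$-truncated, which makes the compatibility with the admissibility conditions of a local morphism easy to track across $n$.

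For the inductive step, \cref{cor:Postnikov_tower_analytic_square_zero} presents $\mathrm t_{\le n}(X) \hookrightarrow \mathrm t_{\le n+1}(X)$ as an analytic square-zero extension via a pullback square whose bottom right corner is $\tau_{\le n}\cO_X \oplus \pi_{n+1}(\cO_X)[n+2]$ and which involves the canonical derivation $d \colon \anL_{\tau_{\le n}\cO_X} \to \pi_{n+1}(\cO_X)[n+2]$. Unwinding this pullback, a lift of $\varphi_n$ to $\varphi_{n+1}$ is the same datum as a nullhomotopy between $\eta_d \circ \varphi_n$ and $\eta_0 \circ \varphi_n$. Using the universal property of the analytic cotangent complex (\cref{prop:UMP_analytic_cotangent_complex}), the transitivity sequence (\cref{prop:transitivity_cotangent_complex}), and the identification $f\inv \anL_Y \simeq \anL_{f\inv \cO_Y}$ of \cref{lem:transitivity_cotangent_complex}, the class controlling this difference is the composite
\[ f_n^{*}\anL_Y \longrightarrow \anL_{\tau_{\le n}\cO_X} \xrightarrow{\ d\ } \pi_{n+1}(\cO_X)[n+2] , \]
so the obstruction to the existence of $f_{n+1}$ is its image in $\pi_0 \Map_{\tau_{\le n}\cO_X\Mod}(f_n^{*}\anL_Y,\ \pi_{n+1}(\cO_X)[n+2])$.

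The key point will be the vanishing of this obstruction. The hypothesis that $\anL_Y$ is perfect and in tor-amplitude $0$ makes $\anL_Y$ locally free of finite rank on $Y$, and both properties are preserved by pullback along $f_n$, so $f_n^{*}\anL_Y$ is locally free of finite rank on the underlying ordinary analytic space $\trunc(X)$. Dualising yields
\[ \Map_{\tau_{\le n}\cO_X\Mod}\bigl(f_n^{*}\anL_Y,\ \pi_{n+1}(\cO_X)[n+2]\bigr) \simeq \rR\Gamma\bigl(\trunc(X),\ \pi_{n+1}(\cO_X) \otimes (f_n^{*}\anL_Y)^{\vee}\bigr)[n+2] . \]
The sheaf $\pi_{n+1}(\cO_X) \otimes (f_n^{*}\anL_Y)^{\vee}$ is coherent on the affinoid (resp.\ Stein) space $\trunc(X)$, so Tate acyclicity (resp.\ Cartan's Theorem B) forces its higher cohomology to vanish, and $\pi_0$ of the shifted mapping space above is therefore $0$ for every $n \ge -1$. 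This provides the compatible family $(f_n)_{n \ge 0}$, which assembles into $\widetilde f \colon X \to Y$ via $\cO_X \simeq \lim_n \tau_{\le n}\cO_X$. The main technical obstacle I anticipate is rigorously justifying the dualisation and the coherent cohomology vanishing inside the analytic module categories used here; once that is in place, the argument is a direct application of the analytic deformation-theoretic tools established earlier in the section.
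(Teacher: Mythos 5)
Your proposal follows the paper's proof essentially step for step: induction up the analytic Postnikov tower using \cref{cor:Postnikov_tower_analytic_square_zero}, identifying the obstruction to lifting $f_n$ in $\pi_0 \Map(f_n^*\anL_Y,\ \pi_{n+1}(\cO_X)[n+2])$, and killing it by the perfect/tor-amplitude-$0$ hypothesis on $\anL_Y$ together with coherent acyclicity on the affinoid (resp.\ Stein) base. The only cosmetic difference is that you phrase the final vanishing via dualization and Tate acyclicity / Cartan's Theorem B, whereas the paper observes directly that $\cHom(f_n^*\anL_Y,\cF) \in \Coh^{\ge 1}(X)$ after reducing locally to a retract of a free module; these are the same acyclicity input.
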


\begin{proof}
	We proceed by induction on the Postnikov tower of $X$.
	In other words, we will construct a sequence of maps making the following diagram commutative:
	\[ \begin{tikzcd}
		\trunc(X) \arrow[hook]{r}{j_0} \arrow[bend right = 5]{drrrrr}[swap, near start]{f_0} & \mathrm t_{\le 1}(X) \arrow[hook]{r}{j_1} \arrow[bend right = 2]{drrrr}{f_1} & \cdots \arrow[hook]{r} & \mathrm t_{\le n}(X) \arrow[bend right = 1]{drr}{f_n} \arrow[hook]{r}{j_n} & \cdots \\
		& & & & & Y .
	\end{tikzcd} \]
	Note that the morphisms $j_n$ induce the identity on the underlying $\infty$-topos.
	In particular, all the maps $f_n$ are forced to have the same underlying geometric morphism of $\infty$-topoi, which we simply denote by
	\[ f\inv \colon \cY \leftrightarrows \cX \colon f_* . \]
	
	For the base step, we simply set $f_0 = f$.
	Suppose now that $f_n$ has been constructed.
	Recall that there is a fiber sequence
	\[ \Map_{\AnRing_k(\cX)}(f\inv \cO_Y, \tau_{\le n+1} \cO_X) \to \Map_{\dAnk}(\mathrm t_{\le n+1}(X), Y) \to \Map_{\RTop}(\cX, \cY) , \]
	the fiber being taken at the geometric morphism $(f\inv, f_*)$.
	Denote by $\varphi_n \colon f\inv \cO_Y \to \tau_{\le n} \cO_X$ the morphism induced by $f_n$.
	We are therefore reduced to solve the following lifting problem:
	\begin{equation} \label{eq:vanishing_obstruction_theory}
		\begin{tikzcd}
			{} & \tau_{\le n+1} \cO_X \arrow{d} \\
			f\inv \cO_Y \arrow{r}{\varphi_n} \arrow[dashed]{ur}{\varphi_{n+1}} & \tau_{\le n}(\cO_X) .
		\end{tikzcd}
	\end{equation}

	Set $\cF \coloneqq \pi_{n+1}(\cO_X) [n+2]$.
	Using \cref{cor:Postnikov_tower_analytic_square_zero}, we see that there exists an analytic derivation $d \colon \anL_{\mathrm t_{\le n} X} \to \cF$ such that the square
	\[ \begin{tikzcd}
		\tau_{\le n + 1} \cO_X \arrow{r} \arrow{d} & \tau_{\le n} \cO_X \arrow{d}{\eta_d} \\
		\tau_{\le n} \cO_X \arrow{r}{\eta_0} & \tau_{\le n} \cO_X \oplus \cF
	\end{tikzcd} \]
	is a pullback square in $\AnRing_k(\cX)$, where $\eta_0$ and $\eta_d$ correspond to the zero derivation and to $d$, respectively.
	This shows that the obstruction to solve the problem \eqref{eq:vanishing_obstruction_theory} lives in
	\[ \pi_0 \Map_{f\inv \cO_Y \Mod}( f\inv \anL_Y, \cF ) \simeq \pi_0 \Map_{\Coh^+(\mathrm t_{\le n}X)}( f_n^* \anL_Y, \cF ) . \]
		It is then enough to prove that the above mapping space vanishes.
	Since $X$ is a derived affinoid (resp.\ Stein), it is enough to check that
	\[ \cHom_{\Coh^+(X)}(f_n^* \anL_Y, \cF) \in \Coh^{\ge 1}(X) . \]
	We can therefore reason locally on $\mathrm t_{\le n}(X)$.
	As a consequence, we can assume $f_n^* \anL_Y$ to be retract of a free sheaf of $\cO_X$-modules.
	In this case, the statement follows because $\cF \in \Coh^{\ge 1}(X)$.
	Therefore, the obstruction to the lifting vanishes and we obtain the map $f_{n+1} \colon \mathrm t_{\le n+1}(X) \to Y$ we were looking for.
\end{proof}

\begin{prop} \label{prop:characterization_smooth_morphisms}
	Let $f \colon X \to Y$ be a morphism of derived analytic spaces.
	The following conditions are equivalent:
	\begin{enumerate}
		\item \label{item:strong_smooth} $f$ is smooth;
		\item \label{item:differential_smooth} $\trunc(f)$ is smooth and $\anL_{X / Y}$ is perfect and in tor-amplitude $0$;
		\item \label{item:factorization_smooth} Locally on both $X$ and $Y$, $f$ can be factored as
		\[ \begin{tikzcd}
		X \arrow{r}{g} & Y \times \mathbf A^n_k \arrow{r}{p} & Y ,
		\end{tikzcd} \]
		where $g$ is \'etale and $p$ is the canonical projection.
	\end{enumerate}
\end{prop}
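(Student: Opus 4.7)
The plan is to prove (\ref{item:factorization_smooth}) $\Rightarrow$ (\ref{item:strong_smooth}) and (\ref{item:factorization_smooth}) $\Rightarrow$ (\ref{item:differential_smooth}) directly, and then to deduce both (\ref{item:strong_smooth}) $\Rightarrow$ (\ref{item:factorization_smooth}) and (\ref{item:differential_smooth}) $\Rightarrow$ (\ref{item:factorization_smooth}) via a common local construction. For the easy directions, given a factorization $X \xrightarrow{g} Y \times \mathbf A^n_k \xrightarrow{p} Y$ with $g$ étale, the transitivity fiber sequence (\cref{prop:transitivity_cotangent_complex}) together with \cref{cor:cotangent_complex_etale}, base change (\cref{prop:base_change_cotangent_complex_analytic}) and the computation of $\anL_{\mathbf A^n_k}$ (\cref{cor:cotangent_complex_An}) yield $\anL_{X/Y} \simeq g^* p^* \anL_{\mathbf A^n_k}$, which is locally free of rank $n$, proving (\ref{item:differential_smooth}). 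For (\ref{item:strong_smooth}), the truncation $\trunc(f)$ is smooth as the composition of a smooth and an étale classical morphism, étale morphisms are strong, the projection $p$ is strong (stalks of $\cO_{Y \times \mathbf A^n_k}$ are flat extensions of those of $\cO_Y$, as made explicit by \cref{prop:geometric_interpretation_relative_analytification} and \cref{cor:flatness_smooth_algebras}), and strongness is preserved under composition.

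For the hard directions, fix a point $x \in X$ with image $y \in Y$ and work locally, so we may assume that $X$ is a derived affinoid (resp.\ Stein) space. Smoothness of $\trunc(f)$ at $\bar x$ supplies, after shrinking, an étale morphism $\trunc(X) \to \trunc(Y) \times \mathbf A^n_k$ over $\trunc(Y)$, where $n$ is the relative dimension of $\trunc(f)$ at $\bar x$. Projecting to the second factor yields a map $a \colon \trunc(X) \to \mathbf A^n_k$. Since $\mathbf A^n_k$ is discrete with $\anL_{\mathbf A^n_k}$ free of rank $n$ by \cref{cor:cotangent_complex_An}, \cref{lem:vanishing_obstruction_theory} lifts $a$ to $\tilde a \colon X \to \mathbf A^n_k$. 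Setting $g \coloneqq (f, \tilde a) \colon X \to Y \times \mathbf A^n_k$, we have $p \circ g = f$ and $\trunc(g)$ is étale by construction. By \cref{cor:etale_characterised_by_cotangent_complex} the remaining task is to check that $\anL_{X / Y \times \mathbf A^n_k} \simeq 0$, so that $g$ is étale.

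Under hypothesis (\ref{item:differential_smooth}), we invoke the transitivity fiber sequence
\[ g^* \anL_{Y \times \mathbf A^n_k / Y} \to \anL_{X/Y} \to \anL_{X / Y \times \mathbf A^n_k} . \]
The first term is free of rank $n$, while the middle term is locally free of rank $n$ (perfect plus tor-amplitude $0$ implies locally free, and the rank is pinned down by $\pi_0(\anL_{X/Y}) \simeq \Omega^1_{\trunc(f)}$, which is locally free of rank equal to the relative dimension of $\trunc(f)$ by classical smoothness). The induced map is surjective on $\pi_0$ because $\trunc(g)$ is étale, and Nakayama then upgrades it to an equivalence, making the cofiber vanish. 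Under hypothesis (\ref{item:strong_smooth}), \cref{lem:extension_strong_morphisms} applied to the factorization $f = p \circ g$ with $f$ and $p$ strong forces $g$ to be strong; combining strongness with the $\pi_0$-equivalence coming from $\trunc(g)$ étale and the fact that $g\inv$ commutes with homotopy sheaves, we see that $g^\sharp$ is an equivalence on every $\pi_i$, hence an equivalence of hypercomplete sheaves, and once again $\anL_{X / Y \times \mathbf A^n_k} \simeq 0$.

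The main obstacle is the obstruction-theoretic lifting $a \mapsto \tilde a$, which we reduce to \cref{lem:vanishing_obstruction_theory}. The inductive obstructions it handles live in $\pi_0 \Map_{\Coh^+(\mathrm t_{\le m} X)}(\tilde a^* \anL_{\mathbf A^n_k}, \pi_{m+1}(\cO_X)[m+2])$, and vanish precisely because $\anL_{\mathbf A^n_k}$ is free of rank $n$, reducing the question to the standard coherent cohomology vanishing on a derived affinoid (resp.\ Stein) space.
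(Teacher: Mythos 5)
Your proof is correct, but it takes a genuinely different route from the paper in the hard direction. The paper proves $(1)\Leftrightarrow(3)$ first (the implication $(1)\Rightarrow(3)$ via the lifting lemma \cref{lem:vanishing_obstruction_theory} followed by \cref{lem:extension_strong_morphisms} to recognize $g$ as étale), then proves $(1)\Rightarrow(2)$ using the factorization, and finally handles $(2)\Rightarrow(1)$ by an entirely separate stalk-level argument: it localizes to a morphism $\varphi\colon A\to B$ in $\AnRing_k$, forms the pushout $C$ of $B$ along $A\to\pi_0(A)$, and derives a contradiction from the connectivity estimates of \cref{cor:connectivity_estimates} applied to $\anL_{C_i/\pi_0(A)}$ if $C\not\simeq\pi_0(B)$, concluding strongness via \cite[7.2.2.13]{Lurie_Higher_algebra}. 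You instead run the same geometric lifting mechanism for \emph{both} $(1)\Rightarrow(3)$ and $(2)\Rightarrow(3)$: construct $g\coloneqq(f,\widetilde a)$ with $\trunc(g)$ étale, then in case $(1)$ use \cref{lem:extension_strong_morphisms} (as the paper does) and in case $(2)$ show $\anL_{X/Y\times\bA^n_k}\simeq 0$ by comparing the free rank-$n$ module $g^*\anL_{Y\times\bA^n_k/Y}$ with the perfect tor-amplitude-$0$ module $\anL_{X/Y}$. This is a more symmetric and more geometric design that avoids the paper's pushout-contradiction computation.

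Two small things are worth tightening. First, the phrase "the induced map is surjective on $\pi_0$ because $\trunc(g)$ is étale, and Nakayama then upgrades it to an equivalence" elides a step: surjectivity between rank-$n$ projective $\pi_0(\cO_X)$-modules plus Nakayama gives an isomorphism on $\pi_0$, but you then need to use that both sides are flat (tor-amplitude $0$) to promote the $\pi_0$-isomorphism to an equivalence of $\cO_X$-modules; only then does the cofiber $\anL_{X/Y\times\bA^n_k}$ vanish. (One could instead observe directly via \cref{cor:connectivity_estimates} that $\anL_{X/Y\times\bA^n_k}$ is $1$-connective and feed that into the fiber sequence.) Second, the identification $\pi_0(\anL_{X/Y})\simeq\Omega^1_{\trunc(f)}$, which you use to pin down the rank, is true but not stated in the paper; it follows from the $2$-connectivity of $\anL_{\trunc(X)/X}$ (cf.\ the proof of \cref{lem:cotangent_complex_truncation}) and of $\anL_{\trunc(Y)/Y}$, so it would be worth a line of justification. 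The paper's route sidesteps both of these points by not needing to know the rank.
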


\begin{proof}
	Let us start by proving the equivalence of (\ref{item:strong_smooth}) and (\ref{item:factorization_smooth}).
	The projection $p \colon Y \times \mathbf A^n_k \to Y$ is a smooth morphism, and every étale morphism is smooth.
	Therefore, if locally on $X$ and $Y$ we can exhibit such a factorization, we can deduce that $f$ is smooth.	
	Let us prove the converse.
	By definition of smooth morphism and up to localizing on $X$ and $Y$, we can suppose that we are already given a factorization of $\trunc(f)$ as
	\[ \begin{tikzcd}
	\trunc(X) \arrow{r}{g_0} & Y \times \mathbf A^n_k \arrow{r}{p} & Y .
	\end{tikzcd} \]
	Let $q \colon Y \times \mathbf A^n_k \to \mathbf A^n_k$ be the second projection.
	It follows from \cref{cor:cotangent_complex_An} and \cref{lem:vanishing_obstruction_theory} that we can extend $q \circ g_0$ to a morphism $h \colon X \to \mathbf A^n_k$.
	This determines a map $g \coloneqq f \times h \colon X \to Y \times \mathbf A^n_k$, which clearly extends $g_0$.
	By construction, $p \circ g \simeq f$. In particular, \cref{lem:extension_strong_morphisms} implies that $g$ is strong.
	This means that the canonical morphism
	\[ g^\sharp \colon g\inv \cO_{Y \times \mathbf A^n_k} \to \cO_X \]
	is strong. It is moreover an equivalence on $\pi_0$. It follows that $g^\sharp$ is an equivalence. In particular, $g$ is an \'etale morphism.
	
	We now prove the equivalence of (\ref{item:strong_smooth}) and (\ref{item:differential_smooth}).
	Assume first that (\ref{item:strong_smooth}) holds.
	Then $\trunc(f)$ is smooth, and thus all we have to prove is that $\anL_{X / Y}$ is perfect and in tor-amplitude $0$.
	This statement is local on both $X$ and $Y$.
	We can therefore use (\ref{item:factorization_smooth}) to factor $f$ as $p \circ g$, where $g \colon X \to Y \times \mathbf A^n_k$ is \'etale and $p \colon Y \times \mathbf A^n_k \to Y$ is the canonical projection.
	It follows from \cref{cor:etale_characterised_by_cotangent_complex} that $\anL_{X  / Y \times \mathbf A^n_k}$ vanishes.
	In particular, $\anL_{X / Y} \simeq f^* \anL_{Y \times \mathbf A^n_k / Y}$. Since $f$ is flat, it is therefore sufficient to prove the same statement for $p$.
	Applying \cref{prop:base_change_cotangent_complex_analytic} to the pullback square
	\[ \begin{tikzcd}
		Y \times \mathbf A^n_k \arrow{r}{p} \arrow{d}{q} & Y \arrow{d} \\
		\mathbf A^n_k \arrow{r} & \Sp(k) ,
	\end{tikzcd} \]
	we get a canonical equivalence
	\[ \anL_{Y \times \mathbf A^n_k / Y} \simeq q^* \anL_{\mathbf A^n_k} . \]
	The statement is therefore a consequence of \cref{cor:cotangent_complex_An}.
	
	Let us now assume that $\trunc(f)$ is smooth and that $\anL_{X / Y}$ is perfect and in tor-amplitude $0$.
	We prove that $f$ is strong.
	The question is local on both $X$ and $Y$, and therefore we can localize at a point in $X$, thus reducing to the analogous statement in $\AnRing_k \coloneqq \AnRing_k(\cS)$.
	In other words, we are given a morphism $\varphi \colon A \to B$ in $\AnRing_k$ whose analytic cotangent complex is perfect in tor-amplitute $0$, and we want to prove that $\varphi$ is strong.
	Form the pushout
	\[ \begin{tikzcd}
	A \arrow{r} \arrow{d} & B \arrow{d} \\
	\pi_0(A) \arrow{r} & C .
	\end{tikzcd} \]
	Observe that since $A \to \pi_0(A)$ is an effective epimorphism, $C\alg \simeq B\alg \otimes_{A\alg} \pi_0(A\alg)$.
	We have a canonical map $C \to \pi_0(B)$, and we claim that this is an equivalence.
	Suppose by contradiction that it is not.
	Let $i > 0$ be the smallest integer such that $\pi_i(C) \ne 0$.
	Let $C_i \coloneqq \tau_{\le i}(C)$.
	We have a fiber sequence
	\[ \anL_{C_i / \pi_0(A)} \otimes_{C_i} \pi_0(C) \to \anL_{\pi_0(C) / \pi_0(A)} \to \anL_{C_i / \pi_0(C)} . \]
	Since $\pi_0(C) \simeq \pi_0(B)$ and since by hypothesis $\trunc(f)$ is smooth, we conclude that $\anL_{\pi_0(C) / \pi_0(A)}$ is perfect and concentrated in degree $0$.
	In particular, we obtain a canonical identification
	\[ \pi_i( \anL_{C_i / \pi_0(A)} ) \simeq \pi_{i+1}( \anL_{C_i / \pi_0(C)} ) .  \]
	Note that \cref{cor:cotangent_complex_closed_immersion} and \cite[2.2.2.8]{HAG-II} imply together that
	\[ \pi_{i+1}( \anL_{C_i / \pi_0(C)}) \simeq \pi_i(C) . \]
	Using the connectivity estimates for the analytic cotangent complex provided by \cref{cor:connectivity_estimates}, we deduce that
	\[ \pi_i( \anL_{C / \pi_0(A)} \otimes_C \pi_0(C) ) \simeq \pi_i(\anL_{C_i / \pi_0(A)} \otimes_{C_i} \pi_0(C)) \simeq \pi_{i+1}( \anL_{C_i / \pi_0(C)}) \simeq \pi_i(C) \ne 0. \]
	On the other side, $\anL_{C / \pi_0(A)} \simeq \anL_{B / A} \otimes_B C$.
	In particular, it is perfect and in tor-amplitude $0$.
	Therefore, the same goes for $\anL_{C / \pi_0(A)} \otimes_C \pi_0(C)$.
	This is a contradiction, and so $C \simeq \pi_0(C)$.
	Since $\pi_0(A) \to \pi_0(B)$ is a flat map of ordinary rings, we can now apply \cite[7.2.2.13]{Lurie_Higher_algebra} to conclude that $\varphi \colon A \to B$ is strong.
	The proof is therefore complete.
\end{proof}

We conclude the subsection with the following useful lemma.

\begin{lem} \label{lem:cotangent_complex_underived_closed_immersion}
	Let $X$ and $Y$ be underived analytic spaces, and assume that $Y$ is smooth.
	Let $f \colon X \to Y$ be a closed immersion.
	Let $\cJ$ be the ideal sheaf on $Y$ defining $X$.
	Then $\tau_{\le 1}\mathbb L_X\an$ is non-canonically quasi-isomorphic to the complex
	\[ \cdots \to 0 \to \cJ / \cJ^2 \xrightarrow{\delta} f^* \Omega_Y\an \to 0 \to \cdots , \]
	where the map $\delta$ is induced by
	\[ \cJ \to \cO_Y \xrightarrow{d} \Omega_Y\an . \]
\end{lem}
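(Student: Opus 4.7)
The plan is to deduce the claim from the transitivity fiber sequence of \cref{prop:transitivity_cotangent_complex}, using \cref{cor:cotangent_complex_closed_immersion} to handle the relative part. Applying that proposition to $f \colon X \to Y$ gives the fiber sequence $f^* \anL_Y \to \anL_X \to \anL_{X/Y}$ in $\cO_X \Mod$. Since $Y$ is smooth, \cref{prop:characterization_smooth_morphisms} together with the identification $\pi_0(\anL_Y) \simeq \Omega_Y\an$ (coming from the universal property in \cref{prop:UMP_analytic_cotangent_complex} applied to discrete modules) yields $\anL_Y \simeq \Omega_Y\an$ concentrated in degree $0$; in particular $f^*\anL_Y \simeq f^*\Omega_Y\an$. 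On the relative side, \cref{cor:cotangent_complex_closed_immersion} identifies $\anL_{X/Y}$ with $\mathbb L_{X\alg/Y\alg}$. Working locally where $f\alg$ corresponds to a surjection of discrete rings $R \to R/I$, the standard Andr\'e--Quillen computation (cf.\ \cite[2.2.2.8]{HAG-II}) gives $\pi_0 \mathbb L_{(R/I)/R} = 0$ and $\pi_1 \mathbb L_{(R/I)/R} \simeq I/I^2$, so globally $\tau_{\le 1}\anL_{X/Y} \simeq (\cJ/\cJ^2)[1]$.

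Next I would apply $\tau_{\le 1}$ to the transitivity fiber sequence. Being a right adjoint, $\tau_{\le 1}$ preserves fiber sequences, producing
\[ f^*\Omega_Y\an \longrightarrow \tau_{\le 1}\anL_X \longrightarrow (\cJ/\cJ^2)[1] . \]
Such a fiber sequence in the stable $\infty$-category $\cO_X\Mod$, with outer terms in degrees $0$ and $1$ respectively, is classified by its connecting map $\delta \colon \cJ/\cJ^2 \to f^*\Omega_Y\an$ (obtained by rotating the sequence and desuspending once), and a standard argument in the stable setting identifies $\tau_{\le 1}\anL_X$ with the two-term complex $[\cJ/\cJ^2 \xrightarrow{\delta} f^*\Omega_Y\an]$. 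The quasi-isomorphism is non-canonical because representing an object of the derived category by a specific chain complex involves a choice of resolution.

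The remaining and most delicate step is to verify that $\delta$ agrees with the map induced by the composition $\cJ \hookrightarrow \cO_Y \xrightarrow{d} \Omega_Y\an$. By the naturality of the comparison in \cref{cor:cotangent_complex_closed_immersion} and the commutation of the algebraization functor with truncations, $\delta$ is identified with the connecting map on $\pi_1$ of the algebraic transitivity sequence $f^*\mathbb L_{Y\alg} \to \mathbb L_{X\alg} \to \mathbb L_{X\alg/Y\alg}$ for $f\alg \colon X\alg \to Y\alg$. That algebraic connecting map is the classical conormal one: it sends $\bar a \in I/I^2$ to the class of $da$ in $\Omega^1_R \otimes_R R/I$. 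Transporting this description back through the equivalence of \cref{cor:cotangent_complex_closed_immersion} gives precisely the map $\delta$ described in the statement, completing the proof.
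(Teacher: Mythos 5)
Your overall strategy is the same as the paper's: apply the transitivity fiber sequence $f^*\anL_Y \to \anL_X \to \anL_{X/Y}$, identify the two ends using smoothness of $Y$ and \cref{cor:cotangent_complex_closed_immersion}, truncate, and identify the connecting map. However, two of your justifications do not hold up. The claim ``being a right adjoint, $\tau_{\le 1}$ preserves fiber sequences'' is wrong on both counts: $\tau_{\le 1}$ is the \emph{left} adjoint of the inclusion $\cC_{\le 1} \hookrightarrow \cC$, and in any case applying a truncation to a fiber sequence in a stable $\infty$-category does not in general return a fiber sequence (for discrete $A$, applying $\tau_{\le 0}$ to the cofiber sequence $A \to 0 \to A[1]$ gives $A \to 0 \to 0$). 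The truncated sequence \emph{is} a fiber sequence here, but only because of the specific connectivity $f^*\anL_Y \in \cC^\heartsuit$, $\anL_X \in \cC_{\ge 0}$, $\anL_{X/Y} \in \cC_{\ge 1}$; you need a dedicated argument for this, which the paper supplies as a general observation at the start of its proof. In that argument, the composite $\pi_1(P) \xrightarrow{\delta} M \to N$ is shown to be nullhomotopic and a chosen nullhomotopy $\alpha$ yields a map $\mathrm{cofib}(\delta) \to N$ that is an isomorphism on $\pi_0$ and $\pi_1$ by the five lemma; this also locates the source of the non-canonicity (the choice of $\alpha$), which your ``choice of resolution'' remark does not.

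The second gap is in the last step: you say that $\delta$ ``is identified with the connecting map on $\pi_1$ of the algebraic transitivity sequence $f^*\mathbb L_{Y\alg} \to \mathbb L_{X\alg} \to \mathbb L_{X\alg/Y\alg}$,'' but those two connecting maps have different targets — $\delta$ lands in $f^*\Omega_Y\an$ while $\delta\alg$ lands in $f^*\Omega_Y$ — and \cref{cor:cotangent_complex_closed_immersion} only compares the \emph{relative} cotangent complexes of the closed immersion, not $\anL_Y$ with $\mathbb L_{Y\alg}$ (which are genuinely different). The correct argument, as the paper does, is to set up a commutative square with horizontals $\delta\alg \colon \mathbb L_{X/Y} \to f^*\Omega_Y$ and $\delta \colon \anL_{X/Y} \to f^*\Omega_Y\an$, with the left vertical an equivalence by \cref{thm:algebraic_vs_analytic_cotangent_complex} and the right vertical the canonical map $f^*\Omega_Y \to f^*\Omega_Y\an$, then identify $\delta\alg$ with the classical conormal map and use the fact that $d\alg$ composed with $\Omega_Y \to \Omega_Y\an$ equals the analytic differential $d$.
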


\begin{proof}
	We start with some general considerations.
	Let $\cC$ be a stable $\infty$-category equipped with a left complete $t$-structure $(\cC_{\ge 0}, \cC_{\le 0})$.
	Let
	\[ M \to N \to P \]
	be a fiber sequence.
	Assume that $M \in \cC^\heartsuit$ and $N \in \cC_{\ge 0}$ and $P \in \cC_{\ge 1}$.
	Let
	\[ \delta \colon \pi_1(P) \to \pi_0(M) \]
	be the natural map.
	Write $P_1 \coloneqq \pi_1(P)$ (seen as an object in $\cC^\heartsuit$).
	As $M \in \cC^\heartsuit$, we have a canonical equivalence $M \simeq \pi_0(M)$.
	We can therefore review $\delta$ as a map $\delta \colon P_1 \to M$.
	Observe that the composition
	\[ P_1 \to M \to N \]
	induces the zero map on homotopy groups.
	Since the $t$-structure is complete, we deduce that the above composition is nullhomotopic.
	For any nullhomotopy $\alpha$, we thus obtain a canonical map
	\[ g_\alpha \colon \mathrm{cofib}( P_1 \xrightarrow{\delta} M ) \longrightarrow N . \]
	Write $Q \coloneqq \mathrm{cofib}( P_1 \xrightarrow{\delta} M)$.
	The five-lemma implies that $g_\alpha$ induces an isomorphism on $\pi_1$ and on $\pi_0$.
	We therefore obtain an equivalence (depending on $\alpha$)
	\[ h_\alpha \colon Q \simeq \tau_{\le 1} N . \]
	
	Let us apply this reasoning with $\cC = \Coh^+(X)$ and to the fiber sequence
	\[ f^* \anL_Y \to \anL_X \to \anL_{X / Y} . \]
	Notice that $f^* \anL_Y \in \Cohh(X)$ because $X$ is underived and $Y$ is smooth.
	On the other hand, since $f\inv \cO_Y \to \cO_X$ is surjective, \cref{cor:connectivity_estimates} implies that $\anL_{X / Y} \in \Coh^{\ge 1}(X)$.
	We therefore obtain a (non-canonical) quasi-isomorphism
	\[ \tau_{\le 1} \anL_X \simeq \mathrm{cofib}( \pi_1( \anL_{X / Y} ) \xrightarrow{\delta} j^* \Omega\an_Y ) . \]
	To complete the proof, we observe that there is a commutative square
	\[ \begin{tikzcd}
		\mathbb L_{X /Y} \arrow{r}{\delta\alg} \arrow{d} & j^* \Omega_Y \arrow{d} \\
		\anL_{X/Y} \arrow{r}{\delta} & j^* \Omega\an_Y .
	\end{tikzcd} \]
	Since $f\inv \cO_Y \to \cO_X$ is surjective, \cref{thm:algebraic_vs_analytic_cotangent_complex} implies that the left vertical map is an equivalence.
	Furthermore, the morphism $\delta\alg$ is obtained via the transitivity sequence for algebraic cotangent complexes for the morphism of locally ringed topoi
	\[ (\cX, \cO_X\alg) \longrightarrow (\cY, \cO_Y\alg) . \]
	We can therefore canonically identify $\delta\alg$ with the inclusion of the conormal sheaf of $f\inv \cO_Y\alg \to \cO_X\alg$ into $j^* \Omega_Y$.
	Recall now that the conormal sheaf is canonically identified with $\cJ / \cJ^2$ and the map to $j^* \Omega_Y$ is the one induced by
	\[ \cJ \to \cO_Y \xrightarrow{d\alg} \Omega_Y . \]
	Recall also that the diagram
	\[ \begin{tikzcd}
		\cO_Y \arrow{r}{d\alg} \arrow{dr} & \Omega_Y \arrow{d} \\
		{} & \Omega_Y\an
	\end{tikzcd} \]
	commutes.
	Thus $\delta$ coincides with the map induced by
	\[ \cJ \to \cO_Y \xrightarrow{d} \Omega\an_Y . \]
	This completes the proof.
\end{proof}

\section{Gluing along closed immersions}\label{sec:gluing}

In this section we prove that the $\infty$-category $\dAnk$ of derived analytic spaces is closed under pushout along closed immersions.
Using the Postnikov tower machinery provided by \cref{cor:Postnikov_tower_analytic_square_zero}, we can decompose the problem into two smaller tasks.
First, we need to know that the category of underived analytic spaces $\Ank$ is closed under pushout along closed immersions;
Second, we need to know that any analytic square-zero extension of a derived analytic space is again a derived analytic space.
This second problem is also a good testing ground for our notion of analytic derivation, hence our construction of the analytic cotangent complex.
The reason is that the square-zero extension of a derived analytic space by an arbitrary algebraic derivation is in general no longer a derived analytic space.

\begin{prop} \label{prop:analytic_square_zero_extensions_are_analytic_spaces}
	Let $X \coloneqq (\cX, \cO_X)$ be an underived analytic space.
	Let $\cF \in \Coh^\heartsuit(X)$ and let $X' \coloneqq (\cX, \cO')$ be an analytic square-zero extension of $X$ by $\cF$.
	Then $X'$ is an underived analytic space.
\end{prop}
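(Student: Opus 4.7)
The plan is to verify the two defining conditions of an underived analytic space (\cref{def:derived_analytic_space}) for $X' = (\cX, \cO')$: that $\cO'$ is $0$-truncated, and that $(\cX, \cO'\alg)$ is locally equivalent to the étale ringed topos of an ordinary analytic space with coherent structure sheaf. The key observation is that $\cO'\alg$ should coincide with the classical algebraic square-zero extension of $\cO_X\alg$ by $\cF$, which in analytic geometry is known to produce an ordinary analytic space; the analyticity of the derivation $d$ then matches the $\cTank$-structure on $\cO'$ with the expected one.

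First I would apply the algebraization functor $(-)\alg$, which preserves limits and is conservative, to the defining pullback square for $\cO'$. Combined with \cref{cor:split_square-zero_extension}, which identifies $(\cO_X \oplus \cF[1])\alg$ with the algebraic split square-zero extension $\cO_X\alg \oplus \cF[1]$, this exhibits $\cO'\alg$ as the classical algebraic square-zero extension of $\cO_X\alg$ by $\cF$, fitting in a short exact sequence
\[ 0 \to \cF \to \cO'\alg \to \cO_X\alg \to 0 \]
of sheaves of abelian groups on $\cX$. In particular $\cO'\alg$ is $0$-truncated, and hence $\cO'$ is $0$-truncated by conservativity of $(-)\alg$, while $\cO'\alg$ is coherent as a sheaf of rings because it is an extension of the coherent sheaf of rings $\cO_X\alg$ by the coherent $\cO_X\alg$-module $\cF$. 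This verifies \cref{def:derived_analytic_space}(ii).

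For the local presentation required by \cref{def:derived_analytic_space}(i), I would work on an open $U \subset \cX$ over which $(\cX_{/U}, \cO_X\alg|_U)$ is equivalent to the étale ringed topos of an analytic space $X_0$ admitting a closed immersion $X_0 \hookrightarrow V$ into a smooth analytic space $V$ — a polydisk $\mathbf D^n_k$ via \cite[Lemma 6.3]{Porta_Yu_DNAnG_I} in the non-archimedean case, or an open in $\mathbf A^n_{\mathbb C}$ via \cite[Lemma 12.13]{DAG-IX} in the complex case — and over which $\cF$ admits finitely many generators $\bar s_1, \ldots, \bar s_r$. Viewing the $\bar s_\ell$ as elements of $\cO'\alg$ via $\cF \hookrightarrow \cO'\alg$ (satisfying $\bar s_i \bar s_j = 0$ because $\cF$ is a square-zero ideal), I would construct a closed immersion of $\cTank$-structured topoi $X'|_U \hookrightarrow V \times \mathbf A^r_k$ whose underlying geometric morphism is the composition $X_0 \hookrightarrow V \hookrightarrow V \times \{0\}$, and whose map on structure sheaves sends a local analytic function $f(z, w)$ to its first-order Taylor expansion $f(z, 0) + \sum_\ell (\partial_{w_\ell} f)(z, 0)\,\bar s_\ell$, interpreted in $\cO'\alg$ via the inclusion $\cO_{X_0} \hookrightarrow \cO'\alg$. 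This map is surjective on stalks (its image contains $\cO_{X_0}$ and the generators $\bar s_\ell$ of $\cF$) with coherent kernel, so $X'|_U$ is identified with a closed analytic subspace of a smooth analytic space.

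The main obstacle is the justification that the Taylor-expansion formula above defines a morphism of $\cTank$-structures, rather than merely a morphism of underlying sheaves of rings. This is precisely where the hypothesis that $d$ is an \emph{analytic} derivation enters: the universal property of the analytic split square-zero extension (\cref{def:analytic_split_square_zero_extension}, \cref{prop:UMP_analytic_cotangent_complex}), combined with the equivalence of modules (\cref{thm:equivalence_of_modules}), forces analytic functions to act on $\cO'$ via their first-order Taylor expansion, which is exactly the structure encoded by the constructed map. Once this compatibility is established, the coherent-ideal description identifies $X'|_U$ with an ordinary closed analytic subspace of a smooth analytic space, completing the verification of both conditions of \cref{def:derived_analytic_space}.
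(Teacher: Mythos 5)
Your reduction to showing that $\cO'$ is $0$-truncated and that $\cO'\alg$ is an algebraic square-zero extension is correct, and the plan of embedding $X'$ locally into a smooth space $V \times \mathbf{A}^r_k$ is a genuinely different route from the one the paper takes. However, as written there is a real gap. Your Taylor-expansion formula
\[
f(z,w)\ \longmapsto\ f(z,0)+\sum_\ell (\partial_{w_\ell}f)(z,0)\,\bar s_\ell
\]
is "interpreted in $\cO'\alg$ via the inclusion $\cO_{X_0}\hookrightarrow\cO'\alg$." No such inclusion exists: the surjection $\cO'\to\cO_X$ has kernel $\cF$, and a ring-theoretic section $\cO_X\to\cO'$ would exactly mean that the extension is split, i.e.\ that the derivation $d$ is (up to equivalence) zero. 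Since $X'$ is a square-zero extension twisted by a possibly nontrivial $d$, the formula as stated is not defined. What you can do is choose a local lift $\sigma\colon\cO_V\to\cO'$ of the surjection $\cO_V\to\cO_{X_0}$ in the category of $\cTank$-structures; this exists locally because $\anL_V$ is free (smoothness of $V$) and because $X_0$ is affinoid or Stein, so the obstruction, living in $H^1$ of a coherent sheaf, vanishes. Replacing your "inclusion" by such a $\sigma$ one can then extend to a map $\cO_{V\times\mathbf A^r_k}\to\cO'$ by sending $w_\ell\mapsto\bar s_\ell$; the fact that this is a $\cTank$-structure map follows from the identification of $\Map_{\RTop(\cTank)}(X', V\times\mathbf A^r_k)$ with $\cO'(V\times\mathbf A^r_k)\simeq\cO'(V)\times(\cO'\alg)^r$, not from the universal property of the \emph{split} square-zero extension, which you invoke but which does not apply to the non-split $\cO'$.

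By contrast, the paper avoids the need for any local section entirely: it embeds $X\hookrightarrow Y$ with $Y$ smooth, lets $Z\subset Y$ be the subspace cut out by $\cJ^2$, and realizes $\cO'$ as the \emph{quotient} $\cO_{Z[\cF]}/\cI$ of the structure sheaf of the genuinely split extension $Z[\cF]$ by the ideal $\cI$ generated by the image of $\cJ/\cJ^2\xrightarrow{(\gamma,\alpha)}\cO_Z\oplus\cF$, where $\alpha\colon\cJ/\cJ^2\to\cF$ represents the class of $d$ via the explicit two-term presentation of $\tau_{\le 1}\anL_X$. This buys a cleaner argument (one is quotienting a known analytic space by a coherent ideal, so there is nothing to lift and no analyticity to re-verify) at the cost of needing \cref{lem:cotangent_complex_underived_closed_immersion}. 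Your embedding approach, once the missing lift is supplied and the analyticity argument is phrased correctly, would also work but is more delicate.
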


\begin{proof}
	By definition, there exists an analytic derivation $\anL_X \to \cF[1]$ such that
	\[ \begin{tikzcd}
		\cO' \arrow{r} \arrow{d} & \cO_X \arrow{d}{\eta_d} \\
		\cO_X \arrow{r}{\eta_0} & \cO_X \oplus \cF[1]
	\end{tikzcd} \]
	is a pullback square in $\AnRing_k(\cX)$.
	Here $\eta_0$ corresponds to the zero derivation and $\eta_d$ corresponds to $d$.
	
	It follows that there is a fiber sequence
	\[ \cF \to \cO' \to \cO_X . \]
	Since both $\cO_X$ and $\cF$ are discrete, we conclude that the same goes for $\cO'$.
	We are thus left to check that $X'$ is an analytic space.
	This question is local on $X$ and we can therefore suppose that it is an affinoid (resp.\ Stein) space and admits a closed embedding $j \colon X \hookrightarrow Y$, where $Y$ is either $\mathbf D^n_k$ or $\mathbf A^n_{\mathbb C}$.
	
	Let $\cJ$ denote the sheaf of ideals defining $X$ as a closed subspace of $Y$.
	It follows from \cref{lem:cotangent_complex_underived_closed_immersion} that $\mathbb L_X\an$ satisfies the relation
	\[ \tau_{\le 1} \mathbb L_X\an \simeq ( \cdots \to 0 \to \cJ / \cJ^2 \to j^* \Omega_Y\an \to 0 \to \cdots ) .\]
	In particular, we can describe $\mathrm{Ext}^1_{\cO_X}(\mathbb L_X\an, \cF)$ as the cokernel of the map
	\[ \Hom_{\cO_X}(j^* \Omega_Y\an, \cF) \to \Hom_{\cO_X}(\cJ / \cJ^2, \cF) . \]
	Fix $\alpha \colon \cJ / \cJ^2 \to \cF$.
	We can describe the associated extension as follows.
	Let $Z$ denote the closed analytic subspace of $Y$ defined by the sheaf of ideals $\cJ^2$.
	Then we can see $\cF$ as a coherent sheaf on $Z$ and we introduce the split square-zero extension $Z[\cF]$.
	Let $\gamma \colon \cJ / \cJ^2 \to \cO_Y / \cJ^2 \simeq \cO_Z$ be the natural map and consider the morphism of $\cO_{Z[\cF]}$-modules $\beta \colon \cJ / \cJ^2 \to \cO_Z \oplus \cF$ defined by $x \mapsto (\gamma(x), \alpha(x))$.
	The image of $\beta$ is an ideal $\mathcal I$, and we have $\cO' = \cO_{Z[\cF]} / \mathcal I$.
	Since $Z[\cF]$ was a analytic space, the same goes for $X'$.
\end{proof}

\begin{prop} \label{prop:pushout_closed_immersions_underived}
	Let $i \colon X \to X'$ and $j \colon X \to Y$ be two closed immersions of underived analytic spaces.
	Then the pushout
	\[ \begin{tikzcd}
		X \arrow[hook]{r}{i} \arrow[hook]{d}{j} & X' \arrow{d} \\
		Y \arrow{r} & Y'
	\end{tikzcd} \]
	exists in $\Ank$.
	Furthermore, the forgetful functor $\Ank \to \RTop$ preserves this pushout.
	\end{prop}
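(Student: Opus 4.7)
The plan is to reduce to the local affinoid (resp.\ Stein) case and realize the pushout as the analytic spectrum of a fiber product algebra.

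First, I would reduce to the situation where $X' = \Sp(A')$, $Y = \Sp(B)$, and $X = \Sp(A)$ are affinoid (resp.\ Stein), with $i, j$ corresponding to surjections $\alpha \colon A' \twoheadrightarrow A$ and $\beta \colon B \twoheadrightarrow A$. This reduction is legitimate because closed immersions are local on the target: an affinoid (resp.\ Stein) open cover of $X'$ and a compatible one of $Y$ restrict to a common cover of $X$, and away from $X$ the pushout is just the disjoint union of $X' \setminus i(X)$ and $Y \setminus j(X)$, so it suffices to build the pushout locally near the closed subspace $X$.

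In the non-archimedean case, the surjectivity of $\alpha$ makes $A$ a finite $A'$-module (it is cyclic, generated by $1$), so \cref{lem:fiber_product_affinoid} guarantees that $B' \coloneqq A' \times_A B$ is a $k$-affinoid algebra; I would then set $Y' \coloneqq \Sp(B')$, with the projections $B' \to A'$ and $B' \to B$ inducing the maps $X' \to Y'$ and $Y \to Y'$. The universal property in $\Ank$ then follows formally: any cocone $X' \to Z$, $Y \to Z$ in $\Ank$ with common restriction from $X$ corresponds on each affinoid piece of $Z$ to a pair of maps $A' \to \cO(Z)$, $B \to \cO(Z)$ with equal composition to $A$, which factors uniquely through $B' = A' \times_A B$ by its ring-theoretic universal property. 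In the complex case, I would run the same argument with Stein algebras, invoking the analog of \cref{lem:fiber_product_affinoid} for Stein algebras to ensure that the fiber product again defines a Stein space.

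For the global statement, the construction commutes with admissible affinoid (resp.\ Stein) localization on $X'$ and $Y$, so the local pushouts glue to a global analytic space $Y'$. For preservation by $\Ank \to \RTop$, the pushout of closed immersions in $\RTop$ is given by the topological pushout of the underlying topoi equipped with the fiber product structure sheaf $\tilde{i}_* \cO_{X'} \times_{\tilde{k}_* \cO_X} \tilde{j}_* \cO_Y$; this description agrees with the one coming from $B' = A' \times_A B$ on affinoid (resp.\ Stein) charts, so the two pushouts coincide.

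The main obstacle is the complex-analytic analog of \cref{lem:fiber_product_affinoid}: the Artin--Tate argument transfers essentially verbatim for $k$-affinoid algebras, but the Stein case requires a genuinely analytic argument to produce a Stein model for $B'$, using coherence and Cartan's Theorem B to handle the finiteness and fiber product in the category of Stein algebras. A secondary technical point is verifying that the local constructions glue coherently along overlaps where the same closed immersion is seen through different affinoid charts; this reduces to the compatibility of fiber products of rings with localization, which is automatic given the surjectivity of $\alpha$ and $\beta$.
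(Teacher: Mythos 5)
Your non-archimedean local construction coincides with the paper's: both set $B' = A'\times_A B$, observe that surjectivity of $\alpha$ makes $A$ a finite (indeed cyclic) $A'$-module so \cref{lem:fiber_product_affinoid} applies, and take $Y' = \Sp(B')$. The points where your argument differs — and where it has real gaps — are the complex case and the globalization.

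For the complex case, you correctly identify that you would need a Stein-algebra analogue of \cref{lem:fiber_product_affinoid}, and you flag this as an obstacle rather than filling it. The paper does not attempt such an analogue at all: it disposes of the complex case in one line by citing Cartan's theorem on quotients (Cartan, Quotients of complex analytic spaces, Théorème~3), which directly gives the existence of pushouts of Stein spaces along closed immersions together with the identification of the structure sheaf. So the obstruction you name is genuine for the route you chose, and without supplying either that Stein-fiber-product lemma (via Cartan~B as you suggest, which is nontrivial) or the citation, the complex half of the proposition remains unproved.

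For the globalization and the $\RTop$ preservation, your strategy — build local pushouts on affinoid charts and then glue, verifying the universal property by mapping to affinoid pieces of an arbitrary target — leaves several checks implicit (compatibility of local pushouts on overlaps, cocycle conditions, and the fact that testing the universal property on affinoid covers suffices). The paper avoids all of this by a structurally different reduction: first invoke \cite[Theorem~5.1]{DAG-IX} to obtain the pushout in $\RTop(\cTank)$, so that a candidate $Y'$ already exists with the correct universal property in the larger category; then the only remaining task is to verify, locally, that this $Y'$ is an analytic space, which is exactly where the affinoid-algebra computation enters. This makes the universal property and the gluing automatic, and it also handles the second assertion of the proposition — preservation under $\Ank\to\RTop$ — by showing (following \cite[Corollary~6.5]{DAG-IX}) that the underlying diagram of \'etale $\infty$-topoi is already a pushout in $\RTop$, rather than by comparing two different explicit formulas for the structure sheaf as you propose. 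I would encourage you to restructure along these lines; as written, the "universal property follows formally" step and the gluing step need more than is given.
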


\begin{proof}
	In the complex case, this follows from \cite[Théorème 3]{Cartan_Quotients_1960} and \cite[Proposition 6.4]{DAG-IX}.
	Let us now prove the non-archimedean case.
	By \cite[Theorem 3.4.1]{Temkin_Non-archimedean_pinchings}, the pushout $Y'$ exists in $\Ank$.
	On the other hand, \cite[Theorem 5.1]{DAG-IX} guarantees the existence of the pushout in $\RTop(\cTank)$, which we denote by $Y''$.
	The universal property of the pushout provides a canonical map $f \colon Y'' \to Y'$, making the following diagram commutative:
	\[ \begin{tikzcd}
		X \arrow{r}{i} \arrow{d}{j} & X' \arrow{d}{p} \arrow[bend left = 25pt]{ddr}{p'} \\
		Y \arrow{r}{q} \arrow[bend right = 25pt]{drr}{q'} & Y'' \arrow{dr}{f} \\
		& & Y'
	\end{tikzcd} \]
	We claim that $f$ is an equivalence in $\RTop(\cTank)$.
	This question is local on $Y'$, and hence also local on $X'$, $Y$ and $Y''$.
	Therefore, by \cite[Lemma 3.3.1]{Temkin_Non-archimedean_pinchings}, we can assume that
	\[ X = \Sp(A) , \quad X' = \Sp(A') , \quad Y = \Sp(B) , \quad Y'=\Sp(B')\]
	with
	\begin{equation} \label{eq:Artin-Tate}
		B' = A' \times_A B,
	\end{equation}
	for $k$-affinoid algebras $A$, $A'$, $B$ and $B'$.
	Let $\cX_A$ be the \'etale $\infty$-topos of $A$, and define similarly $\cX_{A'}$, $\cX_B$ and $\cX_{B'}$.
	Running the same proof of \cite[Corollary 6.5]{DAG-IX} (but using \cite[Proposition 3.5]{Porta_Yu_Derived_non-archimedean_analytic_spaces} instead of \cite[1.2.7]{DAG-VIII}), we deduce that
	\[ \begin{tikzcd}
		\cX_A \arrow{r}{i_*} \arrow{d}{j_*} & \cX_{A'} \arrow{d} \\
		\cX_B \arrow{r} & \cX_{B'}
	\end{tikzcd} \]
	is a pushout diagram in $\RTop$.
	In other words, if we denote by $\cX_{Y'}$ the underlying $\infty$-topos of $Y'$, the geometric morphism
	\[ f\inv \colon \cX_{B'} \leftrightarrows \cX_{Y'} \colon f_\ast \]
	is an equivalence.
	We are now left to verify that the canonical map
	\[ f\inv( \cO_{Y''} ) \longrightarrow \cO_{Y'} \]
	is an equivalence.
	However, \eqref{eq:Artin-Tate} shows that
	\[ f\inv( \cO_{Y'} ) = f\inv p'_*(\cO_{X'}) \times_{ f\inv p'_* i_*(\cO_X) } f\inv q'_*(\cO_Y) . \]
	On the other hand, the explicit construction of $Y''$ given in \cite[Theorem 5.1]{DAG-IX} shows that
	\[ \cO_{Y''} = p_*(\cO_{X'}) \times_{p_* i_*(\cO_X)} q_*(\cO_Y) . \]
	Since $p'_* \simeq f_* \circ p_*$ and $f_*$ is an equivalence, we deduce that $f\inv \circ p'_* \simeq p_*$, and similarly
	\[ f\inv \circ p'_* \circ i_* \simeq p_* \circ i_* \qquad \textrm{and} \qquad f\inv \circ q'_* \simeq q_* . \]
	This completes the proof.
	\end{proof}

We are now ready for the main theorem of this section:

\begin{thm} \label{thm:pushout_closed_immersions}
	Let
	\[ \begin{tikzcd}
		X \arrow{r}{i} \arrow{d}{j} & X' \arrow{d}{p} \\
		Y \arrow{r}{q} & Y'
	\end{tikzcd} \]
	be a pushout square in $\RTop(\cTank)$.
	Suppose that $i$ and $j$ are closed immersions and $X, X', Y$ are derived analytic spaces.
	Then $Y'$ is also a derived analytic space.
\end{thm}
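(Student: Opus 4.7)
The plan is to argue by Postnikov tower induction. Write $Y'_n \coloneqq \mathrm{t}_{\le n}(Y')$, and similarly $X_n, X'_n, Y_n$. Since the truncation $\mathrm{t}_{\le n}\colon \RTop(\cTank) \to \RTop^{\le n}(\cTank)$ is a left adjoint (\cref{lem:analytification_formal_properties}(\ref{item:truncation_functors})), it preserves the pushout, yielding $Y'_n \simeq X'_n \sqcup_{X_n} Y_n$ with the induced maps at each level still closed immersions. In particular, all the $Y'_n$ share the same underlying $\infty$-topos $\cY'$. The base case $n = 0$ is then exactly \cref{prop:pushout_closed_immersions_underived}: $Y'_0$ is an underived analytic space, and hence $\cY' = \cY'_0$ is hypercomplete.

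For the inductive step, suppose $Y'_n$ is a derived analytic space; I apply \cref{thm:lifting_derivations} to the canonical map $\tau_{\le n+1}\cO_{Y'} \to \tau_{\le n}\cO_{Y'}$ in $\AnRing_k(\cY')$. This map is an effective epimorphism, and its algebraization is the standard $n$-small Postnikov step since truncation of analytic structures is compatible with algebraization. Hence $Y'_n \hookrightarrow Y'_{n+1}$ is an analytic square-zero extension classified by a derivation $d\colon \anL_{Y'_n} \to \pi_{n+1}(\cO_{Y'})[n+2]$, i.e.\ an extension by $\cF \coloneqq \pi_{n+1}(\cO_{Y'})[n+1] \in \Coh^{\ge 1}(Y'_n)$ (provided coherence holds).

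Two facts remain. First, coherence of $\pi_{n+1}(\cO_{Y'}\alg)$ on $Y'_0$: I use the pullback presentation of $\cO_{Y'}$ coming from the topos pushout. There is a natural pullback square in $\AnRing_k(\cY')$
\[ \begin{tikzcd}
\cO_{Y'} \arrow{r} \arrow{d} & p_*\cO_{X'} \arrow{d} \\
q_*\cO_Y \arrow{r} & r_*\cO_X,
\end{tikzcd} \]
where $r = p \circ i = q \circ j$. Since $p, q, r$ are closed immersions of derived analytic spaces, their pushforwards are $t$-exact and preserve coherent sheaves, so the resulting Mayer-Vietoris long exact sequence yields coherence of $\pi_m(\cO_{Y'}\alg)$ for every $m$. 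Second, I need a derived analog of \cref{prop:analytic_square_zero_extensions_are_analytic_spaces}: if $Z$ is a derived analytic space, $\cF \in \Coh^{\ge 1}(Z)$, and $Z'$ is any analytic square-zero extension of $Z$ by $\cF$, then $Z'$ is a derived analytic space. This follows from the fiber sequence $\cF \to \cO_{Z'} \to \cO_Z$: since $\cF \in \Coh^{\ge 1}$, the map $\pi_0(\cO_{Z'}\alg) \to \pi_0(\cO_Z\alg)$ is an equivalence, so $\trunc(Z')$ and $\trunc(Z)$ share the same underlying analytic space; higher homotopy sheaves are coherent by the long exact sequence. Together these complete the inductive step.

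Having established that each $Y'_n$ is a derived analytic space, I conclude that $Y' \simeq \lim_n Y'_n$ is a derived analytic space: $\cY'$ is hypercomplete from the base case, coherence of each $\pi_i(\cO_{Y'}\alg)$ is inherited from $Y'_n$ for $n \ge i$, and the underlying topos condition is inherited from $Y'_0$. The main obstacle is the verification of coherence for $\pi_{n+1}(\cO_{Y'}\alg)$ via the pullback description of $\cO_{Y'}$: establishing this pullback formula in $\AnRing_k(\cY')$ together with the $t$-exactness and coherence-preservation of pushforwards along closed immersions is the technical heart of the argument, as it requires careful control of how these pushforwards interact with the analytic ring structure.
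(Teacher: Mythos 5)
Your main structural idea — Postnikov induction on $Y'$, using \cref{thm:lifting_derivations} for the inductive step and a derived analogue of \cref{prop:analytic_square_zero_extensions_are_analytic_spaces} — is plausible and the pieces you invoke for the inductive step are sound. But the reduction that launches the induction contains a genuine error, and it is precisely the point where the paper's proof has to work hardest.

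You claim that $\mathrm t_{\le n}$ preserves the pushout because it is a left adjoint, citing \cref{lem:analytification_formal_properties}(\ref{item:truncation_functors}). That lemma says the opposite: the \emph{inclusion} $\RTop^{\le n} \hookrightarrow \RTop$ is left adjoint to $\mathrm t_{\le n}$, so $\mathrm t_{\le n}$ is a \emph{right} adjoint and preserves limits, not colimits. Concretely, the pushout in $\RTop(\cTank)$ is computed by a pushout of topoi and a \emph{pullback} of pushed-forward structure sheaves, and $\tau_{\le n}$ does not commute with that pullback: in the long exact sequence
\[
\pi_1(p_*\cO_{X'}\alg)\oplus\pi_1(q_*\cO_Y\alg)\to\pi_1(h_*\cO_X\alg)\to\pi_0(\cO_{Y'}\alg)\to\pi_0(p_*\cO_{X'}\alg)\oplus\pi_0(q_*\cO_Y\alg)\to\pi_0(h_*\cO_X\alg)\to 0
\]
the first map need not be surjective. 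Hence $\trunc(Y')$ differs from the underived pushout $Y''$ of \cref{prop:pushout_closed_immersions_underived} by the nilpotent coherent ideal $\cJ=\coker\bigl(\pi_1(p_*\cO_{X'}\alg)\oplus\pi_1(q_*\cO_Y\alg)\to\pi_1(h_*\cO_X\alg)\bigr)$. Your base case ($n=0$) is therefore false as stated, and since the whole induction rests on $Y'_n\simeq X'_n\sqcup_{X_n}Y_n$, the error propagates.

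This discrepancy is exactly the technical heart that you flag as the ``main obstacle'' but then dispatch too quickly: the paper's proof first carefully identifies $\pi_0(\cO_{Y'}\alg)$ as a nilpotent extension of $\cO_{Y''}\alg$ by $\cJ$, reduces to $\cJ^2=0$, and only then applies \cref{thm:lifting_derivations} and \cref{prop:analytic_square_zero_extensions_are_analytic_spaces} to conclude that $(\cY',\pi_0(\cO_{Y'}))$ is an analytic space, before handling higher homotopy. If you repair your base case with that nilpotence argument, your Postnikov-style reorganization of the rest (the derived analogue of \cref{prop:analytic_square_zero_extensions_are_analytic_spaces} you sketch is fine) would go through, but as written the proof does not establish the statement.
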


Before starting the proof, we need the following technical lemma:

\begin{lem} \label{lem:closed_immersions_t_exact}
	Let $j_* \colon \cX \leftrightarrows \cY \colon j\inv$ be a closed immersion of $\infty$-topoi.
	Then $j_*$ commutes with truncations.
	In other words, there are natural equivalences
	\[ j_* \circ \tau_{\le n}^\cX \simeq \tau_{\le n}^\cY \circ j_* \]
	for every $n \ge 0$.
\end{lem}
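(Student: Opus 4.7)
The plan is to exploit the recollement structure attached to any closed immersion of $\infty$-topoi. Specifically, there is a complementary open sub-$\infty$-topos $q \colon \cU \hookrightarrow \cY$ such that the essential image of $j_*$ consists exactly of those $Y \in \cY$ for which $q\inv Y \simeq *_\cU$. This characterization will be the decisive input for recognizing when an object of $\cY$ belongs to the image of $j_*$.

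A preliminary, purely formal observation I would establish first is that for \emph{any} geometric morphism $f\inv \dashv f_*$ of $\infty$-topoi, the inverse image $f\inv$ commutes with $\tau_{\le n}$. Indeed, $f\inv$ is left exact so preserves $n$-truncated objects, and the same holds for $f_*$; comparing mapping spaces into an arbitrary $n$-truncated test object then gives
\[
\Map(f\inv \tau_{\le n}^\cY Y, Z) \simeq \Map(\tau_{\le n}^\cY Y, f_* Z) \simeq \Map(Y, f_* Z) \simeq \Map(f\inv Y, Z),
\]
because $f_* Z$ is $n$-truncated. Applying this to $q\inv$ immediately shows that the essential image of $j_*$ is stable under $\tau_{\le n}^\cY$: if $q\inv Y \simeq *_\cU$, then $q\inv \tau_{\le n}^\cY Y \simeq \tau_{\le n}^\cU q\inv Y \simeq *_\cU$.

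Next, for each $X \in \cX$, applying $j_*$ to the unit $X \to \tau_{\le n}^\cX X$ and invoking the universal property of $\tau_{\le n}^\cY$ (the target $j_* \tau_{\le n}^\cX X$ is $n$-truncated, since $j_*$ preserves $n$-truncated objects) produces the canonical comparison map
\[
\alpha_X \colon \tau_{\le n}^\cY j_* X \longrightarrow j_* \tau_{\le n}^\cX X.
\]
The only real content of the lemma is to check that $\alpha_X$ is an equivalence, and this is where the preliminary observations come in. Both the source and the target of $\alpha_X$ lie in the essential image of $j_*$: the target tautologically, the source by the stability property established above. Since $j_*$ is fully faithful, $j\inv$ is a quasi-inverse on this essential image and therefore conservative there, so it suffices to verify that $j\inv \alpha_X$ is an equivalence. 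But $j\inv$ commutes with $\tau_{\le n}$ by the formal remark, and $j\inv j_* \simeq \id$ by full faithfulness, so $j\inv \alpha_X$ is identified with the identity of $\tau_{\le n}^\cX X$. The hard part is really just recognizing that $\tau_{\le n}^\cY j_* X$ still lies in the image of $j_*$, which is why setting up the recollement picture at the outset is the main move.
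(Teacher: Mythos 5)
Your proof is correct. The core strategy coincides with the paper's — both reduce the lemma to the observation that the essential image of $j_*$ is stable under $\tau_{\le n}^\cY$, and then close the argument via full faithfulness of $j_*$ — but the way you verify that stability is genuinely different. The paper uses the concrete description of the closed subtopos: it identifies $\cX \simeq \cY/U$ for a $(-1)$-truncated $U \in \cY$, recalls that $V$ lies in $\cY/U$ iff $V \times U \simeq U$, and then checks $\tau_{\le n}^\cY(V) \times U \simeq U$ directly by appealing to the compatibility of truncation with finite products in an $\infty$-topos (HTT 6.5.1.2). You instead test membership in the image of $j_*$ against the complementary open immersion $q$ (membership iff $q\inv Y \simeq *_\cU$) and invoke the general fact that any left-exact left adjoint commutes with $\tau_{\le n}$ (HTT 5.5.6.28). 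Your route is more portable — it only needs the recollement characterization plus that formal commutation lemma — whereas the paper's is slightly more self-contained, staying entirely within $\cY$ and avoiding the open complement. You also spell out the comparison map $\alpha_X$ and the conservativity of $j\inv$ on the essential image, which the paper compresses into one sentence; that extra care is welcome, since it is exactly where the universal-property bookkeeping lives.
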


\begin{proof}
	By definition of closed immersion, we can find a $(-1)$-truncated object $U \in \cY$ and an equivalence $\cX \simeq \cY / U$.
		The functor $j_* \colon \cY / U \to \cY$ is fully faithful and \cite[7.3.2.5]{HTT} guarantees that an object $V \in \cY$ belongs to $\cY / U$ if and only if $V \times U \simeq U$.
	Now let $V \in \cY / U$ and consider $\tau_{\le n}^\cY(V)$.
	Since $U$ is $(-1)$-truncated, we see that $\tau_{\le n}^\cY(U) \simeq U$ and therefore
	\[ \tau_{\le n}^\cY(V) \times U \simeq \tau_{\le n}^\cY(V) \times \tau_{\le n}^\cY(U) \simeq \tau_{\le n}^\cY(V \times U) \simeq \tau_{\le n}^\cY(U) \simeq U. \]
	In other words, $\tau_{\le n}^\cY(V)$ belongs to $\cY / U$.
	Since furthermore $j_*$ is fully faithful and commutes with $n$-truncated objects, we conclude that $\tau_{\le n}^\cY(V) \simeq \tau_{\le n}^\cX(V)$.
\end{proof}

\begin{proof}[Proof of \cref{thm:pushout_closed_immersions}]
	The question is local on $Y'$, so it is also local on $Y$ and on $X'$.
	We can therefore assume that $X$, $X'$ and $Y$ are derived affinoid (resp.\ Stein) spaces.
	
	Write
	\[ X = (\cX, \cO_X), \quad X' = (\cX', \cO_{X'}), \quad Y = (\cY, \cO_Y) , \quad Y' = (\cY', \cO_{Y'}) . \]
	The morphisms $i$ and $j$ induce closed immersions of the underlying $\infty$-topoi
	\[ i_* \colon \cX \leftrightarrows \cX' \colon i\inv , \qquad j_* \colon \cX \leftrightarrows \cY \colon j\inv . \]
	Using \cite[Theorem 5.1]{DAG-IX}, we can identify $\cY'$ with the pushout
	\[ \begin{tikzcd}
		\cX \arrow{r}{i_*} \arrow{d}{j_*} & \cX' \arrow{d}{p_*} \\
		\cY \arrow{r}{q_*} & \cY' 
	\end{tikzcd} \]
	computed in $\RTop$.
	Let $h \colon X \to Y'$ denote the compositions $p \circ i \simeq q \circ j$.
	We can use \cite[Theorem 5.1]{DAG-IX} once more to identify $\cO_{Y'}$ with the pullback
	\begin{equation} \label{eq:structure_sheaf_gluing}
		\begin{tikzcd}
			\cO_{Y'} \arrow{r} \arrow{d} & p_* \cO_{X'} \arrow{d} \\
			q_* \cO_Y \arrow{r} & h_* \cO_{X} .
		\end{tikzcd}
	\end{equation}
	In particular, we obtain a long exact sequence of homotopy groups
	\begin{multline} \label{eq:long_exact_sequence_pushout}
		\pi_1(p_* \cO_{X'}\alg) \oplus \pi_1(q_* \cO_Y\alg) \to \pi_1(h_* \cO_X\alg) \to \pi_0 \cO_{Y'}\alg\\ \to \pi_0 (p_*\cO_{X'}\alg) \oplus \pi_0 (q_* \cO_{Y}\alg) \to \pi_0(h_* \cO_X\alg) \to 0
	\end{multline}
	
	Now consider the truncations $\trunc(X)$, $\trunc(X')$, $\trunc(Y)$ and let $Y''$ be the pushout
	\[ \begin{tikzcd}
		\trunc(X) \arrow{r}{\trunc(i)} \arrow{d}{\trunc(j)} & \trunc(X') \arrow{d} \\
		\trunc(Y) \arrow{r} & Y''
	\end{tikzcd} \]
	in $\Ank$, whose existence is guaranteed by \cref{prop:pushout_closed_immersions_underived}.
	Furthermore, \cref{prop:pushout_closed_immersions_underived} ensures that the $\infty$-topos underlying $Y''$ coincides with $\cY'$ and that the structure sheaf $\cO_{Y''}$ fits in the following pullback diagram:
	\[ \begin{tikzcd}
		\cO_{Y''} \arrow{r} \arrow{d} & p_* \pi_0(\cO_{X'}) \arrow{d} \\
		q_* \pi_0(\cO_Y) \arrow{r} & h_* \pi_0(\cO_X) .
	\end{tikzcd} \]
	Using \cref{lem:closed_immersions_t_exact}, we deduce that there are canonical equivalences
	\[ p_* \pi_0(\cO_{X'}) \simeq \pi_0( p_* \cO_{X'} ) , \quad q_* \pi_0(\cO_X) \simeq \pi_0 ( q_* \cO_X ) , \quad h_* \pi_0(\cO_X) \simeq \pi_0( h_* \cO_X ) . \]
	We can therefore split the long exact sequence \eqref{eq:long_exact_sequence_pushout} into
	\[ 0 \to \cJ \to \pi_0( \cO_{Y'}\alg) \to \cO_{Y''}\alg \to 0 , \]
	where
	\[ \cJ \coloneqq \mathrm{coker}( \pi_1(p_* \cO_{X'}\alg) \oplus \pi_1(q_* \cO_Y\alg) \to \pi_1(h_* \cO_X\alg) ) . \]
	Using \cref{lem:closed_immersions_t_exact} once more, we deduce that there are the following natural equivalences:
	\[ \pi_1( p_* \cO_{X'} ) \simeq p_*( \pi_1 \cO_{X'} ), \quad \pi_1( q_* \cO_Y\alg ) \simeq q_*( \pi_1 \cO_Y\alg ), \quad \pi_1( h_* \cO_X\alg ) \simeq h_* \pi_1(\cO_X\alg ) . \]
	This implies that the above sheaves are coherent sheaves of $\cO_{Y''}\alg$-modules.
	As a consequence, we deduce that $\cJ$ is also a coherent sheaf of $\cO_{Y''}\alg$-modules.
	Finally, we observe that $\pi_0(\cO_{Y'}\alg)$ and $\cO_{Y''}\alg$ have the same support.
	This implies that $\cJ$ is (locally) a nilpotent sheaf of ideals of $\pi_0(\cO_{Y'}\alg)$.
	Proceeding by induction, we can therefore suppose that $\cJ^2 = 0$.
	
	We are therefore reduced to the case where $\pi_0( \cO_{Y'} )$ is a square-zero extension of $\cO_{Y''}$.
	In this case, we can invoke \cref{thm:lifting_derivations} to conclude that $\pi_0(\cO_{Y'})$ is an \emph{analytic} square-zero extension of $\cO_{Y''}$.
	Using \cref{prop:analytic_square_zero_extensions_are_analytic_spaces}, we conclude that the $\cTank$-structured topos $(\cY', \pi_0(\cO_{Y'}))$ is an analytic space.
	In order to complete the proof, we only have to prove that each $\pi_i( \cO_{Y'})$ is coherent over $\pi_0(\cO_{Y'})$.
	Observe that the morphisms
	\[ \pi_0(\cO_{Y'}) \to \pi_0(p_* \cO_{X'} ), \quad \pi_0(\cO_{Y'}) \to \pi_0(h_* \cO_X), \quad  \pi_0(\cO_{Y'}) \to \pi_0(q_* \cO_Y) \]
	are epimorphisms.
	The conclusion now follows from the long exact sequence associated to the pullback diagram \eqref{eq:structure_sheaf_gluing}.
\end{proof}

\section{The representability theorem}\label{sec:representability}

The goal of this section is to prove the main theorem of this paper, i.e.\ the representability theorem in derived analytic geometry.

Let $k$ be either the field $\C$ of complex numbers, or a complete non-archimedean field with nontrivial valuation.

Let $\Afd_k$ denote the category of $k$-affinoid spaces when $k$ is non-archimedean, and the category of Stein spaces when $k=\C$.
Let $\dAfd_k$ denote the \infcat of derived $k$-affinoid spaces when $k$ is non-archimedean, and the \infcat of derived Stein spaces when $k=\C$.

Let us first state the theorem before giving the precise definitions of the notions involved.

\begin{thm} \label{thm:representability}
	Let $F$ be a stack over the \infsite $(\dAfdk,\tauet)$.
	The followings are equivalent:
	\begin{enumerate}
		\item \label{item:F_is_representable} $F$ is an $n$-geometric stack with respect to the geometric context $(\dAfdk,\tauet,\bPsm)$;
		\item \label{item:F_satisfies_rep_conditions} $F$ is compatible with Postnikov towers, has a global analytic cotangent complex, and its truncation $\trunc(F)$ is an $n$-geometric stack with respect to the geometric context $(\Afd_k,\tauet,\bPsm)$.
	\end{enumerate}
\end{thm}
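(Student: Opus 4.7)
The plan is to prove the two implications separately, following the strategy outlined in the introduction while leveraging the cotangent complex machinery built in \cref{sec:cotangent_complex} and the gluing result \cref{thm:pushout_closed_immersions}.

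For the implication \eqref{item:F_is_representable}$\Rightarrow$\eqref{item:F_satisfies_rep_conditions}, I would first establish the three required properties for a derived analytic space $X$ viewed as a representable stack. Postnikov compatibility follows from the fact that $\cO_X \simeq \lim_n \tau_{\le n} \cO_X$ together with descent. The existence of a global analytic cotangent complex $\anL_X$ with the right properties is exactly the content of \cref{thmintro:properties_cotangent_complex}, in particular coherence (\cref{cor:finiteness_cotangent_complex}), base change (\cref{prop:base_change_cotangent_complex_analytic}), and compatibility with closed immersions and analytification. That $\trunc(X)$ is an underived analytic space is immediate from the definition of $\dAnk$. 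I would then proceed by induction on the geometric level $n$: given an $n$-geometric $F$ with a smooth atlas $u \colon U \to F$ by $(n-1)$-geometric objects, Postnikov compatibility and the cotangent complex of $F$ are constructed by smooth descent from those of $U$, using \cref{prop:pullback_Goodwillie_analytic} and \cref{prop:base_change_cotangent_complex_analytic} to guarantee that the cotangent complex defined on affinoids is compatible with the smooth covering.

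For the converse \eqref{item:F_satisfies_rep_conditions}$\Rightarrow$\eqref{item:F_is_representable}, the strategy is by induction on the geometric level of $\trunc(F)$. By assumption we have a smooth atlas $u_0 \colon U_0 \to \trunc(F)$ from a disjoint union of affinoids (or a less geometric stack); the goal is to lift this to a smooth atlas $\tilde u \colon \tU \to F$ where $\tU$ is a derived geometric stack with $\trunc(\tU) \simeq U_0$. I would build $\tU$ as a sequential colimit of $m$-truncated approximations $U_m$, with $j_m \colon U_m \hookrightarrow U_{m+1}$ an analytic square-zero extension inducing an equivalence on $m$-truncations, and with $\anL_{U_m/F}$ flat to order $m+1$. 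The base case is $U_0$ itself. For the inductive step, the combination of flatness to order $m+1$ with $m$-truncation of $U_m$ forces $\tau_{\le m}\anL_{U_m/F}$ to be flat and thus perfect; one then picks a splitting
\[ \anL_{U_m/F} \simeq \tau_{\le m}\anL_{U_m/F} \oplus \tau_{\ge m+1}\anL_{U_m/F}, \]
interprets $\pi_{m+1}\anL_{U_m/F}$ as specifying an analytic derivation, and forms the corresponding analytic square-zero extension via \cref{corintro:square-zero} (enabled by \cref{thm:pushout_closed_immersions}). The resulting $U_{m+1}$ inherits the required flatness to order $m+2$ from the transitivity sequence for $U_m \hookrightarrow U_{m+1} \to F$ together with \cref{prop:transitivity_cotangent_complex}.

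Having constructed the tower, I would set $\tU \coloneqq \colim_m U_m$. Postnikov compatibility of $F$ produces the lift $\tilde u \colon \tU \to F$, and by the inductive hypothesis on the geometric level of $\trunc(F)$ the map $\tilde u$ is representable by $(n-1)$-geometric stacks. The final step is to prove that $\tilde u$ is smooth: $\trunc(\tilde u) = u_0$ is smooth by assumption, and by the characterization in \cref{prop:characterization_smooth_morphisms} it suffices to show that $\anL_{\tU/F}$ is perfect in tor-amplitude $0$. This follows from the flatness conditions on the $\anL_{U_m/F}$ passing to the colimit, combined with the infinitesimal lifting criterion \cref{prop:infinitesimal_characterization_smooth}.

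The main obstacle will be the inductive construction of $U_{m+1}$ from $U_m$. The subtlety is twofold: first, one must verify that the splitting of $\anL_{U_m/F}$ exists and that $\tau_{\ge m+1}\anL_{U_m/F}$ can be prolonged to an analytic derivation on the whole of $U_m$ (not merely a sheaf-theoretic one); here \cref{thm:equivalence_of_modules} and \cref{def:analytic_derivation} are crucial, since without the analytic reinterpretation of modules the square-zero extension would not stay within $\dAnk$. Second, one must carefully track the flatness-to-order condition along the transitivity sequence to guarantee it improves by one at each step; this is where the precise definition of flat to order $n$ and a non-trivial base change computation using \cref{prop:base_change_cotangent_complex_analytic} enter. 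Once these two ingredients are in place, the remainder of the argument assembles into the desired atlas.
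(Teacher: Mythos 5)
Your proposal recovers the overall architecture of the paper's proof: $(1)\Rightarrow(2)$ by induction on the geometric level starting from derived analytic spaces, and $(2)\Rightarrow(1)$ by lifting the smooth atlas of $\trunc(F)$ through a tower of $m$-truncated approximations $U_m$ controlled by flatness to order $m+1$, passing to the colimit $\tU$, and checking smoothness via the infinitesimal lifting criterion \cref{prop:infinitesimal_characterization_smooth}. Two points, however, need repair.

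For $(1)\Rightarrow(2)$, the claim that the cotangent complex of an $n$-geometric $F$ is \emph{constructed by smooth descent from that of $U$} does not go through as stated and is in fact circular: the transitivity fiber sequence relating $\anL_F$ to $\anL_U$ passes through $\anL_{U/F}$, which is itself a relative cotangent complex of a morphism to $F$, so this presupposes what you want to construct. What the paper actually does (in \cref{prop:inf_cartesian_implies_obstruction_theory}) is to first establish infinitesimal cartesianness of $n$-geometric stacks by a separate induction (\cref{prop:geometric_implies_inf_cartesian}), and then construct $\anL_{F,x}$ at a point $x\colon X\to F$ from the \emph{diagonal} $\delta_F \colon X\to X\times_F X$: since $X\times_F X$ is $(n-1)$-geometric, the inductive hypothesis applies to $\anL_{\delta_F}$, and comparing with $\anL_\delta$ for $\delta\colon X\to X\times X$ gives a cofiber whose loop space represents $\DerAn_F(X,-)$. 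This use of the diagonal is the key idea that makes the induction close, and your sketch should be amended to go through it rather than through the atlas.

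For $(2)\Rightarrow(1)$, two smaller issues. First, your "base case is $U_0$ itself" silently needs the non-trivial input that $\anL_{\trunc(F)/F}$ is $2$-connective (\cref{lem:cotangent_complex_truncation}); this is what shows $\anL_{U_0/F}$ is flat to order $1$ in the first place and it requires a separate argument using \cref{prop:analytic_square_zero_extensions_are_analytic_spaces}. Second, your degree-indexing of the splitting follows the informal sketch in the introduction and is off by one relative to the actual inductive step: from flatness to order $m+1$ and $m$-truncatedness, \cref{prop:n_flat_implies_flat_truncation} gives that $\tau_{\le m+1}\anL_{U_m/F}$ (not merely $\tau_{\le m}$) is flat, hence perfect; the splitting to use is $\anL_{U_m/F}\simeq\tau_{\le m+1}\anL_{U_m/F}\oplus\tau_{\ge m+2}\anL_{U_m/F}$, and the analytic derivation is valued in $\pi_{m+2}(\anL_{U_m/F})[m+2]$ so that the resulting square-zero extension $U_{m+1}$ is genuinely $(m+1)$-truncated (compare the degree conventions in \cref{cor:Postnikov_tower_analytic_square_zero}). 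Finally, the claim that $U_{m+1}$ "inherits" flatness to order $m+2$ from the transitivity sequence is where the specific choice of the derivation enters: one must verify, via \cref{cor:n_flat_increasing}, that $\pi_{m+2}(\anL_{U_{m+1}/F})=0$, which requires an explicit computation of $\anL_{U_m/U_{m+1}}$ through degrees $m+2$ and $m+3$; this step is not automatic.
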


We refer to \cite[\S 2]{Porta_Yu_Higher_analytic_stacks_2014} for the notions of geometric context and geometric stack with respect to a given geometric context.
Recall that a geometric context $(\cC,\tau,\bP)$ consists of a small \infcat $\cC$ equipped with a Grothendieck topology $\tau$ and a class $\bP$ of morphisms in $\cC$, satisfying a short list of axioms.
In the statement of \cref{thm:representability}, $\tauet$ denotes the étale topology and $\bPsm$ denotes the class of smooth morphisms.

A stack over an \infsite $(\cC,\tau)$ is by definition a hypercomplete sheaf with values in spaces over the \infsite.
We denote by $\St(\cC,\tau)$ the \infcat of stacks over $(\cC,\tau)$.

Given a geometric context $(\cC,\tau,\bP)$ and an integer $n\ge -1$, the notion of $n$-geometric stack is defined by induction on the geometric level $n$.
We refer to \cite[\S 2.3]{Porta_Yu_Higher_analytic_stacks_2014} for the details.
Let us simply recall that a $(-1)$-geometric stack is by definition a representable stack.

\begin{defin}
	A \emph{derived analytic stack} is an $n$-geometric stack with respect to the geometric context $(\dAfd_k, \tauet, \bPsm)$ for some $n$.
\end{defin}

The following definitions are analytic analogues of the algebraic notions introduced in \cite{DAG-XIV,HAG-II}.

\begin{defin}
	Let $f \colon F \to G$ be a morphism in $\St(\dAfdk,\tauet)$.
	We say that $f$ is \emph{infinitesimally cartesian} if for every derived affinoid (resp.\ Stein) space $X \in \dAfd_k$, every coherent sheaf $\cF \in \Coh^{\ge 1}(X)$ and every analytic derivation $d \colon \anL_X \to \cF$, the square
	\[ \begin{tikzcd}
	F(X_d[\cF]) \arrow{r} \arrow{d} & G(X_d[\cF]) \arrow{d} \\
	F(X) \times_{F(X[\cF])} F(X) \arrow{r} & G(X) \times_{G(X[\cF])} G(X)
	\end{tikzcd} \]
	is a pullback square.
	We say that a stack $F \in \St(\dAfdk,\tauet)$ is \emph{infinitesimally cartesian} if the canonical map $F \to *$ is infinitesimally cartesian, where $*$ denotes a final object of $\St(\dAfdk,\tauet)$.
\end{defin}

\begin{defin}
	Let $f \colon F \to G$ be a morphism in $\St(\dAfdk,\tauet)$.
	We say that $f$ is \emph{convergent} (or \emph{nil-complete}) if for every derived affinoid (resp.\ Stein) space $X = (\cX, \cO_X) \in \dAfd_k$, the square
	\[ \begin{tikzcd}
	F(X) \arrow{r} \arrow{d} & \lim_n F( \mathrm t_{\le n} X) \arrow{d} \\
	G(X) \arrow{r} & \lim_n G( \mathrm t_{\le n} X )
	\end{tikzcd} \]
	is a pullback square.
	We say that a stack $F \in \St(\dAfdk,\tauet)$ is \emph{convergent} if the canonical map $F \to *$ is convergent, where $*$ denotes a final object of $\St(\dAfdk,\tauet)$.
\end{defin}

\begin{defin} \label{def:compatible_Postnikov}
	A morphism $f\colon F\to G$ is said to be \emph{compatible with Postnikov towers} if it is infinitesimally cartesian and convergent.
\end{defin}

Let $F \in \St(\dAfdk,\tauet)$.
Let $X \in \dAfd_k$ and let $x \colon X \to F$ be a morphism of sheaves.
For every coherent sheaf $\cF \in \Coh^{\ge 0}(X)$, we denote by $\DerAn_F(X, \cF)$ the fiber at $x$ of the canonical map 
\[ F(X[\cF]) \to F(X) . \]
This assignment is functorial in $\cF$ and therefore provides us a functor
\[ \DerAn_{F}(X, -) \colon \Coh^{\ge 0}(X) \to \cS . \]
If $f \colon F \to G$ is a morphism of sheaves, we obtain a natural transformation
\[ \eta \colon \DerAn_F(X, -) \to \DerAn_G(X, -) \]
for every fixed $X \in \dAfd_k$ and every fixed morphism $x \colon X \to F$.
For every $\cF \in \Coh^{\ge 0}(X)$, the space $\DerAn_G(X,\cF)$ has a distinguished element: the zero derivation.
Let us denote the fiber of $\eta_\cF$ at the zero derivation by $\DerAn_{F/G}(X,\cF)$.
It is naturally functorial in $\cF$.
We denote the corresponding functor by
\[ \DerAn_{F/G}(X, -) \colon \Coh^{\ge 0}(X) \to \cS . \]

\begin{defin}
	Let $f \colon F \to G$ be a morphism in $\St(\dAfdk,\tauet)$.
	\begin{enumerate}
		\item Let $X \in \dAfd_k$ and let $x \colon X \to F$ be a morphism. We say that \emph{$f$ has an analytic cotangent complex at $x$} if the functor
		\[ \DerAn_{F/G}(X,-) \colon \Coh^{\ge 0}(X) \to \cS \]
		is corepresentable by an object in $\Coh^+(X)$.
				In this case, we denote this object by $\anL_{F/G, x}$.
		\item We say that \emph{$f$ has a global analytic cotangent complex} if the following conditions are satisfied:
		\begin{enumerate}
			\item $f$ has an analytic cotangent complex at every morphism $x \colon X \to F$ for every $X \in \dAfd_k$;
			\item For any morphism $g \colon X \to Y$ in $\dAfd_k$, any morphism $y \colon Y \to F$, denote $x \coloneqq y \circ g$.
			Then the canonical morphism
			\[ g^* \anL_{F / G, y} \to \anL_{F / G, x} \]
			is an equivalence in $\Coh^+(X)$.
		\end{enumerate}
	\end{enumerate}
\end{defin}

For the proof of \cref{thm:representability}, we will address the implication (1)$\Rightarrow$(2) in \cref{sec:properties_of_derived_analytic_stacks}, and the implication (2)$\Rightarrow$(1) in \cref{sec:lifting_atlases}.

\subsection{Properties of derived analytic stacks} \label{sec:properties_of_derived_analytic_stacks}

In this subsection, we prove the implication (1)$\Rightarrow$(2) of \cref{thm:representability}.
We will first prove that (2) holds for derived analytic spaces.
After that, we will prove (2) for derived analytic stacks by induction on the geometric level.

\begin{lem} \label{lem:representable_satisfies_rep_conditions}
	Let $X = (\cX, \cO_X)$ be a derived analytic space and let $F_X \in \St(\dAfdk,\tauet)$ be the associated stack via the Yoneda embedding.
	Then $F_X$ is infinitesimally cartesian, convergent and it admits a global analytic cotangent complex.
\end{lem}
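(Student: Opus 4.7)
The plan is to verify the three conditions separately. For the \emph{infinitesimally cartesian} property, I invoke \cref{corintro:square-zero} (a consequence of \cref{thm:pushout_closed_immersions}), which identifies $U_d[\cF]$ with the pushout $U \sqcup_{U[\cF]} U$ in $\dAnk$, where the two maps $U[\cF] \rightrightarrows U$ are the closed immersions $i_d$ and $i_0$ associated to $d$ and to the zero derivation. Applying $F_X = \Map_{\dAnk}(-,X)$ sends this pushout to a pullback of mapping spaces, yielding $F_X(U_d[\cF]) \simeq F_X(U) \times_{F_X(U[\cF])} F_X(U)$, which is precisely the infinitesimally cartesian condition for $G = *$.

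For \emph{convergence}, a morphism $U \to X$ in $\dAnk \subset \RTop(\cTank)$ consists of a geometric morphism $x_* \colon \cU \to \cX$ (unaffected by Postnikov truncation of $\cO_U$, since $\mathrm t_{\le n} U$ has the same underlying topos) together with a local morphism $x^\sharp \colon x\inv \cO_X \to \cO_U$. Since $\cU$ is hypercomplete (by the definition of a derived analytic space), the structure sheaf satisfies $\cO_U \simeq \lim_n \tau_{\le n} \cO_U$ in $\AnRing_k(\cU)$; moreover the truncation maps are themselves local by compatibility of $\cTank$ with $n$-truncations, and locality is a pullback condition preserved under limits. Mapping spaces commuting with this limit then yield $F_X(U) \simeq \lim_n F_X(\mathrm t_{\le n} U)$.

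For the \emph{global analytic cotangent complex}, I claim that $\anL_{F_X, x} \simeq x^* \anL_X$ for every $x \colon U \to X$. Unwinding definitions, the fiber at $x$ of $F_X(U[\cF]) \to F_X(U)$ is the space of extensions of $x^\sharp$ along the projection $\cO_U \oplus \cF \to \cO_U$, namely
\[ \Map_{\AnRing_k(\cU)_{/\cO_U}}(x\inv \cO_X, \cO_U \oplus \cF) , \]
with $x\inv \cO_X$ viewed over $\cO_U$ via $x^\sharp$. Using $\cO_U \oplus \cF \simeq \Omega^\infty_\Ab(\cF)$ together with the adjunction $\Sigma^\infty_\Ab \dashv \Omega^\infty_\Ab$ from \cref{subsec:cotangent_formalism}, this rewrites as $\Map_{\cO_U \Mod}(\Sigma^\infty_\Ab(x\inv \cO_X), \cF)$. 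A direct unwinding of the adjunction (in the spirit of \cref{prop:UMP_analytic_cotangent_complex}) shows that $\Sigma^\infty_\Ab(\cA) \simeq \anL_\cA \otimes_{\cA\alg} \cO_U\alg$ for any $\cA \in \AnRing_k(\cU)_{/\cO_U}$, and \cref{lem:transitivity_cotangent_complex} identifies $\anL_{x\inv \cO_X}$ with $x\inv \anL_X$. Combining, $\Sigma^\infty_\Ab(x\inv \cO_X) \simeq x^* \anL_X$, and functoriality under any further pullback $g \colon V \to U$ is automatic from $(x \circ g)^* \anL_X \simeq g^* x^* \anL_X$.

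The main subtlety is the last identification $\Sigma^\infty_\Ab(x\inv \cO_X) \simeq x^* \anL_X$, which requires carefully combining the absolute derivation adjunction (along the lines of \cref{prop:UMP_analytic_cotangent_complex}) with the transitivity \cref{lem:transitivity_cotangent_complex}; the other two conditions are then formal consequences of the gluing and hypercompleteness results already in hand.
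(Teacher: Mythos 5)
Your proof is correct and follows essentially the same route as the paper's: Theorem \ref{thm:pushout_closed_immersions} for the infinitesimal cartesianness, hypercompleteness of the underlying topos for convergence, and the identification $\anL_{F_X,x}\simeq x^*\anL_X$ via Theorem \ref{thm:equivalence_of_modules} and the $\Sigma^\infty_\Ab\dashv\Omega^\infty_\Ab$ adjunction. The only substantive point you pass over silently is the full faithfulness of the embedding $\dAnk\hookrightarrow\St(\dAfdk,\tauet)$ (the non-archimedean case is \cite[Theorem 7.9]{Porta_Yu_DNAnG_I}, the complex case \cite[Theorem 3.7]{Porta_DCAGI}): writing $F_X=\Map_{\dAnk}(-,X)$ and identifying the fiber of $F_X(U[\cF])\to F_X(U)$ as $\Map_{\AnRing_k(\cU)_{/\cO_U}}(x\inv\cO_X,\cO_U\oplus\cF)$ both rest on this, and the paper flags it explicitly. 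Conversely, you spell out the step $\Sigma^\infty_\Ab(x\inv\cO_X)\simeq x^*\anL_X$ (via \cref{lem:transitivity_cotangent_complex} and the formula $\Sigma^\infty_\Ab(\cA)\simeq\anL_\cA\otimes_{\cA\alg}\cO_U\alg$) more carefully than the paper, which simply asserts the final equivalence; your phrasing matches \cref{prop:pullback_Goodwillie_analytic}(3) as expected. So the two arguments are the same up to which reference each leaves implicit.
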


\begin{proof}
	Let $Y \in \dAfdk$ be a derived affinoid (resp.\ Stein) space.
	Let $\cF \in \Coh^{\ge 0}(Y)$ and let $d \colon \anL_Y \to \cF$ be an analytic derivation.
	It follows from \cref{thm:pushout_closed_immersions} that the diagram
	\[ \begin{tikzcd}
	Y[\cF] \arrow{r} \arrow{d} & Y \arrow{d} \\
	Y \arrow{r} & Y_d[\cF]
	\end{tikzcd} \]
	is a pushout square in $\dAnk$.
	As a consequence, $F_X = \Map_{\dAnk}(-, X)$ takes this diagram into a pullback square.
	In other words, $F_X$ is infinitesimally cartesian.
	
	Let $Y = (\cY, \cO_Y) \in \dAfd_k$.
	Since $\cY$ is hypercomplete, we deduce that the canonical map
	\[ \colim_n \mathrm{t}_{\le n}(Y) \to Y \]
	is an equivalence in $\dAnk$.
	In particular,
	\begin{align*}
	F_X(Y) & = \Map_{\dAnk}(Y, X) \\
	& \simeq \Map_{\dAnk}(\colim_n \mathrm t_{\le n}(Y), X) \\
	& \simeq \lim_n \Map_{\dAnk}(\mathrm t_{\le n}(Y), X) \simeq \lim_n F_X( \mathrm t_{\le n}(Y)) .
	\end{align*}
	It follows that $F_X$ is convergent.
	
	Let us now show that $F_X$ admits a global cotangent complex.
	Let $\anL_X$ be the analytic cotangent complex of $X$ introduced in \cref{subsec:analytic_cotangent_complex}.
	It follows from \cref{cor:finiteness_cotangent_complex} that $\anL_X \in \Coh^{\ge 0}(X)$.
	It will therefore be sufficient to prove that for every derived affinoid (resp.\ Stein) space $Y = (\cY, \cO_Y)$ and every map $y \colon Y \to F_X$, the object $y^* \anL_X \in \Coh^{\ge 0}(Y)$ satisfies the universal property of the analytic cotangent complex.
	Recall now that derived analytic spaces embed fully faithfully in $\St(\dAfdk, \tauet)$: in the non-archimedean case, this follows from \cite[Theorem 7.9]{Porta_Yu_Derived_non-archimedean_analytic_spaces}, while in the complex case this is a consequence of \cite[Theorem 3.7]{Porta_DCAGI}.
	Therefore the map $y$ corresponds to a unique (up to a contractible space of choices) map $f_y \colon Y \to X$ in $\dAnk$.
	Using again the fully faithfulness of the embedding of derived analytic spaces in $\St(\dAfdk, \tauet)$, we conclude that
	\[ \DerAn_F(X, \cF) = \Map_{\AnRing_k(\cY)_{/\cO_Y}}( f_y\inv \cO_X, \cO_Y \oplus \cF ) \simeq \Map_{\Coh^+(Y)}(y^* \anL_X, \cF) . \]
	This completes the proof.
\end{proof}

We will now show that the above conditions are also satisfied by derived analytic stacks.
Our arguments are similar to \cite[§1.4.3]{HAG-II}.

\begin{lem}
	Let $F \in \St(\dAfdk,\tauet)$.
	If $F$ is infinitesimally cartesian, then for every $X \in \dAfd_k$, every point $x \colon X \to F$ and every connective coherent sheaf  $\cF \in \Coh^{\ge 0}(X)$, the canonical morphism
	\[ \DerAn_F(X, \cF) \to \Omega \DerAn_F(X, \cF[1]) \]
	is an equivalence.
\end{lem}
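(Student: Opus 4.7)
The plan is to apply the infinitesimally cartesian hypothesis with the shifted sheaf $\cG := \cF[1]\in\Coh^{\ge 1}(X)$ and the zero derivation $d = 0$. Since $X_0[\cF[1]] = X[\cF]$ by our notation convention, this produces a pullback square
\[
\begin{tikzcd}
F(X[\cF]) \ar[r,"\pi^*"] \ar[d,"\pi^*"'] & F(X) \ar[d,"s"] \\
F(X) \ar[r,"s"'] & F(X[\cF[1]])
\end{tikzcd}
\]
in which $s := F(i_0)$ is induced by the zero derivation $i_0\colon X[\cF[1]]\to X$, and both projections of $F(X[\cF])$ to $F(X)$ coincide with the map $\pi^* := F(\pi)$ attached to the canonical inclusion $\pi\colon X \hookrightarrow X[\cF]$. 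The agreement of the two projections reflects the fact that for $d=0$ the two structure maps $X\to X_0[\cF[1]]$ in the defining pushout agree: their compositions with $i_0$ both equal $\pi\circ i_0 = \id_X$, and this common value determines them via the universal property of the pullback defining $\cO_X\oplus\cF$. Crucially, $s$ is then a section of $\pi^*\colon F(X[\cF[1]]) \to F(X)$, the fiber of the latter at $x$ being exactly $\DerAn_F(X,\cF[1])$, canonically pointed by $s(x)$.

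Taking the fiber of the top horizontal map $\pi^*\colon F(X[\cF])\to F(X)$ at $x$ gives $\DerAn_F(X,\cF)$ by definition, while by the pullback property this same fiber coincides with the fiber of the section $s$ at $s(x)$. The desired equivalence now follows from the general formula
\[
\fib_{s(x)}(s) \;\simeq\; \Omega_{s(x)}\bigl(\fib_x(\pi^*)\bigr) \;=\; \Omega\,\DerAn_F(X,\cF[1])
\]
valid for any section $s$ of any morphism $\pi^*$ of spaces. A quick way to derive this formula is to factor the first projection of $B\times_E B$ (with $B = F(X)$ and $E = F(X[\cF[1]])$) as the ``forget-the-path'' map $B\times_E B \to B\times_B B\simeq B$; at $b = x$, its fiber is identified on one side with $\fib_{s(x)}(s)$, and on the other side with the subspace of $\Omega_{s(x)} E$ consisting of loops whose image under $\pi^*$ is nullhomotopic at $x$, namely $\Omega_{s(x)}\fib_x(\pi^*)$.

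The main obstacle is the bookkeeping required to check that the two projections $\alpha^*,\beta^*\colon F(X[\cF])\to F(X)$ arising from the infinitesimally cartesian pullback truly agree with the canonical map $\pi^*$ whose fiber at $x$ defines $\DerAn_F(X,\cF)$; this amounts to unwinding the pushout description of $X_0[\cF[1]] = X[\cF]$ recorded in \cref{corintro:square-zero} and \cref{notation:analytic_square_zero_derivation}. A secondary verification then traces the chain of equivalences back to the canonical morphism asserted in the statement, which is routine once the above identifications are in place.
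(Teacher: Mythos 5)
Your proposal is correct and follows essentially the same strategy as the paper's proof: apply the infinitesimally cartesian condition with the shifted sheaf $\cF[1]\in\Coh^{\ge 1}(X)$ and the zero derivation, obtaining the pullback square with $F(X[\cF])$ in the corner, and then pass to fibers over $x$ using the retraction $F(X[\cF[1]])\to F(X)$ to arrive at the loop space. Two cautions on the write-up: your use of $\pi^*$ is ambiguous (it alternately denotes $F(X[\cF])\to F(X)$ and $F(X[\cF[1]])\to F(X)$), and the justification you give for the two projections $F(X[\cF])\to F(X)$ coinciding with the map induced by the closed immersion $X\hookrightarrow X[\cF]$ is garbled — the phrase ``their compositions with $i_0$ both equal $\pi\circ i_0=\id_X$'' does not type-check, and in any case a commuting square does not force its two projections to be equal. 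The claim itself is true, and the clean way to see it is that $\Omega^\infty_\Ab$ preserves limits: the pullback square for $\cO_X\oplus_0\cF[1]$ is the image under $\Omega^\infty_\Ab$ of the loop square $\cF\simeq 0\times_{\cF[1]}0$ in $\Sp(\Ab(\AnRing_k(\cX)_{/\cO_X}))$, whose two projections to $0$ are both the zero map, so both projections of $\cO_X\oplus\cF$ onto $\cO_X$ are the structure map of the split square-zero extension.
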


\begin{proof}
	Let $X \in \dAfd_k$ be a derived affinoid (resp.\ Stein) space, and let $\cF \in \Coh^{\ge 0}(X)$.
	Since $F$ is infinitesimally cartesian, we have a pullback square
	\[ \begin{tikzcd}
	F(X[\cF]) \arrow{r} \arrow{d} & F(X) \arrow{d} \\
	F(X) \arrow{r} & F(X[\cF[1]])  .
	\end{tikzcd} \]
	We have a canonical map $F(X[\cF[1]]) \to F(X)$ induced by the closed immersion $X \to X[\cF[1]]$.
	Taking fibers at $x \in \pi_0(F(X))$, we obtain a pullback square
	\[ \begin{tikzcd}
	\DerAn_F(X, \cF) \arrow{r} \arrow{d} & \{*\} \arrow{d} \\
	\{*\} \arrow{r} & \DerAn_F(X, \cF[1]) .
	\end{tikzcd} \]
	Hence, we conclude that $\DerAn_F(X, \cF) \simeq \Omega \DerAn_F(X, \cF[1])$.
\end{proof}

\begin{prop} \label{prop:inf_cartesian_implies_obstruction_theory}
	Let $F \in \St(\dAfdk,\tauet)$ be an $n$-geometric stack with respect to the geometric context $(\dAfdk,\tauet,\bPsm)$.
	If $F$ is infinitesimally cartesian, then it has a global cotangent complex, which is $(-n)$-connective.
\end{prop}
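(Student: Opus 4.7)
The plan is to proceed by induction on the geometric level $n$. The base case $n = -1$ is immediate: here $F$ is representable by a derived affinoid (resp.~Stein) space, so \cref{lem:representable_satisfies_rep_conditions} produces a global analytic cotangent complex, whose connectivity is exactly the content of \cref{cor:finiteness_cotangent_complex}. (In the $(-n)$-connective bookkeeping this corresponds to the cotangent complex lying in $\Coh^{\ge 0}$.)

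For the inductive step, I would first upgrade the infinitesimal cartesian-ness to a linear structure. The preceding lemma shows that for fixed $x \colon X \to F$, the functor $\DerAn_F(X,-)\colon \Coh^{\ge 0}(X) \to \cS$ is reduced excisive, so by $\Omega$-loop delooping it extends to a spectrum-valued functor on $\Coh^+(X)$. What must be shown is that this spectrum-valued functor is corepresentable by an eventually connective coherent sheaf, and that the corepresenting object is compatible with pullback in $X$.

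For this, choose a smooth $(n-1)$-atlas $p \colon U \to F$ with $U$ a disjoint union of derived affinoids. Base changing along $x$ produces an $(n-1)$-geometric smooth cover $X \times_F U \to X$, and since smoothness lifts to the derived setting in the sense of \cref{prop:characterization_smooth_morphisms}, after replacing $X$ by a smooth (hence analytic) cover we may lift $x$ to a point $\tilde x \colon X \to U$. By the inductive hypothesis, $U$ admits a global analytic cotangent complex $\anL_{U,\tilde x}$ which is $(-(n-1))$-connective. Analogously, one constructs the relative object $\anL_{U/F,\tilde x}$ via the fiber sequence coming from pullback $U \times_F U \rightrightarrows U$: using infinitesimal cartesian-ness of $F$ and the (inductively available) cotangent complexes of $U$ and of $U \times_F U$, one produces $\anL_{U/F,\tilde x}$ together with a cofiber sequence
\[ \tilde x^* \anL_{F,x} \to \anL_{U,\tilde x} \to \anL_{U/F,\tilde x} . \]
Since $p$ is smooth, $\anL_{U/F,\tilde x}$ is perfect in tor-amplitude $0$, and hence the fiber of the above map defines the desired corepresenting object $\anL_{F,x}$ on $X$ after smooth descent. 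Descent works because \cref{thm:equivalence_of_modules} guarantees that analytic modules satisfy the same descent as algebraic modules, so the locally constructed objects glue to a coherent sheaf on $X$. The connectivity bound $(-n)$-connective follows by reading off the above fiber sequence: $\anL_{U,\tilde x}$ is $(-(n-1))$-connective and $\anL_{U/F,\tilde x}$ is $0$-connective, so taking fibers drops the connectivity by one. Base change compatibility follows formally from the fact that the construction is characterized by a universal property on $\DerAn_F$.

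The main obstacle is the descent step: one must verify that the locally corepresenting objects $\anL_{F,x}|_{X'}$ constructed over members of a smooth cover $X' \to X$ are functorial in the cover and glue to a global coherent sheaf on $X$. This is where the analytic structure could plausibly cause trouble, since the gluing happens in $\Sp(\Ab(\AnRing_k(\cX)_{/\cO_X}))$ rather than in an ordinary module category; however, the equivalence provided by \cref{thm:equivalence_of_modules} reduces the descent question to descent for sheaves of modules over $\cO_X\alg$, which is standard. The remaining care is in ensuring that the transitivity fiber sequence for relative analytic cotangent complexes (\cref{prop:transitivity_cotangent_complex}) and the base change property (\cref{prop:base_change_cotangent_complex_analytic}) propagate through the inductive construction, which they do since each application only involves morphisms of derived analytic spaces.
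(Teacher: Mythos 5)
Your inductive structure and base case match the paper's, but the inductive step you propose is a genuinely different — and, as written, incomplete — route. The paper's proof (following HAG-II, 1.4.1.11) avoids the atlas entirely: given $x \colon X \to F$ with $X$ affinoid, it uses the two diagonal maps $\delta \colon X \to X \times X$ and $\delta_F \colon X \to X \times_F X$. Both targets are $(n-1)$-geometric, both admit cotangent complexes at the diagonal points by induction, and the factorization of $\delta$ through $\delta_F$ gives a map $f \colon \mathbb L \to \mathbb L'$ between those two cotangent complexes. Setting $\mathbb L'' := \mathrm{cofib}(f)$, one computes directly (using $\DerAn_X(X \times X, -) \simeq *$ and $\DerAn_X(X \times_F X, -) \simeq \Omega \DerAn_F(X,-)$) that $\Omega(\mathbb L'')$ corepresents $\DerAn_F(X,-)$. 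Everything happens over $X$ itself, so there is no descent problem, and the connectivity shift is transparent: $\mathbb L$ and $\mathbb L'$ are $(-n+1)$-connective by induction, so $\Omega(\mathbb L'')$ is $(-n)$-connective.

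Your atlas-based construction has two real gaps. First, the transitivity sequence $\tilde{x}^* \anL_{F,x} \to \anL_{U,\tilde{x}} \to \anL_{U/F,\tilde{x}}$ presupposes both $\anL_{F,x}$ and $\anL_{U/F}$, but you want to \emph{define} $\anL_{F,x}$ from it, so you must first construct $\anL_{U/F}$ from the simplicial object $U_\bullet = U \times_F \cdots \times_F U$ without already having $\anL_F$; you do not explain how, and this is not formal. Second, the lift $\tilde{x}$ only exists after replacing $X$ by a smooth cover $X'$, so you only get a candidate corepresenting object on $X'$, and you then need to glue. This is not merely descent for $\Coh^+$ (which \cref{thm:equivalence_of_modules} indeed reduces to the algebraic case); it is descent for the \emph{construction} of the corepresenting object, including checking that the candidate objects over various covers are canonically identified. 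That step is exactly what HAG-II, 1.4.1.12 handles, and the paper invokes it only for the passage from pointwise existence to global compatibility — not for the pointwise existence itself, which is where you are trying to use it. Finally, a minor but real error: for an $(n-1)$-representable smooth morphism $U \to F$, the relative cotangent complex $\anL_{U/F}$ is perfect in tor-amplitude $[0, n-1]$, not $0$ (cf.\ the proof of \cref{prop:infinitesimal_characterization_smooth}); the tor-amplitude-$0$ statement only holds when both source and target are representable.
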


\begin{proof}
	We follow closely the proof of \cite[1.4.1.11]{HAG-II}.
	We proceed by induction on $n$.
	If $n = -1$, then the statement follows from \cref{lem:representable_satisfies_rep_conditions}.
	Let therefore $n \ge 0$ and let $F$ be an $n$-geometric stack and $x \colon X \to F$ be a point, with $X \in \dAfd_k$.
	Consider the natural morphisms
	\[ \delta \colon X \to X \times X, \qquad \delta_F \colon X \to X \times_F X . \]
	By induction, both $X \times X$ and $X \times_F X$ have analytic cotangent complexes at $\delta$ and at $\delta_F$.
	Let us denote by by $\mathbb L, \mathbb L'$, respectively.
	The canonical map since $\delta$ factors through $\delta_F$, there is a canonical map $f \colon \mathbb L \to \mathbb L'$ in $\Coh^+(X)$.
	Let $\mathbb L'' \coloneqq \mathrm{cofib}(f)$.
	By definition, for any $\cF \in \Coh^{\ge 0}(X)$, the space $\Map_{\Coh^+(X)}(\mathbb L'', \cF)$ is the fiber of
	\[ \DerAn_X(X \times_F X, \cF) \to \DerAn_X(X \times X, \cF) . \]
	Now, $\DerAn_X(X \times X, \cF) \simeq \{*\}$, while
	\[ \DerAn_X(X \times_F X, \cF) \simeq \DerAn_X(X, \cF) \times_{\DerAn_F(X, \cF)} \DerAn_X(X, \cF) \simeq \Omega \DerAn_F(X,\cF) . \]
	As consequence,
	\begin{align*}
	\Map_{\Coh^+(X)} (\Omega(\mathbb L''), \cF) & \simeq \Map_{\Coh^+(X)}(\mathbb L'', \cF[1]) \\
	& \simeq \Omega \DerAn_F(X, \cF[1]) \simeq \DerAn_F(X, \cF).
	\end{align*}
	It follows that $F$ has a cotangent complex at $x$.
	Moreover, the inductive hypothesis shows that both $\mathbb L$ and $\mathbb L'$ are $(-n+1)$-connective.
	It follows that $\mathbb L''$ is $(-n+1)$-connective as well, and therefore $\Omega(\mathbb L'') = \mathbb L''[-1]$ is $(-n)$-connective.
	The same argument of \cite[1.4.1.12]{HAG-II} shows that $F$ has a global cotangent complex.
\end{proof}

Our next task is to show that any $n$-geometric stack with respect to the geometric context $(\dAfdk,\tauet,\bPsm)$ is infinitesimally cartesian.

Let us recall that the notion of smooth morphism between derived analytic spaces is local on both source and target.
Therefore, we can extend it to representable morphisms in $\St(\dAfdk, \tauet)$ (cf.\ \cite[Remark 2.10]{Porta_Yu_Higher_analytic_stacks_2014}).
More explicitly, an $n$-representable morphism $f \colon F \to G$ in $\St(\dAfdk, \tauet)$ is smooth if and only if for every $U \in \dAfdk$ and every map $U \to G$, there exists an atlas $\{V_i\}$ of $U \times_G F$ such that the compositions $V_i \to U$ are smooth morphisms of derived analytic spaces.

\begin{prop} \label{prop:geometric_implies_inf_cartesian}
	\begin{enumerate}
		\item \label{item:n-representable_obs_theory} Any $n$-representable morphism of stacks is infinitesimally cartesian.
		\item \label{item:smooth_vanishing_map} Let $f \colon F \to G$ be an $n$-representable morphism. If $f$ is smooth, then for any $X \in \dAfd_k$ and any $x \colon X \to F$ there exists an \'etale covering $x' \colon X' \to X$ such that for any $\cF \in \Coh^{\ge 1}(X')$ the canonical map
		\[ \pi_0 \Map_{\Coh^+(X')}(\anL_{X / G, x'}, \cF) \to \pi_0 \Map_{\Coh^+(X')}(\anL_{F / G, x \circ x'}, \cF) \]
		is zero.
	\end{enumerate}
\end{prop}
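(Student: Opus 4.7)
The proof of (1) proceeds by induction on $n$. In the base case $n=-1$, the morphism $f \colon F \to G$ is representable. Fix $Y \in \dAfdk$, $\cF \in \Coh^{\ge 1}(Y)$ and a derivation $d \colon \anL_Y \to \cF$. Given a map $Y_d[\cF] \to G$ and a compatible element of $F(Y) \times_{F(Y[\cF])} F(Y)$ lying over its image in $G$, we pull back $f$ along $Y_d[\cF] \to G$ to obtain a derived analytic space $\widetilde F \to Y_d[\cF]$. By \cref{corintro:square-zero} combined with \cref{thmintro:gluing_closed_immersions}, $Y_d[\cF]$ is the pushout $Y \sqcup_{Y[\cF]} Y$ in $\dAnk$, so the Yoneda functor converts this pushout into a pullback of mapping spaces; sections of $\widetilde F \to Y_d[\cF]$ are therefore identified with pairs of sections over $Y$ coinciding over $Y[\cF]$, yielding the required cartesian square. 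For the inductive step, we base change along any map from a derived affinoid to $G$, choose a smooth $(n-1)$-representable atlas of the resulting $n$-geometric stack, and propagate infinitesimal cartesianness by combining the sheaf property of $F$ and $G$ with the inductive hypothesis applied to the $(n-1)$-representable atlas morphisms, following the blueprint of the analogous argument in derived algebraic geometry from \cite{HAG-II}.

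For (2), apply the transitivity fiber sequence of \cref{prop:transitivity_cotangent_complex} to the composition $X' \xrightarrow{x'} X \xrightarrow{x} F \xrightarrow{f} G$:
\[ \anL_{F/G, x \circ x'} \longrightarrow \anL_{X'/G} \longrightarrow \anL_{X'/F} . \]
Since $x'$ is étale, \cref{cor:cotangent_complex_etale} together with transitivity yields $\anL_{X'/G} \simeq (x')^* \anL_{X/G} = \anL_{X/G, x'}$. Applying $\Map_{\Coh^+(X')}(-, \cF)$ gives a fiber sequence of spaces, so it suffices to show that the natural map
\[ \pi_0 \Map_{\Coh^+(X')}(\anL_{X'/F}, \cF) \longrightarrow \pi_0 \Map_{\Coh^+(X')}(\anL_{X/G, x'}, \cF) \]
is surjective for a suitable étale cover $x'$. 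Concretely, this amounts to showing that every extension $X'[\cF] \to G$ over the composition $X' \to X \to G$ admits, after replacing $X$ by $X'$, a lift to an extension $X'[\cF] \to F$ over $x \circ x'$.

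To construct the lift, consider the base change $\pi \colon \widetilde F \coloneqq F \times_G X \to X$, which is smooth and $n$-geometric, together with the section $s \colon X \to \widetilde F$ determined by $x$. An extension $X'[\cF] \to G$ base-changes to an extension $X'[\cF] \to X$, and lifting to $F$ is equivalent to extending $X'[\cF] \to X$ to a map $X'[\cF] \to \widetilde F$ compatible with $s \circ x'$. Choose a smooth atlas $\{V_j \to \widetilde F\}$ with each $V_j \in \dAfdk$. Pulling back along $s$ produces a smooth cover of $X$, which we refine using \cref{prop:characterization_smooth_morphisms}: the étale-plus-projection factorization of smooth morphisms allows us to select an étale cover $x' \colon X' \to X$ through which $s \circ x'$ factors via some $V_j$. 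The lifting problem now reduces to extending $X' \to V_j$ to $X'[\cF] \to V_j$ over $X$. Since $V_j \to X$ is a smooth morphism of derived analytic spaces, \cref{prop:characterization_smooth_morphisms} factors it locally as étale composed with the projection from $X \times \mathbf A^m_k$; étale morphisms lift infinitesimally uniquely, and the remaining lifting along the projection reduces to extending $X' \to \mathbf A^m_k$ to $X'[\cF] \to \mathbf A^m_k$, which is always possible by \cref{lem:vanishing_obstruction_theory} combined with \cref{cor:cotangent_complex_An}.

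The main obstacle is the inductive step of (1): infinitesimal cartesianness is a condition on a specific morphism rather than an obviously local property, so propagating from a smooth $(n-1)$-representable atlas to the full $n$-representable morphism requires carefully combining descent with the pushout structure of $Y_d[\cF]$. Once this is carried out, (2) follows relatively cleanly from the transitivity sequence and the smoothness-driven lifting argument outlined above.
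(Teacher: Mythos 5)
Your base case for (1) is correct and matches the paper (apply \cref{thm:pushout_closed_immersions} to convert $Y_d[\cF]$ into a pushout and use Yoneda), but the inductive step contains a real gap. The paper proves (1) and (2) by a \emph{joint} induction, and the nontrivial content of the inductive step for (1) is precisely an application of (2) at level $n-1$. Concretely: after reducing to the claim that $F(X_d[\cF]) \to F(X)\times_{F(X[\cF])} F(X)$ has contractible fibers, one picks an $(n-1)$-atlas $U_i \to F$, factors the given point through some $U \coloneqq U_i$ (up to \'etale cover), and invokes the inductive hypothesis on the $(n-1)$-representable morphism $U \to F$ to obtain a pullback square. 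At that stage the remaining obstruction is that a certain fiber, identified with $\Omega_{d,0}\Map_{\Coh^+(X)}(\anL_{U/F,y_1},\cF)$, is nonempty; this is deduced from the exact sequence
\[
\pi_0 \Map(\anL_{X/F,x_1},\cF) \to \pi_0\Map(\anL_{U/F,y_1},\cF) \to \pi_{-1}\Map(\anL_{X/U,y_1},\cF),
\]
together with statement (2) at rank $n-1$ applied to the smooth $(n-1)$-representable morphism $U \to F$, which forces the first map to vanish after an \'etale cover. Your phrase ``propagate infinitesimal cartesianness by combining the sheaf property \ldots with the inductive hypothesis'' elides exactly this step; without invoking (2) you cannot conclude that the fiber is nonempty, so the inductive step as written does not close.

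For (2), your route is genuinely different from the paper's, and it also contains an error. Your reduction to surjectivity of $\pi_0\Map(\anL_{X'/F},\cF) \to \pi_0\Map(\anL_{X'/G},\cF)$ is valid, but the proposed base change to $\widetilde F \coloneqq F \times_G X$ does not parse: the given ``extension'' is a map $X'[\cF] \to G$ which does \emph{not} factor through $X \to G$ (only its restriction to $X'$ does), so there is no induced map $X'[\cF] \to \widetilde F$. What you really want is $F \times_G X'[\cF]$, at which point you are essentially reproving the infinitesimal lifting criterion of \cref{prop:infinitesimal_characterization_smooth}; but that proposition is itself downstream of the present one (its proof uses \cref{prop:geometric_implies_inf_cartesian}), so you cannot cite it, and reproving it from scratch is both circular-looking and much heavier than what is needed. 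The paper's argument for (2) is a one-step factorization: reduce to $G = *$, choose an $n$-atlas $U \to F$, refine $X$ by an \'etale cover so that $x$ factors as $X' \xrightarrow{u} U \to F$, and observe that the induced map on mapping spaces factors through $\Map_{\Coh^+(X')}(\anL_{U,u},\cF)$; since $U$ is smooth, $\anL_{U,u}$ is perfect of tor-amplitude $0$ by \cref{prop:characterization_smooth_morphisms}, so $\pi_0\Map(\anL_{U,u},\cF) = 0$ for $\cF \in \Coh^{\ge 1}(X')$, and the composite vanishes. You should replace your lifting argument with this, both because it is correct and because it is what the induction hypothesis in (1) actually needs.
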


\begin{proof}
	We proceed by induction on $n$.
	If $n = -1$, then (\ref{item:n-representable_obs_theory}) follows from \cref{lem:representable_satisfies_rep_conditions} and (\ref{item:smooth_vanishing_map}) follows from \cref{prop:characterization_smooth_morphisms}.
	
	Let now $n \ge 0$.
	We will start by proving (\ref{item:n-representable_obs_theory}).
	It is enough to prove that if $F$ is $n$-geometric then it is infinitesimally cartesian.
		Let $X \in \dAfd_k$, $\cF \in \Coh^{\ge 1}(X)$ and $d \colon X[\cF] \to X$ be an analytic derivation.
	Let $x$ be a point in $\pi_0( F(X) \times_{F(X[\cF])} F(X) )$ with projection $x_1 \in \pi_0(F(X))$ on the first factor.
	We will prove that the fiber taken at $x$ of
	\[ F(X_d[\cF]) \to F(X) \times_{F(X[\cF])} F(X) \]
	is contractible.
	This implies that the above morphism is an equivalence and therefore that $F$ is infinitesimally cartesian.
	
	We claim that this statement is local for the \'etale topology on $X_d[\cF]$.
	Indeed, if $j' \colon U' \to X_d[\cF]$ is an \'etale map in $\dAfd_k$, let
	\[ j \colon U \coloneqq U' \times_{X_d[\cF]} X \to X \]
	be the \'etale map obtained by base change.
	Then since the formation of analytic square-zero extension is local on any structured topos, we obtain that
	\[ U' \times_{X_d[\cF]} X[\cF] \simeq U[j^* \cF], \qquad U' \simeq U_{d'}[j^* \cF] . \]
	As consequence, we are free to replace $X$ by any \'etale cover.
	
	Choose an $(n-1)$-atlas $\{U_i \to F\}_{i \in I}$ of $F$.
	Thanks to the above claim, we can assume that the point $x_1 \in \pi_0(F(X))$ lifts to a point $y_1 \in \pi_0(U_i(X))$ for some index $i \in I$.
	Write simply $U \coloneqq U_i$.
	Consider the diagram
	\[ \begin{tikzcd}
	U(X_d[\cF]) \arrow{r} \arrow{d} & U(X) \times_{U(X[\cF])} U(X) \arrow{d}{f} \\
	F(X_d[\cF]) \arrow{r} & F(X) \times_{F(X[\cF])} F(X) .
	\end{tikzcd} \]
	The induction hypothesis applied to the $(n-1)$-representable morphism $\pi \colon U \to F$ shows that the above square is a pullback.
	Moreover, the top horizontal morphism is an equivalence.
	It follows that the fibers of the bottom horizontal morphism is either empty or contractible.
	In order to complete the proof of (\ref{item:n-representable_obs_theory}), it is thus sufficient to prove that the fiber of $f$ at $x$ is non-empty.
	Consider the following diagram
	\[ \begin{tikzcd}
	\mathrm{fib}(g) \arrow{r} \arrow{d} & \mathrm{fib}(p) \arrow{r}{g} \arrow{d} & \mathrm{fib}(q) \arrow{d} \\
	\mathrm{fib}(f) \arrow{r} & U(X) \times_{U(X[\cF])} U(X) \arrow{r}{f} \arrow{d}{p} & F(X) \times_{F(X[\cF])} F(X) \arrow{d}{q} \\
	{} & U(X) \arrow{r} & F(X) ,
	\end{tikzcd} \]
	where the fiber of $q$ (resp.\ $p$) are taken at $x_1$ (resp.\ $y_1$), while the horizontal fibers are taken at $x$.
	The commutativity of the diagram shows that it is enough to prove that $\mathrm{fib}(g)$ is non-empty.
	Now, $g$ is equivalent to the canonical map
	\[ \Omega_{d,0} \DerAn_U(X, \cF) \to \Omega_{d,0} \DerAn_F(X, \cF) , \]
	and therefore $\mathrm{fib}(g) \simeq \Omega_{d,0} \Map_{\Coh^+(X)}( \anL_{U/F, y_1}, \cF)$.
	The composition $X \to U \to F$ gives rise to the following exact sequence:
	\[ \pi_0 \Map_{\Coh^+(X)}( \anL_{X/F, x_1}, \cF) \to \pi_0 \Map_{\Coh^+(X)}( \anL_{U/F, y_1}, \cF ) \to \pi_{-1} \Map_{\Coh^+(X)}(\anL_{X/U, y_1}, \cF) . \]
	Using (\ref{item:smooth_vanishing_map}) at rank $(n-1)$ for the map $\pi \colon U \to F$ and up to cover $X$ with an \'etale atlas, we can therefore suppose that the first map vanishes.
	On the other hand, the image of $d$ via the second map is zero.
		Therefore, $d$ lies in the image of $\pi_0 \Map_{\Coh^+(X)}(\anL_{X/F,x_1})$, i.e.\ $d$ is in the connected component containing $0$.
	In particular, we can find a path from $d$ to $0$ in $\Map(\anL_{F / U,y_1}, \cF)$.
	This shows that $\Omega_{d,0} \Map_{\Coh^+(X)}(\anL_{U/F,y_1}, \cF)$ is non-empty and concludes the proof of (\ref{item:n-representable_obs_theory}).
	
	We now turn to the proof of (\ref{item:smooth_vanishing_map}) for rank $n$.
	We can assume that $G$ is a final object.
	Let $U \to F$ be an $n$-atlas and let $x \colon X \to F$ be a point, with $X \in \dAfd_k$.
	Up to choosing an \'etale cover of $X$, we can suppose that $x$ factors through a point $u \colon X \to U$.
	Therefore, the map $\anL_{F,x} \to \anL_X$ factors as
	\[ \anL_{F,x} \to \anL_{U,u} \to \anL_X . \]
	Since $U$ is smooth, \cref{prop:characterization_smooth_morphisms} shows that $\anL_{U,u}$ is perfect and concentrated in degree $0$.
	Therefore, for every $\cF \in \Coh^{\ge 1}(X)$, we have
	\[ \pi_0 \Map_{\Coh^+(X)}(\anL_{U,u}, \cF) = 0 , \]
	thus completing the proof.
\end{proof}

In order to prove the convergence property of $n$-representable maps, we need a characterization of smooth morphisms in terms of infinitesimal lifting properties.

\begin{prop} \label{prop:infinitesimal_characterization_smooth}
	Let $f \colon F \to G$ be an $n$-representable morphism in $\St(\dAfdk, \tauet)$ with respect to the geometric context $(\dAfdk,\tauet,\bPsm)$.
	Then $f$ is smooth if and only if it satisfies the following conditions:
	\begin{enumerate}
		\item $\trunc(f)$ is smooth;
		\item \label{item:infinitesimal_smooth} for any derived affinoid (resp.\ Stein) space $X \in \dAfd_k$, any $\cF \in \Coh^{\ge 1}(X)$ and any $d \in \DerAn(X, \cF)$, every lifting problem
		\begin{equation} \label{eq:lifting_problem}
		\begin{tikzcd}
		X \arrow{r}{x} \arrow{d} & F \arrow{d}{f} \\
		X_d[\cF] \arrow{r} \arrow[dashed]{ur} & G
		\end{tikzcd}
		\end{equation}
		admits at least a solution.
	\end{enumerate}
\end{prop}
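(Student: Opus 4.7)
The plan is to prove both implications via an obstruction-theoretic analysis of the lifting problem in \eqref{eq:lifting_problem}, proceeding by induction on the geometric level $n$ of $f$ in order to reduce both directions to the representable case. The base case $n=-1$ (morphisms of derived analytic spaces) is where the real content lies; there we will invoke \cref{prop:characterization_smooth_morphisms} to reformulate smoothness as ``$\trunc(f)$ smooth plus $\anL_{F/G}$ perfect in tor-amplitude $0$''. The central tool in both directions is the identification of the space of solutions to \eqref{eq:lifting_problem} via the description $X_d[\cF] \simeq X \sqcup_{X[\cF]} X$ from \cref{thm:pushout_closed_immersions}, combined with the infinitesimally cartesian property of $F$ and $G$ (\cref{prop:geometric_implies_inf_cartesian}(\ref{item:n-representable_obs_theory})) and the existence of the global cotangent complex (\cref{prop:inf_cartesian_implies_obstruction_theory}). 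This identification places the obstruction to lifting in the group $\pi_{-1}\Map_{\Coh^+(X)}(\anL_{F/G,x},\cF) = \mathrm{Ext}^1(\anL_{F/G,x},\cF)$, and realises the space of lifts (when nonempty) as a torsor under $\Map_{\Coh^+(X)}(\anL_{F/G,x},\cF)$.

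For the forward direction, assume $f$ is smooth. Then $\trunc(f)$ is smooth by definition of smoothness for representable morphisms (via atlases). Working étale-locally on the source $X$ (which is permitted since \'etale covers of derived affinoid resp.\ Stein spaces remain derived affinoid resp.\ Stein), we reduce to the representable case and use \cref{prop:characterization_smooth_morphisms} to conclude that $\anL_{F/G,x}$ is perfect and in tor-amplitude $0$. Consequently $\cHom(\anL_{F/G,x},\cF) \simeq \anL_{F/G,x}^\vee\otimes\cF$ lies in $\Coh^{\geq 1}(X)$ whenever $\cF\in\Coh^{\geq 1}(X)$. Since $X$ is derived affinoid (resp.\ Stein), Tate acyclicity (resp.\ Cartan's Theorem B) gives the vanishing of all positive coherent cohomology, forcing the obstruction class in $\mathrm{Ext}^1(\anL_{F/G,x},\cF)$ to vanish. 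Hence the lift exists.

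For the reverse direction, we argue by induction on $n$. In the representable case, assuming the lifting property together with $\trunc(f)$ smooth, we need to show $\anL_{F/G}$ is perfect and in tor-amplitude $0$. The truncation smoothness gives that $\pi_0(\anL_{F/G})$ is locally free of finite rank (using the identification $\pi_0(\anL_{F/G}) \simeq \Omega^{\mathrm{an}}_{\trunc(F)/\trunc(G)}$). Suppose for contradiction that $i\geq 1$ is minimal with $\pi_{i+1}(\anL_{F/G,x})\neq 0$ on some derived affinoid (resp.\ Stein) $X$; set $\cN \coloneqq \pi_{i+1}(\anL_{F/G,x})$ and $\cF \coloneqq \cN[i]\in\Coh^{\geq 1}(X)$. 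The natural projection $\anL_{F/G,x}\to \cN[i+1]$ is a nonzero class in $\mathrm{Ext}^1(\anL_{F/G,x},\cF)$, which we realise as the obstruction to a specific lifting problem: take $d\colon\anL_X\to\cF$ to be the composition $\anL_X\to\anL_{X/G}$ followed by a suitable truncation-projection map, and choose the bottom horizontal map $X_d[\cF]\to G$ to be the canonical one. The lifting hypothesis forces this class to vanish, contradicting our choice, so $\pi_j(\anL_{F/G})=0$ for all $j\geq 2$. The analogous argument using the remaining degree of freedom handles $\pi_1$ by combining with $\trunc(f)$-smoothness. For the inductive step at level $n\geq 0$, one chooses an $(n-1)$-atlas $U\to F$ and pulls the lifting property back along $U\to F$, reducing smoothness of $f$ to smoothness of $U\to G$ together with smoothness of the representable $U\to F$; the former follows from the inductive hypothesis and the latter from the base case.

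The main obstacle is the reverse direction, specifically the precise engineering of a lifting problem whose obstruction class equals a prescribed element of $\mathrm{Ext}^1(\anL_{F/G,x},\cF)$. One needs to carry out the boundary-map computation in the long exact sequence associated to the cofiber sequence $f^*\anL_G\to\anL_F\to\anL_{F/G}$, verify its compatibility with the pushout presentation of $X_d[\cF]$, and confirm that the class $\anL_{F/G,x}\to\pi_{i+1}(\anL_{F/G,x})[i+1]$ is actually realisable by some $(d,y)$. A secondary technical point is making the inductive step rigorous: one must check that smoothness descends along smooth atlases at the level of both truncations and cotangent complexes, a task that uses \cref{prop:base_change_cotangent_complex_analytic} and the \'etale-local nature of tor-amplitude.
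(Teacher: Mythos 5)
Your overall strategy — induction on the geometric level, reduction to the representable case, and then a cotangent-complex criterion — matches the paper's. But there are two substantive discrepancies.

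First, a degree shift: you place the obstruction to the lifting problem \eqref{eq:lifting_problem} in $\pi_{-1}\Map_{\Coh^+(X)}(\anL_{F/G,x},\cF) = \mathrm{Ext}^1(\anL_{F/G,x},\cF)$, but this is off by one. Since $X_d[\cF]$ is a square-zero extension of $X$ by $\cF[-1]$ (see \cref{notation:analytic_square_zero_derivation}), the conventional obstruction group is $\mathrm{Ext}^1(\anL_{F/G},\cF[-1]) = \mathrm{Ext}^0(\anL_{F/G},\cF) = \pi_0\Map_{\Coh^+(X)}(x^*\anL_{F/G},\cF)$. Indeed the paper formulates the existence of a lift as path-connectedness of the two derivations $\alpha_d,\alpha_{d_0}$ inside $\Map(x^*\anL_{F/G},\cF)$, and establishes the stronger statement that this $\pi_0$ vanishes when $\anL_{F/G}$ is perfect in tor-amplitude $0$ and $\cF\in\Coh^{\geq 1}(X)$ (via retract-of-free in the non-archimedean case, and internal Hom plus Cartan's Theorem B in the Stein case). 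Your vanishing argument works, but your stated degree does not; the same error propagates into the class $\anL_{F/G,x}\to\pi_{i+1}(\anL_{F/G,x})[i+1]$ in your converse, which should be a class in $\mathrm{Ext}^0$, i.e.\ you should take $\cF\coloneqq\cN[i+1]$ rather than $\cN[i]$.

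Second, and more importantly, the converse. You propose an induction-by-contradiction on the minimal degree $i$ with $\pi_{i+1}(\anL_{F/G,x})\neq 0$, realising that degree's truncation map as an obstruction class and then handling $\pi_1$ "analogously," and you yourself flag the realisability step as the main obstacle. This acknowledged gap is real, and your argument also leaves open why the resulting discrete $\anL_{F/G}$ would be perfect of finite presentation, not merely locally free. The paper bypasses all of this with a single retraction argument: after base-changing so that $G$ is representable and pushing forward along an atlas $U\to F$ (using that atlases are smooth, hence already satisfy the lifting property), one reduces to $F=X$, $G=Y$ representable. Every $\gamma\colon\anL_{X/Y}\to\cF$ is realised as $\alpha_d$ by taking $d$ to be the composite $\anL_X\to\anL_{X/Y}\xrightarrow{\gamma}\cF$, so the lifting hypothesis gives $\pi_0\Map_{\Coh^+(X)}(\anL_{X/Y},\cF)=0$ for all $\cF\in\Coh^{\geq 1}(X)$. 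One then picks (locally) a surjection on $\pi_0$, $\phi\colon\cO_X^n\to\anL_{X/Y}$, sets $K\coloneqq\fib(\phi)$, and observes $K[1]\in\Coh^{\geq 1}(X)$, so $\pi_0\Map(\anL_{X/Y},K[1])=0$; the long exact sequence for $\Map(\anL_{X/Y},-)$ applied to $\cO_X^n\to\anL_{X/Y}\to K[1]$ then produces a section of $\phi$, exhibiting $\anL_{X/Y}$ as a retract of $\cO_X^n$, hence perfect and in tor-amplitude $0$. This handles all homotopy groups (including $\pi_1$) and the finiteness simultaneously, with no case analysis and no realizability side-condition. You should replace your contradiction argument with this retraction argument.
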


\begin{proof}
	First suppose that $f$ is smooth.
	Then there exists an affinoid atlas $\{U_i\}$ of $G$, affinoid atlases $\{V_{ij}\}$ of $F\times_G U_i$, such that the maps $V_{ij}\to U_i$ are smooth.
	In particular, the truncations $\trunc(V_{ij})\to \trunc(U_i)$ are smooth.
	Since $\{\trunc(U_i)\}$ constitute an atlas of $\trunc(G)$ and $\{\trunc(V_{ij})\}$ constitute an atlas of $\trunc(F)$, we deduce that the truncation $\trunc(f)$ is smooth.
	Let us now prove that the second condition is satisfied as well.
	We proceed by induction on $n$.
	Suppose first $n = -1$ and consider the lifting problem \eqref{eq:lifting_problem}.
	Set
	\[ F' \coloneqq X_d[\cF] \times_G F . \]
	Let $x' \colon X \to F'$ be the morphism induced by the universal property of the pullback.
	Then the lifting problem \eqref{eq:lifting_problem} is equivalent to the following one:
	\[ \begin{tikzcd}
	X \arrow{r}{x'} \arrow{d} & F' \arrow{d} \\
	X_d[\cF] \arrow{r}{\id} \arrow[dashed]{ur} & X_d[\cF] .
	\end{tikzcd} \]
	In other words, we can assume $G$, and hence $F$, to be $(-1)$-representable.
	Recall that, by definition, $X_d[\cF]$ is the pushout
	\[ \begin{tikzcd}
	X[\cF] \arrow{d}{d} \arrow{r}{d_0} & X \arrow{d} \\
	X \arrow{r} & X_d[\cF]
	\end{tikzcd} \]
	in the category $\dAn$.
	Since $F$ is $(-1)$-representable, to produce a solution $X_d[\cF] \to F$ of the lifting problem is equivalent to produce a path between the two morphisms
	\[ \begin{tikzcd}
	X[\cF] \arrow[shift left = 1]{r}{d} \arrow[shift right = 1]{r}[swap]{d_0} & X \arrow{r}{x} & F .
	\end{tikzcd} \]
	in the category $\dAn_{X // G}$.
		Observe that these two morphisms in $\dAn_{X // G}$ define two elements $\alpha, \beta \in \pi_0 \DerAn_{F/G}(X; \cF)$.
	In order to solve the original lifting problem, it is enough to find a path between $\alpha$ and $\beta$ in the space
	\[ \DerAn_{F/G}(X; \cF) \simeq \Map_{\Coh^+(X)}(x^* \anL_{F/G}, \cF) . \]
	It is enough to prove that
	\begin{equation} \label{eq:path_obstruction}
	\pi_0 \Map_{\Coh^+(X)}(x^* \anL_{F/G}, \cF) \simeq 0 .
	\end{equation}
	Let us first prove \cref{eq:path_obstruction} in the non-archimedean analytic case.
	By \cref{prop:characterization_smooth_morphisms}, $x^* \anL_{F/G}$ is perfect and in tor-amplitude 0.
	This implies that it is a retract of a free module of finite rank.
	In particular, $\pi_0 \Map_{\Coh^+(X)}(x^* \anL_{F/G}, \cF)$ is a retract of $\pi_0(\cF^n) \simeq 0$ for some non-negative integer $n$.
	This completes the proof of \cref{eq:path_obstruction} in the non-archimedean case.
	
	Now let us prove \cref{eq:path_obstruction} in the complex analytic case.
	Consider the internal Hom $\cHom(x^* \anL_{F/G},\allowbreak \cF)$ in $\Coh^+(X)$, and remark that
	\[ \Map_{\Coh^+(X)}(x^* \anL_{F/G}, \cF) \simeq \tau_{\ge 0} \Gamma(X, \cHom(x^* \anL_{F/G}, \cF)) . \]
		Since $X$ is Stein, Cartan's theorem B shows that it is enough to check that
	\[ \cHom(x^* \anL_{F/G}, \cF) \in \Coh^{\ge 1}(X) .  \]
	This condition is local and it can therefore be checked after shrinking $X$.
	Since $f$ is smooth, it follows from \cref{prop:characterization_smooth_morphisms} that $x^* \anL_{F/G}$ is perfect and in tor-amplitude $0$.
	Therefore, locally on $X$, we can express $x^* \anL_{F / G}$ as retract of a free module of finite rank.
	It follows that, locally on $X$, the sheaf $\cHom(x^* \anL_{F/G}, \cF)$ is a retract of $\cF^n$ for some nonnegative integer $n$.
	Since $\cF \in \Coh^{\ge 1}(X)$, this completes the proof of \cref{eq:path_obstruction}.
	
	We now assume that $n \ge 0$ and that the statement has already been proven for $m < n$.
	Base-changing to $X_d[\cF]$ we can assume once again that $G$ is representable and therefore that $F$ is $n$-geometric.
	In particular, $F$ is infinitesimally cartesian in virtue of \cref{prop:geometric_implies_inf_cartesian}.
	It will therefore be sufficient to prove that $\anL_{F / G}$ is perfect and in tor-amplitude $[0, n]$.
	This follows by induction on $n$, and the same proof of \cite[2.2.5.2]{HAG-II} applies.
	
	We now prove the converse. Assume that $\trunc(f)$ is smooth and that the lifting problem (\ref{item:infinitesimal_smooth}) always has at least one solution.
	By base change, we can assume that $G$ is itself representable and therefore that $F$ is $n$-geometric.
	Let $U \to F$ be a smooth atlas for $F$.
	Since $U \to F$ is smooth, the lifting problem \eqref{eq:lifting_problem} for this map has at least one solution.
	It follows that the composition $U \to F \to G$ has the same property.
	We are thus reduced to the case where both $F = X$ and $G = Y$ are representable.
	In virtue of \cref{prop:characterization_smooth_morphisms}(\ref{item:differential_smooth}), it will be enough to show that $\anL_{F / G}$ is perfect and concentrated in tor-amplitude $0$.
	Notice that these conditions can be checked locally on $X$.
	
	The lifting condition implies for any $\cF \in \Coh^{\ge 1}(X)$ we have
	\[ \pi_0 \Map_{\Coh^+(X)}( \anL_{X / Y}, \cF) = 0 . \]
	Using \cref{cor:finiteness_cotangent_complex}, up to shrinking $X$ in the complex analytic case, we can choose a map $\phi \colon \cO_X^n \to \anL_{X / Y}$ which is surjective on $\pi_0$.
	Let $K \coloneqq \fib(\phi)$. We therefore obtain an exact sequence
	\[ \pi_0 \Map_{\Coh^+(X)}( \anL_{X / Y}, \cO_X^n) \to \pi_0 \Map_{\Coh^+(X)}( \anL_{X / Y}, \anL_{X / Y}) \to \pi_0 \Map_{\Coh^+(X)}(\anL_{X / Y}, K[1]). \]
	Since $\pi_0 \Map_{\Coh^+(X)}(\anL_{X / Y}, K[1]) = 0$, we conclude that $\anL_{X / Y}$ is a retraction of $\cO_X^n$, and as a consequence it is perfect and in tor-amplitude $0$.
\end{proof}

We complete the proof of the implication (\ref{item:F_is_representable}) $\Rightarrow$ (\ref{item:F_satisfies_rep_conditions}) in \cref{thm:representability} by the following lemma, analogous to \cite[C.0.10]{HAG-II}.

\begin{lem} \label{lem:representable_implies_convergent}
	Let $f \colon F \to G$ be an $n$-representable morphism in $\St(\dAfdk,\tauet)$.
	Then for any $X \in \dAfd_k$, the square
	\[ \begin{tikzcd}
	F(X) \arrow{r} \arrow{d} & \lim_m F(\mathrm t_{\le m} X) \arrow{d} \\
	G(X) \arrow{r} & \lim_m G(\mathrm t_{\le m} X)
	\end{tikzcd} \]
	is a pullback.
\end{lem}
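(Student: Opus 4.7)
The statement is the analytic analogue of \cite[C.0.10]{HAG-II}, and the proof proceeds along the same lines, by induction on the geometric level $n$.

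First I would reduce to the absolute case. For a fixed point $g \in \pi_0 G(X)$, passing to fibers of the square at $g$ (and at its image on the right) turns the question into whether the canonical map
\[ H(X) \longrightarrow \lim_m H(\mathrm t_{\le m} X) \]
is an equivalence, where $H \coloneqq F \times_G X$ is again $n$-geometric by base change. This uses that fibers commute with the inverse limit together with the mapping-space identity $(F \times_G X)(T) \simeq F(T) \times_{G(T)} X(T)$. We are thus reduced to proving that every $n$-geometric stack $H$ is convergent.

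I would induct on $n$. The base case $n = -1$ is \cref{lem:representable_satisfies_rep_conditions}. For the inductive step, assume the result for $(n-1)$-geometric stacks, and let $H$ be $n$-geometric with a smooth $(n-1)$-atlas $p \colon U \to H$. The claim is checked by showing the map induces a bijection on $\pi_0$ and an equivalence on each based loop space. The higher homotopy is handled by the induction hypothesis: for any point $h \colon X \to H$, the based loop space $\Omega_h H$ is the derived fiber product $X \times_{h, H, h} X$, which is $(n-1)$-geometric, so convergence for $\Omega_h H$ yields the required equivalence on $\pi_i$ for $i \ge 1$.

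The crucial case is surjectivity on $\pi_0$. Given a compatible system $\{h_m\} \in \lim_m H(\mathrm t_{\le m} X)$, I must produce $h \in H(X)$ restricting to it. Since $p$ is smooth and a covering, there is an étale cover $V_0 \to \trunc(X)$ over which $h_0$ lifts through $U$. The étale site of a derived analytic space is invariant under truncation, so $V_0$ extends canonically to an étale cover $V \to X$ with compatible truncations $\mathrm t_{\le m}(V)$. Using that $p$ is smooth together with the infinitesimal lifting property \cref{prop:infinitesimal_characterization_smooth} and the fact that each $\mathrm t_{\le m}(V) \hookrightarrow \mathrm t_{\le m+1}(V)$ is an analytic square-zero extension (\cref{cor:Postnikov_tower_analytic_square_zero}), one inductively extends the lift of $h_0|_{V_0}$ to a compatible system $\tilde h_m \colon \mathrm t_{\le m}(V) \to U$. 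Since $U$ is representable, \cref{lem:representable_satisfies_rep_conditions} assembles these into a single $\tilde h \colon V \to U$. Applying the induction hypothesis to the $(n-1)$-geometric stacks $U \times_H U$, $U \times_H U \times_H U$, \ldots, and using descent along the étale cover $V \to X$, produces the required $h \in H(X)$.

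The main obstacle is the coordination in this last step: the infinitesimal lifting property yields existence of a lift at each stage but not a canonical choice, so one must work inside the $\infty$-groupoid of lifts and successively pick a lift extending the previously chosen one, which is possible precisely because $p$ is smooth. The subsequent descent of $p \circ \tilde h$ from $V$ down to $X$ relies on convergence for the iterated fibered products of $U$ over $H$, which are $(n-1)$-geometric and hence covered by the induction hypothesis.
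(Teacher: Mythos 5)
Your proposal follows the same high-level strategy as the paper's proof — reduce to the absolute case by passing to fibers over $G(X)$, induct on the geometric level $n$, and lift the compatible system through the atlas $U$ via infinitesimal lifting along the Postnikov tower — but it takes a more circuitous route in the inductive step.

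The paper avoids your decomposition into $\pi_0$-bijectivity plus loop-space equivalences, and the ensuing descent argument, by a single use of the induction hypothesis. Once a lift $y = (y_m) \in \lim_m U(\mathrm t_{\le m} X)$ of the given point $x$ has been constructed (after localizing $X$ so that $x_0$ factors through $U$), the induction hypothesis applied to the $(n-1)$-representable atlas map $u \colon U \to F$ makes the square
\[ \begin{tikzcd}
	U(X) \arrow{r} \arrow{d} & \lim_m U(\mathrm t_{\le m} X) \arrow{d} \\
	F(X) \arrow{r} & \lim_m F(\mathrm t_{\le m} X)
\end{tikzcd} \]
a pullback. Hence the fiber of the bottom row at $x$ is identified with the fiber of the top row at $y$, and the latter is contractible because $U$ is representable and therefore convergent by \cref{lem:representable_satisfies_rep_conditions}. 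That all fibers are contractible immediately gives the equivalence, with no need to separate $\pi_0$ from higher homotopy and no need to glue the lifted atlas point $\tilde h \colon V \to U$ back down from the cover $V$ to $X$.

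Two smaller remarks. The phrase \emph{the based loop space $\Omega_h H$ is the derived fiber product $X \times_{h,H,h} X$} is imprecise: $\Omega_h H(X)$ is the fiber of $(X \times_H X)(X) \to (X \times X)(X)$ at $(\mathrm{id}_X, \mathrm{id}_X)$, not the value of the stack $X \times_H X$ at $X$. The intended argument — convergence of the $(n-1)$-geometric stack $X \times_H X$ together with representability of $X \times X$ makes that square of mapping spaces a pullback, hence an equivalence on fibers — does work, but it should be stated this way. Finally, the closing descent from $V$ to $X$ via the iterated fiber products $U \times_H U$, $U \times_H U \times_H U$, $\ldots$ requires building a full descent datum with all its higher coherences; it is not wrong, but it is exactly the coordination problem that the pullback-square argument sidesteps.
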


\begin{proof}
	We start by remarking that in the special case where $G = *$ and $f$ is $(-1)$-representable, the statement follows directly from the fact that
	\[ X \simeq \colim_m \mathrm t_{\le m} X \]
	in $\dAnk$.
	
	Let us now turn to the general case.
	We want to prove that the canonical map
	\[ F(X) \to G(X) \times_{\lim G(\mathrm t_{\le m} X)} \lim F(\mathrm t_{\le m} X) \]
	is an equivalence.
	For this, it is enough to prove that its fibers are contractible.
	Fix a point $x \in G(X) \times_{\lim G(\mathrm t_{\le m}X)} \lim F(\mathrm t_{\le m} X)$.
	The projection of $x$ in $G(X)$ determines a map $f \colon X \to G$.
	We can then replace $G$ by $X$ and $F$ by the fiber product $X \times_G F$.
	At this point $G$ is $(-1)$-representable and therefore the map
	\[ G(X) \to \lim_m G(\mathrm t_{\le m} X) \]
	is an equivalence.
	We are therefore reduced to prove that the map
	\[ F(X) \to \lim_m F(\mathrm t_{\le m} X) \]
	is an equivalence.
	In other words, we can assume $G = *$ and $F$ to be $n$-geometric.
	
	We proceed by induction on the geometric level $n$.
	When $n = -1$, we already proved that the statement is true.
	Suppose $n \ge 0$ and let $u \colon U \to F$ be an $n$-atlas.
	We will prove that the fibers of the morphism
	\[ F(X) \to \lim_m F(\mathrm t_{\le m} X) \]
	are contractible.
	Let $x \in \lim_m F(\mathrm t_{\le m} X)$ be a point and let $x_m \colon \mathrm t_{\le m} X \to F$ be the morphism classified by the projection of $x$ in $F(\mathrm t_{\le m} X)$.
	Since $F$ is a sheaf and limits commute with limits, we see that this statement is local on $X$.
	We can therefore suppose that $x_0$ factors as
	\[ \begin{tikzcd}
		{} & U \arrow{d}{u} \\
		\trunc(X) \arrow{r}{x_0} \arrow{ur}{y_0} & F .
	\end{tikzcd} \]
	We claim that there exists a point $y \in \lim_m U( \mathrm t_{\le m} X)$ whose image in $\lim_m F( \mathrm t_{\le m} X)$ is $x$.
	In order to see this, we construct a compatible sequence of maps $y_m \colon \mathrm t_{\le m} X \to U$ by induction on $m$.
	We already constructed $m = 0$.
	Now, observe that since $u$ is smooth and since the morphisms $\mathrm t_{\le n} X \hookrightarrow \mathrm t_{\le n+1} X$ are analytic square-zero extensions by \cref{cor:Postnikov_tower_analytic_square_zero}, \cref{prop:infinitesimal_characterization_smooth} implies that the lifting problem
	\[ \begin{tikzcd}
		\mathrm t_{\le m} X \arrow{d} \arrow{r}{y_m} & U \arrow{d}{u} \\
		\mathrm t_{\le m+1} X \arrow[dashed]{ur}{y_{m+1}} \arrow{r}{x_{m+1}} & X
	\end{tikzcd} \]
	admits at least one solution.
	This completes the proof of the claim.
	We now consider the diagram
	\[ \begin{tikzcd}
		U(X) \arrow{r} \arrow{d} & \lim_m U( \mathrm t_{\le m} X) \arrow{d} \\
		F(X) \arrow{r} & \lim_m F( \mathrm t_{\le m} X ) .
	\end{tikzcd} \]
	Since $u \colon U \to F$ is $(n-1)$-representable, the induction hypothesis implies that the above diagram is a pullback square.
	We can therefore identify the fiber at $y \in \lim_m U( \mathrm t_{\le m} X )$ of the top morphism with the fiber at $x \in \lim_m F(\mathrm t_{\le m} X)$ of the bottom morphism.
	On the other hand, since $U$ is representable, we see that the top morphism is an equivalence.
	The proof is therefore complete.
\end{proof}

\subsection{Lifting atlases} \label{sec:lifting_atlases}

In this subsection, we prove the implication (2)$\Rightarrow$(1) of \cref{thm:representability}.

\begin{lem} \label{lem:comparing_nonconnective_parts}
	Let $\cC$ be a stable $\infty$-category equipped with a t-structure.
	Let $f \colon M \to N$ be a morphism between eventually connective objects.
	Let $m$ be an integer.
	If for every $P \in \cC^\heartsuit$ the canonical map
	\[ \Map_{\cC}(N, P[m]) \to \Map_{\cC}(M, P[m]) \]
	is an equivalence, then $\tau_{\le m} M \to \tau_{\le m} N$ is an equivalence as well.
\end{lem}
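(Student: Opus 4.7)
The plan is to first reduce to the bounded-above situation and then close a mild off-by-one in the naive long-exact-sequence analysis. Since $P[m] \in \cC_{\le m}$ for every $P \in \cC^\heartsuit$, the mapping space $\Map_{\cC}(X, P[m])$ depends only on $\tau_{\le m} X$; consequently, replacing $M$ and $N$ by their truncations $\tau_{\le m} M$ and $\tau_{\le m} N$ leaves both the hypothesis and the conclusion unchanged. I will therefore assume throughout that $M, N \in \cC_{\le m}$, so that the claim becomes simply that $f$ is an equivalence, or equivalently that $K \coloneqq \mathrm{fib}(f)$ vanishes. From the fiber sequence $K \to M \to N$ and the long exact sequence of homotopy groups, $K$ also lies in $\cC_{\le m}$ and is eventually connective.

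Next, for each $P \in \cC^\heartsuit$ set $E^i(X) \coloneqq \pi_0 \Map_{\cC}(X, P[i])$. Applying $\Map_{\cC}(-, P[m])$ to the fiber sequence $K \to M \to N$ yields a fiber sequence of spaces; its long exact sequence of homotopy groups, combined with the hypothesis and the identification $\pi_j \Map_{\cC}(X, P[m]) = E^{m-j}(X)$ for $j \ge 0$, produces the vanishing $E^i(K) = 0$ for $i \le m - 1$ by a standard sandwich argument. The key additional input is that for $N \in \cC_{\le m}$ one has $\Map_{\cC}(N, P[m+1]) \simeq \Map_{\cC}(\tau_{\ge m+1} N, P[m+1]) \simeq *$, so $E^{m+1}(N) = 0$; feeding this into the long exact sequence at index $m$ upgrades the conclusion to $E^m(K) = 0$ as well.

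To conclude, I will apply a Yoneda-style argument in the heart. If $K$ were nonzero, let $i_0$ be the smallest integer with $\pi_{i_0}(K) \ne 0$; then $i_0 \le m$ since $K \in \cC_{\le m}$ is bounded below, and $K \in \cC_{\ge i_0}$ gives $\tau_{\le i_0} K \simeq \pi_{i_0}(K)[i_0]$. Taking $P \coloneqq \pi_{i_0}(K) \in \cC^\heartsuit$, one computes
\[ E^{i_0}(K) \simeq \pi_0 \Map_{\cC}(\tau_{\le i_0} K, P[i_0]) \simeq \Hom_{\cC^\heartsuit}(\pi_{i_0}(K), \pi_{i_0}(K)) , \]
which contains the identity and is therefore nonzero, contradicting the vanishing just established. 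The main obstacle is precisely this off-by-one phenomenon: without first truncating $M$ and $N$, the long exact sequence produces only $E^i(K) = 0$ for $i \le m - 1$, which yields $\tau_{\le m-1} f$ equivalence but not $\tau_{\le m} f$; the initial reduction supplies the automatic vanishing $E^{m+1}(N) = 0$ that closes the gap.
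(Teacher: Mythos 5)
Your proof is correct, and it takes a genuinely different route from the one in the paper. Both arguments begin with the same reduction: since $P[m]\in\cC_{\le m}$, the hypothesis and conclusion only see $\tau_{\le m}M$ and $\tau_{\le m}N$, so one may assume $M,N\in\cC_{\le m}$ (the paper additionally shifts to $m=0$, which is purely cosmetic). After that the two proofs diverge. The paper runs a downward induction on the "height" of $M$ and $N$: it isolates the lowest degree $-n$ with nonzero homotopy, uses the hypothesis with $P=\pi_{-n}(M)$ to produce a map $g$ whose $\pi_{-n}$ is a two-sided inverse of $\pi_{-n}(f)$ (the right inverse requires a second appeal to the hypothesis with $P=\pi_{-n}(N)$), and then passes to $\tau_{\ge -n+1}$ via the five lemma to set up the inductive step. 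You instead form the fiber $K=\mathrm{fib}(f)$, observe that $K\in\cC_{\le m}$ and is bounded below, and show $E^i(K)\coloneqq\pi_0\Map_\cC(K,P[i])$ vanishes for all $i\le m$ and all $P\in\cC^\heartsuit$ via the long exact sequence for $K\to M\to N$; the vanishing at $i=m$ is where the truncation pays off, since $E^{m+1}(N)=0$ for free once $N\in\cC_{\le m}$. A single Yoneda argument with $P=\pi_{i_0}(K)$, $i_0$ the lowest nonzero degree, then forces $K\simeq 0$. Your version avoids the recursion and the explicit two-sided inverse construction, collapsing the inductive peeling into one application of the lowest-degree trick on the fiber; the paper's version, by contrast, produces the isomorphism $\pi_{-n}(f)$ directly at each stage rather than extracting it at the end from the vanishing of $K$. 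Both are clean; yours is a bit more streamlined.
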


\begin{proof}
	Up to replace $M$ and $N$ by $M[-m]$ and $N[-m]$, we can suppose $m = 0$.
	Moreover, since $\Map_{\cC}(\tau_{\ge 1} M, P) \simeq \Map_{\cC}(\tau_{\ge 1} N, P) \simeq \{*\}$ for every $P \in \cC^\heartsuit$, we can further replace $M$ and $N$ by $\tau_{\le 0} M$ and $\tau_{\le 0} N$, respectively.
	In other words, we can suppose that $\pi_i(M) = \pi_i(N) = 0$ for every $i > 0$.
	
	Let $n$ be the largest integer such that at least one among $\pi_{-n}(M)$ and $\pi_{-n}(N)$ is not zero.
	We proceed by induction on $n$.
	If $n = 0$, then $M, N \in \cC^\heartsuit$ and therefore the statement follows from the Yoneda lemma.
	Let now $n > 0$. Choosing $P = \pi_n(M)$, we obtain an element $\gamma \in \pi_n \Map_{\cC}(M,P)$.
	The corresponding element in $\pi_n \Map_{\cC}(N, P)$ can be represented by a morphism $g \colon N \to P[-n]$.
	Inspection reveals that $\pi_n(g) \colon \pi_n(N) \to \pi_n(M)$ is an inverse for $\pi_n(f)$.
		We now consider the morphism of fiber sequences
	\[ \begin{tikzcd}
	\tau_{\ge - n + 1}M \arrow{r} \arrow{d} & M \arrow{r} \arrow{d} & \pi_{-n}(M) \arrow{d} \\
	\tau_{\ge - n + 1} N \arrow{r} & N \arrow{r} & \pi_{-n}(N) .
	\end{tikzcd} \]
	Fix $P \in \cC^\heartsuit$.
	Applying the functor $\Map_{\cC}(-, P)$ and then taking the long exact sequence of homotopy groups, we conclude that
	\[ \Map_{\cC}(\tau_{\ge -n+1} N, P) \to \Map_{\cC}(\tau_{\ge -n+1} M, P) \]
	is an equivalence for every choice of $P$.
	We can therefore invoke the induction hypothesis to deduce that $\tau_{\ge -n+1}(f)$ is an equivalence.
	As we already argued that $\pi_n(f)$ is an equivalence, we conclude that the same goes for $f$ itself, thus completing the proof.
\end{proof}

\begin{lem} \label{lem:cotangent_complex_truncation}
	Let $F \in \St(\dAfdk,\tauet)$ be a stack satisfying the conditions in \cref{thm:representability}(\ref{item:F_satisfies_rep_conditions}).
	Let $j \colon \trunc(F) \to F$ be the canonical morphism.
	Then $\anL_{\trunc(F) / F}$ belongs to $\Coh^{\ge 2}(\trunc(F))$.
\end{lem}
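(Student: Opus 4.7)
The question is étale-local on $\trunc(F)$, so I would reduce to the following: for a point $x \colon X \to \trunc(F)$ coming from a smooth atlas of the underived geometric stack $\trunc(F)$, with $X \in \Afd_k$ underived, show that $\anL_{\trunc(F)/F,\, x} \in \Coh^{\ge 2}(X)$. Note that by \cref{cor:finiteness_cotangent_complex} and \cref{prop:inf_cartesian_implies_obstruction_theory} applied to $F$ and to the underived stack $\trunc(F)$, both $j^*\anL_{F, j(x)}$ and $\anL_{\trunc(F), x}$ lie in $\Coh^+(X)$, hence so does $\anL_{\trunc(F)/F,\, x}$ via the transitivity fiber sequence
\[ j^*\anL_{F, j(x)} \longrightarrow \anL_{\trunc(F), x} \longrightarrow \anL_{\trunc(F)/F,\, x}. \]

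The next step is to apply \cref{lem:comparing_nonconnective_parts} with $m=1$: to show the cofiber $\anL_{\trunc(F)/F,\, x}$ is $2$-connective, it is equivalent to show that $j^*\anL_{F, j(x)} \to \anL_{\trunc(F), x}$ induces an equivalence on $\tau_{\le 1}$, and hence that for every $P \in \Coh^\heartsuit(X)$ the induced map
\[ \Map\bigl(\anL_{\trunc(F), x},\, P[1]\bigr) \longrightarrow \Map\bigl(j^*\anL_{F, j(x)},\, P[1]\bigr) \]
is an equivalence of spaces (the analogous statement for $P[0]$ being immediate from the agreement of $\trunc(F)$ and $F$ on underived inputs, since $X[P]$ is an underived analytic space when both $X$ and $P$ are). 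Translating both sides via the universal property of the analytic cotangent complex, this becomes a comparison of analytic derivation spaces
\[ \DerAn_{\trunc(F)}\bigl(X, P[1]\bigr) \longrightarrow \DerAn_F\bigl(X, P[1]\bigr). \]

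The key computational step exploits that the analytic split square-zero extension $X[P[1]]$ has the same classical truncation as $X$, namely $\pi_0(X[P[1]]) = X$ (since $P[1]$ is in positive degree). Combined with the infinitesimally Cartesian property of $F$ (part of the hypothesis (2)), this lets me rewrite $F(X[P[1]])$ as a fiber product over the underived $F(X)$ that controls exactly the derivation space on the $F$-side; the analogous reformulation applies to $\trunc(F)$ through the canonical morphism $j$, and the two fiber products match because $\trunc(F)$ and $F$ agree on underived inputs. The convergence half of compatibility with Postnikov towers, together with \cref{cor:Postnikov_tower_analytic_square_zero}, ensures that this matching propagates correctly through the analytic square-zero extension and determines the full derivation space.

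The main obstacle will be the careful bookkeeping in this last identification: even though $\trunc(F)$ is underived, its analytic cotangent complex (as a stack on $\Afd_k$) corepresents derivations into arbitrary coefficients in $\Coh^+(X)$, and one has to match the value of $\trunc(F)$ on the derived extension $X[P[1]]$ with the value of $F$ there, using only the hypothesis that $j$ is the canonical inclusion of the classical truncation and that $F$ is compatible with Postnikov towers.
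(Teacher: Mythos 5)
Your overall framing is reasonable — reducing étale-locally, invoking the transitivity fiber sequence, and aiming at \cref{lem:comparing_nonconnective_parts} with $m=1$ is a viable strategy, and your $m=0$ observation (that $X[P]$ is underived when $P$ is discrete) is correct and is precisely the paper's argument for $1$-connectivity. But the step you label "the main obstacle" is a genuine gap in the argument, not a bookkeeping issue that careful matching would resolve.

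To run \cref{lem:comparing_nonconnective_parts} at $m=1$ you must show that the natural map $\DerAn_{\trunc(F)}(X, P[1]) \to \DerAn_F(X, P[1])$ is an equivalence for every $P \in \Coh^\heartsuit(X)$. Both sides are vertical fibers over points of $\trunc(F)(X[P[1]])$ and $F(X[P[1]])$, and $X[P[1]]$ is a genuinely \emph{derived} affinoid: its structure sheaf $\cO_X \oplus P[1]$ has nontrivial $\pi_1$. The fact that $\trunc(X[P[1]]) = X$ says nothing about the values of the two stacks on $X[P[1]]$ itself — agreement of $\trunc(F)$ and $F$ on underived inputs is agreement on underived affinoids, not on derived affinoids whose truncation is underived. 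Your proposed reduction does not fix this: infinitesimal cartesianness relates $F$ on nonsplit extensions $X_d[\cF]$ to $F$ on split ones $X[\cF]$ (both of which are derived), and convergence only reduces to truncations of $X[P[1]]$, which include $X[P[1]]$ itself. Neither lets you reach an underived test object on the $m=1$ step.

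The paper's argument supplies exactly the missing ingredient via a contradiction. Assuming $\cF := \pi_1(\anL_{\trunc(F)/F}) \neq 0$, one descends the corresponding derivation to a nonzero analytic derivation $d \colon \anL_U \to \eta^*\cF[1]$ on a smooth atlas $U$ of $\trunc(F)$ (using that $\anL_{U/\trunc(F)}$ is perfect in tor-amplitude $0$), and forms the \emph{nonsplit} square-zero extension $U_d[\eta^*\cF]$. This is the crucial object: because $\eta^*\cF$ is discrete and $U$ is underived, $\cO_{U_d[\eta^*\cF]}$ sits in a fiber sequence of discrete sheaves and is therefore discrete, so $U_d[\eta^*\cF]$ is an \emph{underived} affinoid (\cref{prop:analytic_square_zero_extensions_are_analytic_spaces}). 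On this underived object $\trunc(F)$ and $F$ genuinely agree. One then compares the two lifting problems: the lift through $\trunc(F)$ cannot exist (the obstruction $\eta^*\gamma$ is nonzero), while the lift through $F$ does exist (the composite $j^*\anL_F \to \anL_{\trunc(F)} \to \cF[1]$ is nullhomotopic). Since the two lifting problems are equivalent via $U_d[\eta^*\cF]$, this is a contradiction. Your proposal has no substitute for this nonsplit/underived maneuver, and without it there is no way to compare the two derivation spaces into $P[1]$.
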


\begin{proof}
	We follow closely the proof of \cite[Theorem 3.1.2]{DAG-XIV}.
	Let $\eta \colon U \to \trunc(F)$ be a smooth morphism from an affinoid (resp.\ Stein) space $U$.
	For every discrete coherent sheaf $\cF$ on $U$, the canonical map
	\begin{equation} \label{eq:2-connective}
		\Map_{\Coh^+(U)}(\eta^* \anL_{\trunc(F)}, \cF) \to \Map_{\Coh^+(U)}(\eta^* j^* \anL_F, \cF)
	\end{equation}
	is obtained by passing to vertical fibers in the commutative diagram
	\[ \begin{tikzcd}
	\trunc(F)(U[\cF]) \arrow{r} \arrow{d} & F(U[\cF]) \arrow{d} \\
	\trunc(F)(U) \arrow{r} & F(U) .
	\end{tikzcd} \]
	Since $\cF$ is discrete, $U[\cF]$ is an underived affinoid (resp.\ Stein) space.
	As consequence, the horizontal morphisms are equivalences.
	It follows that the same goes for the map \eqref{eq:2-connective}.
	Therefore, \cref{lem:comparing_nonconnective_parts} shows that $\tau_{\le 0} \eta^* j^* \anL_F \to \tau_{\le 0} \eta^* \anL_{\trunc(F)}$ is an equivalence.
	We conclude that $\anL_{\trunc(F) / F}$ is $1$-connective.
	
	We now prove that it is also $2$-connective. We have an exact sequence
	\[ \pi_{1}( j^* \anL_F) \to \pi_{1}(\anL_{\trunc(F)}) \to \pi_{1}( \anL_{\trunc(F) / F, j}) \to 0 . \]
	Let $\cF \coloneqq \pi_{1}(\anL_{\trunc(F) / F, j})$.
	If $\cF \ne 0$, then we obtain a non-zero map
	\[ \gamma \colon \anL_{\trunc(F)} \to \anL_{\trunc(F) / F} \to \cF[1] , \]
	whose restriction to $j^* \anL_F$ vanishes.
	Choose a smooth morphism $\eta \colon U \to \trunc(F)$ such that $\eta^* \cF \ne 0$.
	Then $\gamma$ determines a non-zero morphism $\eta^* \mathbb L_{\trunc(F)} \to \eta^* \cF[1]$.
	Since there is a fiber sequence
	\[ \anL_{U / \trunc(F)}[-1] \to \eta^* \anL_{\trunc(F)} \to \anL_U \]
	and since $\anL_{U / \trunc(F)}$ is perfect and in tor-amplitude 0, we conclude that the composition
	\[ \anL_{U / \trunc(F)}[-1] \to \eta^* \anL_{\trunc(F)} \to \eta^* \cF[1] \]
	vanishes.
	In other words, we obtain a non-zero analytic derivation $d \colon \anL_U \to \eta^* \cF[1]$.
	Let $U_d[\eta^* \cF]$ be the associated square-zero extension.
	We now consider the following diagram:
	\[ \begin{tikzcd}
		U[\eta^* \cF[1]]  \arrow{r}{d} \arrow{d} & U \arrow{d}{i} \arrow{r}{\eta} & \trunc(F) \arrow{d}{j} \\
		U \arrow{r} & U_d[\eta^* \cF] \arrow[dashed]{r}{\beta} \arrow[dashed]{ur}{\alpha} & F .
	\end{tikzcd} \]
	The left square is a pushout, so to produce the lifting $\alpha$ (resp.\ $\beta$) in the category $\St(\dAfdk,\tauet)_{U /}$ is equivalent to produce a path in
	\[ \Map_{\Coh^+(U)}( \eta^* \anL_{\trunc(F)}, \eta^* \cF[1] ) \qquad ( \textrm{resp.\ } \Map_{\Coh^+(U)}(\eta^* j^* \anL_{F}, \eta^* \cF[1] )) \]
	between $\eta \circ d$ and $\eta \circ d_0$ (resp.\ $j \circ \eta \circ d$ and $j \circ \eta \circ d_0$).
	It follows from \cref{prop:analytic_square_zero_extensions_are_analytic_spaces} that $U_d[\eta^* \cF]$ is an underived affinoid (resp.\ Stein) space.
	In particular, the canonical map
	\[ \trunc(F)(U_d[\eta^* \cF]) \to F(U_d[\eta^* \cF]) \]
	is a homotopy equivalence.
	As a consequence, the existence of $\alpha$ is equivalent to the existence of $\beta$.
	Nevertheless:
	\begin{enumerate}
		\item the map $\alpha$ cannot exist because $\eta \circ d_0$ is equivalent to the zero map $\eta^* \anL_{\trunc(F)} \to \eta^* \cF[1]$, while $\eta \circ d$ is non-zero by construction;
		\item the map $\beta$ exists because both $j \circ \eta \circ d_0$ and $j \circ \eta \circ d_0$ correspond to the zero map $\eta^* j^* \anL_F \to \eta^* \cF[1]$.
		This is because the composition $\eta^* j^* \anL_F \to \eta^* \anL_{\trunc(F)} \to \eta^* \cF[1]$ is zero.
	\end{enumerate}
	This is a contradiction, and the lemma is therefore proved.
\end{proof}

\begin{lem} \label{lem:lifting_etale_morphisms}
	Let $F \in \St(\dAfdk, \tauet)$ be a stack satisfying the conditions in \cref{thm:representability}(\ref{item:F_satisfies_rep_conditions}).
	Then for any $U_0\in\Afd_k$ and any \'etale morphism $u_0 \colon U_0 \to \trunc(F)$, there is $U\in\dAfd_k$, a morphism $u \colon U \to F$ satisfying $\anL_{U/F} \simeq 0$ and a pullback square
	\[ \begin{tikzcd}
	U_0 \arrow{r} \arrow{d} & \trunc(F) \arrow{d} \\
	U \arrow{r} & F.
	\end{tikzcd} \]
\end{lem}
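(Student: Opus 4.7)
The plan is to build $U$ as the colimit $U \coloneqq \colim_m U_m$ of a tower $U_0 \hookrightarrow U_1 \hookrightarrow U_2 \hookrightarrow \cdots$ of derived affinoid (resp.\ Stein) spaces, equipped with compatible maps $u_m \colon U_m \to F$ extending $u_0$, such that $U_m$ is $m$-truncated, each transition $U_m \hookrightarrow U_{m+1}$ induces an equivalence on $m$-truncations, and $\anL_{U_m / F} \in \Coh^{\ge m+2}(U_m)$. For the base case, take $u_0$ composed with $\trunc(F) \to F$; the étaleness of $u_0$ gives $\anL_{U_0 / \trunc(F)} \simeq 0$ (arguing étale-locally via an atlas of $\trunc(F)$ and \cref{cor:etale_characterised_by_cotangent_complex}), and combining the transitivity fiber sequence with \cref{lem:cotangent_complex_truncation} yields $\anL_{U_0 / F} \in \Coh^{\ge 2}(U_0)$.

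For the inductive step, set $\cF_m \coloneqq \pi_{m+2}(\anL_{U_m / F})$ and let $d \colon \anL_{U_m} \to \cF_m[m+2]$ be the analytic derivation defined by the composition $\anL_{U_m} \to \anL_{U_m / F} \to \tau_{\le m+2}(\anL_{U_m / F}) \simeq \cF_m[m+2]$. By construction $d$ factors through $\anL_{U_m / F}$. Define $U_{m+1} \coloneqq (U_m)_d[\cF_m[m+2]]$ as in \cref{notation:analytic_square_zero_derivation}; by \cref{prop:analytic_square_zero_extensions_are_analytic_spaces} and the associated pushout description, this is a derived affinoid (resp.\ Stein) space, $(m+1)$-truncated, with $\mathrm t_{\le m} U_{m+1} \simeq U_m$. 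Since $F$ is infinitesimally cartesian, extending $u_m$ to a map $u_{m+1} \colon U_{m+1} \to F$ is equivalent, via the universal property of the global analytic cotangent complex of $F$ at $u_m$, to producing a null-homotopy of the composite $u_m^* \anL_F \to \anL_{U_m} \xrightarrow{d} \cF_m[m+2]$; this null-homotopy exists precisely because $d$ factors through $\anL_{U_m / F}$.

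The heart of the induction is the connectivity estimate for $\anL_{U_{m+1} / F}$. Writing $\iota \colon U_m \hookrightarrow U_{m+1}$, the transitivity fiber sequence
\[ \iota^* \anL_{U_{m+1} / F} \to \anL_{U_m / F} \to \anL_{U_m / U_{m+1}} \]
together with \cref{cor:cotangent_complex_closed_immersion} and the standard cotangent complex computation for square-zero extensions identifies $\anL_{U_m / U_{m+1}} \simeq \cF_m[m+2]$. One must then show that the induced map $\anL_{U_m / F} \to \anL_{U_m / U_{m+1}}$ coincides with the truncation map used to define $d$; granted this, $\iota^* \anL_{U_{m+1} / F} \simeq \tau_{\ge m+3} \anL_{U_m / F} \in \Coh^{\ge m+3}$, and the nilpotent thickening $\iota$ reflects connectivity to give $\anL_{U_{m+1} / F} \in \Coh^{\ge m+3}(U_{m+1})$. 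Pinning down this natural map after the non-canonical choice of null-homotopy that produced $u_{m+1}$ is the main technical obstacle; it requires a careful use of the abelianized tangent bundle formalism of \cref{subsec:cotangent_formalism}.

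Setting $U \coloneqq \colim_m U_m$, each transition $U_m \to U_{m+1}$ being an equivalence on $m$-truncations makes $U$ a derived affinoid (resp.\ Stein) space with $\mathrm t_{\le m} U \simeq U_m$ and $\trunc(U) \simeq U_0$. Convergence of $F$ assembles the system $(u_m)$ into a morphism $u \colon U \to F$, and the base change properties of the analytic cotangent complex combined with the unbounded growth of the connectivity of $\anL_{U_m / F}$ force $\anL_{U / F} \simeq 0$ in the left-complete $t$-structure on $\Coh^+(U)$. The required pullback square is finally verified by checking that the natural map $U_0 \simeq \trunc(U) \to U \times_F \trunc(F)$ is an equivalence; this is a formal consequence of the vanishing of $\anL_{U / F}$ together with the Postnikov-tower compatibility of $F$.
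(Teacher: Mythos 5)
Your construction mirrors the paper's proof of this lemma almost exactly: the same tower $(U_m)$ with the three inductive invariants, the same derivation $d$ obtained by truncating $\anL_{U_m/F}$, the same use of infinitesimal cartesianness of $F$ to extend $u_m$ to $u_{m+1}$, and the same passage to the colimit using convergence. Two remarks on the differences. First, your claim that ``the standard cotangent complex computation for square-zero extensions identifies $\anL_{U_m/U_{m+1}} \simeq \cF_m[m+2]$'' is an overstatement: as the paper shows later (in the proof of the main theorem, via \cref{cor:cotangent_complex_closed_immersion} and \cite[Lemma 1.4.3.7]{HAG-II}), the correct formula is $\anL_{U_m/U_{m+1}} \simeq \cF_m[m+2] \oplus (\cF_m[m+2] \otimes_{\cO_{U_{m+1}}} \cF_m[m+2])[1]$, with an extra $(2m+5)$-connective tail. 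This does not affect the conclusion since that tail is invisible in the range $i \le m+2$ that drives the induction, but the assertion as stated is not quite right; \cref{prop:connectivity_estimates} or \cite[2.2.2.8]{HAG-II} is what you actually need, and they only identify $\anL_{U_m/U_{m+1}}$ with $\cF_m[m+2]$ through degree $2m+3$. Second, the technical obstacle you flag --- that the transitivity map $\anL_{U_m/F} \to \anL_{U_m/U_{m+1}}$ should, in degree $m+2$, match the truncation projection used to define $d$ --- is a genuine point, but the paper handles it precisely via the citation to \cite[2.2.2.8]{HAG-II} combined with \cref{cor:cotangent_complex_closed_immersion}, which compute $\pi_{m+2}$ of the transitivity map directly; the naturality you worry about follows from the universal property defining the square-zero extension $U_{m+1}$, since both the transitivity map and the truncation corepresent the same derivation $d$. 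You do not need the full abelianized tangent bundle formalism for this, only the functoriality built into \cref{def:relative_cotangent_complex_algebras} and the corresponding statements in \cref{prop:transitivity_cotangent_complex}. Apart from these two points, the argument is correct and is the argument of the paper.
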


\begin{proof}
	We follow closely the proof of \cite[Lemma C.0.11]{HAG-II}.
	We will construct by induction a sequence of derived affinoid (resp.\ Stein) spaces
	\[ U_0 \to U_1 \to \cdots \to U_n \to \cdots \to  F \]
	satisfying the following properties:
	\begin{enumerate}
		\item \label{item:U_n_n_truncated} $U_n$ is $n$-truncated;
		\item \label{item:U_n_truncation_U_n+1} the morphism $U_n \to U_{n+1}$ induces an equivalence on $n$-th truncations;
		\item \label{item:U_n_is_n_etale} the morphisms $u_n \colon U_n \to F$ are such that $\pi_{i}(\anL_{U_n / F}) \simeq 0$ for every $i \le n + 1$.
	\end{enumerate}
	Assume that the sequence has already been constructed.
	Then all the derived affinoid (resp.\ Stein) spaces $U_n$ share the same underlying $\infty$-topos $\cU$.
	Moreover, the canonical morphism $\cO_{U_n} \to \pi_0(\cO_{U_n}) \simeq \cO_{U_0}$ are local.
	It follows that
	\[ \cO_U \coloneqq \lim_n \cO_{U_n} \in \AnRing_k(\cU)_{/\cO_{U_0}} \]
	is a $\cTank$-structure satisfying $\tau_{\le n}(\cO_U) \simeq \cO_{U_n}$.
	In particular, $U \coloneqq (\cU, \cO_U)$ is a derived affinoid (resp.\ Stein) space.
	Since $F$ is convergent, we obtain a canonical morphism $u \colon U \to F$.
	Let us check that $\anL_{U/F,u} \simeq 0$.
	Fix $\cF \in \Coh^{\ge 0}(U)$.
	We have
	\[ \DerAn_F(U,\cF) \simeq \lim \DerAn_F(U_n, \tau_{\le n}(\cF)) \simeq \lim \Map_{\Coh^+(U_n)}(\anL_{U_n / F}, \tau_{\le n} \cF) \simeq 0 . \]
	
	Finally, the map $U \times_F \trunc(F) \to U$ enjoys the following universal property: for every underived $X$ the map
	\[ \Map_{\St(\dAfd, \tauet)}(X, U \times_F \trunc(F)) \to \Map_{\St(\dAfd, \tauet)}(X, U) \]
	is an equivalence.
	This allows to identify $U \times_F \trunc(F)$ with $\trunc(U) \simeq U_0$.
	
	We are left to construct the sequence $U_n$. We proceed by induction.
	If $n = 0$, we only have to prove that $\anL_{U_0 / F}$ is $2$-connective.
	Let $j \colon \trunc(F) \to F$ be the canonical map. Then we have a fiber sequence
	\[ u_0^* \anL_{\trunc(F) / F} \to \anL_{U_0 / F} \to \anL_{U_0 / \trunc(F)} .  \]
	Since $u_0$ is \'etale, $\anL_{U_0 / \trunc(F), u_0} \simeq 0$.
	Therefore, the statement follows from the fact that $\anL_{\trunc(F) / F, j}$ is $2$-connective, which is the content of \cref{lem:cotangent_complex_truncation}.
	
	Assume now that $U_n$ has been constructed. Let $u_n \colon U_n \to F$ be the given morphism.
	Consider the composite map
	\[ d \colon \anL_{U_n} \to \anL_{U_n / F} \to \tau_{\le n+2} \anL_{U_n / F} \simeq \pi_{n+2}( \anL_{U_n / F} )[n + 2] . \]
	This is an analytic derivation and thus it defines an analytic square-zero extension of $U_n$ by $\pi_{n+2}( \anL_{U_n / F} )[n + 2]$.
	Let us denote it by $U_{n+1}$. It follows from \cref{prop:analytic_square_zero_extensions_are_analytic_spaces} that $U_{n+1}$ is a derived affinoid (resp.\ Stein) space.
	Moreover, since $F$ is infinitesimally cartesian, we see that there is a canonical map $u_{n+1} \colon U_{n+1} \to F$.
		
	Then conditions (\ref{item:U_n_n_truncated}) and (\ref{item:U_n_truncation_U_n+1}) are met by construction.
	Let us prove that condition (\ref{item:U_n_is_n_etale}) is satisfied as well.
	Let $j_n \colon U_n \to U_{n+1}$ denote the canonical morphim.
	Since $\mathrm{t}_{\le n}(j_n)$ is an equivalence, it will be sufficient to show that $j_n^* \anL_{U_{n+1}/F}$ is $(n+2)$-connective.
	This fits into a fiber sequence
	\[ j_n^* \anL_{U_{n+1} / F} \to \anL_{U_n / F} \xrightarrow{\phi} \anL_{U_n / U_{n+1}} . \]
	Since $j_n$ is $n$-connective and $U_n$ is $n$-truncated, $\mathrm{cofib}(j_n)$ is $(n+1)$-connective.
	It follows from \cref{cor:connectivity_estimates} that $\anL_{U_n / U_{n+1}}$ is $(n+1)$-connective.
	Moreover, since $n \ge 1$, we can combine \cref{cor:cotangent_complex_closed_immersion} with \cite[2.2.2.8]{HAG-II} to conclude that
	\[ \pi_{n+2}( \anL_{U_n / U_{n+1}} ) \simeq \pi_{n+2}( \anL_{U_n / F} ) . \]
	The proof is therefore complete.
\end{proof}

We are finally ready to complete the proof of \cref{thm:representability}.

\begin{proof}[Proof of \cref{thm:representability}.]
	The implication (\ref{item:F_is_representable}) $\Rightarrow$ (\ref{item:F_satisfies_rep_conditions}) follows from \cref{prop:geometric_implies_inf_cartesian} and \cref{lem:representable_implies_convergent}.
	Let now $F \in \St(\dAfd, \tauet)$ be a stack satisfying \cref{thm:representability} Condition (\ref{item:F_satisfies_rep_conditions}).
	We will prove by induction on $n$ that $F$ is $n$-geometric.
	
	If $n = -1$, then \cref{lem:lifting_etale_morphisms} allows to lift the identity of $\trunc(F)$ to a morphism $U \to F$, where $U \in \dAfd_k$ and $\anL_{U/F} \simeq 0$.
	Let $X \in \dAfd_k$.
	By \cref{cor:Postnikov_tower_analytic_square_zero} and by induction on $m$, we see that the canonical map
	\[ U( \mathrm t_{\le m} X ) \to F( \mathrm t_{\le m} X) \]
	is an equivalence for every $m$.
	Since $F$ and $U$ are convergent, we deduce that $U \simeq F$, so $F$ is representable.
	
	Let now $n \ge 0$. It follows from the induction hypothesis that the diagonal of $F$ is $(n-1)$-representable.
	We are therefore left to prove that $F$ admits an atlas.
	Let $u \colon U_0 \to \trunc(F)$ be a smooth atlas and let $j \colon \trunc(F) \hookrightarrow F$ be the natural inclusion.
	We will construct a sequence of morphisms
	\[ \begin{tikzcd}
		U_0 \arrow[hook]{r}{j_0} \arrow[bend right = 5]{drrrrr}[swap, near start]{u_0} & U_1 \arrow[hook]{r}{j_1} \arrow[bend right = 2]{drrrr}{u_1} & \cdots \arrow[hook]{r}{j_{m-1}} & U_m \arrow{drr}[bend right = 1.5]{u_m} \arrow[hook]{r}{j_m} & \cdots \\
		{} & & & & & F
	\end{tikzcd} \]
	satisfying the following properties:
	\begin{enumerate}
		\item $U_m$ is $m$-truncated;
		\item $U_m \to U_{m+1}$ induces an equivalence on $m$-truncations;
		\item $\anL_{U_m / F}$ is flat to order $m+1$ (cf.\ \cref{def:n_flat}).
	\end{enumerate}
	The construction is carried out by induction on $m$.
	When $m = 0$, we set
	\[ u_0 \coloneqq j \circ u . \]
	It suffices to check that $\anL_{U_0/ F}$ is flat to order $1$.
	Consider the fiber sequence
	\[ u^* \anL_{\trunc(F) / F} \to \anL_{U_0 / F} \to \anL_{U_0 / \trunc(F)} . \]
	\cref{lem:cotangent_complex_truncation} guarantees that $\anL_{\trunc(F) / F}$ is 2-connective and therefore
	\[ u^* \anL_{\trunc(F) / F} \in \Coh^{\ge 2}(U_0) . \]
	In particular, it follows that the natural morphism
	\[ \tau_{\le 1} \anL_{U_0 / F} \to \tau_{\le 1} \anL_{U_0 / \trunc(F)} \]
	is an equivalence.
	Since $u \colon U_0 \to \trunc(F)$ is smooth and $U_0$ is discrete, we conclude that $\tau_{\le 1} \anL_{U_0 / \trunc(F)} \simeq \anL_{U_0 / \trunc(F)}$.
	In particular, $\tau_{\le 1} \anL_{U_0 / F}$ is perfect and in tor-amplitude $0$.
	\cref{prop:n_flat_sorite}(\ref{item:n_flat_truncation}) implies that $\anL_{U_0 / F}$ is flat to order $1$.

	Assume now that $u_m \colon U_m \to F$ has been constructed.
	Since $U_m$ is $m$-truncated and $\anL_{U_m / F}$ is flat to order $m+1$, it follows from \cref{prop:n_flat_implies_flat_truncation} that $\tau_{\le m+1} \anL_{U_m / F}$ is flat.
	Up to shrinking $U_m$ in the complex case, we can assume that $\tau_{\le m+1} \anL_{U_m / F}$ is a retract of a free module.
	In particular, it follows that
	\[ \pi_0 \Map_{\Coh^+(U_m)}(\tau_{\le m+1} \anL_{U_m / F}, \cF) = 0 \]
	for every $\cF \in \Coh^{\ge 1}(U_m)$.
	Taking $\cF = \tau_{\le m+2} \anL_{U_m/F}[1]$, we conclude that the natural map
	\[ \tau_{\le m+1} \anL_{U_m / F}[-1] \to \anL_{U_m / F} \]
	is homotopic to zero.
		Consider now the following diagram
	\[ \begin{tikzcd}
		\tau_{\le m+1} \anL_{U_m}[-1] \arrow{r} \arrow{d} & \tau_{\le m+1} \anL_{U_m / F}[-1] \arrow{d}{\simeq 0} \\
		\tau_{\ge m+2} \anL_{U_m} \arrow{r} \arrow{d} & \tau_{\ge m+2} \anL_{U_m / F} \arrow{d} \\
		\anL_{U_m} \arrow{r} \arrow[dashed]{ur}{\varphi} & \anL_{U_m / F} .
	\end{tikzcd} \]
	The universal property of the cofiber implies the existence of the dotted arrow.
	
	Consider the composition
	\[ d \colon \anL_{U_m} \xrightarrow{\varphi} \tau_{\ge m+2} \anL_{U_m / F} \to \pi_{m+2}(\anL_{U_m / F}) . \]
	This map corresponds to an analytic derivation.
	We let $U_{m+1}$ denote the associated analytic square-zero extension.
	By construction, $U_{m+1}$ is $(m+1)$-truncated and the canonical map
	\[ j_m \colon U_m \to U_{m+1} \]
	induces an equivalence on the $m$-th truncation.
	Furthermore, since $F$ is infinitesimally cartesian and since the composition
	\[ u_m^* \anL_F \to \anL_{U_m} \to \anL_{U_m / F} \]
	is homotopic to zero, there is a map $u_{m+1} \colon U_{m+1} \to F$ fitting in the commutative triangle
	\[ \begin{tikzcd}
		U_m \arrow{d}[swap]{j_m} \arrow{r}{u_m} & F \\
		U_{m+1} \arrow{ur}[swap]{u_{m+1}} & \phantom{F}
	\end{tikzcd} \]
	Conditions (1) and (2) are satisfied by construction.
	We are thus left to check that $\anL_{U_{m+1} / F}$ is flat to order $m+2$.
	Using \cref{prop:n_flat_sorite}(\ref{item:n_flat_testing_on_truncations}), it is enough to check that $j_m^* \anL_{U_{m+1} / F}$ is flat to order $m+2$.
	Consider the transitivity fiber sequence
	\[ \begin{tikzcd}
		j_m^* \anL_{U_{m+1} / F} \arrow{r} & \anL_{U_m / F} \arrow{r}{\phi} & \anL_{U_m / U_{m+1}} .
	\end{tikzcd} \]
	By the induction hypothesis, $\anL_{U_m / F}$ is flat to order $m+1$.
	Moreover, $\anL_{U_m / U_{m+1}}$ is $(m+2)$-connective.
	It follows that $j_m^* \anL_{U_{m+1} / F}$ is flat to order $m+1$.
	Since $U_m$ is $m$-truncated, \cref{cor:n_flat_increasing} shows that $j_m^* \anL_{U_{m+1} / F}$ is flat to order $m+2$ if and only if
	\[ \pi_{m+2}( \anL_{U_{m+1} / F}) = 0 . \]
	To prove the latter, it is enough to show that the map $\phi$ induces an isomorphism on $\pi_{m+2}$ and a surjection on $\pi_{m+3}$.
	Set
	\[ \cF \coloneqq \pi_{m+2}(\anL_{U_m / F})[m+2] . \]
	Combining \cref{cor:cotangent_complex_closed_immersion} and \cite[Lemma 1.4.3.7]{HAG-II}, we see that $\anL_{U_m / U_{m+1}}$ can be computed as the pushout
	\[ \begin{tikzcd}
		\cF \otimes_{\cO_{U_{m+1}}} \cF \arrow{r}{\mu} \arrow{d} & \cF \arrow{d} \\
		0 \arrow{r} & \anL_{U_m / U_{m+1}} ,
	\end{tikzcd} \]
	where $\mu$ is the multiplication map induced by $\cO_{U_{m+1}}$.
	Using \cite[7.4.1.14]{Lurie_Higher_algebra}, we see that $\mu$ is nullhomotopic.
	As a consequence, we obtain
	\[ \anL_{U_m / U_{m+1}} \simeq \cF \oplus ( \cF \otimes_{\cO_{U_{m+1}}} \cF [1] ) . \]
	Since $m > 0$, we have:
	\begin{gather*}
		\pi_{m+2}( \anL_{U_m / U_{m+1}} ) \simeq \cF = \pi_{m+2}( \anL_{U_m / F} ) \\
		\pi_{m+3}(\anL_{U_m / U_{m+1}}) \simeq 0.
	\end{gather*}
	It follows that $\phi$ has the required properties.
	In turn, this completes the construction of the sequence of maps $u_m \colon U_m \to F$.
	
	The same argument given in \cref{lem:lifting_etale_morphisms} shows that the colimit of the diagram
	\[ \begin{tikzcd}
		U_0 \arrow[hook]{r}{j_0} & U_1 \arrow[hook]{r}{j_1} & \cdots \arrow[hook]{r} & U_m \arrow[hook]{r}{j_m} & \cdots
	\end{tikzcd} \]
	exists in $\dAfd_k$.
	We denote it by $\tU$.
	Since $F$ is convergent, we can assemble the maps $u_m \colon U_m \to F$ into a canonical map
	\[ \tu \colon \tU \to F . \]
	Let $i_m \colon U_m \to \tU$ be induced map.
	Consider the fiber sequence
	\[ i_m^* \anL_{\tU / F} \to \anL_{U_m / F} \to \anL_{U_m / \tU} . \]
	Since $\anL_{U_m / F}$ is flat to order $m+1$ by construction and $\anL_{U_m / U}$ is $(m+2)$-connective, it follows that $i_m^* \anL_{\tU / F}$ is flat to order $m+1$.
	Using \cref{prop:n_flat_sorite}(\ref{item:n_flat_truncation}), we conclude that $\anL_{\tU/F}$ is flat to order $m+1$.
	Since this holds for every $m$, we see that $\anL_{\tU / F}$ has tor-amplitude $0$.
	Since it is almost perfect, we conclude that $\anL_{\tU / F}$ is perfect and in degree $0$.
	Using the lifting criterion of \cref{prop:infinitesimal_characterization_smooth}, we conclude that $\tu$ is smooth.
	The proof of \cref{thm:representability} is thus complete.
\end{proof}

\section{Appendices}

\subsection{Modules over a simplicial commutative ring} \label{sec:modules}

Let $\CRing$ denote the \infcat of simplicial commutative rings.
Let $A\in\CRing$ and let $X\coloneqq\Spec (A)$ be the associated derived scheme.
We denote by $\mathrm{dSch}_{/X}$ the \infcat of derived schemes over $X$.
Let $\cT_A$ be the discrete pregeometry whose underlying \infcat is the full subcategory of $\mathrm{dSch}_{/X}$ spanned by the derived schemes $\bbA^n_X \coloneqq \Spec(\Sym_A(A^n))$ for all $n\ge 0$.
Moreover, let us define the discrete pregeometry $\cT_A^{\mathrm{aug}} \coloneqq (\cT_A)_{X/}$, whose underlying \infcat is the full subcategory of $(\mathrm{dSch}_k)_{X/ /X}$ spanned by objects $X \to Y \to X$ with $Y\in\cT_A$.

\begin{prop}
	We have the following equivalences of \infcats:
	\begin{enumerate}
		\item $\CRing_A \simeq \Fun^\times(\cT_A, \cS)$;
		\item $\CRing_{A//A} \simeq \Fun^\times(\cT_A^{\mathrm{aug}}, \cS)$.
	\end{enumerate}
\end{prop}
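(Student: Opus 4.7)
The plan is to prove both parts by identifying $\cT_A$ (resp.\ $\cT_A^{\mathrm{aug}}$) as the opposite of a subcategory of compact projective generators of $\CRing_A$ (resp.\ $\CRing_{A//A}$), and then invoking the universal property of sifted cocompletion. This is the natural relative generalization of the equivalence $\Strloc_{\cTdisck}(\cS) \simeq \CRing_k$ recalled at the beginning of \cref{sec:basic_notions}, which is essentially \cite[Example 3.1.6, Remark 4.1.2]{DAG-V}.

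For part (1), the first step is to identify $\cT_A^{\mathrm{op}}$ with the full subcategory $\CRing_A^{\mathrm{free}} \subset \CRing_A$ spanned by the finitely generated free $A$-algebras $A[x_1,\ldots,x_n]$. This is implemented by $\Spec_A$, via $\bbA^n_X = \Spec_A(\Sym_A(A^n))$. Under this equivalence, Cartesian products in $\cT_A$ (given by fiber products over $X$, so $\bbA^m_X \times_X \bbA^n_X \simeq \bbA^{m+n}_X$) correspond to coproducts in $\CRing_A^{\mathrm{free}}$ (given by tensor products over $A$). Since the free $A$-algebras are exactly the compact projective generators of $\CRing_A$, it follows from the characterization of nonabelian derived $\infty$-categories in \cite[\S 5.5.8]{HTT} that
\[ \CRing_A \simeq \mathcal P_\Sigma(\CRing_A^{\mathrm{free}}) \simeq \Fun^\times\bigl((\CRing_A^{\mathrm{free}})^{\mathrm{op}}, \cS\bigr) \simeq \Fun^\times(\cT_A, \cS). \]

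For part (2), I would carry out the analogous argument with $\CRing_{A//A}$ in place of $\CRing_A$. Under $\Spec_A$, an object $X \to Y \to X$ of $\cT_A^{\mathrm{aug}}$ corresponds to a free augmented $A$-algebra $A \to A[x_1,\ldots,x_n] \to A$, with the augmentation determined by the chosen section $X \to Y$. These free augmented $A$-algebras are the compact projective generators of $\CRing_{A//A}$, and products in $\cT_A^{\mathrm{aug}}$ (still computed as fiber products over $X$) correspond under $\Spec_A$ to tensor products of augmented $A$-algebras. The same $\mathcal P_\Sigma$-argument then yields $\CRing_{A//A} \simeq \Fun^\times(\cT_A^{\mathrm{aug}}, \cS)$.

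The principal technical point is the verification that $\CRing_A$ and $\CRing_{A//A}$ are the nonabelian derived $\infty$-categories of their subcategories of free (resp.\ free augmented) objects. For $\CRing_A$ this is a relative version of the standard presentation of simplicial commutative rings; for $\CRing_{A//A}$ it can be deduced from the $\CRing_A$ case by passing to the slice $(\CRing_A)_{/A}$ and observing that the free augmented $A$-algebras are obtained from the free $A$-algebras by adjoining the canonical augmentation $x_i \mapsto 0$. All remaining steps are formal manipulations with presentable $\infty$-categories.
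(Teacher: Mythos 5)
Your argument is correct, but part (2) is organized differently from the paper's. You invoke the recognition criterion for nonabelian derived $\infty$-categories directly, identifying the free augmented $A$-algebras as a family of compact projective generators of $\CRing_{\underover{A}}$ closed under finite coproducts and concluding $\CRing_{\underover{A}} \simeq \mathcal P_\Sigma$ of that family. The paper instead builds the equivalence by hand: it factors the left adjoint $\Psi$ of restriction along $\varphi \colon \cT_A^{\mathrm{aug}} \to \cT_A$ through $\CRing_{\underover{A}}$ to get a functor $F$, constructs an explicit right adjoint $G$ via a pullback formula, and then invokes \cite[5.5.8.10]{HTT} to reduce checking that $F$ and $G$ are mutually inverse to a direct computation on the generating objects. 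Your route is more uniform across (1) and (2) and avoids writing down $G$, but the reduction ``by passing to the slice'' deserves more justification than you give it: the claim that a compact projective $P$ of $\CRing_A$ equipped with an augmentation $\epsilon_P$ stays compact projective in $(\CRing_A)_{/A}$ is not automatic from projectivity in $\CRing_A$ alone, since $\Map_{\CRing_{\underover{A}}}((P,\epsilon_P),-)$ is the fiber of $\Map_{\CRing_A}(P,-)\to\Map_{\CRing_A}(P,A)$ at $\epsilon_P$, and commuting that fiber with geometric realizations uses that colimits in $\cS$ are universal (i.e.\ that $\cS$ is an $\infty$-topos). The generation claim similarly requires observing that any free simplicial resolution of $B$ in $\CRing_A$ inherits a compatible augmentation. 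Finally, a small imprecision: $\cT_A^{\mathrm{aug}} = (\cT_A)_{X/}$ carries an \emph{arbitrary} section $X \to \bbA^n_X$, so the induced augmentations $\epsilon(x_i)$ are arbitrary points of $A$, not only zero; all such objects are equivalent in $\CRing_{\underover{A}}$ by translation, so this is harmless, but the phrase ``the canonical augmentation $x_i \mapsto 0$'' understates what $\cT_A^{\mathrm{aug}}$ actually contains.
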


\begin{proof}
	The first equivalence is the content of \cite[Definition 4.1.1 and Remark 4.1.2]{DAG-V}.
	
	Let us now prove the second one.
	Observe that there is a forgetful functor $\varphi \colon \cT_A^{\mathrm{aug}} \to \cT_A$ that commutes with products.
	In particular, composition with $\varphi$ induces a well-defined functor
	\[ \Phi \colon \Fun^\times(\cT_A, \cS) \to \Fun^\times(\cT_A^{\mathrm{aug}}, \cS) . \]
	This functor commutes with limits and with sifted colimits. In particular, it has a left adjoint, denoted
	\[ \Psi \colon \Fun^\times(\cT_A^{\mathrm{aug}}, \cS) \to \Fun^\times(\cT_A, \cS) \simeq \CRing_A . \]
	Let $\cO_A \coloneqq \Map_{\cT_A^{\mathrm{aug}}}(\Spec(A), -) \in \Fun^\times(\cT_A^{\mathrm{aug}})$.
	Since $X = \Spec(A)$ is a final object in $\cT_A^{\mathrm{aug}}$, it follows that $\cO_A$ is an initial object in $\Fun^\times(\cT_A^{\mathrm{aug}})$.
	In particular, $\Psi(\cO_A)$ is an initial object in $\Fun^\times(\cT_A, \cS) \simeq \CRing_A$.
	In other words, $\Psi(\cO_A) \simeq A$.
	On the other hand, $X$ is also an initial object in $\cT_A^{\mathrm{aug}}$.
	Thus, $\cO_A$ is also a final object in $\Fun^\times(\cT_A^{\mathrm{aug}}, \cS)$.
	It follows that $\Psi$ factors through
	\[ F \colon \Fun(\cT_A^{\mathrm{aug}}, \cS) \to \CRing_{A//A} , \]
	in such a way that the diagram
	\[ \begin{tikzcd}
		\CRing_A & \Fun^\times(\cT_A, \cS) \arrow{l}[swap]{\sim} \\
		\CRing_{A//A} \arrow{u} & \Fun^\times(\cT_A^{\mathrm{aug}}, \cS) \arrow{u}[swap]{\Psi} \arrow{l}[swap]{F}
	\end{tikzcd} \]
	commutes.
	
	The functor $F$ admits a right adjoint $G$ that can be constructed as follows.
	Let $(B,f) \in \CRing_{A//A}$, where $B$ is an $A$-algebra and $f \colon B \to A$ is the augmentation.
	We can review $B$ as an object in $\Fun^\times(\cT_A, \cS)$. Applying $\Phi$ we obtain a product preserving functor $\Phi(B)$ equipped with a map to $\Phi(A) \simeq \Phi(\Psi(\cO_A))$.
	We can thus form the pullback
	\begin{equation} \label{eq:mapping_comma}
		\begin{tikzcd}
			G(B) \arrow{r} \arrow{d} & \Phi(B) \arrow{d} \\ \cO_A \arrow{r} & \Phi(\Psi(\cO_A)) .
		\end{tikzcd}
	\end{equation}
	This construction shows immediately that $G$ is a right adjoint to $F$.
	Let us now remark that for $B \in \cT_A^{\mathrm{aug}} \subset \CRing_{A//A}$, we can canonically identify $G(B)$ with the functor
	\[ \cO_B \colon \cT_A^{\mathrm{aug}} \to \cS \]
	defined by
	\[ \cO_B ( \mathbb A^n_X ) = \Map_{X//X}( \Spec(B), \mathbb A^n_X ) . \]
	Indeed, we remark that evaluating the diagram of natural transformations \eqref{eq:mapping_comma} on $f \colon X \to \bbA^n_X$, we get the pullback diagram
	\[ \begin{tikzcd}
		G(B)(X \xrightarrow{f} \bbA^n_X) \arrow{r} \arrow{d} & \Map_{/X}(\Spec(B), \bbA^n_X) \arrow{d} \\
		\{*\} \arrow{r}{f} & \Map_{/X}(X, \bbA^n_X) .
	\end{tikzcd} \]
	In particular, we obtain a canonical identification
	\[ G(B)(X \xrightarrow{f} \bbA^n_X) \simeq \Map_{X//X}(\Spec(B), \bbA^n_X) . \]
	
	We now remark that both $F$ and $G$ commute with sifted colimits.
	By the statement (1) and \cite[5.5.8.10]{HTT}, it is enough to check that for every $f \colon X \to \bbA^n_X$, the canonical maps
	\[ F(G(\bbA^n,f)) \to (\bbA^n_X, f) , \qquad \cO_f \to G(F(\cO_f)) \]
	are equivalences.
	Observe that the functor
	\[ \Psi \colon \Fun^\times(\cT_A^{\mathrm{aug}}, \cS) \to \Fun^\times(\cT_A, \cS) \]
	can be factored as
	\[ \Fun^\times(\cT_A^{\mathrm{aug}}, \cS) \hookrightarrow \Fun(\cT_A^{\mathrm{aug}}, \cS) \xrightarrow{\mathrm{Lan}_\varphi} \Fun(\cT_A, \cS) \xrightarrow{\pi} \Fun^\times(\cT_A, \cS) . \]
	Now, observe that $\mathrm{Lan}_\varphi(\cO_f) = \Map_{/X}(\bbA^n_X, -)$.
		In particular, $\mathrm{Lan}_{\varphi}(\cO_f)$ is still a product preserving functor.
	As a consequence,
	\[ \Psi(\cO_f) = \pi( \mathrm{Lan}_\varphi(\cO_f) ) \simeq \mathrm{Lan}_\varphi(\cO_f) . \]
	In particular, we obtain
	\[ F(\cO_f) \simeq (\mathbb A^n_X, f) . \]
	The above considerations on the construction of $G$, implies therefore that $\cO_f \simeq G(F(\cO_f))$.
	Vice-versa, $G(\bbA^n_X, f) \simeq \cO_f$, so that the above argument yields $(\bbA^n_X,f) \simeq F(G(\bbA^n_X,f ))$.
	This completes the proof.
\end{proof}

Let $\cX$ be an $\infty$-topos and let $\CRing(\cX)\coloneqq \Sh_{\CRing}(\cX)$ denote the \infcat of sheaves of simplicial commutative rings on $\cX$.
Let $\cA \in \CRing(\cX)$ and let $\cA \Mod$ denote the $\infty$-category of left $\cA$-modules in $\Sh_{\DAb}(\cX)$.
The Dold-Kan correspondence induces a forgetful functor
\[ \CRing(\cX) \to \Sh_{\DAb^{\ge 0}}(\cX) . \]

Let $\cA\Mod(\Sh_{\DAb^{\ge 0}}(X))$ denote the $\infty$-category of left $\cA$-modules in $\Sh_{\DAb^{\ge 0}}(\cX)$.
When $\cX\simeq\cS$, we have $\cA\Mod(\Sh_{\DAb^{\ge 0}}(X))\simeq\cA\Mod^{\ge 0}$, where $\cA\Mod^{\ge 0}$ denotes the connective part of the canonical t-structure on $\cA\Mod$.
Note that the equivalence does not hold for general \inftopos $\cX$.

\begin{cor} \label{cor:modules_in_positive_characteristic}
	Let $\cX$ be an $\infty$-topos and let $\cA \in \CRing(\cX)$ be a sheaf of simplicial commutative rings on $\cX$.
	We have a canonical equivalence of $\infty$-categories
	\[ \Ab(\CRing(\cX)_{/\cA}) \simeq \cA\Mod(\Sh_{\DAb^{\ge 0}}(X)) . \]
	As a consequence, we have a canonical equivalence of stable $\infty$-categories.
	\[ \Sp(\Ab(\CRing(\cX)_{/\cA})) \simeq \cA\Mod. \]
\end{cor}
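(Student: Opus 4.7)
My plan is first to establish the unstable equivalence $\Ab(\CRing(\cX)_{/\cA}) \simeq \cA\Mod(\Sh_{\DAb^{\ge 0}}(\cX))$, and then to deduce the stable version as a formal consequence. For the deduction, I would use that $\cA\Mod$ is stable with a canonical $t$-structure whose connective part is equivalent to $\cA\Mod(\Sh_{\DAb^{\ge 0}}(\cX))$, so that applying $\Sp(-)$ gives
\[ \Sp(\Ab(\CRing(\cX)_{/\cA})) \simeq \Sp(\cA\Mod(\Sh_{\DAb^{\ge 0}}(\cX))) \simeq \cA\Mod. \]

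For the unstable equivalence, I would begin by invoking \cref{lem:pointed_objects}(1) to reduce to the pointed case, obtaining $\Ab(\CRing(\cX)_{/\cA}) \simeq \Ab(\CRing(\cX)_{\cA // \cA})$. Next, I would sheafify the preceding Proposition to identify $\CRing(\cX)_{\cA // \cA}$ with $\Fun^\times(\cT_\cA^{\mathrm{aug}}, \cX)$, where $\cT_\cA^{\mathrm{aug}}$ denotes the sheafified augmented discrete pregeometry whose objects are $\bbA^n_{\Spec(\cA)}$ equipped with their canonical zero sections. Using the formal identity $\Ab(\Fun^\times(\cD, \cX)) \simeq \Fun^\times(\cT_\Ab \times \cD, \cX)$, valid for any small \infcat $\cD$ with finite products (which follows from the exponential law combined with the commutation of product-preservation in the two variables), this yields
\[ \Ab(\CRing(\cX)_{\cA // \cA}) \simeq \Fun^\times(\cT_\Ab \times \cT_\cA^{\mathrm{aug}}, \cX). \]

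The heart of the argument is then to match $\cT_\Ab \times \cT_\cA^{\mathrm{aug}}$ (with its diagonal finite-product structure) with the Lawvere theory $\cT_{\cA\Mod}$ for $\cA$-modules, under which the right-hand side becomes $\cA\Mod(\Sh_{\DAb^{\ge 0}}(\cX))$. The key observation is that $\bbA^1_{\Spec(\cA)} \simeq \bbG_{a,\cA}$ is an abelian group scheme over $\Spec(\cA)$, which provides a canonical product-preserving functor $\cT_\Ab \to \cT_\cA^{\mathrm{aug}}$; combined with the ambient polynomial $\cA$-algebra structure on $\bbA^n_{\Spec(\cA)}$, this assembles into an equivalence of Lawvere theories. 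Concretely, a pair $(A^n, \bbA^m_{\Spec(\cA)})$ in $\cT_\Ab \times \cT_\cA^{\mathrm{aug}}$ corresponds to the free $\cA$-module of rank $n \cdot m$, with scalar action obtained from the ring multiplication in $\cA$ acting on the $\bbG_a$-factors.

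The main obstacle will be verifying this Lawvere-theoretic identification carefully: one must check that the formal abelian-group structure coming from $\cT_\Ab$ and the geometric $\bbG_a$-structure on $\bbA^1_{\Spec(\cA)}$ interact, via the $\cA$-algebra structure on $\bbA^n_{\Spec(\cA)}$, in exactly the way required to encode scalar multiplication by elements of $\cA$ on a module. Once this is established, one recovers the split square-zero extension functor $\cF \mapsto \cA \oplus \cF$ as the explicit realization of the equivalence, which both confirms its compatibility with the earlier conventions and completes the proof.
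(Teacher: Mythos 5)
Your strategy for deducing the stable statement from the unstable one has a gap. You propose to identify $\cA\Mod(\Sh_{\DAb^{\ge 0}}(\cX))$ with the connective part $\cA\Mod^{\ge 0}$ of the canonical $t$-structure on $\cA\Mod$, and then apply $\Sp(-)$. But the paper explicitly warns, in the paragraph immediately preceding the corollary, that this identification holds only when $\cX \simeq \cS$ and \emph{fails} for a general $\infty$-topos. The paper's actual argument does not pass through the $t$-structure on $\cA\Mod$ at all; instead it uses the equivalences
\[
\Sp(\Sh_{\DAb^{\ge 0}}(\cX)) \simeq \Sh_{\Sp(\DAb^{\ge 0})}(\cX) \simeq \Sh_{\DAb}(\cX),
\]
i.e.\ stabilization commutes with the sheaf construction, and then passes to $\cA$-module objects. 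Your intermediate claim is incorrect as stated, even though the conclusion $\Sp(\cA\Mod(\Sh_{\DAb^{\ge 0}}(\cX))) \simeq \cA\Mod$ is ultimately true.

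For the unstable equivalence, the route you propose also differs from the paper's, and its key step does not work as stated. After reducing to the pointed case, the paper does not ``sheafify'' the preceding Proposition; it first reduces to $\cX = \cS$ by exhibiting $\cX$ as a left exact accessible localization of $\PSh(\cC)$ and using that the evaluation functors $\{\ev_C\}_{C \in \cC}$ are jointly conservative. Only after this reduction is $\cT_A^{\mathrm{aug}}$ deployed, which is defined for a single simplicial commutative ring $A$, not a sheaf; your $\cT_\cA^{\mathrm{aug}}$ for a sheaf $\cA$ is not defined. More seriously, your Lawvere-theoretic identification of $\cT_\Ab \times \cT_\cA^{\mathrm{aug}}$ with a Lawvere theory for $\cA$-modules cannot be an equivalence of the underlying categories: $\cT_\Ab \times \cT_\cA^{\mathrm{aug}}$ is a two-sorted theory with objects indexed by pairs $(n,m)$, and your proposed assignment $(n,m) \mapsto nm$ is many-to-one, so it is not bijective on objects and cannot underlie an equivalence. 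What is true is an equivalence of the categories of product-preserving \emph{functors} out of the two theories, but that is precisely the statement to be proved; it amounts to the Quillen-type theorem that abelian group objects in augmented simplicial commutative $A$-algebras coincide with connective $A$-modules. The ``main obstacle'' you flag is not a routine verification---it is the theorem itself. The paper proves it by rigidifying via \cite[5.5.9.2]{HTT} to strict product-preserving functors into $\sSet$, commuting the two Lawvere factors at the strict level, invoking the classical 1-categorical identification $\Ab(\sCRing_{A//A}) \simeq A\sMod$, and concluding by the Dold--Kan correspondence. Your proposal omits this mechanism, which is where the substance of the proof lies.
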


\begin{proof}
	The second statement follows from the first one.
	Indeed, it is enough to remark that
	\[ \Sp(\Sh_{\DAb^{\ge 0}}(\cX)) \simeq \Sh_{\Sp(\DAb^{\ge 0})}(\cX) \simeq \Sh_{\DAb}(\cX) . \]
	We are therefore reduced to prove the first statement.
	
	Since $\cX$ is an $\infty$-topos, we can choose a small $\infty$-category $\cC$ such that $\cX$ is a left exact and accessible localization of $\PSh(\cC)$.
	It follows that $\Sh_{\CRing}(\cX)$ and $\Sh_{\DAb^{\ge 0}}(\cX)$ are localizations of $\PSh_{\CRing}(\cC)$ and of $\PSh_{\DAb^{\ge 0}}(\cX)$, respectively.
	We can therefore replace $\cX$ by $\PSh(\cC)$.
	For every $C \in \cC$, let $\ev_C \colon \PSh(\cC) \to \cS$ be the functor given by evaluation at $C$.
	The collection of the functors $\{\ev_C\}_{C \in \cC}$ is jointly conservative.
	Furthermore, each $\ev_C$ is part of a geometric morphism of topoi.
	We are therefore reduced to prove the statement in the $\infty$-category of spaces $\cS$, and we will write $A$ instead of $\cA$.
	
	Recall from \cref{def:abelian_group_objects} the Lawvere theory of abelian groups $\cTAb$.
	Using \cref{lem:pointed_objects}, we have
	\begin{align*}
		\Ab(\CRing_{/A}) & \simeq \Ab(\CRing_{A // A}) \\
		& \simeq \Fun^\times(\cT_{\Ab}, \CRing_{A//A}) \\
		& \simeq \Fun^\times( \cT_{\Ab}, \Fun^\times(\cT_A^{\mathrm{aug}}, \cS)) \\
		& \simeq \Fun^\times( \cT_{\Ab} \times \cT_A^{\mathrm{aug}}, \cS ) .
	\end{align*}
	We can now invoke \cite[5.5.9.2]{HTT} to obtain an equivalence
	\[ \Fun^\times(\cT_{\Ab} \times \cT_A^{\mathrm{aug}}, \cS) \simeq \infty( \mathrm{Funct}^\times( \cT_{\Ab} \times \cT_A^{\mathrm{aug}}, \sSet ) ) , \]
	where $\mathrm{Funct}^\times( \cT_{\Ab} \times \cT_A^{\mathrm{aug}}, \sSet )$ is the category of strictly product preserving functors to $\sSet$ equipped with the projective model structure (whose existence is guaranteed by \cite[5.5.9.1]{HTT}), and where $\infty(-)$ denotes the underlying \infcat of a simplicial model category (cf.\ \cite[A.3.7]{HTT}).
	We now remark that
	\begin{align*}
		\mathrm{Funct}^\times( \cT_{\Ab} \times \cT_A^{\mathrm{aug}}, \sSet) & \simeq \mathrm{Funct}^\times( \cTAb, \mathrm{Funct}^\times( \cT_A^{\mathrm{aug}}, \sSet) ) \\
		& \simeq \mathrm{Funct}^{\times}( \cTAb, \sCRing_{A//A}) \\
		& \simeq \Ab( \sCRing_{A//A} ) \simeq A\sMod,
	\end{align*}
	where $\sCRing_{A//A}$ denotes the simplicial model category of simplicial commutative $A$-algebras with an augmentation to $A$.
	Moreover, under this chain of equivalences, the model structure on $\mathrm{Funct}^\times( \cT_{\Ab} \times \cT_A^{\mathrm{aug}}, \sSet)$ corresponds to the standard model structure on $A\sMod$.
	Finally, we can use the Dold-Kan equivalence in order to obtain the equivalence
	\[ \Ab(\CRing_{/A}) \simeq \infty(A\sMod) \stackrel{\mathrm{D-K}}{\simeq} A \Mod^{\ge 0} . \]
\end{proof}

\subsection{Flatness to order $n$} \label{sec:n-flatness}

We introduce in this section the notion of flatness to order $n$, which plays a key role in our proof of the representability theorem.

\begin{defin} \label{def:n_flat}
	Let $A$ be a simplicial commutative algebra and let $M \in A \Mod^{\ge 0}$ be a connective $A$-module.
	We say that $M$ is \emph{flat to order $n$} if for every discrete $A$-module $N \in A \Modh$, we have
	\[ \pi_{i}( M \otimes_A N ) = 0 \]
	for every $0 < i < n + 1$.
\end{defin}

\begin{prop} \label{prop:n_flat_sorite}
	Let $A$ be a simplicial commutative algebra and let $M \in A \Mod^{\ge 0}$ be a connective $A$-module.
	\begin{enumerate}
		\item \label{item:n_flat_decreasing} If $M$ is flat to order $n$, then it is flat to order $m$ for every $m \le n$.
		\item \label{item:n_flat_truncation} $M$ is flat to order $n$ if and only if $\tau_{\le n} M$ is flat to order $n$.
		\item \label{item:n_flat_functorial}  If $f \colon A \to B$ is a morphism of simplicial commutative algebras and $M$ is flat to order $n$, then $f^*(M) = M \otimes_A B$ is flat to order $n$.
		\item \label{item:n_flat_testing_on_truncations} Let $m, n \ge 0$ be integers. Then $M$ is flat to order $n$ if and only if $M \otimes_A \tau_{\le m} A$ is flat to order $n$.
	\end{enumerate}
\end{prop}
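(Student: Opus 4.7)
The four statements are increasingly substantive but all rest on a handful of standard manipulations with the canonical $t$-structure on $A\Mod$. Part (\ref{item:n_flat_decreasing}) is immediate: flatness to order $n$ requires the vanishing of $\pi_i(M \otimes_A N)$ on a larger range of indices than flatness to order $m \le n$, so the implication holds tautologically from \cref{def:n_flat}.

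For part (\ref{item:n_flat_truncation}), the plan is to apply the tensor product $- \otimes_A N$ to the fiber sequence
\[ \tau_{\ge n+1} M \to M \to \tau_{\le n} M \]
for a fixed discrete $A$-module $N$. Since tensoring with a connective module is right $t$-exact and $\tau_{\ge n+1} M$ is $(n+1)$-connective, the term $\tau_{\ge n+1} M \otimes_A N$ remains $(n+1)$-connective. The long exact sequence of homotopy groups will then identify $\pi_i(M \otimes_A N)$ with $\pi_i(\tau_{\le n} M \otimes_A N)$ for every $i \le n$, so the vanishing condition of \cref{def:n_flat} on $0 < i < n+1$ will be equivalent for $M$ and $\tau_{\le n} M$.

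For part (\ref{item:n_flat_functorial}), given a discrete $B$-module $N'$, I would use the projection-type identification $(M \otimes_A B) \otimes_B N' \simeq M \otimes_A N'$. Restriction of scalars along $f$ preserves discreteness, so $N'$ is also a discrete $A$-module, and the flatness-to-order-$n$ hypothesis on $M$ supplies the required vanishing directly.

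Part (\ref{item:n_flat_testing_on_truncations}) is the one requiring a genuine (if mild) observation, and I expect it to be the main obstacle. The forward direction is analogous to (\ref{item:n_flat_functorial}): any discrete $\tau_{\le m} A$-module is in particular a discrete $A$-module via the unit $A \to \tau_{\le m} A$, and one reads off the conclusion after identifying $(M \otimes_A \tau_{\le m} A) \otimes_{\tau_{\le m} A} N \simeq M \otimes_A N$. For the converse, the crucial remark is that any discrete $A$-module $N$ carries a canonical $\tau_{\le m} A$-module structure, since the original $A$-action factors through $\pi_0 A$ and hence through $\tau_{\le m} A$ for every $m \ge 0$. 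With this in hand, the same identification applies and the hypothesis on $M \otimes_A \tau_{\le m} A$ yields the vanishing. The only subtlety is that ``flat to order $n$'' on the right-hand side of (\ref{item:n_flat_testing_on_truncations}) must be interpreted as flatness of $M \otimes_A \tau_{\le m} A$ as a $\tau_{\le m} A$-module, and one should verify this base-change compatibility explicitly before reading off the conclusion.
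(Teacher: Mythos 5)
Your argument is correct and matches the paper's proof essentially step for step: the fiber-sequence argument for (2), the projection formula plus $t$-exactness of restriction of scalars for (3), and the factoring of the $A$-action on a discrete module through $\pi_0 A$ for (4) are all exactly what the paper does. The only cosmetic difference is in part (4), where the paper first reduces to $m = 0$ via $\pi_0(A) \simeq \pi_0(\tau_{\le m} A)$ before citing the earlier items, while you work with $\tau_{\le m} A$ directly; both routes rest on the same observation that a discrete $A$-module is automatically a $\tau_{\le m} A$-module.
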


\begin{proof}
	Statement (\ref{item:n_flat_decreasing}) follows directly from the definitions.
	We prove (\ref{item:n_flat_truncation}).
	Consider the fiber sequence
	\[ \tau_{\ge n+1} M \to M \to \tau_{\le n} M. \]
	Let $N \in A\Modh$ and consider the induced fiber sequence
	\[ (\tau_{\ge n+1} M) \otimes_A N \to M \otimes_A N \to (\tau_{\le n} M) \otimes_A N. \]
	Since $\tau_{\ge n+1}M \otimes_A N \in A \Mod^{\ge n+1}$, the conclusion follows from the long exact sequence of cohomology groups.

	We now prove (\ref{item:n_flat_functorial}).
	Let $N \in B \Modh$.
	Recall that the functor $f_* \colon B \Mod \to A \Mod$ is $t$-exact and conservative.
	In particular, it is enough to prove that $\pi_{i}( f_*( f^*(M) \otimes_B N ) ) = 0$ for $0 < i < n+1$.
	We have
	\[ f_*(f^*(M) \otimes_B N) \simeq M \otimes_A f_*(N) . \]
	The conclusion now follows from the fact that $M$ is flat to order $n$.
	
	Finally, we prove (\ref{item:n_flat_testing_on_truncations}).
	Since $\pi_0(A) \simeq \pi_0(\tau_{\le m}(A))$, it is enough to deal with the case $m = 0$.
	As the ``only if'' follows from point (\ref{item:n_flat_truncation}), we are left to prove the ``if'' direction.
	Suppose therefore that $M \otimes_A \pi_0(A)$ is flat to order $n$.
	Let $N \in A \Modh$.
	Since $A \Modh \simeq \pi_0(A) \Modh$, we see that $N$ is naturally a $\pi_0(A)$-module.
	Therefore, we can write
	\[ M \otimes_A N \simeq (M \otimes_A \pi_0(A)) \otimes_{\pi_0(A)} N . \]
	Since $M \otimes_A \pi_0(A)$ is flat to order $n$, it follows that
	\[ \pi_{i}( M \otimes_A N ) = 0 \]
	for every $0 < i < n+1$. 
	In other words, $M$ is flat to order $n$.
\end{proof}

\begin{prop} \label{prop:n_flat_implies_flat_truncation}
	Let $A\in\CRing$ and let $M \in A \Mod^{\ge 0}$.
	Assume that $M$ is flat to order $n$, and that $A$ is $m$-truncated with $m \le n$.
	Then $\tau_{\le n} M$ is flat as $A$-module.
\end{prop}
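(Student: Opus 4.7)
The plan is to prove the stronger claim that, after replacing $M$ by $\tau_{\le n}M$ (which is allowed by \cref{prop:n_flat_sorite}(\ref{item:n_flat_truncation})), $M$ itself is flat over $A$.  By the flatness criterion of \cite[7.2.2.13]{Lurie_Higher_algebra}, this reduces to three assertions: (a) $\pi_0(M)$ is flat over $\pi_0(A)$; (b) the canonical maps $\pi_i(A)\otimes_{\pi_0(A)}\pi_0(M)\to\pi_i(M)$ are isomorphisms for $0\le i\le m$; and (c) $\pi_i(M)=0$ for $m<i\le n$ (beyond this range $\pi_i(A)$ itself vanishes, so both sides of the purported isomorphism are automatically zero).

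First I would dispose of the case $m=0$, where $A$ is discrete.  Here the hyper-Tor spectral sequence
\[ E^2_{p,q}=\Tor^A_p(\pi_q(M),N)\Rightarrow\pi_{p+q}(M\otimes_A N) \]
admits no nontrivial differentials touching $(1,0)$ for bidegree reasons, so $E^\infty_{1,0}=\Tor^A_1(\pi_0(M),N)$ is a subquotient of $\pi_1(M\otimes_A N)=0$; hence $\pi_0(M)$ is flat.  This in turn kills the entire column $p\ge 1$, $q=0$ of $E^2$, and an easy induction on $i$ then shows that $E^\infty_{0,i}=\pi_i(M)\otimes_A N$ is a subquotient of $\pi_i(M\otimes_A N)=0$ for $1\le i\le n$; specialising to $N=A$ yields $\pi_i(M)=0$ in that range.

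For general $m$, I would use the Postnikov tower $A_k\coloneqq\tau_{\le k}A$ of $A$, whose truncation maps sit in cofiber sequences $\pi_{k+1}(A)[k+1]\to A_{k+1}\to A_k$ in $A\Mod$.  Setting $B_k\coloneqq M\otimes_A A_k$ and $X_{k+1}\coloneqq M\otimes_A\pi_{k+1}(A)[k+1]$ yields cofiber sequences $X_{k+1}\to B_{k+1}\to B_k$ with $B_m=M$ and $B_0=M\otimes_A\pi_0(A)\eqqcolon M_0$.  By \cref{prop:n_flat_sorite}(\ref{item:n_flat_functorial}), $M_0$ is flat to order $n$ over the discrete ring $\pi_0(A)$, and applying the argument of the preceding paragraph to $M_0$ shows that $\pi_0(M)=\pi_0(M_0)$ is flat over $\pi_0(A)$ and that $\pi_q(M_0)=0$ for $1\le q\le n$.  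Since $\pi_{k+1}(A)$ is a $\pi_0(A)$-module one has $M\otimes_A\pi_{k+1}(A)\simeq M_0\otimes_{\pi_0(A)}\pi_{k+1}(A)$, to which the Tor spectral sequence
\[ E^2_{p,q}=\Tor^{\pi_0(A)}_p(\pi_q(M_0),\pi_{k+1}(A))\Rightarrow\pi_{p+q}(M_0\otimes_{\pi_0(A)}\pi_{k+1}(A)) \]
applies: flatness of $\pi_0(M_0)$ kills the column $p\ge 1$, $q=0$, and the vanishing of $\pi_q(M_0)$ for $1\le q\le n$ kills every bidegree with $1\le q\le n$, so in total degree $\le n$ only the corner $(0,0)$ contributes.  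Consequently $\pi_i(X_{k+1})=\pi_0(M)\otimes_{\pi_0(A)}\pi_{k+1}(A)$ for $i=k+1$ and $\pi_i(X_{k+1})=0$ for every other $i\le n$.  A straightforward induction on $k$, chasing the long exact sequence of $X_{k+1}\to B_{k+1}\to B_k$, then yields $\pi_i(B_k)\simeq\pi_i(A_k)\otimes_{\pi_0(A)}\pi_0(M)$ for $0\le i\le k$ and $\pi_i(B_k)=0$ for $k<i\le n$; specialising to $k=m$ produces the required descriptions of $\pi_*(M)$.  The main obstacle is precisely this spectral-sequence analysis of $X_{k+1}$: one has to check that our ignorance of $\pi_q(M_0)$ for $q>n$ cannot interfere with the computation, which is guaranteed because any such contribution has total degree $>n$ and therefore lies outside the relevant range.
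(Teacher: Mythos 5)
Your approach is genuinely different from the paper's: the paper's proof is a one-liner invoking the mechanism of \cite[7.2.2.15, (3) $\Rightarrow$ (1)]{Lurie_Higher_algebra}, whereas you make the Tor spectral sequence arguments explicit, which is more self-contained and instructive. Your treatment of the case $m=0$ and your computation of $\pi_*(X_{k+1})$ are both correct. However, the ``straightforward induction on $k$'' contains a gap at the very last step $k = m-1$. To conclude $\pi_m(B_m) \cong \pi_m(A) \otimes_{\pi_0(A)} \pi_0(M)$ from the long exact sequence of $X_m \to B_m \to B_{m-1}$, you need the connecting map $\pi_{m+1}(B_{m-1}) \to \pi_m(X_m)$ to vanish; your inductive hypothesis controls $\pi_i(B_{m-1})$ only for $i \le n$, so this requires $m+1 \le n$, i.e.\ $n > m$. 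The gap is not merely an omitted verification: when $n = m$ the proposition as stated is actually false. Take $k$ a field, $A = k \oplus k[1]$ the trivial square-zero extension (so $A$ is $1$-truncated, $m=1$), and $M = k$. Tensoring the fiber sequence $k[1] \to A \to k$ with $k$ over $A$ gives $(k \otimes_A k)[1] \to k \to k \otimes_A k$, whose long exact sequence yields $\pi_1(k \otimes_A k) = 0$ (so $M$ is flat to order $1$) and $\pi_2(k \otimes_A k) \cong \pi_0(k \otimes_A k) = k \ne 0$, so $\tau_{\le 1} M = k$ is \emph{not} flat over $A$. Your argument therefore establishes the proposition only under the strengthened hypothesis $n > m$, which is in fact the only case used in the paper: both \cref{cor:n_flat_increasing} and the proof of \cref{thm:representability} apply it with $n = m+1$. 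You should state this restriction explicitly; the paper's own terse proof glosses over the same boundary case.
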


\begin{proof}
	It follows from the same proof of \cite[7.2.2.15, (3) $\Rightarrow$ (1)]{Lurie_Higher_algebra} that
	\[ \pi_{i}(M) \simeq \pi_{i}(A) \otimes_{\pi_0(A)} \pi_0(M) \]
	for $0 \le i \le n$.
		Moreover, since $A$ is $m$-truncated and $m \le n$, we see that
	\[ \pi_{i}(\tau_{\le n} M) \simeq 0 \simeq \pi_{i}(A) \otimes_{\pi_0(A)} \pi_0(M) \]
	for $i > n$.
	Therefore, $\tau_{\le n} M$ is flat.
\end{proof}

\begin{cor} \label{cor:n_flat_increasing}
	Let $A \in\CRing$ and let $M \in A \Mod^{\ge 0}$.
	Assume that $M$ is flat to order $n$, and that $A$ is $m$-truncated with $m \le n$.
	Then $M$ is flat to order $n+1$ if and only if $\pi_{n+1}(M) = 0$.
\end{cor}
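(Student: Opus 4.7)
My plan is to deduce the corollary from Proposition~\ref{prop:n_flat_implies_flat_truncation} together with part~(\ref{item:n_flat_truncation}) of Proposition~\ref{prop:n_flat_sorite}. The observation driving the argument is that, under the truncation hypothesis $m \le n$, flatness to order $n$ already pins down the homotopy groups $\pi_i(M)$ for $0 < i \le n$ via the formula $\pi_i(M) \simeq \pi_i(A) \otimes_{\pi_0(A)} \pi_0(M)$, so the only new information at the next level is whether $\pi_{n+1}(M)$ vanishes.

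For the ``only if'' direction, I would assume $M$ is flat to order $n+1$. Since $A$ is $m$-truncated with $m \le n \le n+1$, Proposition~\ref{prop:n_flat_implies_flat_truncation} applies at level $n+1$ and shows that $\tau_{\le n+1} M$ is flat over $A$. The proof of that proposition gives $\pi_{n+1}(\tau_{\le n+1} M) \simeq \pi_{n+1}(A) \otimes_{\pi_0(A)} \pi_0(M)$, which vanishes because $A$ is $m$-truncated with $m < n+1$. Hence $\pi_{n+1}(M) \simeq \pi_{n+1}(\tau_{\le n+1} M) = 0$.

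For the ``if'' direction, I would assume $\pi_{n+1}(M) = 0$ and apply Proposition~\ref{prop:n_flat_implies_flat_truncation} at level $n$ to conclude that $\tau_{\le n} M$ is flat over $A$. Because $\pi_{n+1}(M) = 0$, the canonical map $\tau_{\le n+1} M \to \tau_{\le n} M$ induces isomorphisms on all homotopy groups and is therefore an equivalence; in particular, $\tau_{\le n+1} M$ is flat. Any flat module is flat to every order, so $\tau_{\le n+1} M$ is flat to order $n+1$. Proposition~\ref{prop:n_flat_sorite}(\ref{item:n_flat_truncation}) then gives that $M$ itself is flat to order $n+1$.

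There is no real obstacle here: the argument is purely a bookkeeping exercise using the tools already established in the appendix. The slightly delicate point is to make sure that the hypothesis $m \le n$ on the truncation level of $A$ suffices to invoke Proposition~\ref{prop:n_flat_implies_flat_truncation} at the next level $n+1$, which it does because $m \le n < n+1$, so the formula for the higher homotopy groups of the flat truncation continues to force the vanishing of $\pi_{n+1}$.
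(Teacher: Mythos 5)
Your proof is correct, and it takes a genuinely different route from the one in the paper. The paper proves both directions at once by tensoring the fiber sequence $\tau_{\ge n+1}M \to M \to \tau_{\le n}M$ with an arbitrary discrete $N$, observing via \cref{prop:n_flat_implies_flat_truncation} that $\tau_{\le n}M\otimes_A N$ is discrete, and then extracting from the long exact sequence together with \cite[7.2.1.23]{Lurie_Higher_algebra} the isomorphism $\pi_{n+1}(M\otimes_A N)\simeq \pi_{n+1}(M)\otimes_{\pi_0(A)} N$; both implications drop out immediately. You instead handle the two directions separately: the ``if'' direction observes that $\tau_{\le n+1}M\simeq \tau_{\le n}M$ is flat (hence flat to every order) and transfers back to $M$ via \cref{prop:n_flat_sorite}(\ref{item:n_flat_truncation}), while the ``only if'' direction reads off $\pi_{n+1}(M)\simeq \pi_{n+1}(A)\otimes_{\pi_0(A)}\pi_0(M)=0$ from the identification of homotopy groups established inside the proof of \cref{prop:n_flat_implies_flat_truncation} applied at level $n+1$. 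Your ``if'' argument is arguably the cleaner of the two, avoiding the fiber-sequence computation entirely. The one cosmetic weakness is that your ``only if'' direction appeals to the \emph{proof} of \cref{prop:n_flat_implies_flat_truncation} rather than its statement; you could make that step self-contained by instead noting that the flat module $\tau_{\le n+1}M$ satisfies $\pi_{n+1}(\tau_{\le n+1}M)\simeq\pi_{n+1}(A)\otimes_{\pi_0(A)}\pi_0(M)$ by the derived Lazard characterization \cite[7.2.2.15]{Lurie_Higher_algebra}, which vanishes since $A$ is $m$-truncated with $m<n+1$. The paper's approach buys a slightly stronger intermediate statement (the explicit formula for $\pi_{n+1}(M\otimes_A N)$ for all discrete $N$) at the cost of one extra external reference; yours is more modular.
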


\begin{proof}
	Using \cref{prop:n_flat_sorite}(\ref{item:n_flat_truncation}), we deduce that $\tau_{\le n}$ is flat to order $n$.
	For any $N \in A \Modh$, consider the fiber sequence
	\[ \tau_{\ge n+1} M \otimes_A N \to M \otimes_A N \to \tau_{\le n} M \otimes_A N . \]
	Since $A$ is $m$-truncated, \cref{prop:n_flat_implies_flat_truncation} implies that $\tau_{\le n} M$ is flat.
	In particular, $\tau_{\le n} M \otimes_A N$ is discrete.
	Therefore, passing to the long exact sequence of cohomology groups, we obtain
	\[ 0 \to \pi_{n+1}(\tau_{\ge n+1} M \otimes_A N) \to \pi_{n+1}(M \otimes_A N) \to 0 . \]
	It follows from \cite[7.2.1.23]{Lurie_Higher_algebra} that
	\[ \pi_{n+1}(\tau_{\ge n+1} M \otimes_A N) \simeq \pi_{n+1}(M) \otimes_{\pi_0(A)} N . \]
	Therefore, if $\pi_{n+1}(M) = 0$, then $M$ is flat to order $n+1$.
	Vice-versa, if $M$ is flat to order $n+1$, then choosing $N = \pi_0(A)$, we conclude that $\pi_{n+1}(M) = 0$.
	The proof is thus complete.
\end{proof}

\bibliographystyle{plain}
\bibliography{dahemaold}

\end{document}